\newcommand{\Ker}{\mathrm{Ker}}
\newcommand{\GL}{\mathrm{GL}}
\newcommand{\PSL}{\mathrm{PSL}}
\newcommand{\PGL}{\mathrm{PGL}}
\newcommand{\F}{\mathbb{F}}
\newcommand{\R}{\mathbb{R}}
\newcommand{\comm}{\mathrm{Comm}}
\newcommand\bs[1]{\boldsymbol{ #1}}
\newtheorem{definition}{Definition}[section]
\newtheorem{defn}[definition]{Definition}
\newtheorem{thm}[definition]{Theorem}
\newtheorem{prop}[definition]{Proposition}
\newtheorem{lem}[definition]{Lemma}
\newtheorem{cor}[definition]{Corollary}
\newtheorem{conj}[definition]{Conjectures}
\theoremstyle{definition}
\title{Reduced Power Graphs of $PGL_n(\F_q)$}
\author{Yilong Yang}
\date{\today}
\begin{document}
\maketitle
%\tableofcontents

%%%%%%%%%%%%%%%%%%%%
%%%%%%%%%%%%%%%%%%%%
%%%%%%%%%%%%%%%%%%%%

\abstract{
Given a group $G$, let us connect two non-identity elements by an edge if and only if one is a power of another. This gives a graph structure on $G$ minus identity, called the reduced power graph. It is conjectured by Akbari and Ashrafi that if a non-abelian finite simple group has connected reeuced power graph, then it must be an alternating group.

In this paper, we shall give a complete descprition about when the reduced power graphs of $PGL_n(\F_q)$ is connected for all $q$ and all $n\geq 3$. In particular, the conjectured by Akbari and Ashrafi is false. We shall also provide an upper bound in their diameters, and in case of disconnection, provide a description of all connected components.
}

\section{Introduction}

\subsection{Background and Main Results}

Graphical representations of an algebraic structure is always a fascinating research topic. The most well-known class of graphs is the Cayley graphs of finitely generated groups, which has been studied for a long time. Recently, some other graphical representations of groups have also been gathering more attention from researchers. Around 2000, the concept of a directed power graph was introduced by Kelarev and Quinn \cite{KQ00} as a new kinds of graphical representations, and later on Chakrabarty et al. \cite{CGS09} introduced the undirected power graph of a group.

\begin{defn}
Given a group $G$, the \emph{directed power graph} of $G$ has all elements of $G$ as vertices, and two distinct vertices $g,h$ are connected by an edge from $g$ to $h$ if and only if $h$ is a power of $g$. The \emph{(undirected) power graph} of $G$ is the same graph but we drop all directions on all edges. The \emph{reduced power graph} of $G$ is obtained from the undirected power graph by deleting the identity element.
\end{defn}

In this paper, by power graph we shall always mean the undirected power graph, unless specifically said otherwise.

Power graphs have been shown to capture many algebraic properties of a group. For example, the power graph of a finite group is a tree if and only if every element of the group is its own inverse \cite{TCS13}. If a finite group's power graph lack particular kinds of subgraphs, then its isomorphism types can be classified \cite{DEF14}. A finite group is a non-cyclic group of prime exponent if and only if its power graph is non-complete and minimally edge-connected \cite{Panda20}. For more results of this type, see \cite{KSCC}. 

One interesting aspect of the power graph is its connectivity. For finite groups, obviously power graphs are always connected,because every element is connected to the identity element. For this purpose, it is more interesting to consider the connectivity of the reduced power graph, i.e., power graph with the identity element removed. In particular, Akbari and Ashrafi proposed the following conjecture in 2015.

\begin{conj}[Akbari and Ashrafi,\cite{AA15}]
The reduced power graph of a non-abelian simple group $G$ is connected only if $G$ is isomorphic to some alternating group $A_n$.
\end{conj}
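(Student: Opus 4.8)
Since the statement quantifies over \emph{all} non-abelian finite simple groups, any serious attack must eventually route through the classification of finite simple groups; but first one should convert the graph-theoretic hypothesis into something group-theoretic. The plan is to reduce connectivity of the reduced power graph to connectivity of the \emph{intersection graph $\Gamma(G)$ of maximal cyclic subgroups of $G$} (vertices: maximal cyclic subgroups; edges: pairs with nontrivial intersection). The reduction is routine: every non-identity element lies in some maximal cyclic subgroup $M$; each $M$ with the identity removed is connected in the reduced power graph, since all its elements are powers of a generator; and any edge of the reduced power graph joins two elements one of whose generated subgroups contains the other, hence joins elements lying in a common maximal cyclic subgroup. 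Thus the components of the reduced power graph correspond exactly to those of $\Gamma(G)$, and the conjecture becomes: \emph{the only non-abelian finite simple groups $G$ for which any two maximal cyclic subgroups are joined by a chain of maximal cyclic subgroups with successive nontrivial intersections are the alternating groups}.

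Next I would extract a cheap necessary condition via the Gruenberg--Kegel prime graph. If that graph is disconnected, with a prime $p$ forming its own component, then $G$ has no element of order $pq$ for any prime $q\neq p$, so every cyclic subgroup of order divisible by $p$ is a $p$-group; hence every element of $p$-power order has all its $\Gamma(G)$-neighbours of $p$-power order too, and the $p$-elements form a union of components of the reduced power graph disjoint from the (nonempty, by Burnside's $p^aq^b$ theorem) set of elements of order prime to $p$. So $\Gamma(G)$ is disconnected whenever the prime graph is. This lets me discard, in one stroke, every non-abelian finite simple group with disconnected prime graph, using the known classification of these (Williams, Kondrat'ev, and others), and reduces to the simple groups of connected prime graph.

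It remains to handle two halves. \emph{Step 3 (positive direction).} Show $\Gamma(A_n)$ is connected for exactly the $n$ that ought to qualify, by exploiting the richness of cycle types: fix a ``hub'' maximal cyclic subgroup generated by an element supported on few points, and show any prescribed maximal cyclic subgroup can be linked to it by merging one disjoint cycle at a time, each merge producing an element whose cyclic group meets both the previous and the next. \emph{Step 4 (negative direction).} For each remaining simple group $G$ --- the groups of Lie type with connected prime graph, and the sporadic groups --- exhibit a single maximal cyclic subgroup that is isolated in $\Gamma(G)$: typically a cyclic maximal torus of near-Singer type whose order is coprime to that of every maximal cyclic subgroup it could plausibly be joined to, so that $\Gamma(G)$ is disconnected.

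The main obstacle is Step 4 for the groups of Lie type. One must control exactly which cyclic maximal tori occur and, crucially, how the orders of \emph{different} tori share small prime divisors; the naive heuristic ``a large Singer-type cyclic subgroup is automatically isolated'' degrades as the rank grows and the field size is chosen adversarially, since then many tori acquire a common small prime and become linked, re-connecting $\Gamma(G)$. This is precisely where the argument cannot be forced through: as the present paper demonstrates, for suitable $n$ and $q$ the group $\PGL_n(\F_q)$ --- which is simple exactly when $\gcd(n,q-1)=1$ --- has connected reduced power graph, so Step 4 fails and the conjecture is false. The honest outcome of the plan is therefore not a proof of the conjecture but a localisation of the point at which it collapses, namely the Lie-type case analysed in detail in the remainder of the paper.
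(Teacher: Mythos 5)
The statement you were given is not a theorem of the paper but the conjecture that the paper \emph{refutes}: the paper's ``proof'' concerning it is a disproof, obtained as a corollary of Theorem~\ref{MainThm:n}, which gives exact connectivity criteria for the reduced power graphs of $\PGL_n(\F_q)$ and thereby produces explicit simple counterexamples such as $\PGL_9(\F_3)\cong\PSL_9(\F_3)$, $\PGL_9(\F_2)$ and $\PGL_{12}(\F_2)$. Your proposal correctly senses this: you decline to push the proof through and you localize the breakdown to the groups of Lie type, which is exactly where the paper's counterexamples live. Your preparatory reductions are also sound as far as they go: the equivalence between connectivity of the reduced power graph and connectivity of the intersection graph of maximal cyclic subgroups is correct, and the observation that an isolated prime in the Gruenberg--Kegel graph forces disconnection is a valid necessary condition (note, though, that the conjecture is an ``only if'' statement, so your Step~3 about connectivity for $A_n$ is not needed at all, and the appeal to CFSG and the classification of disconnected prime graphs is likewise unnecessary for settling the conjecture either way).

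The genuine gap is that your argument establishes neither the conjecture nor its negation. The decisive claim --- that Step~4 fails because some simple group of Lie type has connected reduced power graph --- is not argued but imported by citing ``the present paper''; stripped of that citation, your proposal contains no construction, no specific group, and no verification of connectivity, and the heuristic discussion of Singer-type tori acquiring common small primes does not by itself show that \emph{all} maximal cyclic subgroups become linked (connectivity requires handling unipotent and mixed-type elements too, which is precisely what the paper's pivot-matrix machinery in Sections~\ref{Sec:PivotAllConnect} and~\ref{sec:nConn} is for). To actually refute the conjecture one needs what the paper supplies: a concrete simple group, e.g.\ $\PGL_9(\F_3)$ with $\gcd(9,q-1)=1$, together with a proof that every non-central element is joined to a (Jordan) pivot matrix and that all such pivots are mutually connected, so that no obstruction of the types in Propositions~\ref{prop:ConnObstJordan}, \ref{prop:ConnObstIrred}, \ref{prop:ConnObstDiag} occurs. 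So your proposal is best read as a correct diagnosis of where the conjecture collapses, but it is not a substitute for the paper's counterexample analysis, and as a proof attempt for the stated conjecture it necessarily fails, since the statement is false.
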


In this paper, we shall show that this conjecture is false.

\begin{thm}
\label{MainThm:n}
Let $\F_q$ be a field with characteristic $p$. For any $n\geq 3$ when $q\neq 2$, and for any $n\geq 6$ when $q=2$, the connected components of the reduced power graph of $\PGL_n(\F_q)$ are the following.
\begin{enumerate}
\item If $n$ is prime, and the minimal polynomial of $A$ is irreducible of degree $n$, then the images of all powers of $A$ that are not multiples of identity would form a connected component in the reduced power graph of $\PGL_n(\F_q)$.
\item If $n<p$, and $A$ has minimal polynomial $p(x)=(x-\lambda)^n$ for some $\lambda\in\F_q^*$, then the images of all powers of $A$ that are not multiples of identity would form a connected component in the reduced power graph of $\PGL_n(\F_q)$.
\item If $q-1$ is prime and $n<q$, and $A$ is diagonalizable with distinct eigenvalues, then the images of all powers of $A$ that are not multiples of identity would form a connected component in the reduced power graph of $\PGL_n(\F_q)$.
\item If $q=2$ and $n-1$ is prime, and the minimal polynomial of $A$ is $(x-1)p(x)$ where $p(x)$ is irreducible of degree $n-1$, then the images of all powers of $A$ that are not multiples of identity would form a connected component in the reduced power graph of $\PGL_n(\F_q)$.
\item If $q=2$, and $n$ or $n-1$ has a prime factor $p_1$ such that $p_0=2^{p_1}-1$ is also prime, and the characteristic polynomial of $A$ divides $p(x)=x^{p_0}-1$, then the images of all powers of $A$ that are not multiples of identity would form a connected component in the reduced power graph of $\PGL_n(\F_q)$. (Note that in this case $n$ or $n-1$ must be at most $p_0-1$.)
\item All other matrices are in the same connected component. If $q=2$ and $n\geq 6$, then this component has diameter at most $20$. If $q=3$ and $n=4$, then this component has diameter at most $20$. If $q\neq 2,n\geq 3$ and we are not in the previous case, this component has diameter at most $16$.
\end{enumerate}
\end{thm}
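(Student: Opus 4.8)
The plan is to recast the problem in terms of cyclic subgroups and their comparability, and then to treat the ``small'' components (1)--(5) and the ``big'' component (6) by quite different means. \textbf{Reformulation.} Two non-identity elements of $\PGL_n(\F_q)$ are adjacent in the reduced power graph exactly when one generates a cyclic subgroup containing the other, so the component of $\bar A$ is the union, over all cyclic subgroups reachable from $\gen{\bar A}$ by a zig-zag of inclusions of cyclic subgroups, of their non-identity elements. Two consequences guide everything: (i) if $\bar A$ is non-scalar then each non-scalar power $\bar A^k$ already lies in its component, so every set in (1)--(5) is automatically \emph{connected} and the real content there is \emph{maximality}; and (ii) an element of prime order $\ell$ generates a minimal cyclic subgroup, so the only elements comparable to it are the other generators of that subgroup — hence a prime-order element whose every ``root'' (an element $\bar B$ with $\bar B^m$ a power of it) again has order $\ell$ forms, with its cyclic subgroup, a full component. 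The whole theorem becomes: show the families (1)--(5) are closed in the comparability sense, and show everything else is routed, in boundedly many steps, to a single hub.

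\textbf{The small components (1)--(5).} For each listed conjugacy type of $A$ I would first compute the order of $\bar A$ in $\PGL_n(\F_q)$ and check the relevant cyclic subgroup cannot be enlarged from inside: in (2) the hypothesis $n<p$ forces $(I+N)^p=I$, so $\bar A$ has prime order $p$; in (1) one uses that, $n$ being prime, every non-scalar element of $\F_q[A]\cong\F_{q^n}$ has minimal polynomial of degree exactly $n$. Then, given a putative escaping root $\bar B$ with $B^m$ equal, up to a scalar, to a non-scalar power $A^k$, I would pin down the centralizer of $A^k$ in $\GL_n(\F_q)$ — a Singer torus $\F_{q^n}^*$ in (1), a split torus in (3), the unipotent-by-scalar centralizer of a regular element in (2), the corresponding product tori in (4) and (5) — forcing $B$ into that same small subgroup, and finally invoke the primality hypothesis on $n$, $n-1$, or $q-1$ (and, for $q=2$, the Mersenne condition that $p_0=2^{p_1}-1$ is prime, which constrains the possible element orders) to conclude that $\bar B$ is itself a power of $\bar A$. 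This exhibits (1)--(5) as genuine components.

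\textbf{The big component and its diameter.} For (6) I would fix a hub: a Singer-cycle element $\bar S$ (the image of a generator of $\F_{q^n}^*\le\GL_n(\F_q)$ when $n$ is composite, and a suitable near-Singer regular semisimple element when $n$ is prime but we are not in case (1)), together with its links to a transvection and to a maximally split regular semisimple torus. The argument is then: (a) every $\bar A$ in case (6) has a prime-order power $\bar B$, and one shows that when $\bar A$ is not of any type (1)--(5) the prime $\ell$ can be chosen so that $\bar B$ is not trapped — this uses that the rational-canonical data of $\bar A$ then has a non-trivial block decomposition or a non-prime invariant, leaving room to move; (b) classifying prime-order elements of $\PGL_n(\F_q)$ by the shape of the eigenvalue multiset (a single Frobenius orbit; several orbits of $\ell$-th roots of a common scalar; the unipotent case $\ell=p$; $\ldots$), I would, for each shape, walk to $\bar S$ along a bounded chain — pass up to a regular semisimple element of a product torus $\prod\F_{q^{d_i}}^*$ containing the given element, use $d_i\mid n$ (or near-coprimality of the $q^{d_i}-1$) to merge the blocks into the Singer torus, then re-descend, with transvections reaching $\bar S$ through a regular unipotent one of whose non-scalar powers is a transvection and which is a power of a torus element. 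Counting steps in the worst case yields the uniform bound: $16$ in general, and $20$ in the cramped cases $q=2$ and $(q,n)=(3,4)$, where units are scarce, coprimality is hard to arrange, and the exceptional families (4) and (5) sit just outside the hub.

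\textbf{Main obstacle.} The hard part is step (b) above together with step (a): producing a single bounded-length routing from every non-trapped conjugacy type to the hub, uniformly in $n$ and $q$, while simultaneously certifying that (1)--(5) is exhaustive. This is exactly where the number theory bites — one needs auxiliary torus elements of prescribed coprime or prime orders, which fails precisely in the small and Mersenne cases, forcing both the larger diameter $20$ and the appearance of the families (4) and (5). Bringing the constants all the way down to $16$ and $20$, rather than merely proving finiteness, then requires a careful but essentially routine optimization of these chains.
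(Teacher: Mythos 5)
The overall architecture you propose — certify that the families (1)--(5) are closed under taking powers and roots via centralizer computations plus the primality hypotheses, then route every other element to a fixed ``hub'' in boundedly many steps — is the same as the paper's, and your treatment of the trapped components matches the paper's obstruction propositions (which indeed rest on the fact that anything commuting with a companion matrix of an irreducible polynomial is a polynomial in it, plus Mersenne/Fermat-type constraints on element orders).

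The genuine gap is in the hub. You ``fix a Singer-cycle element $\bar S$'' and walk each prime-order element ``to $\bar S$,'' but a routing argument of this kind only ever lands you at \emph{some conjugate} of a standard form, not at the one fixed element. So the real content of case (6) is: given two conjugate hub elements $X_1HX_1^{-1}$ and $X_2HX_2^{-1}$, produce a short path between them. Your proposal never supplies this, and your choice of hub makes it hard to supply: two Singer tori generically intersect only in the scalars, so two Singer elements in different tori are not comparable and share no small common neighborhood, and a Singer element's centralizer ($\cong\F_{q^n}^*$) is far too small to generate $\GL_n(\F_q)$ in few steps. The paper's hubs are chosen for the opposite reason: ``pivot matrices'' (semisimple with exactly two eigenvalues) and ``Jordan pivot matrices'' (transvection-type unipotents) have centralizers so large that $\GL_n(\F_q)=\comm(J)\comm(A)\comm(J)\comm(A)\comm(J)$ — a width-five factorization proved by an LU-decomposition argument — and since $J$ and $A$ are mutually powers of $JA$, each factor converts into a distance-$2$ hop, giving diameter at most $8$ among all conjugate hubs. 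Without an analogue of this factorization your argument does not establish that component (6) is connected at all, and the constants $16=4+8+4$ and $20=6+8+6$ cannot be extracted from it; they are not obtainable by ``routine optimization'' of the chains you describe, because the missing $8$ is precisely the hub-to-hub distance.
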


\begin{cor}
Let $\F_q$ be a field with odd characteristic $p$. Then $\PGL_n(\F_q)$ has connected reduced power graph if and only if $n$ is not a prime and $n>p$.

If $\F_q$ is a field with characteristic $2$ and $q\neq 2$, then $\PGL_n(\F_q)$ has connected reduced power graph if and only if $n$ is not a prime, $n>2$, and either $q-1$ is not prime or $n\geq q$.

Finally, $\PGL_n(\F_2)$ has disconnected reduced power graph if and only if at least one of the following three situations occurs.
\begin{enumerate}
\item $n$ or $n-1$ is prime.
\item $n$ has a prime factor $p_1$ such that $p_0=2^{p_1}-1$ is also a prime and $n\leq 2^{p_1}-2$.
\item $n-1$ has a prime factor $p_1$ such that $p_0=2^{p_1}-1$ is also a prime and $n\leq 2^{p_1}-1$.
\end{enumerate}
\end{cor}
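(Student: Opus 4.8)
The plan is to obtain the corollary as a bookkeeping consequence of Theorem~\ref{MainThm:n}: that theorem lists the connected components of the reduced power graph of $\PGL_n(\F_q)$ as the special components of types (1)--(5) together with one remaining component of type (6), so the graph is connected exactly when each of the families (1)--(5) is empty (granting that the type-(6) component is nonempty, which I verify next). I would first record that the type-(6) component is always nonempty in the stated ranges, by checking that $A=J_2(1)\oplus I_{n-2}$ (a single $2\times2$ Jordan block with eigenvalue $1$, padded by the identity) lies in it: its minimal polynomial $(x-1)^2$ has degree $2<n$, which excludes types (1), (2) and (4) (all of which force the minimal polynomial to have degree $n$); it is not squarefree, which excludes type (5), whose characteristic polynomial $(x-1)^n$ would have to divide the squarefree polynomial $x^{p_0}-1$; and $A$ is not diagonalizable, which excludes type (3). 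Hence the graph is disconnected exactly when at least one of the families (1)--(5) is nonempty, and I would now pin down when each one is.

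Type (1) is realized iff $n$ is prime, since a monic irreducible polynomial of degree $n$ over $\F_q$ always exists and its companion matrix is non-scalar for $n\ge2$. Type (2) is realized iff $n<p$: the Jordan block $J_n(\lambda)$ with $\lambda\in\F_q^*$ always has minimal polynomial $(x-\lambda)^n$, so only the numerical constraint $n<p$ can fail. Type (3) is realized iff $q-1$ is prime and $n<q$, because a matrix with $n$ distinct (hence nonzero) eigenvalues in $\F_q$ exists exactly when $|\F_q^*|=q-1\ge n$. Type (4) is realized iff $q=2$ and $n-1$ is prime, using that an irreducible polynomial of degree $n-1$ over $\F_2$ exists.

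Type (5) needs the only genuine arithmetic. If $p_1$ is prime and $p_0:=2^{p_1}-1$ is prime, then $\mathrm{ord}_{p_0}(2)$ divides $p_1$ and is not $1$, hence equals $p_1$; so over $\F_2$ one has $x^{p_0}-1=(x-1)\prod_{i=1}^{(p_0-1)/p_1}g_i$ with the $g_i$ distinct monic irreducibles of degree $p_1$. As $x^{p_0}-1$ is squarefree, an $n\times n$ matrix whose characteristic polynomial divides $x^{p_0}-1$ exists exactly when $x^{p_0}-1$ has a divisor of degree exactly $n$, that is when $n=\varepsilon+jp_1$ for some $\varepsilon\in\{0,1\}$ and $0\le j\le(p_0-1)/p_1$; this amounts to $p_1\mid n$ together with $n\le 2^{p_1}-2$, or $p_1\mid n-1$ together with $n\le 2^{p_1}-1$. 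Thus type (5) is realized iff $q=2$ and condition (2) or (3) of the corollary holds.

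It remains to assemble the three regimes. For odd $p$: types (4) and (5) require $q=2$ and are vacuous; type (3) requires $q-1$ prime, which forces $q-1=2$ as $q-1$ is even, hence $n<q=3$, impossible; so the graph is disconnected iff type (1) or (2) is realized, that is iff $n$ is prime or $n<p$, and since $n=p$ is itself prime this is precisely the negation of ``$n$ is not prime and $n>p$''. For $q=2^k$ with $k\ge2$: types (2), (4), (5) are vacuous, since $n\ge3>p=2$ and $q\ne2$, so the graph is disconnected iff type (1) or (3) is realized, which is the second equivalence, the clause $n>2$ being automatic from $n\ge3$. For $q=2$ and $n\ge6$: types (2) and (3) are vacuous, since $p=2$ and $q-1=1$, so the graph is disconnected iff type (1), (4) or (5) is realized, that is iff $n$ or $n-1$ is prime, or condition (2) or (3) holds. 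I expect the only real obstacle to be keeping the type-(5) count exact --- reading the two bounds $2^{p_1}-2$ and $2^{p_1}-1$ correctly off the factorization of $x^{p_0}-1$ --- together with the small but necessary verification that the type-(6) component is never empty.
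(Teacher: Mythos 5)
Your derivation is correct and is precisely the bookkeeping the paper leaves implicit (the corollary is stated without proof). The one genuinely necessary step you flag --- checking that the type-(6) component is nonempty --- is indeed needed for the ``only if'' direction, and your witness $J_2(1)\oplus I_{n-2}$ handles it cleanly, since its minimal polynomial $(x-1)^2$ has degree $2<n$ (excluding types (1), (2), (4)), it is not diagonalizable (excluding type (3)), and its characteristic polynomial $(x-1)^n$ is not squarefree (excluding type (5)). Your realizability analysis of each type is correct: types (1)--(4) are immediate from the existence of companion matrices and Jordan blocks, and for type (5) you correctly read off from $x^{p_0}-1=(x-1)\prod g_i$ with each $\deg g_i=\mathrm{ord}_{p_0}(2)=p_1$ that the achievable dimensions are $n=\varepsilon+jp_1$ with $\varepsilon\in\{0,1\}$ and $1\le j\le(p_0-1)/p_1$, which unpacks exactly to conditions (2) and (3) of the corollary. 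The case assembly for odd $p$, for $q=2^k$ with $k\ge2$, and for $q=2$ is all correct, including the observation that $n=p$ is subsumed by ``$n$ prime'' so that ``$n\ge p$'' and ``$n>p$'' are interchangeable when conjoined with ``$n$ not prime''. One small caveat worth noting: the theorem's item (2) is stated with $n<p$ while Proposition~\ref{prop:ConnObstJordan} establishes the obstruction for $2\le n\le p$; as you implicitly observe, this discrepancy at $n=p$ is harmless for the corollary since $n=p$ is prime and is therefore already caught by type (1).
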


Note that when $n$ and $q-1$ are coprime, then $\PGL_n(\F_q)$ is isomorphic to $\PSL_n(\F_q)$, which is always non-abelian simple when $n\geq 3$. Say when $q=3$ and $n=9$, then we have a non-abelian finite simple group $\PGL_9(\F_3)$ with connected reduced power graph. Similarly, $\PGL_{9}(\F_2),\PGL_{12}(\F_2)$ also have connected reduced power graphs. And these are not isomorphic to any alternating groups. These shows that the conjecture by Akbari and Ashrafi is false.

For the sake of completion, we also list the number of connected components and the diameter of each component for the reduced power graph of $\GL_3(\F_2),\GL_4(\F_2),\GL_5(\F_2)$. These can easily be verified using a computer or some adaptations of the methods in this paper.

\begin{enumerate}
\item The reduced power graph of $\GL_3(\F_2)$ is made of components with diameter $1$, each component is made of non-identity powers of $X\begin{bmatrix}1&1&\\&1&1\\&&1\end{bmatrix}X^{-1}$ for some invertible $X$.
\item The reduced power graph of $\GL_4(\F_2)$ has one component containing $\begin{bmatrix}1&1&&\\&1&&\\&&1&\\&&&1\end{bmatrix}$, and another component containing $\begin{bmatrix}1&1&&\\&1&&\\&&1&1\\&&&1\end{bmatrix}$, and the rest are isolated components as described by Proposition~\ref{prop:ConnObstIrred2}.
\item The reduced power graph of $\GL_5(\F_2)$ has one component containing $\begin{bmatrix}1&1&&&\\&1&&&\\&&1&&\\&&&1&\\&&&&1\end{bmatrix}$, and another component containing $\begin{bmatrix}1&1&&&\\&1&&&\\&&1&1&\\&&&1&\\&&&&1\end{bmatrix}$, and the rest are isolated components as described by Proposition~\ref{prop:ConnObstIrred}.
\end{enumerate}

Thus the connected components of the reduced power graph of $\PGL_n(\F_q)$ for all $n\geq 3$ has be described, and a diameter upper bound is established for each component.

\subsection{Notations and Outline of Proof}

We use $\F_q$ to denote the field with $q$ elements, and $\GL_n(\F_q)$ to denote the group of invertible $n\times n$ matrices on $\F_q$. We use $Z(\GL_n(\F_q))$ to denote the center of the group $\GL_n(\F_q)$, i.e., the subgroup made of scalar multiples of identity. We use $\PGL_n(\F_q)$ to denote the quotient group $\GL_n(\F_q)/Z(\GL_n(\F_q))$. Throughout this paper, we use $I_k$ to denote the $k\times k$ identity matrix.

In order to study the reduced power graphs of $\PGL_n(\F_q)$, we shall study a special graph on $\GL_n(\F_q)$, which has the reduced power graph of $\PGL_n(\F_q)$ as a quotient graph.

\begin{defn}
Given a group $G$, let $Z$ be its center. Then the projectively reduced power graph of $G$ is obtained from the undirected power graph of $G$ by deleting vertices in $Z$. 
\end{defn}

\begin{prop}
Given a group $G$, let $Z$ be its center. Let $\Gamma$ be projective reduced power graph of $G$, and let $\Gamma'$ be the reduced power graph of $G/Z$. If $\Gamma$ is connected, then $\Gamma'$ is connected with the same or less diameter.
\end{prop}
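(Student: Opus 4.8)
The plan is to exhibit $\Gamma'$ as the image of $\Gamma$ under the natural quotient map $\pi\colon G\setminus Z \to (G/Z)\setminus\{\bar 1\}$, and show this map is a surjective graph homomorphism that does not increase distances. First I would check $\pi$ is well-defined on vertices: if $g\notin Z$ then $gZ\neq Z$, so $\pi(g)=gZ$ is indeed a vertex of $\Gamma'$; and every non-identity coset $gZ$ has a representative $g\notin Z$, so $\pi$ is surjective on vertices. Next I would verify that $\pi$ sends edges to edges (or to equal vertices): if $g,h\notin Z$ are adjacent in $\Gamma$, then without loss of generality $h=g^k$ for some integer $k$, hence $\bar h=\bar g^{\,k}$ in $G/Z$; if $\bar g\neq\bar h$ this is exactly an edge of $\Gamma'$, and if $\bar g=\bar h$ the two endpoints collapse to one vertex. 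Either way, adjacent vertices of $\Gamma$ map to vertices of $\Gamma'$ that are equal or adjacent.

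From these two facts the connectivity claim is immediate: given two vertices $\bar x,\bar y$ of $\Gamma'$, choose preimages $x,y\notin Z$; since $\Gamma$ is connected there is a path $x=v_0,v_1,\dots,v_m=y$ in $\Gamma$, and applying $\pi$ to it yields a walk $\bar x=\pi(v_0),\pi(v_1),\dots,\pi(v_m)=\bar y$ in which consecutive terms are equal or adjacent; deleting repeats gives a genuine path in $\Gamma'$ from $\bar x$ to $\bar y$. Hence $\Gamma'$ is connected. For the diameter bound, I would note that the deletion-of-repeats operation only shortens the walk, so $d_{\Gamma'}(\bar x,\bar y)\le m$; taking $m=d_\Gamma(x,y)$ for an optimal path in $\Gamma$ and then maximizing over $\bar x,\bar y$ gives $\diam(\Gamma')\le\diam(\Gamma)$.

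This argument is essentially routine; there is no serious obstacle. The only point requiring a moment's care is the one place where the two endpoints of an edge of $\Gamma$ can map to the same vertex of $\Gamma'$ (namely when $g$ and $h=g^k$ differ by a central element), which is why the statement allows "the same or less diameter" rather than claiming the quotient map is injective on vertices; handling this simply means phrasing the image of a path as a walk and then reducing it. I would also remark that the hypothesis $g\notin Z$ is used only to ensure $\pi$ lands in the reduced (identity-deleted) graph — the underlying power-graph structure is preserved automatically because the relation "$h$ is a power of $g$" is preserved by any group homomorphism.
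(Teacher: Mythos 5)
Your proof is correct and takes the same approach as the paper, which simply observes in one line that $\Gamma'$ is a quotient graph of $\Gamma$; you have just written out the routine details (surjectivity of the quotient map, that it sends edges to edges or collapsed vertices, and that projected walks can be shortened to paths) that the paper leaves implicit.
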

\begin{proof}
$\Gamma'$ is a quotient graph of $\Gamma$.
\end{proof}

Therefore, to establish connectivity, diameters and distances on the reduced power graph of $\PGL_n(\F_q)$, it is enough to establish the same statements in Theorem~\ref{MainThm:n} on the projectively reduced power graph of $\GL_n(\F_q)$.

In order to achieve connection, we first need to identify elements of $\GL_n(\F_q)$ that are connected to as much other elements as possible. Note that if $A$ connected to $B$ in the projectively reduced power graph of $\GL_n(\F_q)$, then $B$ must be a power or a root of $A$. Therefore $B$ must be in the centralizer of $A$. Consequently, to search for elements with many connections, we can try to find elements with large centralizers. We shall call these the \textit{pivot matrices}.

\begin{defn}
For a prime power $q$ and an integer $n\geq 2$, an $n\times n$ matrix $A$ over $\F_q$ is called a \emph{pivot matrix} if $A$ is diagonalizable over $\F_q$, and $A$ has exactly two eigenvalues not counting multiplicity. An $n\times n$ matrix $A$ over $\F_q$ is called a \emph{Jordan pivot matrix} if all eigenvalues of $A$ are $1$, and $\dim\Ker(A-I)=n-1$.
\end{defn}

In particular, pivot matrices are those similar to $\begin{bmatrix}x&&&&&\\&\ddots&&&&\\&&x&&&\\&&&y&&\\&&&&\ddots&\\&&&&&y\end{bmatrix}$ for some distinct $x,y\in\F_q^*$. Jordan pivot matrices are those similar to $\begin{bmatrix}1&&1\\&\ddots&\\&&1\end{bmatrix}$.

Our focus is on the pivot matrices when $q\neq 2$. But for $q=2$, there are no pivot matrices, so we have to resort to Jordan pivot matrices. (And sometimes, even for $q\neq 2$, connections via Jordan pivot matrices might also give a shorter distance.) All these matrices are important, because they all have big centralizers.

In particular, in Section~\ref{Sec:PivotAllConnect}, we shall show that all (Jordan) pivot matrices in the projectively reduced power graph of $\GL_n(\F_q)$ are connected to each other by some short path if $n$ is large enough. (We need $n\geq 3$ when $q\neq 2$, and $n\geq 4$ when $q=2$.)

In Section~\ref{sec:nConn}, we shall show that almost all matrices in the projectively reduced power graph of $\GL_n(\F_q)$ have paths to (Jordan) pivot matrices, with exceptions outlined in Section~\ref{sec:nDisconn}. These results will establish Theorem~\ref{MainThm:n}.

\begin{proof}[Proof of Theorem~\ref{MainThm:n}]
Combine Proposition~\ref{Prop:PivotDistanceAllCases} and Proposition~\ref{Prop:PathToPivotAllCases}.
\end{proof}

\section{Preliminary}

In this section, we compile many preliminary results about prime powers, finite fields and matrices over them. These are mostly elementary and are kept here for reference purpose.

\subsection{Prime powers}

\begin{lem}
\label{Lem:ConseqPP}
If $q,q-1$ are both prime powers, then either $q$ is Fermat prime, or $q-1$ is a Mersenne prime, or $q=9$.
\end{lem}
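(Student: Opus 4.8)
The plan is to analyze the equation $q - (q-1) = 1$ where both $q = p^a$ and $q - 1 = r^b$ are prime powers, with $p, r$ primes and $a, b \geq 1$. First I would dispose of the small cases: if $a = 1$ then $q$ is itself a prime, and I ask whether $q - 1 = r^b$ forces anything; if $b = 1$ then $q - 1$ is prime, and since $q = p^a$ I need to understand when $p^a - 1$ is prime. The genuinely interesting regime is when $a \geq 2$ and $b \geq 2$, i.e. both $q$ and $q-1$ are proper prime powers, and the claim is that this forces $q = 9$ (so $q - 1 = 8 = 2^3$).

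For the case $b \geq 2$, $a = 1$: here $q$ is an odd prime (since $q - 1 = r^b$ is a proper power, $q-1 > 2$, and if $q$ were $2$ then $q - 1 = 1$ is not a prime power in the relevant sense) and $q - 1 = r^b$. Then $r^b = q - 1$ is even, so $r = 2$ and $q = 2^b + 1$. A standard elementary argument shows $2^b + 1$ can only be prime if $b$ has no odd prime factor, i.e. $b$ is a power of $2$ — so $q$ is a Fermat prime. For the case $a \geq 2$, $b = 1$: then $q - 1 = r$ is prime and $q = p^a$. Since $r = p^a - 1 = (p-1)(p^{a-1} + \cdots + 1)$ and this is prime with $a \geq 2$, the factor $p - 1$ must be $1$, so $p = 2$, giving $r = 2^a - 1$, a Mersenne prime. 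The remaining case $a, b \geq 2$ is the heart of the matter: $p^a - r^b = 1$. By Mihailescu's theorem (Catalan's conjecture) the only solution in integers $\geq 2$ of $X^u - Y^v = 1$ with $u, v \geq 2$ is $3^2 - 2^3 = 1$, giving $q = 9$. If one wishes to avoid invoking Mihailescu, one can instead argue directly: reducing mod small numbers and using the classical special cases of Catalan (the case of exponent $2$ is elementary via factoring $p^a - 1 = r^b$ or $p^2 - 1 = (p-1)(p+1) = r^b$ forcing $p = 3$; the case $p = 2$ gives $2^a - 1 = r^b$ which by Mihailescu's easy direction or a direct mod-$4$ argument on $r^b \equiv 3 \pmod 4$ forces $b = 1$, already handled).

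I would carry out the steps in the order: (1) set up $q = p^a$, $q - 1 = r^b$ and split on whether $a = 1$ or $a \geq 2$, and whether $b = 1$ or $b \geq 2$; (2) handle $(a = 1, b \geq 2)$ to get Fermat primes; (3) handle $(a \geq 2, b = 1)$ to get Mersenne primes; (4) handle $(a = 1, b = 1)$ trivially (both prime, nothing to prove — this lands in the Mersenne case when $q = 3$, or is covered by "$q$ is prime", though note the lemma's phrasing presumes $q$ or $q-1$ is a \emph{proper} prime power, so I should check the edge cases like $q = 3, 4, 5$ by hand); (5) handle $(a \geq 2, b \geq 2)$ via Catalan/Mihailescu to get $q = 9$. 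The main obstacle is step (5): deciding whether to cite Mihailescu's theorem outright (clean but a heavy hammer) or to give a self-contained elementary argument covering the two-proper-prime-powers case, which requires the classical exponent-$2$ and base-$2$ special cases of Catalan. Given the "kept here for reference" framing of this section, citing Mihailescu is the pragmatic choice.
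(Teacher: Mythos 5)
Your proposal is correct and takes essentially the same route as the paper, which simply cites Mih\u{a}ilescu's theorem (Catalan's conjecture) in one line. You spell out the case analysis on the exponents that the paper leaves implicit, but the crux --- invoking Mih\u{a}ilescu for the case where both $q$ and $q-1$ are proper prime powers --- is the same.
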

\begin{proof}
This is a easy and special case of Catalan's conjecture, proven by Mih\u{a}ilescu \cite{Mihai04}.
\end{proof}

\begin{lem}
\label{Lem:qnPrimeEvasionQ}
Suppose $q$ and $q-1$ are both prime powers, and $q\neq 2$. For any positive integer $n>1$, $q^n-1$ has a prime factor coprime to $q-1$, unless $q=3$ and $n=2$.
%Let $q$ be a power of a prime $p$, such that $q-1$ and $\frac{q^n-1}{q-1}$ are both powers of the same prime $p'$ for some positive integer $n$. Then either $n=1$, or we have $q=3$ and $n=2$.
\end{lem}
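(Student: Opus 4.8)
The plan is to invoke Zsygmondy's theorem (also called the primitive prime divisor theorem): for integers $q \geq 2$ and $n \geq 2$, the number $q^n - 1$ has a \emph{primitive prime divisor}---that is, a prime $\ell$ dividing $q^n - 1$ but not dividing $q^k - 1$ for any $1 \leq k < n$---with exactly two classes of exceptions: the pair $q = 2$, $n = 6$, and the pairs $n = 2$ with $q + 1$ a power of $2$. A primitive prime divisor $\ell$ of $q^n-1$ in particular does not divide $q^1 - 1 = q-1$, so it is automatically coprime to $q-1$, which is exactly the conclusion we want. Thus the lemma reduces to checking the exceptional cases of Zsygmondy against the hypotheses here.

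First I would dispose of the $q = 2$ case: this is excluded by hypothesis ($q \neq 2$), so the exception $(q,n) = (2,6)$ never arises. Next I would handle the family $n = 2$ with $q + 1 = 2^m$. Here $q = 2^m - 1$ is odd, so since we also assume $q$ is a prime power, $q$ is an odd prime power; combined with $q - 1 = 2^m - 2 = 2(2^{m-1}-1)$ being a prime power, Lemma~\ref{Lem:ConseqPP} applies (both $q$ and $q-1$ are prime powers), forcing $q$ to be a Fermat prime, $q-1$ a Mersenne prime, or $q = 9$. If $q + 1 = 2^m$ and $q = 9$, then $2^m = 10$, impossible. If $q+1 = 2^m$ and $q$ is itself of the form where $q - 1$ is a Mersenne prime $2^j - 1$, then $q = 2^j$, contradicting $q$ odd. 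So we must be in the Fermat prime case $q = 2^{2^k} + 1$; but then $q + 1 = 2^{2^k} + 2 = 2(2^{2^k - 1} + 1)$, which is a power of $2$ only if $2^{2^k - 1} + 1 = 1$, i.e. never (for $k \geq 0$, $2^k - 1 \geq 0$ so the term is at least $2$)---\emph{except} we should separately check the smallest Fermat prime $q = 3$: then $q + 1 = 4 = 2^2$, which \emph{is} a power of $2$. This recovers precisely the stated exception $q = 3$, $n = 2$.

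I expect the only real subtlety to be the bookkeeping in the $n=2$ family: one must be careful that "$q$ is a prime power and $q+1$ is a power of $2$" is compatible only with $q = 3$ among admissible $q$, and the cleanest route is the two-line argument above using Lemma~\ref{Lem:ConseqPP} rather than re-deriving a Catalan-type statement. An alternative, fully self-contained route avoiding Zsygmondy entirely would be: for $n = 2$, factor $q^2 - 1 = (q-1)(q+1)$ and observe that any odd prime divisor of $q + 1$ is coprime to $q - 1$ (since $\gcd(q-1,q+1) \mid 2$), so we are done unless $q + 1$ is a power of $2$, which as above forces $q = 3$; and for $n \geq 3$ one still needs a primitive-divisor input, so Zsygmondy is hard to avoid in general. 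I would present the Zsygmondy-based proof as the main argument, with the explicit $n = 2$ factorization remark included to make the exceptional case transparent.
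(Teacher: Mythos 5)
Your proof is correct, but it takes a genuinely different route from the paper, and it is worth seeing why the paper's route is shorter given the hypotheses. You invoke Zsygmondy/Bang to get a primitive prime divisor of $q^n-1$, then check the two exceptional families; this is sound, and the primitive prime divisor indeed cannot divide $q-1$, hence is coprime to it. The paper instead never touches Zsygmondy: it uses the hypothesis that $q-1$ is a prime power in a much more essential way. If every prime factor of $q^n-1$ divided $q-1$, then since $q-1$ is a power of a \emph{single} prime $p_0$, the number $q^n-1$ would itself be a power of $p_0$, i.e.\ a prime power. Then $q^n$ and $q^n-1$ are two consecutive prime powers, and Lemma~\ref{Lem:ConseqPP} applies \emph{directly with $q^n$ in the role of $q$}. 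Since $n>1$ rules out $q^n$ being prime, and since $q\neq 2$, $n>1$ give $q^n-1$ the proper divisor $q-1>1$ so that $q^n-1$ is not prime either, the only surviving option from Lemma~\ref{Lem:ConseqPP} is $q^n=9$, i.e.\ $(q,n)=(3,2)$. This is a two-line reuse of a lemma the paper has already proved, whereas your argument imports a heavier theorem and then needs a somewhat fiddly case analysis to clear the $n=2$ exception. Your remark that ``for $n\geq 3$ one still needs a primitive-divisor input, so Zsygmondy is hard to avoid'' is therefore not quite right under the paper's hypotheses: the assumption that $q-1$ is a prime power is strong enough to replace Zsygmondy entirely.

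One small slip in your $n=2$ case: from $q+1=2(2^{2^k-1}+1)$ you claim this is a power of $2$ ``only if $2^{2^k-1}+1=1$, i.e.\ never.'' The correct statement is that $2\cdot X$ is a power of $2$ iff $X$ is; here $X=2^{2^k-1}+1$ equals $2$ exactly when $k=0$, which is the $q=3$ case you then recover by inspection. So your conclusion is right, but the parenthetical reasoning contradicts the exception you immediately grant. The cleaner version of that step is simply: if $q+1=2^m$ then $q$ is odd, so $q-1$ is an even prime power, hence a power of $2$; two powers of $2$ differing by $2$ must be $2$ and $4$, giving $q=3$.
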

\begin{proof}
Note that $q-1$ is a prime power. So if $q^n-1$ has no prime factor coprime to $q-1$, then $q^n$ is also a prime power, hence $q^n$ and $q^n-1$ are also consecutive prime powers.

Since $n>1$, $q^n$ cannot be prime. Since $q\neq 2$ and $n>1$, $q^n-1$ must have a proper factor $q-1$, so it also cannot be prime. So by Lemma~\ref{Lem:ConseqPP}, the only possibility is $q^n=9$.
%
%If $q-1,\frac{q^n-1}{q-1}$ are both powers of the same prime $p'$, then $q^n-1$ is also a power of $p'$. So $q^n,q^n-1$ are consecutive prime powers. So either $q^n$ is a Fermat prime, or $q^n=9$, or $q$ is even.
%
%Suppose $q^n$ is a Fermat prime, then $n=1$ and $q$ is the Fermat prime itself.
%
%If $q^n=9$, then either $n=1$, or we have $q=3$ and $n=2$.
%
%If $q$ is even, $q-1$ and $q^n-1$ must both be Mersenne primes. But since both $q-1,\frac{q^n-1}{q-1}$ are both powers of the same prime $p'$, both must equal to $p'$. So $n=1$.
\end{proof}

\subsection{Matrices over finite fields}

In this subsection, we focus on $n\times n$ matrices over $\F_q$ whose minimal polynomial is irreducible of degree $n$. Note that such a matrix must be similar to the campanion matrix of an irreducible polynomial.

\begin{lem}
\label{Lem:CompanionOrderBasic}
Suppose the characteristic polynomial of a matrix $C\in\GL_n(\F_q)$ is irreducible. Then polynomials of $C$ of degree at most $n-1$ form a field $\F_q[C]$ isomorphic to $\F_{q^n}$ under the usual matrix addition and matrix multiplication. In particular, $C$ is invertible, with multiplicative order dividing $q^n-1$.
\end{lem}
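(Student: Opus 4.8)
The plan is to identify $\F_q[C]$ with the quotient ring $\F_q[x]/(f(x))$, where $f$ denotes the characteristic polynomial of $C$. First I would note that the minimal polynomial $m$ of $C$ is a non-constant divisor of $f$ by Cayley--Hamilton, so irreducibility of $f$ forces $m = f$ (up to a scalar), and in particular $\deg m = n$. Then consider the evaluation homomorphism $\varphi\colon \F_q[x]\to M_n(\F_q)$ with $\varphi(x) = C$. Its image is, by definition, the set of all polynomial expressions in $C$, and its kernel is precisely the ideal generated by the minimal polynomial, namely $(m) = (f)$. Hence $\F_q[C] := \Im\varphi \cong \F_q[x]/(f(x))$, which is a field because $f$ is irreducible, and it has exactly $q^{\deg f} = q^n$ elements; since all finite fields of a given order are isomorphic, $\F_q[C]\cong\F_{q^n}$.

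Next I would verify that every element of $\F_q[C]$ is represented uniquely by a polynomial in $C$ of degree at most $n-1$: given $g(C)$, division with remainder $g = sf + r$ with $\deg r < n$ gives $g(C) = r(C)$, and the powers $I, C, \dots, C^{n-1}$ are $\F_q$-linearly independent exactly because the minimal polynomial has degree $n$ (any nontrivial linear relation among them would be a nonzero annihilating polynomial of degree $<n$). So $\{I, C, \dots, C^{n-1}\}$ is an $\F_q$-basis of $\F_q[C]$, which matches the description in the statement.

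Finally, for the ``in particular'' clause: since $\F_q[C]$ is a field, $C$ is invertible inside $\F_q[C]$ (its inverse being some polynomial in $C$) as soon as $C \neq 0$, and $C \neq 0$ because $f$ is irreducible with $f \neq x$, so $f(0) \neq 0$ and $0$ is not an eigenvalue of $C$ (this also re-derives $C \in \GL_n(\F_q)$). Thus $C$ is a nonzero element of the field $\F_q[C]$, i.e.\ $C$ lies in the multiplicative group $\F_q[C]^\times$, which has order $q^n - 1$; by Lagrange's theorem the multiplicative order of $C$ divides $q^n - 1$.

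I do not anticipate a genuine obstacle here; the only points that need a little care are the identification $\ker\varphi = (f)$ (which combines the fact that the kernel of evaluation is generated by the minimal polynomial with the equality $m = f$ coming from irreducibility) and the linear independence of $I, C, \dots, C^{n-1}$, both of which are standard once $\deg m = n$ is established.
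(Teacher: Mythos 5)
Your proof is correct and is the standard argument that the paper's one-line proof (``It is easy to verify that this is a field with $q^n$ elements'') is gesturing at: identify $\F_q[C]$ with $\F_q[x]/(f(x))$ via the evaluation map, use irreducibility of $f$ (and Cayley--Hamilton) to conclude the kernel is $(f)$ and the quotient is a field of size $q^n$, then apply Lagrange in $\F_q[C]^\times$. You have simply supplied the details the paper omits, including the care about $m=f$, the basis $\{I,C,\dots,C^{n-1}\}$, and $f(0)\neq 0$; there is no divergence in approach.
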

\begin{proof}
It is easy to very that this is a field with $q^n$ elements.
\end{proof}

\begin{lem}
\label{Lem:CompanionPower}
Suppose the characteristic polynomial of a matrix $C\in\GL_n(\F_q)$ is irreducible, and the multiplicative order of $C$ is $k$. For any positive integer $t$, the minimal polynomial of $C^t$ is irreducible. For any factor $t$ of $k$, if $\frac{k}{t}$ is a factor of $q^m-1$, then the minimal polynomial of $C^t$ is irreducible with degree at most $m$.
\end{lem}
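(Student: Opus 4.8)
The plan is to work inside the field $\F_q[C]\cong\F_{q^n}$ provided by Lemma~\ref{Lem:CompanionOrderBasic}. Since $C$ has irreducible characteristic polynomial of degree $n$, $C$ corresponds to a generator (or at least an element) $\alpha$ of $\F_{q^n}$ whose multiplicative order is $k$. The key observation is that the minimal polynomial of $C^t$ over $\F_q$ — meaning the monic polynomial $f$ of least degree with $f(C^t)=0$ — coincides with the minimal polynomial of the field element $\alpha^t\in\F_{q^n}$ over $\F_q$, because $\F_q[C]$ is a field isomorphic to $\F_{q^n}$ and polynomial evaluation at $C$ is a ring isomorphism onto this field. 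Minimal polynomials of elements of a finite field over a subfield are always irreducible, so the first assertion — that the minimal polynomial of $C^t$ is irreducible for every positive integer $t$ — is immediate.

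For the second assertion, I would identify the degree of the minimal polynomial of $\alpha^t$ as $[\F_q(\alpha^t):\F_q]$, which equals the size of the Frobenius orbit of $\alpha^t$, i.e. the least $d\geq 1$ with $(\alpha^t)^{q^d}=\alpha^t$, equivalently $\alpha^{t(q^d-1)}=1$, equivalently $\operatorname{ord}(\alpha^t)\mid q^d-1$. Now $\operatorname{ord}(\alpha^t)=\operatorname{ord}(\alpha)/\gcd(t,k)=k/\gcd(t,k)$; when $t$ divides $k$ this is exactly $k/t$. So the degree of the minimal polynomial of $C^t$ is the least $d$ with $k/t\mid q^d-1$. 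The hypothesis says $k/t\mid q^m-1$, which exhibits $d=m$ as one such exponent; hence the least such $d$ is at most $m$, giving the claimed bound $\deg\leq m$.

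The only subtlety — and the step I would be most careful about — is making sure the translation between "minimal polynomial of the matrix $C^t$" and "minimal polynomial of the field element $\alpha^t$" is airtight, since a priori the minimal polynomial of a matrix is defined via matrix arithmetic, not field arithmetic. This is handled cleanly by Lemma~\ref{Lem:CompanionOrderBasic}: the map $\F_q[x]\to\F_q[C]$, $g(x)\mapsto g(C)$, has kernel generated by the minimal polynomial of $C$ (the irreducible characteristic polynomial), so $\F_q[C]\cong\F_q[x]/(\text{char poly})\cong\F_{q^n}$, and under this isomorphism $C\mapsto\alpha$ with $\alpha$ a root of the characteristic polynomial. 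Then $g(C^t)=0$ in $\GL_n(\F_q)$ iff $g(\alpha^t)=0$ in $\F_{q^n}$, so the two minimal polynomials literally agree. Everything else is the standard finite-field fact that $[\F_q(\beta):\F_q]$ is the length of the Frobenius orbit of $\beta$, which I would cite or spell out in one line. I do not expect any genuine obstacle here; the lemma is a routine repackaging of Galois theory of finite fields once the field isomorphism is in hand.
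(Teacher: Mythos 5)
Your proof is correct, and the two arguments differ mainly in how the first claim (irreducibility of the minimal polynomial of $C^t$ for every $t$) is handled. The paper's proof is purely polynomial: if the minimal polynomial of $C^t$ factored as $g(x)h(x)$, then $g(C^t)h(C^t)=0$ means the minimal polynomial $f(x)$ of $C$ divides $g(x^t)h(x^t)$; since $f$ is irreducible it divides $g(x^t)$ or $h(x^t)$, giving $g(C^t)=0$ or $h(C^t)=0$ and contradicting minimality. You instead pass through the isomorphism $\F_q[C]\cong\F_{q^n}$ and invoke the standard fact that minimal polynomials of finite-field elements over a subfield are irreducible. Both are valid; the paper's version is slightly more elementary (it never uses that $\F_q$ is finite, so it would hold over any base field) while yours is shorter once the field identification is in hand. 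For the second claim — $\operatorname{ord}(C^t)=k/t$ divides $q^m-1$, so $C^t$ lies in a subfield of size $q^m$ and hence has minimal polynomial of degree at most $m$ — your argument and the paper's are essentially identical; you just spell out the ``degree $=$ least $d$ with $\operatorname{ord}\mid q^d-1$'' bookkeeping that the paper leaves implicit. One small remark on rigor that applies equally to both: the phrase ``contained in the field with $q^m$ elements'' should really be read as ``contained in $\F_{q^n}\cap\F_{q^m}=\F_{q^{\gcd(m,n)}}$,'' which still gives degree $\leq m$; your phrasing via the least $d$ sidesteps this cleanly.
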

\begin{proof}
Let the characteristic polynomial of $C$ be $f(x)$. Since this is irreducible, $f(x)$ is also the minimal polynomial for $C$.

Suppose the minimal polynomial of $C^t$ is reducible, say it is $g(x)h(x)$ for polynomials $g(x),h(x)$ of degree at least one. Then we must have $g(C^t)h(C^t)=0$, and thus the minimal polynomial $f(x)$ of $C$ must divide $g(x^t)h(x^t)$. Since $f(x)$ is irreducible, $f(x)$ must divide $g(x^t)$ or $h(x^t)$. But this means we have $g(C^t)=0$ or $h(C^t)=0$, contradicting the fact that $g(x)h(x)$ is supposed to be the minimal polynomial for $C^t$. Hence the minimal polynomial of $C^t$ is irreducible. 

Now suppose $t$ is a factor of $k$, then $C^t$ has order $\frac{k}{t}$, which is a factor of $q^m-1$, therefore in the finite field $\F_q[C]$, $C^t$ is contained in the field with $q^m-1$ elements. Hence its minimal polynomial has degree at most $m$.
\end{proof}

\begin{cor}
\label{Cor:CompanionPower}
Suppose the characteristic polynomial of a matrix $C\in\GL_n(\F_q)$ is irreducible, and the multiplicative order of $C$ is $k$. For any factor $t$ of $k$, $\frac{k}{t}$ is a factor of $q-1$ if and only if $C^t$ is a scalar multiple of identity.
\end{cor}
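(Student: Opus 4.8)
The plan is to argue entirely inside the field $\F_q[C]$ furnished by Lemma~\ref{Lem:CompanionOrderBasic}, which is isomorphic to $\F_{q^n}$. Under this identification, the scalar matrices $\lambda I$ with $\lambda\in\F_q$ are exactly the image of the prime-degree subfield $\F_q\subseteq\F_{q^n}$, so the nonzero scalar matrices form a subgroup of $\F_q[C]^*$ of order $q-1$. Since $\F_q[C]^*\cong\F_{q^n}^*$ is cyclic, this is the \emph{unique} subgroup of order $q-1$, and more generally it is the unique subgroup consisting of all elements whose order divides $q-1$. Also recall that since $C$ has order $k$ and $t\mid k$, the element $C^t$ has order exactly $k/t$.

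For the forward implication, suppose $k/t$ divides $q-1$. Then $C^t$ is an element of $\F_q[C]^*$ whose order divides $q-1$, hence it lies in the unique such subgroup, namely $\F_q^*\cdot I$; that is, $C^t$ is a scalar multiple of identity. (One could equivalently invoke Lemma~\ref{Lem:CompanionPower} with $m=1$: it gives that the minimal polynomial of $C^t$ is irreducible of degree at most $1$, forcing $C^t=\lambda I$.) For the reverse implication, suppose $C^t=\lambda I$ for some $\lambda\in\F_q^*$. Then $C^t$ lies in the group $\F_q^*\cdot I$ of order $q-1$, so by Lagrange's theorem its order $k/t$ divides $q-1$.

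There is no real obstacle here; the statement is a direct corollary of Lemma~\ref{Lem:CompanionOrderBasic} and Lemma~\ref{Lem:CompanionPower}. The only point worth stating carefully is that the isomorphism $\F_q[C]\cong\F_{q^n}$ carries the scalar matrices onto the subfield $\F_q$, so that "scalar multiple of identity'' is synonymous with "lies in the order-$(q-1)$ subgroup of the cyclic group $\F_q[C]^*$,'' after which both directions are one-line consequences of basic cyclic-group structure.
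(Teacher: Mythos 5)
Your proof is correct and matches the paper's intended derivation: the paper states this as an immediate corollary of Lemma~\ref{Lem:CompanionOrderBasic} and Lemma~\ref{Lem:CompanionPower} (the latter with $m=1$ giving the forward direction), which is exactly the route you take via the cyclic group $\F_q[C]^*\cong\F_{q^n}^*$ and the identification of the scalar matrices with its unique subgroup of order $q-1$.
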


\begin{cor}
\label{Cor:CompanionSameOrderSameSize}
Suppose $C$ is an $n\times n$ matrix over $\F_q$, $n>1$, and its characteristic polynomial is irreducible. If $C^{p_1}$ is a scalar multiple of identity for a prime number $p_1$, then $n$ is the smallest positive integer such that $p_1(q-1)$ divides $q^n-1$.
\end{cor}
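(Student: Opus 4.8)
The plan is to translate the hypothesis into arithmetic conditions on the multiplicative order $k$ of $C$ and then compare $n$ with the multiplicative order of $q$ modulo $p_1(q-1)$.

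First I would record the standing facts. Since the characteristic polynomial of $C$ is irreducible of degree $n>1$, it equals the minimal polynomial, so by Lemma~\ref{Lem:CompanionOrderBasic} we may view $C$ as an element of the field $\F_q[C]\cong\F_{q^n}$ with $[\F_q(C):\F_q]=n$; its multiplicative order $k$ divides $q^n-1$, and $C$ is not a scalar multiple of the identity (a scalar matrix has reducible characteristic polynomial when $n>1$). Next I claim $p_1\mid k$: otherwise $\gcd(p_1,k)=1$, so $C$ would be a power of $C^{p_1}$, forcing $C$ to be scalar, a contradiction. In particular $p_1$ is coprime to $q$ (as $\gcd(k,q)=1$), hence $\gcd(q,p_1(q-1))=1$ and the multiplicative order $n_0$ of $q$ modulo $p_1(q-1)$ — equivalently, the least positive integer with $p_1(q-1)\mid q^{n_0}-1$ — is well defined.

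Now I would apply Corollary~\ref{Cor:CompanionPower}. Taking $t=1$ and using that $C$ is not scalar gives $k\nmid q-1$; taking $t=p_1$ (legitimate since $p_1\mid k$) and using that $C^{p_1}$ is scalar gives $k/p_1\mid q-1$, hence $k\mid p_1(q-1)$. Then I invoke the elementary fact that any divisor of $p_1(q-1)$ which is a multiple of $q-1$ equals either $q-1$ or $p_1(q-1)$ (write such a divisor as $(q-1)e$ with $e\mid p_1$). Applied to $\mathrm{lcm}(k,q-1)$ — which divides $p_1(q-1)$, is a multiple of $q-1$, and cannot equal $q-1$ because $k\nmid q-1$ — this yields $\mathrm{lcm}(k,q-1)=p_1(q-1)$. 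Since $k$ and $q-1$ both divide $q^n-1$ (the latter always, the former by Lemma~\ref{Lem:CompanionOrderBasic}), we get $p_1(q-1)\mid q^n-1$, i.e. $n_0\mid n$. For the reverse divisibility, note $k\mid p_1(q-1)\mid q^{n_0}-1$, so $C^{q^{n_0}-1}=1$ in $\overline{\F_q}^{\,*}$; hence $C$ is a root of $x^{q^{n_0}}-x$, so $C\in\F_{q^{n_0}}$ and therefore $n=[\F_q(C):\F_q]$ divides $[\F_{q^{n_0}}:\F_q]=n_0$. Combining the two divisibilities gives $n=n_0$, which is exactly the claim.

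The proof has no deep obstacle; the only delicate points are the ordering of steps — one must establish $p_1\mid k$ before applying Corollary~\ref{Cor:CompanionPower} with $t=p_1$ — and the small number-theoretic lemma pinning down $\mathrm{lcm}(k,q-1)$, which is what converts the two one-sided divisibilities $k\mid p_1(q-1)$, $k\nmid q-1$ into the exact equality. Everything else is routine bookkeeping with orders in cyclic groups.
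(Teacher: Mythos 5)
Your proposal is correct and follows essentially the same route as the paper: both arguments pin down the multiplicative order $k$ of $C$ by showing $k\nmid q-1$ but $k\mid p_1(q-1)$, deduce $\mathrm{lcm}(k,q-1)=p_1(q-1)\mid q^n-1$ for one inequality, and use $k\mid q^{n_0}-1$ to bound the degree of the minimal polynomial for the other. The only differences are cosmetic — you obtain $k\mid p_1(q-1)$ via Corollary~\ref{Cor:CompanionPower} with $t=p_1$ (after first checking $p_1\mid k$) where the paper just uses $C^{p_1(q-1)}=I$ directly, and you close the upper bound by subfield containment in $\overline{\F_q}$ where the paper cites Lemma~\ref{Lem:CompanionPower}.
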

\begin{proof}

Let $n_1$ be the smallest positive integer such that $p_1(q-1)$ divides $q^{n_1}-1$. We shall show that $n=n_1$.

Since $n>1$ and the characteristic polynomial of $C$ is irreducible, by Corollary~\ref{Cor:CompanionPower}, the multiplicative order of $C$ cannot divide $q-1$, but must divide $q^n-1$. On the other hand, since $C^{p_1}$ is a scalar multiple of identity, $C^{p_1(q-1)}=I$. So the multiplicative order of $C$ must divide $p_1(q-1)$. Together, this means the least common multiple of $q-1$ and the multiplicative order of $C$ must be $p_1(q-1)$. Since the multiplicative order of $C$ and $q-1$ are factors of $q^n-1$, therefore their least common multiple $p_1(q-1)$ must divide $q^n-1$. So $n_1\leq n$.

Suppose $C$ has multiplicative order $k$. Note that since $C^{p_1}$ is a scalar multiple of identity, $C^{p_1(q-1)}=I$. So $k$ divides $p_1(q-1)$, and therefore divides $q^{n_1}-1$. By Lemma~\ref{Lem:CompanionPower}, this means the minimal polynomial of $C$ has degree at most $n_1$. But it has degree $n$. So $n\leq n_1$.
\end{proof}

\begin{lem}
\label{Lem:CompanionRoot}
Suppose the characteristic polynomial of a matrix $C\in\GL_n(\F_q)$ is irreducible, and the multiplicative order of $C$ is $k$. For any positive integer $t$, if $kt$ is a factor of $q^n-1$, then $C$ has a $t$-th root $M$ whose characteristic polynomial is also irreducible, and whose multiplicative order is $kt$.
\end{lem}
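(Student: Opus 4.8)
The statement to prove is Lemma~\ref{Lem:CompanionRoot}: if $C\in\GL_n(\F_q)$ has irreducible characteristic polynomial and multiplicative order $k$, and $kt\mid q^n-1$, then $C$ has a $t$-th root $M$ with irreducible characteristic polynomial and multiplicative order exactly $kt$. The natural setting is the field $\F_q[C]\cong\F_{q^n}$ furnished by Lemma~\ref{Lem:CompanionOrderBasic}: here $C$ corresponds to an element of the cyclic group $\F_{q^n}^*$ of order $q^n-1$, and $C$ itself has order $k$. The plan is to produce $M$ as a suitable element of this same cyclic group, check that it is a $t$-th root of $C$, and then verify that its minimal polynomial over $\F_q$ has full degree $n$ (equivalently, that $M$ generates $\F_{q^n}$ over $\F_q$, i.e.\ lies in no proper subfield).

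\textbf{Step 1: locate a candidate root in $\F_{q^n}^*$.} Since $\F_{q^n}^*$ is cyclic of order $q^n-1$ and $kt\mid q^n-1$, there is an element $\zeta$ of order exactly $kt$. The subgroup of order $k$ it generates is the unique such subgroup, which is exactly $\gen{C}$ viewed inside $\F_{q^n}^*$; hence $\zeta^t$ has order $k$ and generates the same subgroup, so $\zeta^t = C^j$ for some $j$ coprime to $k$. Because $\gcd(j,k)=1$ and (after possibly adjusting) one can solve $j j' \equiv 1 \pmod{kt}$ — here one must be a little careful, since we need the exponent congruence to hold mod $kt$, not just mod $k$; if $\gcd(j,t)\neq 1$ this needs a small fix, e.g.\ replace $\zeta$ by $\zeta^{u}$ for suitable $u$ coprime to $kt$ to arrange $\zeta^t = C$ exactly. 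A cleaner route: pick any generator $g$ of $\F_{q^n}^*$, write $C = g^{(q^n-1)/k \cdot a}$ with $\gcd(a,k)=1$, and set $M = g^{(q^n-1)/(kt)\cdot a}$; then $M^t = g^{(q^n-1)/k\cdot a} = C$ directly, and $M$ has order $(kt)/\gcd(a,kt)$. Choosing the representative $a$ of the residue class mod $k$ that is additionally coprime to $t$ (possible by CRT since we only need $a \bmod k$ fixed) makes $\gcd(a,kt)=1$, so $M$ has order exactly $kt$.

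\textbf{Step 2: irreducibility of the minimal polynomial of $M$.} By Lemma~\ref{Lem:CompanionPower} (with $C$ replaced by $M$ and $t=1$, or simply by the same argument as there) any power of a matrix with irreducible minimal polynomial has irreducible minimal polynomial — but here the roles are reversed: $C = M^t$ is assumed irreducible of degree $n$, and we want $M$ irreducible. This direction needs the subfield criterion: $M$ fails to generate $\F_{q^n}$ iff $M$ lies in the subfield $\F_{q^d}$ for some proper divisor $d\mid n$, which happens iff the order of $M$ divides $q^d-1$. Since the order of $M$ is exactly $kt$, it suffices to show $kt \nmid q^d-1$ for every proper divisor $d$ of $n$. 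Here is where I expect the main work: we know $k\nmid q^d-1$ for proper $d\mid n$ (this is exactly why $C$ has degree $n$, via Corollary~\ref{Cor:CompanionPower}/Lemma~\ref{Lem:CompanionPower}: if $k\mid q^d-1$ then $C$'s minimal polynomial would have degree $\le d<n$). Since $k\mid kt$, any $d$ with $kt\mid q^d-1$ would force $k\mid q^d-1$, contradiction. So no such proper $d$ exists, $M$ generates $\F_{q^n}$, and its minimal polynomial over $\F_q$ has degree exactly $n$ — hence equals its characteristic polynomial, which is therefore irreducible.

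\textbf{Main obstacle and wrap-up.} The genuinely delicate point is the bookkeeping in Step~1: ensuring the chosen root $M$ has order exactly $kt$ (not a proper divisor) \emph{and} satisfies $M^t=C$ on the nose (not merely up to a scalar). Both are handled by the discrete-logarithm computation in $\F_{q^n}^*$ together with a CRT choice of exponent coprime to $t$; once $M$ has order $kt$, Step~2 is essentially automatic from the divisibility facts already established in the preceding lemmas. Finally I would note that $M$ invertible is immediate (its order $kt\ge 1$ is finite and it is a unit in the field $\F_q[C]=\F_q[M]$), completing the proof.
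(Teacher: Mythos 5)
Your proof is correct, and for Step 1 it is the same idea as the paper's: both arguments work inside the cyclic group $\F_q[C]^*\cong\F_{q^n}^*$. Your discrete-logarithm plus CRT bookkeeping (choosing the representative $a$ coprime to $t$ so the root has order exactly $kt$ and hits $C$ on the nose) is more explicit than the paper's one-sentence assertion that such an $M$ exists, but the content is identical. For Step 2, however, you take a genuinely different route. You invoke the subfield criterion -- $M$ lies in $\F_{q^d}$ iff $\mathrm{ord}(M)\mid q^d-1$ -- and reduce irreducibility to showing $kt\nmid q^d-1$ for every proper divisor $d$ of $n$, which you derive from $k\nmid q^d-1$ (itself a consequence of $C$ having minimal polynomial of degree $n$). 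This works. The paper instead uses a shorter dimension count: since $M$ is chosen inside $\F_q[C]$, we have $\F_q[M]\subseteq\F_q[C]$, and since $C=M^t\in\F_q[M]$ we also have $\F_q[C]\subseteq\F_q[M]$; hence $\F_q[M]=\F_q[C]$ has $\F_q$-dimension $n$, so the minimal polynomial of $M$ has degree $n$ and equals the characteristic polynomial. The inclusion-equality argument sidesteps all of the order-divisibility accounting; your route is correct but does more work to reach the same conclusion.
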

\begin{proof}
Since $\F_q[C]$ is a field with $q^n$ elements, its multiplicative group $\F_q[C]^*$ is cyclic. Therefore if $C$ has multiplicative order $k$, and $kt$ is a factor of $q^n-1$ for some positive integer $k$, then a $t$-th root $M$ whose multiplicative order is $kt$.

Now $\F_q[M]$ is a vector space over $\F_q$ with dimension at most $n$, but it contains $\F_q[C]$. Therefore $\F_q[M]=\F_q[C]$ and it is a field. Therefore the minimal polynomial of $M$ is irreducible and has degree $n$, which means it is also the characteristic polynomial of $M$.
\end{proof}

\begin{lem}
\label{Lem:PrimeCompanionHasGoodRoot}
Suppose $C$ is an $n\times n$ matrix over $\F_q$ whose characteristic polynomial is irreducible, and whose multiplicative order is a power of a prime $p_1$. Then for any prime $p_0\neq p_1$, $C$ has a $p_0$-th root. Furthermore, if $p_0$ divides $q^n-1$, then $C$ has a $p_0$-th root whose multiplicative order is a multiple of $p_0$.
\end{lem}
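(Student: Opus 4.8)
The plan is to transport the problem into the finite field $\F_q[C]\cong\F_{q^n}$ supplied by Lemma~\ref{Lem:CompanionOrderBasic}. Write $G=\F_q[C]^{\ast}$, which is cyclic of order $N=q^n-1$, and note that $C$ sits in $G$ as an element whose order $k=p_1^{a}$ (with $a\geq 1$) is coprime to $p_0$, since $p_0\neq p_1$. Every element of $G$ is a polynomial in $C$, hence a matrix over $\F_q$, so it suffices to exhibit the required root inside $G$.

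For the existence statement I would use the $p_0$-th power endomorphism $\phi\colon G\to G$, $x\mapsto x^{p_0}$. If $p_0\nmid N$ then $\gcd(p_0,|G|)=1$, so $\phi$ is an automorphism and $C$ has a (unique) $p_0$-th root. If $p_0\mid N$, then $\ker\phi$ has order $p_0$, so $\Im\phi$ is the unique subgroup of $G$ of index $p_0$, that is, the one of order $N/p_0$; since $k\mid N$ and $\gcd(k,p_0)=1$ we have $k\mid N/p_0$, hence $C\in\Im\phi$ and $C$ again has a $p_0$-th root.

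For the second statement, assume $p_0\mid N$, let $p_0^{v}$ be the largest power of $p_0$ dividing $N$, and decompose the cyclic group as $G=P\times H$ with $|P|=p_0^{v}$ and $|H|=N/p_0^{v}$. Since $C$ has order coprime to $p_0$ it lies in $H$; since $\gcd(p_0,|H|)=1$, raising to the $p_0$-th power is an automorphism of $H$, so there is a unique $C'\in H$ with $(C')^{p_0}=C$, and then $\mathrm{ord}(C')=\mathrm{ord}(C)=k$. Choosing any $w\in P$ of order exactly $p_0$ (possible as $|P|=p_0^{v}\geq p_0$) and setting $M=C'w$ gives $M^{p_0}=(C')^{p_0}w^{p_0}=C$ and $\mathrm{ord}(M)=\mathrm{lcm}(k,p_0)=kp_0$, so $p_0\mid\mathrm{ord}(M)$, as required. (As a bonus, $C=M^{p_0}\in\F_q[M]$ forces $\F_q[M]=\F_q[C]$, so $M$ again has irreducible characteristic polynomial, in the spirit of Lemma~\ref{Lem:CompanionRoot}, although the statement does not demand this.)

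I do not foresee a serious obstacle: once the field structure of $\F_q[C]$ is invoked, everything reduces to elementary facts about cyclic groups. The only points requiring a little care are checking that coprimality of $\mathrm{ord}(C)$ with $p_0$ is precisely what forces $\mathrm{ord}(C)$ to divide $N/p_0$ in the existence step, and confirming that the decomposition $G=P\times H$ genuinely lets the two factors of $M$ contribute independently to $\mathrm{ord}(M)$.
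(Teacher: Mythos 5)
Your proof is correct and works inside the cyclic group $\F_q[C]^*\cong\F_{q^n}^*$, which is exactly the paper's setting; the existence argument is the same. For the refined root whose order is a multiple of $p_0$, the paper simply notes that $p_0$ and $\mathrm{ord}(C)=p_1^a$ are coprime divisors of $q^n-1$, so $p_0\,p_1^a\mid q^n-1$, and then invokes Lemma~\ref{Lem:CompanionRoot} to produce a root of order $p_0 p_1^a$; you re-derive the same conclusion by hand via the decomposition $G=P\times H$ into the $p_0$-part and the $p_0$-coprime part and multiplying the unique root in $H$ by an order-$p_0$ element of $P$. The two arguments are equivalent; yours is a few lines longer but self-contained, while the paper's is shorter by reusing the earlier lemma (and also gets for free that the root again has irreducible characteristic polynomial, which you observe only in passing).
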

\begin{proof}
Note that $\F_q[C]$ is a field with $q^n$ elements. Suppose $p_0$ does not divide $q^n-1$. Then as the multiplicative group $\F_q[C]^*$ is cyclic with order $q^n-1$, every non-zero element in this field must have a unique $p_0$-th root.

Suppose $p_0$ divides $q^n-1$. Since $C$ has multiplicative order $p_1^k$ for some integer $k$, $p_1^k$ must also divide $q^n-1$. Since $p_0\neq p_1$, $p_0p_1^k$ must divide $q^n-1$. So by Lemma~\ref{Lem:CompanionRoot}, $C$ has a $p_0$-th root whose multiplicative order is a multiple of $p_0p_1^k$.
\end{proof}

\begin{lem}
\label{Lem:DoubleCompanionHasGoodRoot}
Suppose $C$ is an $n\times n$ matrix over $\F_q$ whose characteristic polynomial is irreducible, and whose multiplicative order is a power of a prime $p_1$. Then for any prime $p_0\neq p_1$ dividing $q^{2n}-1$, $\begin{bmatrix}C&\\&C\end{bmatrix}$ has a $p_0$-th root whose multiplicative order is a multiple of $p_0$.
\end{lem}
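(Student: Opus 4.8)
The plan is to realise the $2n\times 2n$ block matrix $D:=\begin{bmatrix}C&\\&C\end{bmatrix}$ as multiplication by a scalar inside the field $\F_{q^{2n}}$, so that the problem becomes a computation in a cyclic group, exactly as in Lemmas~\ref{Lem:CompanionRoot} and \ref{Lem:PrimeCompanionHasGoodRoot}. Let $f$ be the characteristic polynomial of $C$, which is irreducible of degree $n$. Since $n\mid 2n$, $f$ has a root $\alpha$ in $K:=\F_{q^{2n}}$, and $L:=\F_q(\alpha)=\F_{q^n}$. Viewing $K$ as a $2n$-dimensional $\F_q$-vector space, let $M_\alpha\in\GL_{2n}(\F_q)$ be multiplication by $\alpha$. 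Its minimal polynomial over $\F_q$ is $f$, while its characteristic polynomial has degree $2n$ and, being a power of the irreducible $f$, equals $f^2$; hence the invariant factors of $M_\alpha$ are $f,f$, so $M_\alpha$ is similar over $\F_q$ to $\begin{bmatrix}\mathrm{Comp}(f)&\\&\mathrm{Comp}(f)\end{bmatrix}$, which in turn is similar to $D$. Since the property of ``having a $p_0$-th root of multiplicative order divisible by $p_0$'' is invariant under conjugation, it suffices to exhibit such a root of $M_\alpha$; and for any $\beta\in K^*$ with $\beta^{p_0}=\alpha$, the map $M_\beta$ (multiplication by $\beta$) satisfies $M_\beta^{p_0}=M_\alpha$ and has multiplicative order $\mathrm{ord}(\beta)$ in $K^*$. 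So the task reduces to finding $\beta\in K^*$ with $\beta^{p_0}=\alpha$ and $p_0\mid\mathrm{ord}(\beta)$.

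Next I would locate $\beta$ in the cyclic group $K^*=\F_{q^{2n}}^*$, of order $N:=q^{2n}-1$. By Lemma~\ref{Lem:CompanionOrderBasic} and the hypothesis, $\mathrm{ord}(\alpha)=\mathrm{ord}(C)=p_1^{k}$ for some $k$, and $p_1^{k}\mid q^n-1\mid N$; since $p_0\mid N$ and $\gcd(p_0,p_1^{k})=1$, in fact $p_1^{k}\mid N/p_0$. Thus $\alpha$ lies in the unique subgroup of $K^*$ of order $N/p_0$, which is exactly the set of $p_0$-th powers in $K^*$; hence $p_0$-th roots of $\alpha$ exist, and they form a coset $\beta_0 K_0$ of the order-$p_0$ subgroup $K_0=\{\zeta\in K^*:\zeta^{p_0}=1\}$, so there are exactly $p_0$ of them.

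Finally I would show that one of these $p_0$ roots has order divisible by $p_0$. For any root $\beta$, from $\mathrm{ord}(\beta^{p_0})=\mathrm{ord}(\beta)/\gcd(\mathrm{ord}(\beta),p_0)=p_1^{k}$ together with $p_0\neq p_1$ one deduces that $\mathrm{ord}(\beta)$ is either $p_1^{k}$ (equivalently $\beta$ lies in the order-$p_1^{k}$ subgroup $H=\{x\in K^*:x^{p_1^{k}}=1\}$) or $p_0 p_1^{k}$ (equivalently $p_0\mid\mathrm{ord}(\beta)$). If all $p_0$ roots lay in $H$, then, as they form the coset $\beta_0 K_0$ with $\beta_0\in H$, we would get $K_0\subseteq H$; but $\gcd(|K_0|,|H|)=\gcd(p_0,p_1^{k})=1$ while $|K_0|=p_0>1$, a contradiction. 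So some root $\beta$ has $\mathrm{ord}(\beta)=p_0 p_1^{k}$, and $M_\beta$, transported back through the similarity, is the required $p_0$-th root of $D$. The only genuine obstacle is the very first step: one must model $D$ using the degree-$2n$ extension $\F_{q^{2n}}$ rather than the naive $\F_{q^n}\times\F_{q^n}$, since a coordinatewise root would live in $\F_{q^n}$ and could not have order divisible by $p_0$ when $p_0\nmid q^n-1$; once this extra room is in place, the rest is the same cyclic-group bookkeeping used in Lemma~\ref{Lem:PrimeCompanionHasGoodRoot}.
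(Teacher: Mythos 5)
Your proposal is correct, but it takes a genuinely different route from the paper. The paper splits into two cases: when $p_0\mid q^n-1$ it takes a $p_0$-th root $C'$ of $C$ from Lemma~\ref{Lem:PrimeCompanionHasGoodRoot} and uses $\begin{bmatrix}C'&\\&C'\end{bmatrix}$; when $p_0\nmid q^n-1$ it instead finds a degree-$2$ irreducible factor $x^2+g_1(C)x+g_0(C)$ of $x^{p_0}-C$ over the field $\F_q[C]\cong\F_{q^n}$ and uses the block companion matrix $\begin{bmatrix}0&-g_0(C)\\I&-g_1(C)\end{bmatrix}$. You instead realize $\begin{bmatrix}C&\\&C\end{bmatrix}$ uniformly as the $\F_q$-linear map ``multiplication by $\alpha$'' on $\F_{q^{2n}}$ (correctly identifying its invariant factors as $f,f$), after which the whole problem is pure cyclic-group bookkeeping in $\F_{q^{2n}}^*$: $\alpha$ has order $p_1^k\mid N/p_0$, so the $p_0$-th roots of $\alpha$ form a coset $\beta_0K_0$, and since $\gcd(|K_0|,|H|)=1$ not all of them can sit in the unique subgroup $H$ of order $p_1^k$. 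Your version avoids the case split and never touches polynomials over $\F_q[C]$, which makes the argument slightly cleaner and more uniform; the paper's version has the minor virtue of producing the root matrix explicitly in block form over $\F_q[C]$ and of directly recycling Lemma~\ref{Lem:PrimeCompanionHasGoodRoot}. Either proof is complete and correct.
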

\begin{proof}
If $p_0$ divides $q^n-1$ as well, then by Lemma~\ref{Lem:PrimeCompanionHasGoodRoot}, $C$ has a $p_0$-th root $C'$ whose multiplicative order is a multiple of $p_0$. Then $\begin{bmatrix}C'&\\&C'\end{bmatrix}$ is the desired $p_0$-th root of $\begin{bmatrix}C&\\&C\end{bmatrix}$.

Now suppose $p_0$ does not divide $q^n-1$. Note that $\F_q[C]$ is a field with $q^n$ elements. Since $p_0\neq p_1$ is a factor of $q^{2n}-1$, we can find an irreducible factor of the polynomial $f(x)=x^{p_0}-C$ of degree $2$ over the field $\F_q[C]$. Let this polynomial be $g(x)=x^2+g_1(C)x+g_0(C)$ for some polynomials $g_1(x),g_0(x)$ over $\F_q$. Then $\begin{bmatrix}0&-g_0(C)\\I&-g_1(C)\end{bmatrix}$ is the desired $p_0$-th root of $\begin{bmatrix}C&\\&C\end{bmatrix}$.
\end{proof}

\subsection{Generalized Jordan canonical form over finite fields}

We have the following well-known results for matrices over any field.

\begin{thm}
For any matrix $A$ over a field $K$, $A$ is similar to a block diagonal matrix $\begin{bmatrix}J_1&&\\&\ddots&\\&&J_t\end{bmatrix}$ over $K$, such that each diagonal block is $J_i=\begin{bmatrix}C_i&N&&\\ &\ddots&\ddots&\\&&\ddots&N\\&&&C_i\end{bmatrix}$ for a companion matrix $C_i$ of some irreducible polynomial over $K$, and $N=\begin{bmatrix}0&0&\dots&0\\\vdots&\vdots&\iddots&\vdots\\ 0&0&\dots&0\\1&0&\dots&0\end{bmatrix}$.
\end{thm}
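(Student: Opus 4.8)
The plan is to deduce this from the structure theorem for finitely generated modules over the PID $K[x]$. Given an $n\times n$ matrix $A$ over $K$, I would make $V=K^n$ into a $K[x]$-module by letting $x$ act as $A$. Then $V$ is finitely generated over $K[x]$ and is torsion, since it is annihilated by the characteristic polynomial of $A$, so the primary decomposition form of the structure theorem provides a $K[x]$-module isomorphism $V\cong\bigoplus_{i=1}^{t}K[x]/(p_i(x)^{e_i})$, where each $p_i$ is a monic irreducible polynomial over $K$ (not necessarily distinct for different $i$) and each $e_i\geq 1$. Choosing a $K$-basis of $V$ adapted to this decomposition displays the matrix of $A$ as a block diagonal matrix whose $i$-th block is the matrix of ``multiplication by $x$'' on the cyclic primary module $K[x]/(p_i(x)^{e_i})$. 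Thus the theorem reduces to analysing one such module.

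Fix an irreducible $p(x)=x^{d}+c_{d-1}x^{d-1}+\cdots+c_{0}$ over $K$ and an integer $e\geq 1$, and let $M=K[x]/(p(x)^{e})$, a $K$-vector space of dimension $de$. I would use the $de$ elements $x^{j}p(x)^{k}$ with $0\leq j\leq d-1$ and $0\leq k\leq e-1$, and first check they form a $K$-basis: a dependence $\sum_{k}q_{k}(x)p(x)^{k}\equiv 0\pmod{p(x)^{e}}$ with each $\deg q_{k}<d$ forces, by looking at the least $k$ for which $q_{k}\neq 0$, a nonzero polynomial divisible by exactly that power of $p$ and hence by no higher one, contradicting divisibility by $p^{e}$. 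Next I would compute the action of $x$ on this basis: for $j<d-1$ it sends $x^{j}p^{k}\mapsto x^{j+1}p^{k}$, while $x\cdot x^{d-1}p^{k}=x^{d}p^{k}=p^{k+1}-\sum_{i=0}^{d-1}c_{i}x^{i}p^{k}$, where the term $p^{k+1}$ vanishes precisely when $k=e-1$. Ordering the basis by $k$ and then by $j$, this reads off as a block bidiagonal matrix whose diagonal blocks are all equal to a companion matrix of $p$ and whose off-diagonal blocks are each a single-entry matrix; reversing the order of the basis vectors rotates the whole matrix by $180^{\circ}$, turning the off-diagonal single-entry blocks into the matrix $N$ of the statement, moving them to the super-diagonal, and converting the companion block to the companion-matrix convention used here. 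Assembling these blocks over the summands $K[x]/(p_i(x)^{e_i})$ then yields the claimed global form.

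The only genuine work is the bookkeeping inside the single-module computation: verifying the basis and tracking indices so that the off-diagonal blocks emerge as $N$ rather than as its transpose or a reflection, and checking the orientation conventions for $C_i$ and $N$; the ingredients about $K[x]$ being a PID and about $V$ being finitely generated torsion, together with the primary decomposition itself, are entirely standard. One could alternatively obtain the primary decomposition by first invoking the rational canonical form $V\cong\bigoplus_j K[x]/(f_j(x))$ with $f_1\mid f_2\mid\cdots$ and then applying the Chinese Remainder Theorem to split each $K[x]/(f_j)$ into its primary components, after which the per-module analysis is identical.
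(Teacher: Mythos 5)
The paper does not actually prove this theorem; it states it as classical and refers the reader to Perlis for details, so there is no in-paper argument to compare against. Your proof, via the structure theorem for finitely generated torsion modules over the PID $K[x]$, is the standard one and is correct: making $V=K^{n}$ into a $K[x]$-module with $x$ acting as $A$, the primary decomposition $V\cong\bigoplus_{i}K[x]/(p_{i}^{e_{i}})$ reduces the problem to a single cyclic primary module $K[x]/(p^{e})$; your verification that $\{x^{j}p^{k}:0\le j<d,\ 0\le k<e\}$ is a $K$-basis is sound; and the computation of multiplication by $x$ in that basis does produce the block-bidiagonal shape with companion blocks on the diagonal and single-entry blocks off the diagonal.

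The one place worth a little more care than you give it is the final convention-matching. With the ordering ``increasing $k$, increasing $j$'' you describe, the single-entry block sits on the \emph{sub}-diagonal in position $(1,d)$ (that is $N^{T}$), and the diagonal blocks are companion matrices with the coefficients of $p$ in the \emph{last column}. Applying the $180^\circ$ reversal does, as you say, move those blocks to the super-diagonal and turn $N^{T}$ into $N$ -- but it simultaneously rotates each companion block into the opposite (``top'') companion form, with the coefficients in the first column. Since the theorem only asserts ``a companion matrix of some irreducible polynomial,'' this is still a legitimate instance of the statement, so your proof is complete. Just be aware that if one insists on the particular companion convention the paper uses elsewhere (e.g.\ $\begin{bmatrix}\bs e_{2}&\dots&\bs e_{n}&\bs v\end{bmatrix}$ in Lemma~\ref{Lem:CompanionComm}), no mere reordering of the $\{x^{j}p^{k}\}$ basis will produce simultaneously that companion form \emph{and} $N$ on the super-diagonal; one would need a genuinely different similarity, though the two block forms are of course similar since both realize multiplication by $x$ on $K[x]/(p^{e})$. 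This is exactly the kind of bookkeeping you flag, and it does not threaten the argument, but it is not quite as automatic as a single basis reversal.
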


This block diagonal matrix is called the generalized Jordan canonical form of $A$ over the field $K$, and it is unique up to permutations of the diagonal blocks. For more detail, see for example \cite{Perlis52}.

Since the focus of this paper is on matrices over finite fields, we can actually obtain a better result, by combining the following two statements.

\begin{prop}
Every irreducible polynomial over a finite field is separable.
\end{prop}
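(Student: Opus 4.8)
The plan is to combine the standard derivative criterion for repeated roots with the one feature that distinguishes finite fields in positive characteristic: the surjectivity of the Frobenius map. Let $f(x)$ be irreducible over $\F_q$, where $q$ is a power of the prime $p$, and suppose toward a contradiction that $f$ has a repeated root in its splitting field. First I would invoke the fact that a polynomial has a repeated root exactly when it shares a nonconstant common factor with its formal derivative $f'(x)$. Since $f$ is irreducible, any such common factor is an associate of $f$ itself; but $\deg f' < \deg f$, so the only way this can happen is $f'(x) = 0$ identically.

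Next I would unpack what $f' = 0$ means in characteristic $p$. Writing $f(x) = \sum_i a_i x^i$, the vanishing of $f'(x) = \sum_i i a_i x^{i-1}$ forces $a_i = 0$ for every $i$ not divisible by $p$, so $f(x) = \sum_j a_{pj} x^{pj} = g(x^p)$ for the polynomial $g(y) = \sum_j a_{pj} y^j$ over $\F_q$, with $\deg g \geq 1$ because $\deg f \geq 1$. The decisive step — and the only place finiteness is used — is that $c \mapsto c^p$ is an automorphism of $\F_q$, hence surjective, so each coefficient may be written $a_{pj} = b_j^{\,p}$ with $b_j \in \F_q$. Then, using $(\alpha+\beta)^p = \alpha^p + \beta^p$ in characteristic $p$,
$$f(x) = \sum_j b_j^{\,p} (x^j)^p = \Bigl(\sum_j b_j x^j\Bigr)^{p} = h(x)^p,$$
where $h$ is a nonconstant polynomial over $\F_q$. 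This displays $f$ as a proper power, contradicting irreducibility, so $f$ must be separable.

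I do not anticipate a genuine obstacle: the argument is short and every step is routine. The only point worth flagging for the reader is that the proof hinges entirely on surjectivity of Frobenius on $\F_q$ (equivalently, that finite fields are \emph{perfect}); this is precisely what fails over an infinite field of characteristic $p$ such as $\F_p(t)$, where $x^p - t$ is irreducible yet inseparable, so the hypothesis that the base field is finite cannot be dropped.
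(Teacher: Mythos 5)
Your proof is correct, and it is precisely the standard textbook argument that the paper defers to by citing \cite{DF91} rather than proving the claim itself: derivative criterion, $f'=0$ forces $f(x)=g(x^p)$, and surjectivity of Frobenius on a finite field turns $f$ into a $p$-th power, contradicting irreducibility. Your closing remark correctly isolates perfectness as the essential hypothesis and gives the right counterexample over $\F_p(t)$.
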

\begin{proof}
Commonly found in abstract algebra textbooks, say \cite{DF91}.
\end{proof}

\begin{prop}[\cite{Robinson70}]
Let $C$ be a companion matrix of an irreducible polynomial $f(x)$ over a field $K$. Again let $N=\begin{bmatrix}0&0&\dots&0\\\vdots&\vdots&\dots&\vdots\\ 0&0&\dots&0\\1&0&\dots&0\end{bmatrix}$. Then $\begin{bmatrix}C&N&&\\ &\ddots&\ddots&\\&&\ddots&N\\&&&C\end{bmatrix}$ and $\begin{bmatrix}C&I&&\\ &\ddots&\ddots&\\&&\ddots&I\\&&&C\end{bmatrix}$ are similar if and only if $f(x)$ is separable.
\end{prop}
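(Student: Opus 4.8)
The plan is to rephrase the statement in terms of rational canonical forms, exploiting the fact that both competing matrices are nilpotent perturbations of the block‑scalar matrix with $C$'s down the diagonal. Write $A_N$ and $A_I$ for the two matrices in the statement, with $k$ diagonal blocks (the case $k=1$ is degenerate — both matrices are literally $C$ — so assume $k\geq 2$), and set $d=\deg f$. Both are block upper triangular with $C$ on every diagonal block, so both have characteristic polynomial $f(x)^k$, of degree $kd$. Since $f$ is irreducible, a matrix with characteristic polynomial $f^k$ is similar to the companion matrix of $f^k$ if and only if it is cyclic, i.e.\ if and only if its minimal polynomial also equals $f^k$ (equivalently, has degree $kd$). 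Hence the proposition follows once I prove: (i) the minimal polynomial of $A_N$ is $f^k$ for every $f$; and (ii) the minimal polynomial of $A_I$ is $f^k$ exactly when $f$ is separable. Indeed, (i) makes $A_N$ similar to the companion matrix of $f^k$ unconditionally, so $A_N\sim A_I$ iff $A_I$ is also similar to that companion matrix, iff, by (ii), $f$ is separable.

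Claim (i) is part of the theory of the generalized Jordan form recalled above: $A_N$ is a single block of that canonical form, so by uniqueness it is the generalized Jordan form of anything similar to it; matching it against the companion matrix of $f^k$ — whose generalized Jordan form is built from $N$‑blocks over the companion matrix of the unique irreducible factor $f$, with block sizes forming a partition of $k$ whose largest part equals the exponent in the minimal polynomial — forces that partition to be $(k)$, so $A_N$ is similar to the companion matrix of $f^k$ and its minimal polynomial is $f^k$. (Alternatively one checks directly that $A_N$ is a cyclic matrix.)

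For claim (ii) I would write $A_I=I_k\otimes C+U_k\otimes I_d$, where $U_k$ is the nilpotent matrix with $1$'s on the first superdiagonal, so $U_k^k=0$ and the two summands commute. The Hasse–Taylor identity — which holds over any field, the Hasse derivatives $D_j$ being defined with integer coefficients, and which replaces the ordinary Taylor expansion that is useless in characteristic $p$ — gives, for every polynomial $g$, the equality $g(A_I)=\sum_{j=0}^{k-1}U_k^{\,j}\otimes D_j(g)(C)$, a block matrix carrying $D_j(g)(C)$ on its $j$-th block‑superdiagonal; hence $g(A_I)=0$ iff $D_j(g)(C)=0$ for all $0\leq j\leq k-1$. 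Apply this with $g=f^m$ and expand by the Leibniz rule $D_j(f^m)=\sum_{a_1+\dots+a_m=j}\prod_\ell D_{a_\ell}(f)$: since $D_0(f)(C)=f(C)=0$, a summand can be nonzero at $C$ only if every $a_\ell\geq 1$, so $D_j(f^m)(C)=0$ whenever $j<m$, while $D_m(f^m)(C)=D_1(f)(C)^m=f'(C)^m$. Identifying $K[C]$ with $\F=K[x]/(f)$ so that $C$ corresponds to a root $\alpha$ of $f$, we have $f'(C)\neq 0$ iff $f'(\alpha)\neq 0$ iff $\alpha$ is a simple root iff $f$ is separable (an irreducible polynomial being separable precisely when it has no repeated root). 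Thus if $f$ is separable, $D_m(f^m)(C)=f'(C)^m\neq 0$ for every $m\leq k-1$, so $f^m(A_I)\neq 0$ and the minimal polynomial of $A_I$ is $f^k$; and if $f$ is inseparable then $f'=0$ identically, so in the Leibniz expansion every surviving summand needs all $a_\ell\geq 2$, forcing $j\geq 2m$, which for $m=k-1$ contradicts $j\leq k-1$ (as $k\geq 2$) — hence $f^{k-1}(A_I)=0$ and the minimal polynomial of $A_I$ has degree $<kd$.

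The main obstacle is exactly the positive‑characteristic bookkeeping in (ii): the comfortable picture "a polynomial of a Jordan block is its Taylor expansion in derivatives" collapses, so one must carefully justify the Hasse‑derivative formula for $g(A_I)$ (e.g.\ by expanding $(I_k\otimes C+U_k\otimes I_d)^{\,r}$ directly) and track how the vanishing $f'=0$ drops the nilpotency index of $f(A_I)$ below $k$. The remaining ingredients — the reduction to minimal polynomials, the identification of $A_N$ with the companion matrix of $f^k$, and the dictionary between $f'(C)\neq 0$ and separability — are routine.
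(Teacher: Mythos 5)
Your proof is correct, but there is no proof in the paper to compare it against: the paper simply cites Robinson \cite{Robinson70} for this proposition without reproducing an argument. So let me evaluate your write-up on its own terms. The reduction to minimal polynomials is sound: both block matrices have characteristic polynomial $f^k$ by block triangularity, and a matrix with that characteristic polynomial is similar to the companion matrix of $f^k$ exactly when its minimal polynomial is also $f^k$; so the proposition reduces to your claims (i) and (ii). Claim (i) is handled correctly once one unpacks it carefully: you only need the \emph{easy} direction (the minimal polynomial of a hypercompanion block of $j$ copies of $C$ divides $f^j$, by strict block-triangularity of $f$ applied to it) to force the generalized Jordan form of the companion matrix of $f^k$ to consist of a single block, after which similarity transfers the minimal polynomial back to $A_N$; so the argument is not circular, though the direct cyclicity check you mention is a perfectly good substitute. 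Claim (ii) is the real content, and your Hasse-derivative identity $g(A_I)=\sum_{j=0}^{k-1}U_k^{\,j}\otimes D_j(g)(C)$ is the right tool: it follows immediately from the binomial expansion of $(I_k\otimes C+U_k\otimes I_d)^r$, the block-superdiagonal decomposition makes the vanishing criterion $D_j(g)(C)=0$ for $0\le j\le k-1$ transparent, and the Leibniz expansion of $D_j(f^m)$ at $C$ correctly isolates $f'(C)^m$ as the leading obstruction, with $f'(C)\neq 0$ in the field $K[C]$ being precisely the separability condition. The inseparable case is also handled correctly: $f'\equiv 0$ pushes the surviving Leibniz terms to $j\geq 2m$, killing $f^{k-1}(A_I)$ for $k\geq 2$. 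This is a clean, characteristic-free, self-contained argument that improves on the paper, which merely points the reader to the literature.
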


Combining these two statements, we have the follower prettier version of generalized Jordan canonical form over finite fields.

\begin{thm}
For any matrix $A$ over a finite field $K$, $A$ is similar to a block diagonal matrix $\begin{bmatrix}J_1&&\\&\ddots&\\&&J_t\end{bmatrix}$ over $K$, such that each diagonal block is $J_i=\begin{bmatrix}C_i&I&&\\ &\ddots&\ddots&\\&&\ddots&I\\&&&C_i\end{bmatrix}$ for a companion matrix $C_i$ of some irreducible polynomial over $K$.
\end{thm}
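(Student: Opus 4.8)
The plan is to deduce the statement directly by combining the three results immediately preceding it: the general-field form of the generalized Jordan canonical form, the separability of every irreducible polynomial over a finite field, and Robinson's proposition identifying the $N$-superdiagonal and $I$-superdiagonal block forms.

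First I would apply the general-field theorem: over $K$, the matrix $A$ is similar to a block diagonal matrix whose diagonal blocks are $J_i'=\begin{bmatrix}C_i&N&&\\&\ddots&\ddots&\\&&\ddots&N\\&&&C_i\end{bmatrix}$, where $C_i$ is the companion matrix of an irreducible polynomial $f_i(x)$ over $K$ and each $C_i$ appears some number $m_i\geq 1$ of times. Because $K$ is finite, the separability proposition guarantees that each $f_i$ is separable. Robinson's proposition then applies to each index $i$ individually: since $f_i$ is separable, the size-$m_i$ block $J_i'$ with $N$'s on the superdiagonal is similar over $K$ to the size-$m_i$ block $J_i=\begin{bmatrix}C_i&I&&\\&\ddots&\ddots&\\&&\ddots&I\\&&&C_i\end{bmatrix}$ with $I$'s on the superdiagonal; fix an invertible $P_i$ with $P_i J_i' P_i^{-1}=J_i$.

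Next I would assemble $P$ as the block diagonal matrix with diagonal blocks $P_1,\dots,P_t$ matching the sizes of the blocks $J_1',\dots,J_t'$. Then $P$ is invertible, and conjugation by $P$ sends the block diagonal matrix with blocks $J_i'$ to the block diagonal matrix with blocks $P_i J_i' P_i^{-1}=J_i$, since a block diagonal conjugating matrix acts on each diagonal block independently. Composing this similarity with the one produced in the first step shows that $A$ is similar over $K$ to the block diagonal matrix with diagonal blocks $J_1,\dots,J_t$, which is precisely the asserted form. If one also wants uniqueness up to permutation of the diagonal blocks, it transfers from the uniqueness in the general-field version through the same block-by-block conjugations, as each $P_i$ preserves the $i$-th block.

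The argument has essentially no obstacle; it is a bookkeeping assembly of results already quoted. The one point deserving an explicit remark is that Robinson's proposition, as stated, compares a single block built from one companion matrix, so it must be invoked once per block $J_i'$ and the resulting conjugations $P_i$ combined into a single block diagonal $P$; the observation making this harmless is that a block diagonal change of basis respects the block structure, so replacing each $N$-block by the corresponding $I$-block can be carried out simultaneously without interference.
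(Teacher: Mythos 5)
Your proof is correct and follows essentially the same route the paper intends: combine the general-field generalized Jordan form, separability of irreducible polynomials over a finite field, and Robinson's proposition, applied block by block and assembled via a block diagonal change of basis. The paper itself gives no detailed proof here and simply states that the theorem follows by combining the two preceding propositions, which is precisely what you have spelled out.
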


This version of canonical form is much easier to calculate.
We establish some easy results here for later use.

\begin{lem}
\label{Lem:CompanionDecomp}
If $A$ is an $n\times n$ matrix whose minimal polynomial is an irreducible polynomial over $\F_q$ of degree $n$, and $B = A^k$ for some positive integer $k$. Then the generalized Jordan canonical form of $B$ is block diagonal with identical diagonal blocks, each block is a companion matrix to the same irreducible polynomial.
\end{lem}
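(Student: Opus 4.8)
The plan is to use the hypothesis to turn $A$ into a cyclic matrix generating a field, and then read off the structure of $B=A^k$ from the generalized Jordan canonical form. First I would note that since the minimal polynomial of $A$ is irreducible of degree $n$, it equals the characteristic polynomial, so by Lemma~\ref{Lem:CompanionOrderBasic} the algebra $\F_q[A]$ is a field isomorphic to $\F_{q^n}$, and $B=A^k\in\F_q[A]$. By Lemma~\ref{Lem:CompanionPower}, the minimal polynomial $p(x)$ of $B$ is irreducible; write $d=\deg p$ (one also has $d\mid n$ because $\F_q[B]$ is a subfield of $\F_q[A]$, though this is not strictly needed for the statement).

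Next I would pass to the generalized Jordan canonical form of $B$: it is similar to a block diagonal matrix whose $i$-th block is $J_i=\begin{bmatrix}C_i&I&&\\&\ddots&\ddots&\\&&\ddots&I\\&&&C_i\end{bmatrix}$ with $\ell_i$ copies of a companion matrix $C_i$ of some irreducible polynomial $p_i(x)$ on its diagonal. The minimal polynomial of $J_i$ is $p_i(x)^{\ell_i}$, and the minimal polynomial of $B$ is the least common multiple $\operatorname{lcm}_i p_i(x)^{\ell_i}$, which equals $p(x)$. Since $p(x)$ is irreducible it is in particular squarefree, which forces every $\ell_i=1$; and since each nonconstant $p_i(x)$ divides the irreducible polynomial $p(x)$, we get $p_i(x)=p(x)$ for all $i$. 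Hence every diagonal block equals the companion matrix $C$ of $p(x)$, and the canonical form is the block diagonal matrix with $n/d$ identical blocks $C$, as claimed.

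The computations are essentially bookkeeping with the cited lemmas; the only step requiring a little care is the last one, where two distinct features of the generalized Jordan form must be combined — squarefreeness of the minimal polynomial eliminates the nilpotent superdiagonal ``$I$'' parts, and irreducibility of the minimal polynomial forces all the remaining companion blocks to coincide. I expect that to be the crux of the argument; everything before it is immediate from Lemma~\ref{Lem:CompanionOrderBasic} and Lemma~\ref{Lem:CompanionPower}.
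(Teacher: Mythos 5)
Your proof is correct and takes essentially the same route as the paper. The paper's own proof is the one-line remark that the statement is Lemma~\ref{Lem:CompanionPower} interpreted through the generalized Jordan canonical form, and your argument is exactly the careful unpacking of that remark: irreducibility of the minimal polynomial of $B=A^k$ (from Lemma~\ref{Lem:CompanionPower}) simultaneously kills the nilpotent superdiagonal parts (squarefreeness forces each $\ell_i=1$) and forces all companion blocks to be the same (each $p_i$ divides the irreducible $p$).
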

\begin{proof}
This is simply Lemma~\ref{Lem:CompanionPower} in terms of generalized Jordan canonical form.
%Note that polynomials of $A$ form a field isomorphic to $\F_{q^n}$. If $A^k$ is not contained in any proper subfield, then $\F_q[A^k]$ is still the whole field. So it is easy to see that the minimal polynomial of $A^k$ is still an irreducible polynomial over $\F_q$ of degree $n$. 
%
%Suppose $\F_q[A^k]$ is a proper subfield. Say $B = A^k$ has minimal polynomial $g(x)$, with strictly smaller degree than the minimal polynomial $f(x)$ of $A$.
%
%Then $g(x^k)$ contains $f(x)$ as an irreducible factor. Then for an irreducible factor $h(x)$ of $g(x)$, $h(x^k)$ must contain $f(x)$ as a factor, and we would have $h(B) = h(A^k) = 0$ because $f(A) = 0$. Since $g(x)$ is the minimal polynomial of $B$, we must have $g(x)=h(x)$. Hence $g(x)$ is irreducible, and all diagonal blocks of the generalized Jordan canonical form of $B$ are the companion matrix for $g(x)$.
\end{proof}

\begin{lem}
\label{Lem:IrreducibleRootIrreducible}
If $A$ is an $n\times n$ matrix whose characteristic polynomial is an irreducible or has an irreducible factor of degree $n-1$, and $B^k = A$ for some positive integer $k$. Then the characteristic polynomial of $B$ is irreducible or has an irreducible factor of degree $n-1$, respectively.
\end{lem}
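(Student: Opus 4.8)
The plan is to reduce everything to statements about the characteristic polynomial of a matrix versus that of its powers, using the multiplicative structure of the $\F_q$-algebra generated by a matrix. First consider the case where the characteristic polynomial of $A$ is irreducible of degree $n$; here $B^k = A$ with $B$ an $n\times n$ matrix over $\F_q$. I would argue that $B$ commutes with $A$ (since $A = B^k$ is a polynomial in $B$), so $B$ lies in the centralizer of $A$. But by Lemma~\ref{Lem:CompanionOrderBasic}, $\F_q[A]$ is a field isomorphic to $\F_{q^n}$, and since $A$ has an irreducible characteristic polynomial of degree $n$ its centralizer in $M_n(\F_q)$ is exactly $\F_q[A]$ (the centralizer of a cyclic matrix is the polynomial algebra it generates, which here is already all of the centralizer because $\dim \F_q[A] = n$ and the centralizer of a non-derogatory matrix has dimension $n$). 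Hence $B \in \F_q[A]$, so $\F_q[B] \subseteq \F_q[A]$; conversely $A = B^k \in \F_q[B]$ gives $\F_q[A]\subseteq \F_q[B]$, so $\F_q[B] = \F_q[A]$ is a field with $q^n$ elements. Therefore the minimal polynomial of $B$ is irreducible of degree $n$, and since $B$ is $n\times n$ this minimal polynomial equals the characteristic polynomial. (This is essentially the argument already used in Lemma~\ref{Lem:CompanionRoot}, just without the hypothesis relating orders.)

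For the second case, suppose the characteristic polynomial of $A$ has an irreducible factor $p(x)$ of degree $n-1$, say $\chi_A(x) = (x-\lambda)\,p(x)$ with $\lambda\in\F_q$ (the remaining linear factor is forced to lie in $\F_q$ since $\deg \chi_A = n$ and $\deg p = n-1$). The idea is to split off the eigenvalue $\lambda$. Over a sufficiently large extension, $B$ preserves the generalized eigenspace decomposition of $A$ because $B$ commutes with $A$; concretely, $\F_q^n$ decomposes $A$-invariantly as $V_\lambda \oplus W$ where $V_\lambda = \Ker(A-\lambda I)$ is $1$-dimensional (the $p(x)$-part, being a field extension, is semisimple, and the $(x-\lambda)$-part is also just a $1$-dimensional eigenspace since the minimal polynomial's $\lambda$-part has degree $1$ — here I should note $A$ is diagonalizable away from possible repeated structure, but actually $\chi_A = (x-\lambda)p(x)$ with $p$ irreducible of degree $n-1$ forces $A$ to be cyclic, hence non-derogatory, so both pieces are as claimed). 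Since $B$ commutes with $A$, $B$ preserves $V_\lambda$ and $W$. On $W$, $A$ restricts to a matrix with irreducible characteristic polynomial $p(x)$ of degree $n-1$, and $B|_W$ is a $k$-th root of it, so by the first case $B|_W$ has irreducible characteristic polynomial of degree $n-1$. On the $1$-dimensional $V_\lambda$, $B|_{V_\lambda}$ contributes a single linear factor $(x-\mu)$ with $\mu\in\F_q$. Hence $\chi_B(x) = (x-\mu)\cdot(\text{irreducible of degree } n-1)$, as desired.

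The main obstacle I anticipate is making the decomposition argument in the second case fully rigorous over $\F_q$ rather than over an extension — specifically, verifying that the primary decomposition of $\F_q^n$ with respect to $A$ is defined over $\F_q$ (it is, since the factors $(x-\lambda)$ and $p(x)$ already live in $\F_q[x]$ and are coprime), and that $B$, commuting with $A$, must respect this decomposition (standard: each primary component is $\Ker$ of a polynomial in $A$, which $B$ preserves). A secondary subtlety is confirming that $A$ is non-derogatory in the second case so that the restriction $A|_W$ genuinely has $p(x)$ as both its minimal and characteristic polynomial; this follows because $\chi_A = (x-\lambda)p(x)$ is squarefree, forcing the minimal polynomial to equal the characteristic polynomial. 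Once these structural points are in place, the conclusion in both cases is immediate from the first (irreducible) case, which is itself a short centralizer-is-a-field argument. I would also remark that the statement is what powers Lemma~\ref{Lem:CompanionDecomp}-style reasoning in the sections on paths to pivot matrices, so stating it cleanly here is worthwhile.
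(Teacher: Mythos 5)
Your proof is correct (for $n\geq 3$, which is all the paper ever needs), but it takes a genuinely different route from the paper's. The paper proves the \emph{contrapositive}: if $\chi_B$ is reducible then the generalized Jordan canonical form of $B$ is block upper triangular with companion blocks $C_1,\dots,C_t$ ($t\geq 2$) on the diagonal, so $A=B^k$ is block upper triangular with $C_1^k,\dots,C_t^k$, hence $\chi_A=\prod\chi_{C_i^k}$ is reducible; the degree-$(n-1)$ clause is dispatched by noting that each $C_i$ has size at most $n-2$ (and if $t=1$ then $\chi_{C_1^k}$ is a power of an irreducible of degree $n/s$, which cannot be $n-1$). Your argument instead goes \emph{forward}: since $A$ is non-derogatory in both cases, its centralizer is $\F_q[A]$, so $B=g(A)$ for some polynomial $g$; in the irreducible case this forces $\F_q[B]=\F_q[A]\cong\F_{q^n}$, and in the degree-$(n-1)$ case $B$ respects the primary decomposition $\F_q^n=\Ker(A-\lambda I)\oplus\Ker p(A)$ so you can invoke the first case on $B|_W$. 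What your approach buys is conceptual clarity (it runs parallel to the centralizer arguments the paper uses in Propositions~\ref{prop:ConnObstIrred} and \ref{prop:ConnObstIrred2}, and you could cite Lemma~\ref{Lem:CompanionComm} in place of the general non-derogatory-centralizer fact); what the paper's contrapositive buys is brevity, since it never needs to set up the primary decomposition or verify invariance of subspaces.

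One minor caveat worth flagging: your observation that $\chi_A=(x-\lambda)p(x)$ is squarefree requires $n\geq 3$ (so that $p$ has degree $\geq 2$ and cannot share the root $\lambda$). For $n=2$ the lemma itself is false: over $\F_2$, $A=I_2$ has $\chi_A=(x-1)^2$ with a degree-$1$ irreducible factor, yet a matrix $B\neq I_2$ with $B^3=I_2$ has $\chi_B=x^2+x+1$ irreducible. The paper's ``similarly'' clause has the same blind spot, but since the lemma is only invoked when $n\geq 6$, neither proof is affected in practice; you might simply add ``$n\geq 3$'' to the hypotheses.
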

\begin{proof}
We prove the contrapositives. If $B$ has reducible characteristic polynomial, then the generalized Jordan canonical form of $B$ will be $\begin{bmatrix}C_1&*&*\\&\ddots&*\\&&C_t\end{bmatrix}$ for some companion matrices $C_1,\dots,C_t$ and $t\geq 2$. Therefore $A$ will be similar to $\begin{bmatrix}C_1^k&*&*\\&\ddots&*\\&&C_t^k\end{bmatrix}$, and thus its characteristic polynomial is also reducible.

The other statement about $n-1$ can be proven similarly.
\end{proof}

\section{Connections between Pivot Matrices}
\label{Sec:PivotAllConnect}

In this section, we shall show that all (Jordan) pivot matrices are connected by a short path in the projectively reduced power graph of $\GL_n(\F_q)$, when $n$ is not too small. In particular, we should establish the following.

\begin{prop}
\label{Prop:PivotDistanceAllCases}
For $q\neq 2$ and $n\geq 3$, in the projectively reduced power graph of $\GL_n(\F_q)$, all pivot matrices has distance at most $8$ to each other. 

For $q\neq 2$ and $n\geq 3$ or for $q=2$ and $n\geq 4$, in the projectively reduced power graph of $\GL_n(\F_q)$, all Jordan pivot matrices has distance at most $8$ to each other.
\end{prop}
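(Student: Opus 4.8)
The plan is to work throughout in the projectively reduced power graph of $\GL_n(\F_q)$, where an edge between $A$ and $B$ means $B=\lambda A^{m}$ or $A=\lambda B^{m}$ for some scalar $\lambda$ and integer $m$; in particular every neighbour of $A$ lies in $C_{\GL_n}(A)$. The value of (Jordan) pivots is that this centralizer is enormous — for a pivot $\mathrm{diag}(xI_k,yI_{n-k})$ it is $\cong\GL_k(\F_q)\times\GL_{n-k}(\F_q)$ — so such a matrix has a root $\mathrm{diag}(R_1,R_2)$ for \emph{every} compatible pair $R_1^{m}=xI_k$, $R_2^{m}=yI_{n-k}$, giving ample freedom to build useful neighbours. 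The overall scheme is: fix a small family $\mathcal H$ of ``hub'' matrices with irreducible characteristic polynomial of degree $n$ (Singer-type elements, whose centralizers are the cyclic groups $\F_{q^n}^{*}$); show every pivot is within distance $4$ of $\mathcal H$; show the elements of $\mathcal H$ are mutually so tightly linked that this costs nothing; and conclude by the triangle inequality. The analogous scheme, with ``irreducible characteristic polynomial of degree $n$'' replaced by ``a single large unipotent Jordan block'', handles the Jordan pivot statement.

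\emph{Reductions and main steps.} A pivot whose eigenspaces occupy the first $k$ and last $n-k$ standard coordinates equals $y\,G_k^{\,i}$, where $G_k=\mathrm{diag}(\zeta I_k,I_{n-k})$, $\zeta$ generates $\F_q^{*}$ and $\zeta^{i}=x/y$; hence all pivots of a fixed eigenspace-dimension \emph{shape} (and in a fixed coordinate position) lie within distance $2$ of one another, and the real work is (a) crossing between different shapes $\{k,n-k\}$ and (b) moving an arbitrarily positioned pivot into standard position. For (a)--(b), given a pivot with, say, $k\le n-k$, I would first connect it to $\mathrm{diag}(xI_k,C)$ with $C$ a companion matrix of an irreducible polynomial of degree $n-k$ whose order is chosen — using a primitive prime divisor of $q^{n-k}-1$, available outside the exceptions governed by Lemma~\ref{Lem:qnPrimeEvasionQ} — coprime to $\mathrm{ord}(x)$, so that a suitable power of $\mathrm{diag}(xI_k,C)$ is $\mathrm{diag}(xI_k,yI_{n-k})$ up to scalar (Corollary~\ref{Cor:CompanionPower}). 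Then I would pass, via the root-building Lemmas~\ref{Lem:CompanionRoot}, \ref{Lem:PrimeCompanionHasGoodRoot} and especially \ref{Lem:DoubleCompanionHasGoodRoot} (which furnishes the needed $\GL_2(\F_{q^{d}})$-flavoured roots), to a matrix made of repeated equal irreducible blocks, and from there into a hub using Lemma~\ref{Lem:IrreducibleRootIrreducible}; finally one reverses the construction on the far side. Careful bookkeeping of the step counts should yield $4+4=8$.

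\emph{Jordan pivots.} All $n\times n$ Jordan pivot matrices are conjugate (Jordan type $(2,1^{\,n-2})$), so it suffices to connect each to one fixed Jordan pivot; there is no shape to cross, only conjugates to reconcile. The key points I would use are that a root $M$ of a unipotent matrix whose exponent is divisible by the characteristic $p$ need not itself be unipotent — it carries a free semisimple part commuting with the matrix — and that a single large unipotent Jordan block has a very restricted invariant-subspace flag; routing through such matrices (and, when $q\neq2$, noting that a Jordan pivot is then within two steps of an ordinary pivot, to which the previous paragraph applies) gives the required flexibility. This is also where the hypothesis $n\ge4$ for $q=2$ is forced: in characteristic $2$ the square of a $3\times3$ unipotent Jordan block is itself a Jordan pivot, which wrecks the $n=3$ estimate, and the small groups $\GL_n(\F_2)$ with $n\le5$ are genuinely exceptional, as recorded in the introduction.

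\emph{Main obstacle.} The hard part is crossing shapes when $q-1$ is a prime power — equivalently, by Lemma~\ref{Lem:ConseqPP}, when $q$ is a Fermat prime, $q-1$ is a Mersenne prime, or $q=9$. In that regime the divisors of $q-1$ are totally ordered, so the naive device of a diagonalizable matrix with three distinct eigenvalues provably cannot link two different shapes — its powers preserve the shape — and one is pushed into the more delicate route through repeated-irreducible-block matrices and higher-degree irreducible hubs sketched above; verifying that this route really reaches every shape in at most four steps is the crux of the argument. A secondary difficulty is the genuinely exceptional small cases ($q=3$ with small $n$, and $q=2$ with $n\le5$), where the generic order estimates fail and one argues by hand or, as the authors note, by machine. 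Finally, pinning the bound at $8$ rather than merely ``bounded'' is the quantitative price of choosing the hubs and the engineered orders as economically as possible.
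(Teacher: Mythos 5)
Your overall strategy is a genuinely different route from the paper's, and unfortunately it contains a fatal flaw. You propose to pick a family $\mathcal{H}$ of ``hub'' matrices with irreducible characteristic polynomial of degree~$n$ and show every pivot is within distance~$4$ of $\mathcal{H}$. But Proposition~\ref{prop:ConnObstIrred} in this very paper proves that when $n$ is prime, every matrix $A$ with irreducible characteristic polynomial of degree~$n$ is trapped in an isolated connected component consisting only of the non-scalar elements of $\F_q[A]^{*}$, and no pivot matrix lies in that component. The present proposition covers $n=3,5,7,\dots$, so your hub strategy collapses exactly in the base case $n=3$. The obstruction is structural, not merely a bookkeeping gap: Lemma~\ref{Lem:CompanionDecomp} and Lemma~\ref{Lem:IrreducibleRootIrreducible} together show that any neighbour of a Singer element is either another Singer element in the same field or a power with identical companion blocks, and for $n$ prime neither kind of step ever produces a pivot matrix.

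The paper's actual proof (assembled from Propositions~\ref{Prop:PivotJordan}, \ref{Prop:Pivot>=4}, \ref{Prop:Pivot3}) is architecturally quite different and avoids irreducible-characteristic-polynomial elements altogether. It fixes one Jordan pivot $J$ and one ordinary pivot $A$ in standard position, shows they are at distance~$2$ because both are powers of $AJ$, and then proves an LU/Bruhat-type factorization
\[
\GL_n(\F_q)=\comm(J)\comm(A)\comm(J)\comm(A)\comm(J)
\]
(Lemmas~\ref{Lem:PivotJAJ=Strong}, \ref{Lem:PivotStrongJA=All}, with variants for $n\geq 4$ and $n=3$). Writing the conjugator between two pivots as an alternating product $M_1\cdots M_5$ with $M_i$ in the appropriate centralizers, one replaces the conjugated element one factor at a time, each replacement costing a jump of distance~$2$ (toggling between a conjugate of $J$ and a conjugate of $A$), for a total of at most~$8$. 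The key realization the paper exploits, which your sketch does not, is that the ``bigness'' of the centralizers is used not to manufacture roots reaching a hub, but to factor an arbitrary conjugating matrix into pieces that commute alternately with $J$ and $A$; this is the LU decomposition in disguise, and it sidesteps the prime-$n$ obstruction entirely. Your observation that pivots of a fixed shape in a fixed position are within distance~$2$ of each other via a common power generator is correct, but it is the shape- and position-crossing for which your route breaks down. I would also flag that even when $n$ is composite, the step count in your sketch (``careful bookkeeping\dots should yield $4+4=8$'') is not substantiated and appears hard to land, since passing from a pivot to a repeated-block matrix and then to a degree-$n$ irreducible hub involves several changes of Jordan type, each needing its own root-and-power manoeuvres.
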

\begin{proof}
Combine Proposition~\ref{Prop:PivotJordan}, Proposition~\ref{Prop:Pivot>=4} and Proposition~\ref{Prop:Pivot3}.
\end{proof}

As a special remark, the proofs here rely fundamentally on the fact that these (Jordan) piviot matrices have big centralizers. In fact, every matrix in the group $\GL_n(\F_q)$ can be written as the product of at most five matrices in the centralizers of some Jordan pivot matrices and some pivot matrices. %The distance $8$ here is essentially $2(5-1)$.

\subsection{From Jordan to Jordan}

We only consider the cases for $q\neq 2$ and $n\geq 3$, or for $q=2$ and $n\geq 4$.

We consider the group $\GL_n(\F_q)$. In this group, pick $J=\begin{bmatrix}1&&1\\&\ddots&\\&&1\end{bmatrix}$, which is a Jordan pivot matrix. We also pick $A=\begin{bmatrix}1&&&&\\&x&&&\\&&1&&\\&&&\ddots&\\&&&&1\end{bmatrix}$, where $x\neq 1\in\F_q^*$ for $q\neq 2$, and $x$ is a block $\begin{bmatrix}1&1\\1&0\end{bmatrix}$ when $q=2$. Note that for $q\neq 2$ and $n\geq 3$, or for $q=2$ and $n\geq 4$, we always have $AJ=JA$. We also always have $A^{q^2-1}=I$.

\begin{lem}
For $q\neq 2$ and $n\geq 3$ or for $q=2$ and $n\geq 4$, $A,J$ are both powers of $AJ$. In particular, $A$ and $J$ have distance $2$ in the projectively reduced power graph of $\GL_n(\F_q)$.
\end{lem}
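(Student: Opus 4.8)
The plan is to observe that $A$ and $J$ are commuting elements of \emph{coprime order}, so that the cyclic group $\langle AJ\rangle$ contains both of them. First I would pin down the two orders. The displayed Jordan pivot matrix $J$ satisfies $(J-I)^2=0$, so $J^m=I+m(J-I)$ and its multiplicative order is exactly $p$, the characteristic of $\F_q$. On the other hand $A$ is semisimple and, by the remark in the paragraph preceding the lemma, $A^{q^2-1}=I$; since $q$ is a power of $p$, the order of $A$ divides $q^2-1$ and is therefore coprime to $p=\mathrm{ord}(J)$. (Concretely, for $q\neq 2$ the order of $A$ is the order of $x$ in $\F_q^*$, dividing $q-1$; for $q=2$ the block $\left[\begin{smallmatrix}1&1\\1&0\end{smallmatrix}\right]$ has irreducible characteristic polynomial $x^2+x+1$ over $\F_2$, so $A$ has order $3$.)

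Next, writing $a=\mathrm{ord}(A)$ and $j=\mathrm{ord}(J)$ with $\gcd(a,j)=1$, I would pick integers $u,v$ with $ua+vj=1$ and use $AJ=JA$ together with $A^a=J^j=I$ to compute
\[
(AJ)^{vj}=A^{vj}J^{vj}=A^{vj}=A,\qquad (AJ)^{ua}=A^{ua}J^{ua}=J^{ua}=J,
\]
the last equalities holding because $vj\equiv 1\pmod a$ and $ua\equiv 1\pmod j$. (One reduces these exponents modulo $\mathrm{ord}(AJ)=aj$ so that they are genuine nonnegative powers inside the finite group.) This exhibits $A$ and $J$ as powers of $AJ$.

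For the distance assertion I would then check that $A$, $J$, and $AJ$ are all non-central, so that they are genuine vertices of the projectively reduced power graph and the path $A\sim AJ\sim J$ lives there: $J$ is a nonidentity unipotent, hence not scalar; $A$ has two distinct eigenvalues, hence is not scalar; and $AJ=A\cdot J$ is the Jordan--Chevalley decomposition of $AJ$ (semisimple part $A$, unipotent part $J\neq I$), so $AJ$ is not scalar either. This gives distance at most $2$, and to see it is exactly $2$ I would note $A\not\sim J$: every power of $A$ is semisimple and so is never the non-semisimple $J$, while every power of $J$ is unipotent and so is never $A$ (which has an eigenvalue $\neq 1$).

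The argument is entirely elementary; the only place needing a little care is the $q=2$ case, where $A$ is not diagonal, so one must verify separately both that $\mathrm{ord}(A)=3$ is coprime to $p=2$ and that $A$ is still semisimple (this is what makes the Jordan--Chevalley identification of $AJ$, and hence its non-centrality, go through), and one must remember that the hypothesis $n\geq 4$ is exactly what guarantees $A$ and $J$ commute in the first place.
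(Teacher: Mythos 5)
Your proof is correct and is essentially the paper's argument: both exploit that $A$ and $J$ are commuting elements of coprime order (a power of $p$ versus a divisor of $q^2-1$), the paper simply short-circuiting your Bezout step by exhibiting the concrete exponents $(AJ)^{q^2}=A$ and $(AJ)^{(q^2-1)^2}=J$, which works because $q^2\equiv 0\pmod p$, $(q^2-1)^2\equiv 1\pmod p$, and $q^2\equiv 1\pmod{q^2-1}$. Your extra observations — that $A$, $J$, $AJ$ are non-central (via the Jordan--Chevalley decomposition of $AJ$) and that the distance is exactly $2$ rather than merely at most $2$ — are correct and fill in details the paper leaves implicit.
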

\begin{proof}
Let $p$ be the characteristic of the field $\F_q$. Since $AJ=JA$, $J^p=I$ and $A^{q^2-1}=I$, we have $(AJ)^{q^2}=A$.

On the other hand, $J^{q^2-1}=J^{-1}$. So we have $(AJ)^{(q^2-1)^2}=J$.
\end{proof}

Let $\comm(A)$ and $\comm(J)$ be the centralizer of the two elements in $\GL_n(\F_q)$ respectively. We adopt the notation that for any two subsets $H,K$ of a group $G$, then $HK$ stands for the the subset $\{hk\in G:h]in H,k\in K\}$. Our goal here is to show that these two centralizers can generate $\GL_n(\F_q)$ quickly, and specifically $\GL_n(\F_q)=\comm(J)\comm(A)\comm(J)\comm(A)\comm(J)$. Our first step is to show that, most of the time, $\comm(J)\comm(A)\comm(J)$ is enough.

\begin{lem}
\label{Lem:PivotJAJ=Strong}
For $q\neq 2$ and $n\geq 3$ or for $q=2$ and $n\geq 4$, suppose a matrix $X\in \GL_n(\F_q)$ has non-zero lower left entry and invertible lower left $(n-1)\times(n-1)$ block, then $X\in\comm(J)\comm(A)\comm(J)$.
\end{lem}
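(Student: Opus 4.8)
The plan is to identify left- and right-multiplication by $\comm(J)$ with an explicit list of elementary row and column operations, and then to reduce $X$ by those operations to a matrix that visibly centralizes $A$.

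Since $J = I + E_{1n}$, one first checks that $M \in \comm(J)$ exactly when the first column of $M$ is a scalar multiple of $e_1$, the last row is a scalar multiple of $e_n^{\top}$, and $M_{11} = M_{nn}$; such an $M$ is invertible precisely when $M_{11} \neq 0$ and the central $(n-2)\times(n-2)$ block on rows and columns $2,\dots,n-1$ is invertible. Reading off the effect of $X \mapsto MX$ and $X \mapsto XM$ and normalizing the common scalar $M_{11}$ to $1$, left multiplication by $\comm(J)$ lets one add any linear combination of rows $2,\dots,n$ to row $1$, apply any element of $\GL_{n-2}$ to rows $2,\dots,n-1$ while also adding arbitrary multiples of row $n$ to them, and it fixes row $n$; dually, right multiplication lets one add any linear combination of columns $1,\dots,n-1$ to column $n$, apply any element of $\GL_{n-2}$ to columns $2,\dots,n-1$ while adding arbitrary multiples of column $1$, and it fixes column $1$. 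A composite of such moves still comes from a single element of $\comm(J)$ on each side, so it suffices to bring $X$ into $\comm(A)$ by a sequence of them.

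The reduction then proceeds in four stages. First, because the lower-left entry $X_{n1}$ is nonzero, subtract multiples of row $n$ from rows $1,\dots,n-1$ so that the first column becomes $\alpha e_n$ with $\alpha = X_{n1}$; this is an invertible operation on the last $n-1$ rows, so the lower-left $(n-1)\times(n-1)$ block stays invertible, and since its first column is now $\alpha e_{n-1}$, its complementary minor — which is exactly the central block $W$ of $X$ on rows and columns $2,\dots,n-1$ — is invertible. Second, with the first column supported only in row $n$, subtract multiples of column $1$ from columns $2,\dots,n$ to clear row $n$ outside position $(n,1)$; column $1$ meets only row $n$, so $W$ and row $1$ are undisturbed. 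Third, using that $W$ is invertible, add a suitable combination of rows $2,\dots,n-1$ to row $1$ to clear row $1$ outside positions $(1,1)$ and $(1,n)$, then add a suitable combination of columns $2,\dots,n-1$ to column $n$ to clear column $n$ outside position $(1,n)$; neither move touches row $n$ or column $1$. At this point $X$ has been reduced to the matrix that equals $W$ on $\langle e_2,\dots,e_{n-1}\rangle$, has $\alpha$ at $(n,1)$ and a scalar $c$ at $(1,n)$, and vanishes elsewhere; invertibility forces $c\neq 0$. Fourth, apply $W^{-1}$ to rows $2,\dots,n-1$ to replace $W$ by $I_{n-2}$, which leaves column $1$ and column $n$ alone. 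The end result is the identity on $\langle e_2,\dots,e_{n-1}\rangle$ together with an anti-diagonal $2\times 2$ block on $\langle e_1,e_n\rangle$; since $A$ acts as the identity outside $\langle e_2\rangle$ when $q\neq 2$ (so we need $2\in\{2,\dots,n-1\}$, i.e.\ $n\geq 3$) and outside $\langle e_2,e_3\rangle$ when $q=2$ (so we need $\{2,3\}\subseteq\{2,\dots,n-1\}$, i.e.\ $n\geq 4$), this matrix lies in $\comm(A)$. Folding the left moves into one element of $\comm(J)$ and the right moves into another exhibits $X\in\comm(J)\comm(A)\comm(J)$.

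The real content is the second paragraph — pinning down exactly which row and column operations $\comm(J)$ supplies — together with the single structural observation in stage one that clearing the first column transfers invertibility of the given $(n-1)\times(n-1)$ minor down to the central $(n-2)\times(n-2)$ block; everything afterwards is routine, the only thing to monitor being that each later move stays within the allowed operations and does not refill an entry cleared earlier. I do not expect this to pose a serious obstacle.
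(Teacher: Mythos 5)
Your proposal is correct and takes essentially the same approach as the paper: both prove the lemma by an LU-type reduction, using $\comm(J)$ on the two sides to carry out row and column eliminations until what remains is visibly in $\comm(A)$. The paper packages this by applying the swap $T\in\comm(A)$ and quoting the uniqueness of the block LDU factorization of $TX$, whereas you instead characterize $\comm(J)$ explicitly as a set of elementary operations and perform the Gaussian elimination by hand, which is more elementary but comes to the same thing.
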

\begin{proof}
This is essentially the LU decomposition.

Pick $T=\begin{bmatrix}&&1\\&I_{n-2}&\\1&&\end{bmatrix}\in\comm(A)$. Then $TX$ has non-zero upper left entry, and invertible upper left $(n-1)\times(n-1)$ block. Consequently, it has a unique block LDU decomposition $TX=LDU$ where $L=\begin{bmatrix}1&&\\ *&I_{n-2}&\\ *& *&1\end{bmatrix}$ and $U=\begin{bmatrix}1& *& * \\ &U_{n-2}& * \\ &&1\end{bmatrix}$ and $D=\begin{bmatrix}d_1&&\\&I_{n-2}&\\&&d_n\end{bmatrix}$ for some $(n-2)\times(n-2)$ invertible matrix $U_{n-2}$ and some $d_1,d_n\in\F_q^*$. Note that $TLT,U\in\comm(J)$ and $TD\in\comm(A)$.

So $X=TLDU=(TLT)(TD)U\in\comm(J)\comm(A)\comm(J)$.
\end{proof}

Our second step is to show that every matrix can be adjusted to the case as described in Lemma~\ref{Lem:PivotJAJ=Strong}.

\begin{lem}
\label{Lem:PivotStrongJA=All}
For $q\neq 2$ and $n\geq 3$ or for $q=2$ and $n\geq 4$, and any $X\in\GL_n(\F_q)$, we can find a unit upper triangular matrix $U$ and a matrix $T=\begin{bmatrix}&&1\\&I_{n-2}&\\1&&\end{bmatrix}$, such that $XUT$ has non-zero lower left entry, and invertible lower left $(n-1)\times(n-1)$ block.
\end{lem}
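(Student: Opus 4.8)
The plan is to massage an arbitrary $X \in \GL_n(\F_q)$ by right-multiplication so that the result lands in the special form of Lemma~\ref{Lem:PivotJAJ=Strong}, using only a unit upper triangular $U$ (which lies in $\comm(J)$) and the fixed permutation matrix $T$ (which lies in $\comm(A)$, since $T$ fixes the second coordinate where the nontrivial block of $A$ sits). Concretely, $XT$ is obtained from $X$ by swapping its first and last columns, so the lower-left entry of $XT$ is the bottom-right entry of $X$, and the lower-left $(n-1)\times(n-1)$ block of $XT$ is built from the last $n-1$ rows of $X$ with the first and last columns interchanged. So the real task is: right-multiply $X$ by a suitable unit upper triangular $U$ so that, after also applying $T$, these two minors become nonzero. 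Equivalently, writing $Y = XU$, we need the last $n-1$ rows of $Y$ to have an invertible submatrix on columns $\{2,\dots,n\}$ (this is the lower-left $(n-1)\times(n-1)$ block of $YT$) and we need the $(n,n)$ entry of $Y$ to be nonzero (this becomes the lower-left entry of $YT$).

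First I would handle the $(n-1)\times(n-1)$ block. The last $n-1$ rows of $X$ form an $(n-1)\times n$ matrix of rank $n-1$ (since $X$ is invertible), so some $n-1$ of its $n$ columns are independent. If columns $\{2,\dots,n\}$ already work, take $U = I$ for this step; otherwise column $1$ must be in the independent set and some column $j \in \{2,\dots,n\}$ is dependent on $\{1,\dots,n\}\setminus\{j\}$. The idea is to add a multiple of column $1$ into column $j$ — an operation realized by right-multiplying by a unit upper triangular matrix, since $1 < j$ — to make columns $\{2,\dots,n\}$ of the bottom block independent; because we only ever add \emph{column~1} into later columns, and column~1 of the bottom block is unchanged, a generic such operation (there are at most finitely many bad scalars, and $|\F_q| = q \geq 2$ suffices after possibly iterating, or one argues directly by a rank count) achieves invertibility of the bottom-left $(n-1)\times(n-1)$ block. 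I would phrase this as: among the bottom $n-1$ rows, replace column $j$ by column $j$ plus a scalar times column $1$ so that the new columns $2,\dots,n$ of the bottom block span.

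Next, with the $(n-1)\times(n-1)$ block arranged to be invertible, I would fix the $(n,n)$ entry. After step one we may still have a zero in position $(n,n)$ of $Y$. Since the bottom-left $(n-1)\times(n-1)$ block of $YT$ — equivalently rows $n-(n-2),\dots,n$? let me say rows $2,\dots,n$ of $Y$ restricted to columns $2,\dots,n$... — is invertible, in particular row $n$ of $Y$ restricted to columns $2,\dots,n$ is nonzero, so $Y$ has a nonzero entry $(n,m)$ for some $m \in \{2,\dots,n\}$. If $m < n$ we add a suitable multiple of column $m$ into column $n$ (again right-multiplication by a unit upper triangular matrix, legitimate since $m < n$, and since this only touches column $n$ it does not disturb the already-achieved invertibility of the block on columns $2,\dots,n-1$, and we can choose the scalar to avoid the finitely many values making the full $(n-1)\times(n-1)$ block on columns $2,\dots,n$ singular); this makes the $(n,n)$ entry nonzero while preserving the block condition. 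Composing the two unit upper triangular matrices from the two steps gives the desired $U$, and then $XUT$ has the required properties.

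The main obstacle I anticipate is the bookkeeping in making the two steps compatible: the column operation that fixes the $(n,n)$ entry must not destroy the invertibility of the $(n-1)\times(n-1)$ block, and conversely. This is where the hypothesis $q \neq 2, n \geq 3$ or $q = 2, n \geq 4$ will be used — it guarantees enough field elements and enough room in the matrix that the finitely many "bad" scalars can be avoided at each step (and that $T$ and the $A$ used genuinely commute). I would organize the argument by choosing the column operations greedily and tracking exactly which submatrices each one can affect, rather than trying to write a single closed-form $U$.
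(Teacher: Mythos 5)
Your proof is correct and takes essentially the same approach as the paper: apply elementary column operations (a unit upper triangular $U$) to $X$, using a rank count on the bottom $n-1$ rows to decide which column to modify, while arranging that the second adjustment does not disturb the first. The only real difference is that you secure the invertible $(n-1)\times(n-1)$ block first and then the $(n,n)$ entry, whereas the paper does these two steps in the opposite order; both orders work, and in fact the only bad scalar at each step is $c=0$, so the size and field hypotheses play no role in this particular lemma (they are inherited from the surrounding statements that set up $A$, $J$, and $T$).
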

\begin{proof}
Consider $X=\begin{bmatrix}*&*&*\\ \bs x_1&\dots&\bs x_n\end{bmatrix}$ where $\bs x_1,\dots,\bs x_n\in\F_q^{n-1}$. Since $X$ is invertible, these vectors cannot all have zero last coordinate. So $\bs y=\bs x_n+u_1\bs x_1+\dots+u_{n-1}\bs x_{n-1}$ will have non-zero last coordinate for some constants $u_1,\dots,u_{n-1}\in\F_q$. Set $U_1=\begin{bmatrix}1&&&u_1\\&\ddots&&\vdots\\&&1&u_{n-1}\\&&&1\end{bmatrix}$. Then $XU_1=\begin{bmatrix}*&*&*&*\\ \bs x_1&\dots&\bs x_{n-1}&\bs y\end{bmatrix}$ will have non-zero lower right entry.

Let $V$ be the space spanned by $\bs x_2,\dots,\bs x_{n-1},\bs y$. If $V=\F_q^{n-1}$, then $XU_1$ will also have invertible lower right $(n-1)\times(n-1)$ block. So $XU_1T$ is as desired.

Suppose that is not the case. Since $X$ is invertible, we must have $\dim V= n-2$, and $\bs x_1\notin V$. Now $\bs y$ has non-zero last coordinate, so $\bs y\neq \bs 0$. Since the collection $\bs x_2,\dots,\bs x_{n-1},\bs y$ spans $V$ and since $\bs y\neq \bs 0$, therefore we can find a subcollection of $n-2$ vectors including $\bs y$ that already spans $V$. Let $\bs x_i$ be the excluded vector for some $2\leq i\leq n-1$. 

Then since $\bs x_1\notin V$, we must have $\bs x_i+\bs x_1\notin V$. So if we set $U_2$ be the unit upper triangular matrix that corresponds to the elementary column operation of adding the first column to the $i$-th column, then $XU_1U_2$ will now have non-zero lower right entry and invertible lower right $(n-1)\times(n-1)$ block. So $XU_1U_2T$ is as desired.
\end{proof}

\begin{cor}
\label{Lem:PivotJAJAJ}
For $q\neq 2$ and $n\geq 3$ or for $q=2$ and $n\geq 4$, \[\GL_n(\F_q)=\comm(J)\comm(A)\comm(J)\comm(A)\comm(J).\]
\end{cor}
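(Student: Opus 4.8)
The plan is to assemble the corollary directly from the two preceding lemmas, observing that the auxiliary matrices $U$ and $T$ produced along the way already lie in the relevant centralizers, so they cost only a bounded number of extra factors.

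First I would take an arbitrary $X\in\GL_n(\F_q)$ and apply Lemma~\ref{Lem:PivotStrongJA=All} to obtain a unit upper triangular matrix $U$ and the matrix $T=\begin{bmatrix}&&1\\&I_{n-2}&\\1&&\end{bmatrix}$ such that $Y:=XUT$ has non-zero lower left entry and invertible lower left $(n-1)\times(n-1)$ block. Lemma~\ref{Lem:PivotJAJ=Strong} then gives $Y\in\comm(J)\comm(A)\comm(J)$.

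Next I would record two elementary membership facts. The matrix $T$ is an involution, so $T^{-1}=T$; it fixes coordinates $2,\dots,n-1$ and swaps coordinates $1$ and $n$, and since $A$ acts by the scalar $1$ on coordinates $1$ and $n$ while its only non-scalar part sits among the fixed coordinates $2,\dots,n-1$ (this is precisely where the hypothesis $n\geq 3$ for $q\neq 2$, and $n\geq 4$ for $q=2$ so that the $2\times2$ block of $A$ fits, is used), we get $T\in\comm(A)$. For the other fact, note that $J-I_n$ has a single non-zero entry, located in position $(1,n)$, so a matrix $M$ commutes with $J$ exactly when the first column of $M$ vanishes below position $1$, the last row of $M$ vanishes before position $n$, and $M_{11}=M_{nn}$; all three conditions hold for every unit upper triangular matrix, so $U$ and hence $U^{-1}$ lie in $\comm(J)$.

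Finally I would conclude by writing
\[ X = Y(UT)^{-1}=Y\,T\,U^{-1}\in \comm(J)\comm(A)\comm(J)\cdot\comm(A)\cdot\comm(J), \]
which is exactly the asserted factorization. I do not expect a genuine obstacle in this last step: all the real work has already been carried out in Lemma~\ref{Lem:PivotStrongJA=All} (the elementary column operations putting $X$ into an LU-friendly shape) and in Lemma~\ref{Lem:PivotJAJ=Strong} (the block $LDU$ decomposition), and the corollary is simply the observation that the correcting factors $U$ and $T$ add only one extra $\comm(A)$ and one extra $\comm(J)$ to the three-term product.
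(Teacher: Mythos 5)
Your proposal is correct and follows essentially the same route as the paper: invoke Lemma~\ref{Lem:PivotStrongJA=All} to normalize $X$ via $U$ and $T$, apply Lemma~\ref{Lem:PivotJAJ=Strong} to the resulting matrix, and observe that $U\in\comm(J)$ and $T\in\comm(A)$ to absorb the correcting factors. You supply somewhat more detail than the paper (the explicit characterization of $\comm(J)$ via $J-I=E_{1n}$, and the explicit check that $T$ fixes the coordinates supporting the non-scalar part of $A$), but the argument is the same.
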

\begin{proof}
Pick any $X\in\GL_n(\F_q)$. By Lemma~\ref{Lem:PivotStrongJA=All}, we can find a unit upper triangular matrix $U$ and $T=\begin{bmatrix}&&1\\&I_{n-2}&\\1&&\end{bmatrix}$, such that $XUT$ has non-zero lower left entry, and invertible lower left $(n-1)\times(n-1)$ block. Then by Lemma~\ref{Lem:PivotJAJ=Strong}, we see that $XUT\in\comm(J)\comm(A)\comm(J)$. However, we also see that $U\in\comm(J)$ and $T\in\comm(A)$. So our result follows.
\end{proof}

\begin{prop}
\label{Prop:PivotJordan}
For $q\neq 2$ and $n\geq 3$ or for $q=2$ and $n\geq 4$, any two Jordan pivot matrices in the projectively reduced power graph of $\GL_n(\F_q)$ will have distance at most $8$.
\end{prop}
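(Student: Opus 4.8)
The plan is to reduce the distance between two arbitrary Jordan pivot matrices to a single quantity of the form $d(J,XJX^{-1})$, and then bound that by telescoping the factorization $\GL_n(\F_q)=\comm(J)\comm(A)\comm(J)\comm(A)\comm(J)$ of Corollary~\ref{Lem:PivotJAJAJ}, using the already-established fact that $J$ and $A$ are at distance $2$. Here $d$ denotes distance in the projectively reduced power graph of $\GL_n(\F_q)$.

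First, every Jordan pivot matrix is similar to $J$: if $K$ has all eigenvalues $1$ and $\dim\Ker(K-I)=n-1$, then $K-I$ is nilpotent of rank $1$, so the Jordan form of $K$ consists of one block of size $2$ together with $n-2$ blocks of size $1$ — exactly the similarity class of $J$. Moreover, conjugation by a fixed $g\in\GL_n(\F_q)$ is an automorphism of the projectively reduced power graph: if $h=h'^{\,k}$ then $ghg^{-1}=(gh'g^{-1})^{k}$, and the center is mapped to the center, so conjugation preserves $d$. Thus, given two Jordan pivot matrices $K_1=X_1JX_1^{-1}$ and $K_2=X_2JX_2^{-1}$, conjugating by $X_1^{-1}$ gives $d(K_1,K_2)=d(J,XJX^{-1})$ with $X=X_1^{-1}X_2$, so it suffices to prove $d(J,XJX^{-1})\leq 8$ for every $X\in\GL_n(\F_q)$.

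For this I would use the ``peeling'' identities: for any matrix $M$, $d(J,gMg^{-1})=d(g^{-1}Jg,M)=d(J,M)$ whenever $g\in\comm(J)$, and symmetrically $d(A,gMg^{-1})=d(A,M)$ whenever $g\in\comm(A)$; both follow from conjugation-invariance of $d$. Now write $X=c_1a_1c_2a_2c_3$ with $c_i\in\comm(J)$ and $a_i\in\comm(A)$ by Corollary~\ref{Lem:PivotJAJAJ}. Since $c_3Jc_3^{-1}=J$ we have $XJX^{-1}=c_1a_1c_2a_2\,J\,a_2^{-1}c_2^{-1}a_1^{-1}c_1^{-1}$, and then alternately peeling the outermost factor and inserting via the triangle inequality whichever of $J$, $A$ it fixes gives
\begin{align*}
d(J,XJX^{-1})
&= d(J,\ a_1c_2a_2\,J\,a_2^{-1}c_2^{-1}a_1^{-1})\\
&\leq d(J,A)+d(A,\ c_2a_2\,J\,a_2^{-1}c_2^{-1})\\
&\leq d(J,A)+d(A,J)+d(J,\ a_2Ja_2^{-1})\\
&\leq d(J,A)+d(A,J)+d(J,A)+d(A,J)=8,
\end{align*}
where the last equality uses that $J$ and $A$ are both powers of $AJ$, hence $d(J,A)\leq 2$.

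I do not expect a genuine obstacle here: all the real content has already been built, namely the factorization $\GL_n(\F_q)=\comm(J)\comm(A)\comm(J)\comm(A)\comm(J)$ and the bound $d(J,A)\leq 2$, both valid under exactly the hypothesis of the proposition, so the argument is uniform in $q$ and $n$ with no case split. The only point needing care is to record explicitly that conjugation is an automorphism of the projectively reduced power graph, so that distances — and in particular the peeling identities — are legitimate.
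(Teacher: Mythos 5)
Your proof is correct and takes essentially the same approach as the paper: both use the factorization $\GL_n(\F_q)=\comm(J)\comm(A)\comm(J)\comm(A)\comm(J)$ together with $d(J,A)\leq 2$, the only difference being that the paper spells out the length-$8$ path through conjugates of $J$ and $A$ explicitly, whereas you encode the same path via conjugation-invariance of $d$ and the triangle inequality. Your extra remarks (that every Jordan pivot matrix is similar to $J$, and that conjugation is a graph automorphism of the projectively reduced power graph) are implicit in the paper and worth making explicit as you did.
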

\begin{proof}
Suppose $J_1,J_2$ are two Jordan pivot matrices. Say $J_1=X_1JX_1^{-1}$ and $J_2=X_2JX_2^{-1}$ for invertible matrices $X_1,X_2$. Set $X=X_1^{-1}X_2$.

By Lemma~\ref{Lem:PivotJAJAJ}, we have $X=M_1M_2M_3M_4M_5$ where $M_1,M_3,M_5\in\comm(J)$ and $M_2,M_4\in\comm(A)$.

Now we have the following path.
\begin{align*}
J_1=&X_1JX_1^{-1}=X_1M_1JM_1^{-1}X_1^{-1}\\
\xrightarrow{\text{distance 2}}&X_1M_1AM_1^{-1}X_1^{-1}=X_1M_1M_2AM_2^{-1}M_1^{-1}X_1^{-1}\\
\xrightarrow{\text{distance 2}}&X_1M_1M_2JM_2^{-1}M_1^{-1}X_1^{-1}=X_1M_1M_2M_3JM_3^{-1}M_2^{-1}M_1^{-1}X_1^{-1}\\
\xrightarrow{\text{distance 2}}&X_1M_1M_2M_3AM_3^{-1}M_2^{-1}M_1^{-1}X_1^{-1}=X_1M_1M_2M_3M_4AM_4^{-1}M_3^{-1}M_2^{-1}M_1^{-1}X_1^{-1}\\
\xrightarrow{\text{distance 2}}&X_1M_1M_2M_3M_4JM_4^{-1}M_3^{-1}M_2^{-1}M_1^{-1}X_1^{-1}=X_1M_1M_2M_3M_4M_5JM_5^{-1}M_4^{-1}M_3^{-1}M_2^{-1}M_1^{-1}X_1^{-1}\\
=&X_1XJX^{-1}X_1^{-1}=X_2JX_2^{-1}=J_2.
\end{align*}
\end{proof}

\subsection{From Pivot to Pivot when $n\geq 4$}

When $q\neq 2$, it is more important to have a path between pivot matrices. For now, we only consider the cases for $q\neq 2$ and $n\geq 4$.

As before, we consider the group $\GL_n(\F_q)$. In this group, pick $J=\begin{bmatrix}1&&1\\&\ddots&\\&&1\end{bmatrix}$, which is a Jordan pivot matrix. We also pick $A=\begin{bmatrix}1&&&&\\&x&&&\\&&1&&\\&&&\ddots&\\&&&&1\end{bmatrix}$, where $x\neq 1\in\F_q^*$. 

We first establish a version of Lemma~\ref{Lem:PivotJAJ=Strong} with a weaker requirement. However, this only works for $n\geq 4$.

\begin{lem}
\label{Lem:PivotJAJ=Weak}
For $q\neq 2$ and $n\geq 4$, suppose a matrix $X\in \GL_n(\F_q)$ has non-zero lower left entry, then $X\in\comm(J)\comm(A)\comm(J)$.
\end{lem}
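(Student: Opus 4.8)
The goal is to relax the hypothesis of Lemma~\ref{Lem:PivotJAJ=Strong}: instead of requiring both a non-zero lower left entry \emph{and} an invertible lower left $(n-1)\times(n-1)$ block, we want to conclude $X\in\comm(J)\comm(A)\comm(J)$ assuming only the non-zero lower left entry, at the cost of requiring $n\geq 4$. The plan is to follow the same LU/LDU strategy as in Lemma~\ref{Lem:PivotJAJ=Strong}, but with an extra preliminary step: since we no longer get invertibility of the lower left $(n-1)\times(n-1)$ block for free, we first multiply $X$ on the appropriate side by a matrix in $\comm(J)$ (a unit upper or lower triangular matrix that fixes the relevant corner) to \emph{arrange} that invertibility, while preserving the non-zero corner entry, and then invoke Lemma~\ref{Lem:PivotJAJ=Strong}.

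Concretely, I would first recall which matrices lie in $\comm(J)$ and $\comm(A)$. The centralizer $\comm(J)$ contains all unit upper triangular matrices of a suitable shape (in particular all matrices fixing the first column and last row appropriately), and $\comm(A)$ contains the permutation-type matrix $T=\begin{bmatrix}&&1\\&I_{n-2}&\\1&&\end{bmatrix}$ together with the diagonal matrices $D=\mathrm{diag}(d_1,I_{n-2},d_n)$. So the task reduces to: given $X$ with $X_{n,1}\neq 0$, find $P\in\comm(J)$ with $XP$ (or $PX$) having both a non-zero lower left entry and an invertible lower left $(n-1)\times(n-1)$ block. The non-zero corner entry $X_{n,1}$ should be protected by choosing $P$ to be lower triangular with $1$'s affecting only columns $2,\dots,n$ when multiplying on the right, or upper triangular affecting only rows $1,\dots,n-1$ when multiplying on the left — something that does not disturb the $(n,1)$ position. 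Then one does a rank/linear-algebra argument, as in Lemma~\ref{Lem:PivotStrongJA=All}, to show the lower left $(n-1)\times(n-1)$ block can be made invertible by such column (or row) operations; here is where $n\geq 4$ is used, since there must be enough ``room'' among the columns $2,\dots,n-1$ (and the vectors they contribute) to adjust the block without touching column $1$ or row $n$.

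The key steps in order: (1) identify generous subgroups of $\comm(J)$ and $\comm(A)$ that fix the $(n,1)$ entry; (2) show that by one or two elementary column (or row) operations drawn from $\comm(J)$, one can pass from ``$X_{n,1}\neq 0$'' to ``$X_{n,1}\neq 0$ and lower left $(n-1)\times(n-1)$ block invertible'', using a dimension count that is exactly where $n\geq 4$ enters; (3) apply Lemma~\ref{Lem:PivotJAJ=Strong} to the adjusted matrix to get membership in $\comm(J)\comm(A)\comm(J)$; (4) absorb the auxiliary factor from $\comm(J)$ back into the outer $\comm(J)$ factors, concluding $X\in\comm(J)\comm(A)\comm(J)$.

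I expect the main obstacle to be step (2): carrying out the linear-algebra bookkeeping to guarantee the lower left $(n-1)\times(n-1)$ block becomes invertible \emph{without} destroying the non-zero $(n,1)$ entry, and verifying that the column index we need to modify genuinely exists when $n\geq 4$ but may fail for $n=3$. In the $n=3$ case the lower left $2\times 2$ block shares both of its columns with ``constrained'' positions (column $1$ carries the protected corner entry, and the remaining freedom is too small), which is presumably exactly why the stronger hypothesis was needed in Lemma~\ref{Lem:PivotJAJ=Strong} and why this weaker-hypothesis version is stated only for $n\geq 4$. A secondary, more routine point is checking that the chosen auxiliary matrix really does lie in $\comm(J)$ (right shape of unit triangular matrix) and that multiplying it into the $LDU$ decomposition keeps everything in the claimed product of centralizers.
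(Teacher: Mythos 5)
Your plan has a genuine gap at the crucial step (2). You propose to multiply $X$ on the left or right by elements of $\comm(J)$ so as to preserve the non-zero $(n,1)$ entry while also making the lower left $(n-1)\times(n-1)$ block invertible, and then invoke Lemma~\ref{Lem:PivotJAJ=Strong}. This cannot work: the invertibility of the lower left $(n-1)\times(n-1)$ block is itself invariant under $\comm(J)\times\comm(J)$. Indeed, by cofactor expansion that block is invertible if and only if $(X^{-1})_{n1}\neq 0$. A matrix $M$ lies in $\comm(J)$ exactly when its first column is $ae_1$ and its last row is $ae_n^{\mathsf T}$ for some $a\neq 0$ (with the inner $(n-2)\times(n-2)$ block invertible and the remaining entries free). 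From this, for any $L,R\in\comm(J)$ one computes $(LXR)_{n1}=L_{nn}X_{n1}R_{11}$, and since $\comm(J)$ is a group, also $((LXR)^{-1})_{n1}=(R^{-1}X^{-1}L^{-1})_{n1}=(R^{-1})_{nn}(X^{-1})_{n1}(L^{-1})_{11}$. Both quantities only get rescaled by non-zero constants. So if $X_{n1}\neq 0$ but the lower left $(n-1)\times(n-1)$ block is singular, no choice of $P\in\comm(J)$ on either side can repair that block, and your reduction to Lemma~\ref{Lem:PivotJAJ=Strong} is unavailable. The "extra room for $n\geq 4$" you point to does not help here because the obstruction is an honest invariant of the $\comm(J)$-action, not a bookkeeping issue.

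The paper's proof therefore does not pass through Lemma~\ref{Lem:PivotJAJ=Strong} at all; it gives a fresh direct argument. Using unit upper triangular matrices in $\comm(J)$, one first clears column $1$ (above row $n$) and row $n$ (after column $1$) against the pivot $X_{n1}$, producing a matrix whose middle $(n-2)\times(n-2)$ block $X'$ sits inside an invertible $(n-1)\times(n-1)$ corner, hence has rank at least $n-3$. This is where $n\geq 4$ enters: it forces $X'\neq 0$, so $X'$ can be normalized (by further $\comm(J)$ factors of the block-diagonal form $\mathrm{diag}(1,R',1)$) to have a $1$ in its upper left corner with zeros in the rest of that row and column. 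Two more unit upper triangular $\comm(J)$ factors then clear the residual entries in row $2$ and column $2$, so the resulting matrix has $e_2$ as both its second row and second column, i.e., it lies in $\comm(A)$. Unwinding the $\comm(J)$ factors on both sides gives $X\in\comm(J)\comm(A)\comm(J)$ directly, with no appeal to the LDU factorization or to the invertibility of any corner block of $X$. You would need to adopt an argument of this shape rather than a reduction to Lemma~\ref{Lem:PivotJAJ=Strong}.
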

\begin{proof}
Since the lower left entry of $X$ is non-zero, using elementary row and column operations, we can find unit upper triangular matrices $U_1,U_2$ such that $U_1XU_2=\begin{bmatrix}0&\begin{matrix}*&\dots\end{matrix}&*\\ \begin{matrix}\vdots\\0\end{matrix}&X'&\begin{matrix}\vdots\\ *\end{matrix}\\x_{n1}&\begin{matrix}0&\dots\end{matrix}&0\end{bmatrix}$ for some $(n-2)\times(n-2)$ matrix $X'$ and some $x_{n1}\neq 0$. 

Since $U_1XU_2$ is invertible, its upper right $(n-1)\times(n-1)$ block will have rank $n-1$. Hence $X'$ has rank at least $n-3$. When $n\geq 4$, this implies that $X'$ is not zero. So $R'_1X'R'_2=\begin{bmatrix}1&\\&X''\end{bmatrix}$ for some invertible $(n-2)\times(n-2)$ matrices $R'_1,R'_2$ and some $(n-3)\times(n-3)$ matrix $X''$. Set $R_1=\begin{bmatrix}1&&\\&R'_1&\\&&1\end{bmatrix}$ and $R_2=\begin{bmatrix}1&&\\&R'_2&\\&&1\end{bmatrix}$, then $R_1U_1XU_2R_2=\begin{bmatrix}
0& *&\begin{matrix} *&\dots& *\end{matrix}& *\\0&1&\begin{matrix} 0&\dots& 0\end{matrix}& *\\ \begin{matrix}0\\ \vdots\\ 0\end{matrix}& \begin{matrix} 0\\ \vdots\\ 0\end{matrix} & X''&\begin{matrix} *\\ \vdots\\ *\end{matrix}\\x_{n1}&0&\begin{matrix} 0&\dots& 0\end{matrix}&0\end{bmatrix}$.

Then using elementary row and column operations, we can find unit upper triangular matrices $U_3,U_4$ such that $U_3R_1U_1XU_2R_2U_4=\begin{bmatrix}
0& 0&\begin{matrix} *&\dots& *\end{matrix}& *\\0&1&\begin{matrix} 0&\dots& 0\end{matrix}& 0\\ \begin{matrix}0\\ \vdots\\ 0\end{matrix}& \begin{matrix} 0\\ \vdots\\ 0\end{matrix} & X''&\begin{matrix} *\\ \vdots\\ *\end{matrix}\\x_{n1}&0&\begin{matrix} 0&\dots& 0\end{matrix}&0\end{bmatrix}\in\comm(A)$.

Since $U_1,U_2,U_3,U_4,R_1,R_2\in\comm(J)$, we are done.
\end{proof}

%\begin{remark}
%Note that the idea above can easily be modified to work when $q=2$ and $n\geq 5$ as well.
%\end{remark}

Now a pivot matrix is diagonalizable, but its diagonal form does not necessarily looks like $A$. So the analogue to Lemma~\ref{Lem:PivotJAJAJ} are the followings.

\begin{lem}
\label{Lem:PivotAJAJAUnbalanced}
For $q\neq 2$ and $n\geq 3$, in the group $\GL_n(\F_q)$, we pick an arbitrary $A_1=\begin{bmatrix}x_1I_{n-m_1-1}&&\\&y_1I_{m_1}&\\&&x_1\end{bmatrix}$ and $A_2=\begin{bmatrix}x_2I_{n-m_2-1}&&\\&y_2I_{m_2}&\\&&x_2\end{bmatrix}$ for some positive integers $m_1,m_2\leq n-2$, and some $x_1,x_2,y_1,y_2\in\F_q^*$ such that $x_1\neq y_1$ and $x_2\neq y_2$. 

Suppose $m_1+m_2<n$, then $\GL_n(\F_q)=\comm(A_1)\comm(J)\comm(A)\comm(J)\comm(A_2)$.

Suppose $m_1=\frac{n}{2}$, then $\GL_n(\F_q)$ is the union of $\comm(A_1)\comm(J)\comm(A)\comm(J)\comm(A_2)$ and $P\comm(A_1)\comm(J)\comm(A)\comm(J)\comm(A_2)$ for a permutation matrix $P$ such that $PA_1P^{-1}=\begin{bmatrix}y_1I_{n-m_1-1}&&\\&x_1I_{m_1}&\\&&y_1\end{bmatrix}$.
\end{lem}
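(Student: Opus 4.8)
The plan is to deduce both parts from the $\comm(J)\comm(A)\comm(J)$ decompositions of Lemmas~\ref{Lem:PivotJAJ=Weak} and~\ref{Lem:PivotJAJ=Strong}, by absorbing one factor of $\comm(A_1)$ on the left and one of $\comm(A_2)$ on the right. Write $W_i$ for the $x_i$-eigenspace of $A_i$, so that $\dim W_1 = n-m_1$, $\dim W_2 = n-m_2$, $e_n\in W_1$, and $e_1\in W_2$, and let $V_1$ be the $y_1$-eigenspace of $A_1$, so that $\F_q^n = W_1\oplus V_1$. Since $A_1$ and $A_2$ are diagonal, $\comm(A_1)$ and $\comm(A_2)$ are closed under transpose and preserve the respective eigenspaces; hence as $M_1$ ranges over $\comm(A_1)$ the vector $M_1^{\top}e_n$ ranges over every nonzero vector of $W_1$, and as $M_2$ ranges over $\comm(A_2)$ the column $M_2 e_1$ ranges over every nonzero vector of $W_2$. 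Therefore $(M_1 X M_2)_{n,1} = u^{\top} X v$, where $(u,v)$ can be made an arbitrary pair in $(W_1\setminus\{0\})\times(W_2\setminus\{0\})$. If such $M_1,M_2$ can be chosen so that $M_1 X M_2$ satisfies the hypothesis of Lemma~\ref{Lem:PivotJAJ=Weak} (for $n\ge 4$) or of Lemma~\ref{Lem:PivotJAJ=Strong} (which also covers $n=3$), then $M_1 X M_2 \in \comm(J)\comm(A)\comm(J)$, and since the centralizers are subgroups, $X\in M_1^{-1}\comm(J)\comm(A)\comm(J)M_2^{-1}\subseteq\comm(A_1)\comm(J)\comm(A)\comm(J)\comm(A_2)$.

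For part~(a) with $n\ge 4$ it then suffices to find $u\in W_1\setminus\{0\}$, $v\in W_2\setminus\{0\}$ with $u^{\top}Xv\ne 0$, i.e. to see that the bilinear pairing $(u,v)\mapsto u^{\top}Xv$ on $W_1\times W_2$ is not identically zero. It is identically zero precisely when $XW_2\subseteq W_1^{\perp}$; but $\dim XW_2 = \dim W_2 = n-m_2$ and $\dim W_1^{\perp} = m_1$, and the hypothesis $m_1+m_2<n$ gives $n-m_2 > m_1$, ruling this out. When $n=3$ (which forces $m_1=m_2=1$) Lemma~\ref{Lem:PivotJAJ=Weak} is unavailable, so I would instead arrange the full hypothesis of Lemma~\ref{Lem:PivotJAJ=Strong}: here $\comm(A_1)=\comm(A)=\comm(A_2)$ is the copy of $\GL_2\times\GL_1$ fixing $\langle e_1,e_3\rangle$ and $\langle e_2\rangle$, and one checks directly that $M_1,M_2$ in this group can be chosen so that $M_1XM_2$ has nonzero lower-left entry and invertible lower-left $2\times 2$ block. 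Unwinding the block structure, this reduces to choosing two vectors in $\F_q^2$ (the $\{1,3\}$-parts of row $3$ of $M_1$ and of column $1$ of $M_2$) each outside a union of at most two proper subspaces; that the forbidden subspaces are proper follows from invertibility of $X$ (using the Schur complement of the $(2,2)$-entry when it is nonzero).

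For part~(b), $m_1 = n/2$, so $n$ is even and $n\ge 4$. If the pairing on $W_1\times W_2$ is not identically zero, then $X\in\comm(A_1)\comm(J)\comm(A)\comm(J)\comm(A_2)$ exactly as above. Otherwise $XW_2\subseteq W_1^{\perp}$. Let $P$ be the permutation matrix of any permutation of $\{1,\dots,n\}$ interchanging the coordinate set underlying $W_1$ with that underlying $V_1$; such a $P$ exists precisely because $\dim W_1 = \dim V_1 = n/2$, it swaps the subspaces $W_1$ and $V_1$, and one checks that $PA_1P^{-1}$ is the matrix displayed in the statement. Since $X\in P\,\comm(A_1)\comm(J)\comm(A)\comm(J)\comm(A_2)$ iff $P^{-1}X\in\comm(A_1)\comm(J)\comm(A)\comm(J)\comm(A_2)$, I rerun the reduction for $P^{-1}X$: now $(M_1P^{-1}XM_2)_{n,1} = (PM_1^{\top}e_n)^{\top}Xv$ with $PM_1^{\top}e_n$ ranging over $PW_1\setminus\{0\} = V_1\setminus\{0\}$, so this succeeds as soon as the pairing $(w,v)\mapsto w^{\top}Xv$ on $V_1\times W_2$ is not identically zero, i.e. $XW_2\not\subseteq V_1^{\perp}$. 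If both pairings vanished identically, then $XW_2\subseteq W_1^{\perp}\cap V_1^{\perp} = (W_1+V_1)^{\perp} = \{0\}$, contradicting $W_2\ne\{0\}$; hence $X$ lies in one of the two sets.

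The main obstacle is the $n=3$ case of part~(a): there the convenient Lemma~\ref{Lem:PivotJAJ=Weak} does not apply, so one cannot merely produce a nonzero corner entry but must chase the full $\mathrm{LDU}$-type hypothesis of Lemma~\ref{Lem:PivotJAJ=Strong}, and the bookkeeping — tracking which eigenspaces $e_1$ and $e_n$ lie in and exploiting the transpose/block structure of the centralizers — is most delicate there, even though every inequality actually needed is of the ``a plane is not a union of two lines'' type.
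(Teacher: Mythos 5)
Your argument is essentially the same as the paper's, recast in invariant language. Where the paper works directly with explicit block submatrices (showing $\begin{bmatrix}X_{11}&X_{13}\\X_{31}&X_{33}\end{bmatrix}$ has rank at least $n-m_1-m_2>0$ and then using block row/column operations from $\comm(A_1),\comm(A_2)$ to move a nonzero entry into the corner), you phrase exactly the same dimension count as the nondegeneracy of the pairing $(u,v)\mapsto u^{\top}Xv$ on $W_1\times W_2$, and then hit a nonzero value with $M_1^{\top}e_n$, $M_2e_1$. Your treatment of part (b) is likewise the paper's argument, though slightly cleaner: the paper only inspects the single column $Xe_n$, splitting on whether its $W_1$- or $V_1$-component is nonzero, while you observe that $XW_2\subseteq W_1^{\perp}\cap V_1^{\perp}=\{0\}$ is impossible outright.

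One thing worth knowing: you correctly flagged that Lemma~\ref{Lem:PivotJAJ=Weak} is unavailable at $n=3$, and sketched but did not complete a replacement. The paper's own proof of this lemma has exactly the same gap --- it invokes Lemma~\ref{Lem:PivotJAJ=Weak} unconditionally, which requires $n\geq 4$ --- but this is harmless because the lemma is only ever applied in Proposition~\ref{Prop:Pivot>=4} (for $n\geq 4$), and the $n=3$ case of the pivot-to-pivot connection is established separately and by a different, more delicate argument (Lemma~\ref{Lem:PivotAStrongA=All3} and Corollary~\ref{Cor:PivotAJAJA3}, exploiting that for $n=3$ all the centralizers coincide). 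So you were right that the $n=3$ bookkeeping is where the real work lies, and in the paper it is isolated into its own lemma rather than being an appendage of this one; your sketch of ``avoid a union of two proper subspaces in $\F_q^2$'' is on the right track but, as you acknowledge, stops short of the full case analysis of Lemma~\ref{Lem:PivotAStrongA=All3}.
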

\begin{proof}
For any $X\in\GL_n(\F_q)$, we break it down into block form $X=\begin{bmatrix}X_{11}&X_{12}&X_{13}\\X_{21}&X_{22}&X_{23}\\X_{31}&X_{32}&X_{33}\end{bmatrix}$ where the rows are grouped into the first $n-m_1-1$ rows, the next $m_1$ rows, and the last row, and the columns are grouped into the first $n-m_2-1$ columns, the next $m_2$ columns, and the last column.

The submatrix $\begin{bmatrix}X_{11}&X_{13}\\X_{31}&X_{33}\end{bmatrix}$ has rank at least $n-m_1-m_2$. If $m_1+m_2<n$, then this submatrix is non-zero. So we can find invertible matrices $\begin{bmatrix}R_{11}&R_{12}\\R_{21}&R_{22}\end{bmatrix}$ and $\begin{bmatrix}R'_{11}&R'_{12}\\R'_{21}&R'_{22}\end{bmatrix}$ such that $\begin{bmatrix}R_{11}&R_{12}\\R_{21}&R_{22}\end{bmatrix}\begin{bmatrix}X_{11}&X_{13}\\X_{31}&X_{33}\end{bmatrix}\begin{bmatrix}R'_{11}&R'_{12}\\R'_{21}&R'_{22}\end{bmatrix}$ has a non-zero lower left entry. 

Let $R=\begin{bmatrix}R'_{11}&&R'_{12}\\&I_{m_1}&\\R'_{21}&&R'_{22}\end{bmatrix}$ and $R'=\begin{bmatrix}R'_{11}&&R'_{12}\\&I_{m_2}&\\R'_{21}&&R'_{22}\end{bmatrix}$. Then $RXR'$ will have non-zero lower left entry. So by Lemma~\ref{Lem:PivotJAJ=Weak}, $RXR'\in\comm(J)\comm(A)\comm(J)$. Since $R\in\comm(A_1)$ and $R'\in\comm(A_2)$, we are done.

Now suppose $m_1=\frac{n}{2}$. Then $\begin{bmatrix}X_{13}\\X_{23}\\X_{33}\end{bmatrix}$ is in fact the last column of $X$. Since $X$ is invertible, entries in this column cannot all be zero. Let $P$ be any permutation matrix such that $PA_1P^{-1}=\begin{bmatrix}y_1I_{n-m_1-1}&&\\&x_1I_{m_1}&\\&&y_1\end{bmatrix}$. Then $P$ swaps the $\frac{n}{2}$ rows involved in $X_{23}$ with the $\frac{n}{2}$ rows involved in $X_{13}$ and $X_{33}$. Then $X$ or $PX$ will have a block structure $\begin{bmatrix}X'_{11}&X'_{12}&X'_{13}\\X'_{21}&X'_{22}&X'_{23}\\X'_{31}&X'_{32}&X'_{33}\end{bmatrix}$ where the submatrix $\begin{bmatrix}X'_{11}&X'_{13}\\X'_{31}&X'_{33}\end{bmatrix}$ is non-zero. 

So we can find invertible matrices $\begin{bmatrix}R_{11}&R_{12}\\R_{21}&R_{22}\end{bmatrix}$ and $\begin{bmatrix}R'_{11}&R'_{12}\\R'_{21}&R'_{22}\end{bmatrix}$ such that $\begin{bmatrix}R_{11}&R_{12}\\R_{21}&R_{22}\end{bmatrix}\begin{bmatrix}X'_{11}&X'_{13}\\X'_{31}&X'_{33}\end{bmatrix}\begin{bmatrix}R'_{11}&R'_{12}\\R'_{21}&R'_{22}\end{bmatrix}$ has a non-zero lower left entry. 

Let $R=\begin{bmatrix}R'_{11}&&R'_{12}\\&I_{n/2}&\\R'_{21}&&R'_{22}\end{bmatrix}$ and $R'=\begin{bmatrix}R'_{11}&&R'_{12}\\&I_{n/2}&\\R'_{21}&&R'_{22}\end{bmatrix}$. Then $RXR'$ or $RPXR'$ will have non-zero lower left entry. So by Lemma~\ref{Lem:PivotJAJ=Weak}, $RXR'\in\comm(J)\comm(A)\comm(J)$ or $RPXR'\in\comm(J)\comm(A)\comm(J)$. 

Since $R\in\comm(A_1)$ and $R'\in\comm(A_2)$, we see that $X$ or $PX$ is in $\comm(A_1)\comm(J)\comm(A)\comm(J)\comm(A_2)$. So we are done.
\end{proof}

\begin{lem}
Suppose $q\neq 2$ and $n\geq 3$. Let $A'$ be a diagonal matrix whose upper left entry and lower right entry are identical. Then $A',J$ are both powers of $A'J$. In particular, $A'$ and $J$ have distance $2$ in the projectively reduced power graph of $\GL_n(\F_q)$.
\end{lem}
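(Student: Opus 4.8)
The plan is to mimic the earlier argument showing that $A$ and $J$ are both powers of $AJ$ (the lemma whose proof uses $AJ = JA$, $J^p = I$, and $A^{q^2-1} = I$), but now with $A'$ in place of $A$. First I would record the two structural facts that make this work: since $A'$ is diagonal, it commutes with every diagonal matrix, and in particular I claim $A'J = JA'$. The matrix $J$ differs from the identity only in its top-right corner (it sends $e_n \mapsto e_n + e_1$ in some basis, fixing everything else), so conjugating $J$ by the diagonal matrix $A' = \mathrm{diag}(a_1,\dots,a_n)$ rescales that corner entry by $a_1/a_n$. By hypothesis the upper-left and lower-right entries of $A'$ agree, i.e. $a_1 = a_n$, so $A'JA'^{-1} = J$, which gives $A'J = JA'$.

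Next I would pin down the orders. Let $p$ be the characteristic of $\F_q$; then $J^p = I$ since $J$ is unipotent with a single nontrivial Jordan block of size $2$ (hence $(J-I)^2 = 0$, so $J^p = I$ because $p \geq 2$). And $A'$ is a diagonal matrix over $\F_q$, so every diagonal entry lies in $\F_q^*$ and satisfies $a_i^{q-1} = 1$; therefore $A'^{\,q-1} = I$, and in particular $A'^{\,q^2-1} = I$. Now, exactly as in the earlier lemma, since $A'$ and $J$ commute with coprime-ish orders we can extract each from the product: because $\gcd$ considerations let us write $(A'J)^{m} = A'$ for a suitable exponent $m$ congruent to $1$ modulo the order of $A'$ and to $0$ modulo $p$ (e.g. $m = q^2$ works since $q^2 \equiv 0 \bmod p$ and $q^2 \equiv 1 \bmod (q^2-1)$, noting $A'^{\,q^2-1}=I$), and similarly $(A'J)^{m'} = J$ for a suitable $m'$ (e.g. $m' = (q^2-1)^2$, using $J^{q^2-1} = J^{-1}$ so that $(A'J)^{(q^2-1)^2} = (A'^{\,q^2-1})^{q^2-1} J^{-(q^2-1)} = J$). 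Hence both $A'$ and $J$ are powers of $A'J$.

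Finally, to conclude the distance claim: $A'J$ is not a scalar multiple of the identity (it is not even diagonal, since it has the nonzero $(1,n)$ entry inherited from $J$ while also being non-scalar if $A'$ is non-scalar, and even if $A'$ is scalar the off-diagonal entry rules out centrality), so $A'J$ is a genuine vertex of the projectively reduced power graph; likewise $A'$ and $J$ are non-central vertices (this needs $A'$ non-scalar, which should be assumed or is automatic in the intended application — $J$ is visibly non-scalar). Then $A' \to A'J \to J$ is a path of length $2$, and $A' \neq J$, so their distance is exactly $2$. I do not anticipate a real obstacle here; the only point requiring a little care is the bookkeeping on exponents to extract $A'$ and $J$ from $A'J$, and the tacit non-scalar hypotheses needed to make every matrix in sight an actual vertex of the graph — these are the same minor caveats already present in the analogous lemma for $A$.
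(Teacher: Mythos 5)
Your proof is correct and follows essentially the same route as the paper's: establish $A'J = JA'$ (you verify this by the conjugation computation, which the paper treats as immediate), then extract $A'$ and $J$ as powers of $A'J$ via a Chinese-remainder-style choice of exponent. The only substantive difference is the exponents you use: you carry over $q^2$ and $(q^2-1)^2$ from the earlier lemma (where they were forced because the $2\times 2$ block in $A$ had order dividing $q^2-1$), but here $A'$ is fully diagonal, so $A'^{\,q-1}=I$, and the paper uses the sharper exponents $q$ and $(q-1)^2$. Your larger exponents still work — $q^2\equiv 1\pmod{q-1}$ and $(q^2-1)^2\equiv 1\pmod{p}$ — so this is a harmless inefficiency, not an error. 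You are also right to flag the implicit hypothesis that $A'$ is non-scalar: the ``both powers of $A'J$'' clause needs no such hypothesis, but the ``distance $2$'' clause does require $A'$ and $A'J$ to lie outside the center, which holds in the paper's intended application (where $A'$ is a pivot matrix) but is not stated in the lemma itself.
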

\begin{proof}
Let $p$ be the characteristic of the field $\F_q$. Since $A_iJ=JA_i$, $J^p=I$ and $A_i^{q-1}=I$, we have $(A_iJ)^{q}=A_i$.

On the other hand, $J^{q-1}=J^{-1}$. So we have $(A_iJ)^{(q-1)^2}=J$.
\end{proof}

%\begin{lem}
%For $q\neq 2$ and $n\geq 3$, and for $i=1,2$, $A_i,J$ are both powers of $A_iJ$. In particular, $A_i$ and $J$ have distance $2$ in the projectively reduced power graph of $\GL_n(\F_q)$.
%\yang{change statement to be more universal, to include later $A'_1$.}
%\end{lem}
%\begin{proof}
%Let $p$ be the characteristic of the field $\F_q$. Since $A_iJ=JA_i$, $J^p=I$ and $A_i^{q-1}=I$, we have $(A_iJ)^{q}=A_i$.
%
%On the other hand, $J^{q-1}=J^{-1}$. So we have $(A_iJ)^{(q-1)^2}=J$.
%\end{proof}

\begin{prop}
\label{Prop:Pivot>=4}
For $q\neq 2$ and $n\geq 4$, any two pivot matrices in the projectively reduced power graph of $\GL_n(\F_q)$ will have distance at most $8$.
\end{prop}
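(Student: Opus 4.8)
The plan is to follow the template of Proposition~\ref{Prop:PivotJordan}, with Lemma~\ref{Lem:PivotAJAJAUnbalanced} (the five-factor decomposition through $\comm(A_1)\comm(J)\comm(A)\comm(J)\comm(A_2)$) playing the role that Corollary~\ref{Lem:PivotJAJAJ} played there. Given two pivot matrices $P_1,P_2$, I first put each into a convenient diagonal shape: write $P_i=X_iA_iX_i^{-1}$ with $A_i=\mathrm{diag}(x_iI_{n-m_i-1},\,y_iI_{m_i},\,x_i)$, where $x_i$ is the eigenvalue of $P_i$ of larger multiplicity (either one if the two multiplicities agree) and $y_i$ the other. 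Since a pivot matrix has both eigenvalues present, $m_i\ge 1$; since $n\ge 4$ and there are only two eigenvalues, the larger multiplicity $n-m_i$ is at least $2$, so $1\le m_i\le n-2$ and the block shape is legitimate, with in fact $m_i\le\lfloor n/2\rfloor$. Each $A_i$ is a diagonal matrix whose first and last diagonal entries coincide, so by the lemma immediately preceding this proposition $A_i$ and $J$ are both powers of $A_iJ$ and hence lie at distance $2$ (all of $A_i$, $J$, $A_iJ$ being non-central); applying the same lemma to $A=\mathrm{diag}(1,x,1,\dots,1)$, whose first and last entries are both $1$ because $n\ge 3$, shows that $A$ and $J$ are at distance $2$ as well.

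Now set $X=X_1^{-1}X_2$. The generic case is $m_1+m_2<n$, which holds automatically when $n$ is odd and, when $n$ is even, fails only if $m_1=m_2=n/2$. In this case Lemma~\ref{Lem:PivotAJAJAUnbalanced} gives $X=N_1N_2N_3N_4N_5$ with $N_1\in\comm(A_1)$, $N_2,N_4\in\comm(J)$, $N_3\in\comm(A)$, $N_5\in\comm(A_2)$, and then one reads off a walk $P_1\to\cdots\to P_2$ of length $8$: starting from $P_1=X_1A_1X_1^{-1}=(X_1N_1)A_1(X_1N_1)^{-1}$, conjugate the length-$2$ path $A_1$--$A_1J$--$J$ by $X_1N_1$ and absorb $N_2\in\comm(J)$ to reach $(X_1N_1N_2)J(X_1N_1N_2)^{-1}$; then conjugate $J$--$AJ$--$A$ and absorb $N_3\in\comm(A)$; then conjugate $A$--$AJ$--$J$ and absorb $N_4\in\comm(J)$; then conjugate $J$--$A_2J$--$A_2$ and absorb $N_5\in\comm(A_2)$, arriving at $(X_1N_1\cdots N_5)A_2(\cdots)^{-1}=(X_1X)A_2(X_1X)^{-1}=X_2A_2X_2^{-1}=P_2$. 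Every intermediate vertex is a conjugate of $A_1J$, $AJ$, or $A_2J$, hence non-central, so this is a legitimate walk and $\mathrm{dist}(P_1,P_2)\le 8$.

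The only remaining case is $n$ even with $m_1=m_2=n/2$. Here the second clause of Lemma~\ref{Lem:PivotAJAJAUnbalanced} supplies a permutation matrix $P$ that interchanges the two coordinate blocks of size $n/2$ underlying $A_1$, with $PA_1P^{-1}=A_1':=\mathrm{diag}(y_1I_{n/2-1},x_1I_{n/2},y_1)$, and with $X$ lying either in $\comm(A_1)\comm(J)\comm(A)\comm(J)\comm(A_2)$ or in $P\,\comm(A_1)\comm(J)\comm(A)\comm(J)\comm(A_2)$. Two observations dispose of the second alternative. First, $P$ may be taken to be an involution, $P^2=I$, since it merely swaps two coordinate sets of equal size. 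Second, $A_1$ and $A_1'$ have exactly the same pair of eigenspaces --- only the labels $x_1,y_1$ are exchanged --- so $\comm(A_1)=\comm(A_1')$, and $A_1'$ is again diagonal with matching first and last entries, hence at distance $2$ from $J$. If $X\in\comm(A_1)\comm(J)\comm(A)\comm(J)\comm(A_2)$ we are in the generic case. Otherwise $PX\in P^2\,\comm(A_1)\comm(J)\comm(A)\comm(J)\comm(A_2)=\comm(A_1')\comm(J)\comm(A)\comm(J)\comm(A_2)$; and since $P_1=(X_1P)A_1'(X_1P)^{-1}$ (using $P^2=I$) while $(X_1P)(PX)=X_1X=X_2$, the same length-$8$ walk --- now begun from the representative $A_1'$ of $P_1$ with conjugator $X_1P$, the first absorbed factor $N_1$ lying in $\comm(A_1')=\comm(A_1)$ --- connects $P_1$ to $P_2$.

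I expect the balanced even case to be the only real obstacle: Lemma~\ref{Lem:PivotAJAJAUnbalanced} there covers $\GL_n(\F_q)$ only up to a leftover coset $P\,(\cdots)$, and the trick is to absorb that coset by re-presenting $P_1$ through the conjugate diagonal matrix $A_1'$ --- which is legitimate precisely because $\comm(A_1)=\comm(A_1')$ and $P$ can be chosen involutive, so that the factor $N_1$ furnished by the lemma still commutes with the representative actually used and the walk goes through unchanged. Everything else is a routine transcription of the Jordan-to-Jordan argument.
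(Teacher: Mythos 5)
Your proposal is correct and follows essentially the same route as the paper: normalize so $m_i\le n/2$, invoke Lemma~\ref{Lem:PivotAJAJAUnbalanced} to factor $X=X_1^{-1}X_2$ (or $PX$) through $\comm(A_1)\comm(J)\comm(A)\comm(J)\comm(A_2)$, and conjugate the distance-$2$ hops $A_1$--$J$--$A$--$J$--$A_2$ across the five factors. The only addition is that you explicitly record that $P$ may be taken to be an involution and that $\comm(A_1)=\comm(A_1')$ — both of which the paper uses silently in the balanced $m_1=m_2=n/2$ case — so this is a clarification of, not a departure from, the paper's argument.
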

\begin{proof}
Suppose we have two Jordan pivot matrices. Say they are $X_1A_1X_1^{-1}$ and $X_2A_2X_2^{-1}$ for invertible matrices $X_1,X_2$, and $A_1=\begin{bmatrix}x_1I_{n-m_1-1}&&\\&y_1I_{m_1}&\\&&x_1\end{bmatrix}$ and $A_2=\begin{bmatrix}x_2I_{n-m_2-1}&&\\&y_2I_{m_2}&\\&&x_2\end{bmatrix}$ for some integers $m_1,m_2\leq n-2$, and some $x_1,x_2,y_1,y_2\in\F_q^*$ such that $x_1\neq y_1$ and $x_2\neq y_2$. By choosing $X_1,X_2$ carefully, we may assume that $m_1,m_2\leq\frac{n}{2}$. Then we must either have $m_1+m_2<n$, or we must have $m_1=m_2=\frac{n}{2}$.

Set $X=X_1^{-1}X_2$. By Lemma~\ref{Lem:PivotAJAJAUnbalanced}, we must either have $X=M_1M_2M_3M_4M_5$ or have $PX=M_1M_2M_3M_4M_5$, where $M_1\in\comm(A_1),M_3\in\comm(A),M_5\in\comm(A_2)$ and $M_2,M_4\in\comm(J)$, and $P$ is a permutation matrix such that $PA_1P^{-1}=\begin{bmatrix}y_1I_{n-m_1-1}&&\\&x_1I_{m_1}&\\&&y_1\end{bmatrix}$.

Suppose we have $X=M_1M_2M_3M_4M_5$. Then we have the following path.
\begin{align*}
X_1A_1X_1^{-1}=&X_1M_1A_1M_1^{-1}X_1^{-1}\\
\xrightarrow{\text{distance 2}}&X_1M_1JM_1^{-1}X_1^{-1}=X_1M_1M_2JM_2^{-1}M_1^{-1}X_1^{-1}\\
\xrightarrow{\text{distance 2}}&X_1M_1M_2AM_2^{-1}M_1^{-1}X_1^{-1}=X_1M_1M_2M_3AM_3^{-1}M_2^{-1}M_1^{-1}X_1^{-1}\\
\xrightarrow{\text{distance 2}}&X_1M_1M_2M_3JM_3^{-1}M_2^{-1}M_1^{-1}X_1^{-1}=X_1M_1M_2M_3M_4JM_4^{-1}M_3^{-1}M_2^{-1}M_1^{-1}X_1^{-1}\\
\xrightarrow{\text{distance 2}}&X_1M_1M_2M_3M_4A_2M_4^{-1}M_3^{-1}M_2^{-1}M_1^{-1}X_1^{-1}=X_1M_1M_2M_3M_4M_5A_2M_5^{-1}M_4^{-1}M_3^{-1}M_2^{-1}M_1^{-1}X_1^{-1}\\
=&X_1XA_2X^{-1}X_1^{-1}=X_2A_2X_2^{-1}.
\end{align*}

Now suppose we have $PX=M_1M_2M_3M_4M_5$. Let $A'_1=PA_1P^{-1}$. Then we have the following path.

\begin{align*}
X_1A_1X_1^{-1}=&X_1P^{-1}A'_1PX_1^{-1}=X_1P^{-1}M_1A'_1M_1^{-1}PX_1^{-1}\\
\xrightarrow{\text{distance 2}}&X_1P^{-1}M_1JM_1^{-1}PX_1^{-1}=X_1P^{-1}M_1M_2JM_2^{-1}M_1^{-1}PX_1^{-1}\\
\xrightarrow{\text{distance 2}}&X_1P^{-1}M_1M_2AM_2^{-1}M_1^{-1}PX_1^{-1}=X_1P^{-1}M_1M_2M_3AM_3^{-1}M_2^{-1}M_1^{-1}PX_1^{-1}\\
\xrightarrow{\text{distance 2}}&X_1P^{-1}M_1M_2M_3JM_3^{-1}M_2^{-1}M_1^{-1}PX_1^{-1}=X_1P^{-1}M_1M_2M_3M_4JM_4^{-1}M_3^{-1}M_2^{-1}M_1^{-1}PX_1^{-1}\\
\xrightarrow{\text{distance 2}}&X_1P^{-1}M_1M_2M_3M_4A_2M_4^{-1}M_3^{-1}M_2^{-1}M_1^{-1}PX_1^{-1}\\
=&X_1P^{-1}M_1M_2M_3M_4M_5A_2M_5^{-1}M_4^{-1}M_3^{-1}M_2^{-1}M_1^{-1}PX_1^{-1}\\
=&X_1P^{-1}PXA_2X^{-1}P^{-1}PX_1^{-1}=X_2A_2X_2^{-1}.
\end{align*}
\end{proof}

\subsection{From Pivot to Pivot when $n=3$}

For $q\neq 2,n=3$, we do not have Lemma~\ref{Lem:PivotJAJ=Weak}. So we are forced to use Lemma~\ref{Lem:PivotJAJ=Strong} which requires a stronger assumption. For this end, we need a stronger version of Lemma~\ref{Lem:PivotAJAJAUnbalanced}.

As before, we consider the group $\GL_3(\F_q)$. In this group, pick $J=\begin{bmatrix}1&&1\\&1&\\&&1\end{bmatrix}$, which is a Jordan pivot matrix. We also pick $A=\begin{bmatrix}x&&\\&y&\\&&x\end{bmatrix}$ for any $x,y\in\F_q^*$ such that $x\neq y$. note that the specific values of $x,y$ does not matter, because the centralizer $\comm(A)$ will always be the same. 

\begin{lem}
\label{Lem:PivotAStrongA=All3}
For $q\neq 2$ and $n=3$, for any $X\in\GL_n(\F_q)$, we can find $R_1,R_2\in\comm(A)$ such that $R_1XR_2$ has non-zero lower left entry, and invertible lower left $2\times 2$ block.
\end{lem}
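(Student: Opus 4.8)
The plan is to proceed by a careful case analysis on the rank structure of the last column and last row of $X$, mimicking the strategy of Lemma~\ref{Lem:PivotStrongJA=All} and Lemma~\ref{Lem:PivotAJAJAUnbalanced} but with the extra leverage that $\comm(A)$ is much larger here. First I would note that $A=\begin{bmatrix}x&&\\&y&\\&&x\end{bmatrix}$ with $x\neq y$ has centralizer $\comm(A)=\left\{\begin{bmatrix}a&0&b\\0&c&0\\d&0&e\end{bmatrix}:\begin{bmatrix}a&b\\d&e\end{bmatrix}\in\GL_2,\ c\in\F_q^*\right\}$, so multiplying $X$ on the left (resp.\ right) by an element of $\comm(A)$ lets me apply an arbitrary invertible linear transformation to the pair (row $1$, row $3$) (resp.\ column $1$, column $3$) while leaving row/column $2$ free to be scaled. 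Write $X=\begin{bmatrix}x_{11}&x_{12}&x_{13}\\x_{21}&x_{22}&x_{23}\\x_{31}&x_{32}&x_{33}\end{bmatrix}$. The goal is to arrange, after left/right multiplication by $\comm(A)$ elements, that the lower-left entry is non-zero \emph{and} the lower-left $2\times 2$ block $\begin{bmatrix}x_{21}&x_{22}\\x_{31}&x_{32}\end{bmatrix}$ is invertible.

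The key steps in order: (1) Observe that the $2\times 3$ submatrix formed by rows $2$ and $3$ has rank $2$ (since $X$ is invertible, no two rows are dependent and in fact rows $2,3$ are independent). Similarly columns $1,2$ restricted to rows $2,3$ — I want to make $\begin{bmatrix}x_{21}&x_{22}\\x_{31}&x_{32}\end{bmatrix}$ invertible. (2) Using the right action of $\comm(A)$, I can do arbitrary column operations among columns $1,3$ and scale column $2$; using the left action I can do arbitrary row operations among rows $1,3$ and scale row $2$. The obstruction to invertibility of the lower-left $2\times 2$ block is that rows $2$ and $3$ might, after projecting to the first two coordinates, be dependent — but since I cannot touch row $2$ except by scaling, and I can add multiples of row $1$ to row $3$, I should first use a column operation (adding column $3$ to one of columns $1,2$, but column $2$ can only be scaled, so adding column $3$ to column $1$) to "spread" the information. (3) The cleanest approach: argue that the span of the last columns of $X$ (column $3$) together with column $1$ already has the rows $2,3$ components spanning $\F_q^2$ generically, and handle the degenerate cases by a single elementary operation from $\comm(A)$ as in Lemma~\ref{Lem:PivotStrongJA=All}. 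Concretely, I expect to reduce to showing: there is $R_2\in\comm(A)$ so that the lower-left $2\times 2$ block of $XR_2$ is invertible, and then a further $R_1\in\comm(A)$ (acting on rows $1,3$) fixing that block's invertibility while making the $(3,1)$ entry non-zero.

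For step toward invertibility of the $2\times2$ block: rows $2$ and $3$ of $X$ are linearly independent $3$-vectors. Projecting onto coordinates $1,2$ gives vectors $u=(x_{21},x_{22}),v=(x_{31},x_{32})\in\F_q^2$. If $u,v$ are independent we are done with that part. If not, the failure is "cured" by a column operation mixing in column $3$: since rows $2,3$ are independent in $\F_q^3$, the coordinate-$3$ entries $(x_{23},x_{33})$ cannot be a scalar multiple of $(u,v)$ in the appropriate sense — more precisely, the $2\times 3$ matrix with rows $(x_{21},x_{22},x_{23})$ and $(x_{31},x_{32},x_{33})$ has rank $2$, so some $2\times 2$ minor is non-zero; if the $\{1,2\}$ minor vanishes, then either the $\{1,3\}$ or $\{2,3\}$ minor is non-zero, and a single operation from $\comm(A)$ — adding column $3$ to column $1$, or swapping column $3$ into the picture — fixes it. I would write this out via the explicit minors. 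Then for the $(3,1)$ entry: after fixing the $2\times 2$ block invertible, I have freedom to replace row $3$ by row $3$ plus a multiple of row $1$ (an element of $\comm(A)$), and row $1$ is not identically zero on columns $1,2$ unless... here I need that the full matrix is invertible to guarantee row $1$ restricted to column $1$, or column $1$ together with the block, gives something non-degenerate; generically $x_{31}\neq 0$ already, and if $x_{31}=0$ I add row $1$ to row $3$, which changes $x_{31}$ to $x_{11}$ — if that is still $0$ then column $1$ is $(0,x_{21},0)^T$, and then I can use a right operation (add column $2$ or $3$ into column $1$) to make it non-zero while preserving invertibility of the $2\times 2$ block, since I have enough room.

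The main obstacle I anticipate is the simultaneous constraint: making the $(3,1)$ entry non-zero \emph{without} destroying invertibility of the lower-left $2\times 2$ block, given that $\comm(A)$ only allows row operations within $\{1,3\}$ (not touching row $2$ except scaling) and column operations within $\{1,3\}$ (not touching column $2$ except scaling). This is exactly the $n=3$ tightness that forced the authors away from Lemma~\ref{Lem:PivotJAJ=Weak}. I expect the resolution is a short explicit case split — on whether $x_{31}=0$, then on whether $x_{11}=0$ after the row operation, then on whether column $1$ is a scalar multiple of $e_2$ — each case dispatched by one elementary $\comm(A)$-move, checking that the relevant $2\times 2$ determinant stays non-zero (or can be restored), using that $X\in\GL_3$ keeps all three columns and all three rows independent. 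The bookkeeping is routine linear algebra over $\F_q$; the only subtlety is that we must never need to add row $2$ to another row or column $2$ to another column, and I will verify at each branch that we don't.
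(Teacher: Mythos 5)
Your sketch has the right ingredients — you correctly identify $\comm(A)$ as the block structure that mixes rows/columns $1,3$ arbitrarily while only scaling row/column $2$, and you correctly flag that the danger is making the $(3,1)$ entry non-zero without destroying the lower-left block — but the plan you lay out is the harder ordering of the two goals, and it contains a concrete error that matters. You propose first to make the lower-left $2\times 2$ block invertible, then to repair the $(3,1)$ entry, and in the branch where $x_{31}=x_{11}=0$ you say ``I can use a right operation (add column $2$ or $3$ into column $1$).'' Adding column $2$ to column $1$ is \emph{not} an operation in $\comm(A)$; only column $3$ may be mixed in, as you yourself noted a few lines earlier. If you restrict to column $3$, the new $(3,1)$ entry is $\gamma x_{33}$, and $x_{33}$ can be zero; in fact the case $x_{11}=x_{31}=x_{33}=0$ (with $x_{13}\neq 0$ forced by invertibility) is one where no single row operation and no single column operation from $\comm(A)$ can produce a non-zero $(3,1)$ entry --- you need a row operation and a column operation composed in the right order, and then you must also re-verify the block determinant. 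So the sequential ``fix block, then fix corner'' plan runs into exactly the entanglement you anticipated, and resolving it would require a more careful combined argument than you have given.

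The paper's proof sidesteps this by reversing the priorities. Since $X$ is invertible, the $\{1,3\}\times\{1,3\}$ submatrix $\begin{bmatrix}x_{11}&x_{13}\\x_{31}&x_{33}\end{bmatrix}$ cannot be the zero matrix, and $\comm(A)$ acts on exactly those rows and columns by arbitrary $\GL_2$ transformations, so one can normalize that corner to $\begin{bmatrix}0&1\\1&0\end{bmatrix}$ or $\begin{bmatrix}0&0\\1&0\end{bmatrix}$. This makes the $(3,1)$ entry equal to $1$ for free, and all that remains is to make the lower-left block invertible. That remaining step is a short determinant computation, using one carefully chosen lower-triangular $L\in\comm(A)$ in each of a handful of sub-cases, and the identity $\det X=ad+be-c$ (in the paper's notation) to show the relevant products are non-zero. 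Your decision to build around the lower-left block (which touches the immovable row $2$ and column $2$) rather than the $\{1,3\}$ corner (which is exactly what $\comm(A)$ controls) is what makes your version harder; I would recommend normalizing the $\{1,3\}$ corner first.
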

\begin{proof}
Suppose $X=\begin{bmatrix}x_{11}&x_{12}&x_{13}\\x_{21}&x_{22}&x_{23}\\x_{31}&x_{32}&x_{33}\end{bmatrix}$. Then since $X$ is invertible, the submatrix $\begin{bmatrix}x_{11}&x_{13}\\x_{31}&x_{33}\end{bmatrix}$ cannot be zero. So we can find $\begin{bmatrix}r_{11}&r_{12}\\r_{21}&r_{22}\end{bmatrix}$ and $\begin{bmatrix}r'_{11}&r'_{12}\\r'_{21}&r'_{22}\end{bmatrix}$ such that $\begin{bmatrix}r_{11}&r_{12}\\r_{21}&r_{22}\end{bmatrix}\begin{bmatrix}x_{11}&x_{13}\\x_{31}&x_{33}\end{bmatrix}\begin{bmatrix}r'_{11}&r'_{12}\\r'_{21}&r'_{22}\end{bmatrix}$ is either $\begin{bmatrix}0&1\\1&0\end{bmatrix}$ or $\begin{bmatrix}0&0\\1&0\end{bmatrix}$. 

Let $R=\begin{bmatrix}r_{11}&&r_{12}\\&1&\\r_{21}&&r_{22}\end{bmatrix}$ and $R'=\begin{bmatrix}r'_{11}&&r'_{12}\\&1&\\r'_{21}&&r'_{22}\end{bmatrix}$, then $R,R'\in\comm(A)$ and $RXR'=\begin{bmatrix}0& *& *\\ *& *& *\\ 1& *& 0\end{bmatrix}$ where the upper right entry is $0$ or $1$.

First, let us deal with the case $RXR'=\begin{bmatrix}0& a&1\\ b& c& d\\ 1& e& 0\end{bmatrix}$. 

If the lower left $2\times 2$ block is already invertible, then we are done. If the upper right $2\times 2$ block is invertible, then set $T=\begin{bmatrix}&&1\\&1&\\1&&\end{bmatrix}\in\comm(A)$. We would then have $TRXR'T$ with non-zero lower left entry and invertible lower left $2\times 2$ block as well. Since $T\in\comm(A)$ as well, we are also done.

Now suppose that the lower left $2\times 2$ block and the upper right $2\times 2$ block of $RXR'$ are both not invertible. Since the lower left block is not invertible, we must have $a,d\neq 0$. Since the upper right block is not invertible, we must have $b,e\neq 0$. Finally, since both the lower left block and the upper right block must have determinant $0$, we must have $c=ad=be$. Set $L=\begin{bmatrix}1&&\\&1&\\1&&1\end{bmatrix}\in\comm(A)$, then $RXR'L=\begin{bmatrix}1& a&1\\ b+d& c& d\\ 1& e& 0\end{bmatrix}$, and its lower left $2\times 2$ block has determinant $(b+d)e-c=de\neq 0$. So it is invertible, and we are done.

Second, let us deal with the case $RXR'=\begin{bmatrix}0& a&0\\ b& c& d\\ 1& e& 0\end{bmatrix}$. Again if the lower left $2\times 2$ block is already invertible, then we are done. Suppose that this is not the case, then $be-c=0$

Since $RXR'$ is invertible, we must have $a,d\neq 0$. Set $L=\begin{bmatrix}1&&\\&1&\\1&&1\end{bmatrix}\in\comm(A)$. If $b\neq 0$, then $LRXR'=\begin{bmatrix}0& a&0\\ b& c& d\\ 1& e+a& 0\end{bmatrix}$, and the lower left block has determinant $b(e+a)-c=ab\neq 0$. So we are done. Similarly, if $e\neq 0$, then $RXR'L=\begin{bmatrix}0& a&0\\ b+d& c& d\\ 1& e& 0\end{bmatrix}$, and the lower left block has determinant $(b+d)e-c=de\neq 0$. So we are done. 

Suppose $b=e=0$. Since the lower left block should not be invertible, we must have $c=0$ as well. So we have $RXR'=\begin{bmatrix}0& a&0\\ 0& 0& d\\ 1& 0& 0\end{bmatrix}$. Then $LRXR'L=\begin{bmatrix}0& a&0\\ d& 0& d\\ 1& a& 0\end{bmatrix}$, and the lower left block has determinant $ad\neq 0$. So we are done.

In all cases, we can find $R_1,R_2\in\comm(A)$ such that $R_1XR_2$ has non-zero lower left entry, and invertible lower left $2\times 2$ block.
\end{proof}

\begin{cor}
\label{Cor:PivotAJAJA3}
For $q\neq 2$ and $n=3$, $\GL_n(\F_q)=\comm(A)\comm(J)\comm(A)\comm(J)\comm(A)$.
\end{cor}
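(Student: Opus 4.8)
The plan is to combine the two reduction lemmas already available in dimension three in exactly the pattern that produces an $\comm(A)$-bracketed product: first use $\comm(A)$-multiplications on both sides of $X$ to reach the nondegeneracy hypothesis of Lemma~\ref{Lem:PivotJAJ=Strong}, and then apply that lemma. Note that this is the analogue for $n=3$ of Corollary~\ref{Lem:PivotJAJAJ}, except that we need the product to begin and end with $\comm(A)$ rather than $\comm(J)$; that is what makes it usable for connecting pivot matrices (which are conjugates of the $A$-type diagonal matrices), and it is the reason we cannot simply quote Corollary~\ref{Lem:PivotJAJAJ}.

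Concretely, given an arbitrary $X\in\GL_3(\F_q)$, I would first invoke Lemma~\ref{Lem:PivotAStrongA=All3} to obtain $R_1,R_2\in\comm(A)$ such that $R_1XR_2$ has non-zero lower left entry and invertible lower left $2\times 2$ block. Then I would apply Lemma~\ref{Lem:PivotJAJ=Strong} with $n=3$ to the matrix $R_1XR_2$, obtaining $R_1XR_2\in\comm(J)\comm(A)\comm(J)$. One compatibility point deserves a sentence: the matrix called $A$ in Lemma~\ref{Lem:PivotJAJ=Strong} is $\mathrm{diag}(1,x,1)$, whereas the $A$ of this subsection is $\mathrm{diag}(x,y,x)$; but both have the same centralizer (the matrices supported on the entries $(1,1),(1,3),(2,2),(3,1),(3,3)$), so the symbol $\comm(A)$ is unambiguous and the cited lemma applies without change.

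Finally, since $\comm(A)$ is a subgroup we have $R_1^{-1},R_2^{-1}\in\comm(A)$, and therefore
\[
X=R_1^{-1}(R_1XR_2)R_2^{-1}\in\comm(A)\bigl(\comm(J)\comm(A)\comm(J)\bigr)\comm(A)=\comm(A)\comm(J)\comm(A)\comm(J)\comm(A).
\]
I do not anticipate any real difficulty here: all the substantive work has already been localized in the case analysis of Lemma~\ref{Lem:PivotAStrongA=All3}, and the remaining argument is just the composition of that lemma with Lemma~\ref{Lem:PivotJAJ=Strong} together with the trivial bookkeeping of which factor lies in which centralizer. The only thing to watch is the centralizer-compatibility remark above.
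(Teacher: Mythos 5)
Your proof is exactly the paper's argument: apply Lemma~\ref{Lem:PivotAStrongA=All3} to move into the nondegenerate situation, then Lemma~\ref{Lem:PivotJAJ=Strong}, and absorb $R_1^{-1},R_2^{-1}$ into the flanking $\comm(A)$ factors. The compatibility remark about $\mathrm{diag}(1,x,1)$ versus $\mathrm{diag}(x,y,x)$ sharing a centralizer is correct and a worthwhile clarification, even though the paper leaves it implicit.
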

\begin{proof}
For any $X\in\GL_3(\F_q)$, by lemma~\ref{Lem:PivotAStrongA=All3}, we can find $R_1,R_2\in\comm(A)$ such that $R_1XR_2$ has non-zero lower left entry, and invertible lower left $2\times 2$ block. The by Lemma~\ref{Lem:PivotJAJ=Strong}, $R_1XR_2\in\comm(J)\comm(A)\comm(J)$. Therefore $X\in\comm(A)\comm(J)\comm(A)\comm(J)\comm(A)$.
\end{proof}

\begin{prop}
\label{Prop:Pivot3}
For $q\neq 2$ and $n=3$, any two pivot matrices in the projectively reduced power graph of $\GL_n(\F_q)$ will have distance at most $8$.
\end{prop}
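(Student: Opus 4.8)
The plan is to follow the template of the proof of Proposition~\ref{Prop:Pivot>=4} verbatim, only replacing Lemma~\ref{Lem:PivotAJAJAUnbalanced} by Corollary~\ref{Cor:PivotAJAJA3}. The one point requiring attention is that Corollary~\ref{Cor:PivotAJAJA3} produces a factorization whose two outer factors lie in $\comm(A)$ for the fixed pivot matrix $A=\begin{bmatrix}x&&\\&y&\\&&x\end{bmatrix}$, rather than in the centralizers of the two given pivot matrices; so the first step is to record that, in $\GL_3(\F_q)$, every pivot matrix has exactly this centralizer after a suitable choice of conjugator.

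First I would observe: since $n=3$, a pivot matrix has one eigenvalue of multiplicity $2$ and one of multiplicity $1$, so it is conjugate to $\begin{bmatrix}a&&\\&b&\\&&a\end{bmatrix}$ with $a\neq b$, where $a$ is the repeated eigenvalue. The centralizer of any such diagonal matrix is exactly $\comm(A)=\left\{\begin{bmatrix}*&0&*\\0&*&0\\ *&0&*\end{bmatrix}\ \text{invertible}\right\}$, independent of $a,b$; and since its $(1,1)$- and $(3,3)$-entries coincide it commutes with $J$, so by the lemma immediately preceding Proposition~\ref{Prop:Pivot>=4} (which is stated for $n\geq 3$) such a matrix and $J$ are both powers of their product, hence at distance $2$ from one another in the projectively reduced power graph. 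The same lemma, applied with $A'=A$, gives that $A$ is at distance $2$ from $J$.

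Then, given two pivot matrices, write them as $X_1A_1X_1^{-1}$ and $X_2A_2X_2^{-1}$ with $A_i=\begin{bmatrix}x_i&&\\&y_i&\\&&x_i\end{bmatrix}$, $x_i\neq y_i$, and set $X=X_1^{-1}X_2$. By Corollary~\ref{Cor:PivotAJAJA3}, $X=M_1M_2M_3M_4M_5$ with $M_1,M_3,M_5\in\comm(A)$ and $M_2,M_4\in\comm(J)$; because $\comm(A)=\comm(A_1)=\comm(A_2)$, we also get $M_1\in\comm(A_1)$ and $M_5\in\comm(A_2)$. Now, exactly as in Proposition~\ref{Prop:Pivot>=4}, conjugating throughout by $X_1M_1$ one walks the path $X_1A_1X_1^{-1}\to(\text{conjugate of }J)\to(\text{conjugate of }A)\to(\text{conjugate of }J)\to X_2A_2X_2^{-1}$, absorbing $M_1$ at the start and $M_2,M_3,M_4,M_5$ one after each successive hop; each of the four hops has length $2$ by the previous paragraph, so the total distance is at most $8$.

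The only genuine obstacle is the first observation: that one can always move the repeated eigenvalue of a $3\times3$ pivot matrix into positions $1$ and $3$, so that its centralizer becomes the single fixed group $\comm(A)$ \emph{and} so that it still commutes with $J$. Once this is in hand, the remainder is a direct transcription of the $n\geq4$ argument; in particular no separate analysis of the entries of $X$ in the style of Lemma~\ref{Lem:PivotAStrongA=All3} is needed here, since that work has already been packaged into Corollary~\ref{Cor:PivotAJAJA3}.
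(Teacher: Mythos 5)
Your proposal is correct and follows essentially the same route as the paper: you identify that every $3\times3$ pivot matrix is conjugate to a diagonal matrix of the form $\mathrm{diag}(a,b,a)$ and hence has centralizer exactly $\comm(A)$, you invoke Corollary~\ref{Cor:PivotAJAJA3} for the factorization $X=M_1M_2M_3M_4M_5$, and you walk the same four-hop path of length $2$ each, citing the lemma preceding Proposition~\ref{Prop:Pivot>=4} for each hop. The paper's proof does exactly this, only leaving the ``distance $2$'' hops as implicit annotations in the path rather than naming the lemma that justifies them.
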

\begin{proof}
Suppose we have two pivot matrices. Note that a $3\times 3$ pivot matrix must have exactly one eigenvalue with multiplicity $2$, and another eigenvalue with multiplicity $1$. Say they are $X_1A_1X_1^{-1}$ and $X_2A_2X_2^{-1}$ for invertible matrices $X_1,X_2$, and $A_1=\begin{bmatrix}x_1&&\\&y_1&\\&&x_1\end{bmatrix}$ and $A_2=\begin{bmatrix}x_2&&\\&y_2&\\&&x_2\end{bmatrix}$ for some $x_1,x_2,y_1,y_2\in\F_q^*$ such that $x_1\neq y_1$ and $x_2\neq y_2$. Note that we must have $\comm(A_1)=\comm(A_2)=\comm(A)$.

Set $X=X_1^{-1}X_2$. 

By Corollary~\ref{Cor:PivotAJAJA3}, we have $X=M_1M_2M_3M_4M_5$ where $M_1,M_3,M_5\in\comm(A)$ and $M_2,M_4\in\comm(J)$.

Now we have the following path.
\begin{align*}
X_1A_1X_1^{-1}=&X_1M_1A_1M_1^{-1}X_1^{-1}\\
\xrightarrow{\text{distance 2}}&X_1M_1JM_1^{-1}X_1^{-1}=X_1M_1M_2JM_2^{-1}M_1^{-1}X_1^{-1}\\
\xrightarrow{\text{distance 2}}&X_1M_1M_2AM_2^{-1}M_1^{-1}X_1^{-1}=X_1M_1M_2M_3AM_3^{-1}M_2^{-1}M_1^{-1}X_1^{-1}\\
\xrightarrow{\text{distance 2}}&X_1M_1M_2M_3JM_3^{-1}M_2^{-1}M_1^{-1}X_1^{-1}=X_1M_1M_2M_3M_4JM_4^{-1}M_3^{-1}M_2^{-1}M_1^{-1}X_1^{-1}\\
\xrightarrow{\text{distance 2}}&X_1M_1M_2M_3M_4A_2M_4^{-1}M_3^{-1}M_2^{-1}M_1^{-1}X_1^{-1}=X_1M_1M_2M_3M_4M_5A_2M_5^{-1}M_4^{-1}M_3^{-1}M_2^{-1}M_1^{-1}X_1^{-1}\\
=&X_1XA_2X^{-1}X_1^{-1}=X_2A_2X_2^{-1}.
\end{align*}

So we are done.
\end{proof}

\section{Obstructions to connectivity}
\label{sec:nDisconn}

In this section, we show several types of obstructions to the connectivity of the reduced power graphs of $\PGL_n(\F_q)$, and the projectively reduced power graph of $\GL_n(\F_q)$. As we shall see later, Proposition~\ref{prop:ConnObstJordan}, Proposition~\ref{prop:ConnObstIrred} and Proposition~\ref{prop:ConnObstDiag} are in fact the only obstructions when $q\neq 2$ and $n\geq 3$. 

For $q=2$, there are two extra types of obstructions by Proposition~\ref{prop:ConnObstDiag2} and Proposition~\ref{prop:ConnObstIrred2}. Again as we shall see later, they are the only extra obstructions for $\GL_n(\F_2)$ and $\PGL_n(\F_2)$ when $n\geq 6$. 

\begin{prop}[Jordan-type obstruction]
\label{prop:ConnObstJordan}
Consider a finite field $\F_q$ with characteristic $p$. If $2\leq n\leq p$, let $A$ be any matrix over $\F_q$ similar to the $n\times n$ matrix $\begin{bmatrix}\lambda&1&&\\ &\ddots&\ddots&\\&&\ddots&1\\ &&&\lambda\end{bmatrix}$ for some $\lambda\in\F_q^*$. Then the image of $A$ in the reduced power graph of $\PGL_n(q)$ is trapped in a connected component with $p-1$ vertices and diameter $1$, made of images of powers of $A$ that are not scalar multiples of identity.
\end{prop}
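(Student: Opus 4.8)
The plan is to analyze the multiplicative structure of the cyclic group generated by $A$ and show that, apart from the scalar powers, every non-scalar power of $A$ is again a Jordan block of the same shape, and that these all share the same underlying subspace data (eigenspace and flag), so that any element connected to $A$ in the reduced power graph of $\PGL_n(\F_q)$ is itself such a power. First I would reduce to the concrete case $A=\lambda I_n + N$ where $N$ is the standard nilpotent Jordan block, using that similarity is a graph automorphism of the (projectively) reduced power graph. Since $n\leq p$, we have $N^n=0$ but $N^{n-1}\neq 0$, and the key computation is that for any integer $k$,
\[
A^k=(\lambda I_n+N)^k=\lambda^k I_n + k\lambda^{k-1}N + \binom{k}{2}\lambda^{k-2}N^2+\cdots,
\]
and because $2\leq n\leq p$ the binomial coefficient $\binom{k}{1}=k$ is a unit in $\F_q$ exactly when $p\nmid k$. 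Hence $A^k$ is a scalar multiple of identity iff $p\mid k$ (in which case all the higher terms also vanish, using $n\leq p$ so that $\binom{k}{j}$ for $1\le j\le n-1$ are all divisible by $p$ when $p\mid k$), and otherwise $A^k=\lambda^k I_n + (\text{unit})N + \cdots$ is a matrix with single eigenvalue $\lambda^k$ and $\dim\Ker(A^k-\lambda^k I)=n-1$, i.e. similar to a single Jordan block $\begin{bmatrix}\lambda^k&1&&\\&\ddots&\ddots&\\&&\ddots&1\\&&&\lambda^k\end{bmatrix}$.

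Next I would identify, inside $\PGL_n(\F_q)$, which powers of $A$ give distinct non-identity vertices: $A^k$ and $A^\ell$ have the same image iff $A^{k-\ell}$ is scalar iff $p\mid k-\ell$, so the non-scalar images of powers of $A$ are in bijection with the nonzero residues modulo $p$, giving exactly $p-1$ vertices. All of these are mutually adjacent in the reduced power graph since each is a power of the image of $A$ (equivalently, $A$ is a power of $A^k$ whenever $p\nmid k$, as $k$ is then invertible modulo the order of $A$); this already gives a clique on $p-1$ vertices, hence a component of diameter at most $1$.

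The main obstacle — and the heart of the statement — is showing this clique is a \emph{full} connected component, i.e. no non-scalar power of $A$ is adjacent to anything outside the clique. Suppose $B$ is adjacent to $\bar{A^k}$ in the reduced power graph of $\PGL_n(\F_q)$; lifting, $B$ (up to scalar) is a power or a root of $A^k$ in $\GL_n(\F_q)$. If $B$ is a power of $A^k$, it is a power of $A$ and we are inside the clique. If $B$ is a root, say $B^m = A^k \cdot z$ for a scalar $z$; here I would argue that $B$ commutes with $A^k$, and since $A^k$ is a non-scalar matrix with a single eigenvalue $\lambda^k$ and $(n-1)$-dimensional eigenspace (a "regular" element in the sense that its centralizer is $\F_q[A^k]=\F_q[N]$), $B$ must lie in $\F_q[N]$, hence $B=\mu I + (\text{polynomial in }N)$ with the same nilpotent part structure; then $B^m$ being $A^k$ up to scalar forces, by comparing the coefficient of $N$ and using $n\le p$ again so that $m$-th powers behave linearly on the $N$-coefficient modulo higher terms, that $B$ itself is $\nu I + cN + \cdots$ with $c\neq 0$, so $B$ is similar to a single Jordan block with eigenvalue $\nu$, and finally $\nu^m=\lambda^k z^{-1}$ together with the requirement that $B\in\GL_n(\F_q)$ (so $\nu\in\F_q^*$) shows that after scaling $B$ is a power of $A$: indeed replacing $B$ by $\nu^{-1}\lambda B$ we may take $\nu=\lambda$, and then $B=\lambda I + cN+\cdots$ with $c$ a unit is conjugate to $A$ and, being in the one-parameter unipotent-times-scalar subgroup $\F_q[A]^\times$, is actually equal to $A^j$ for a suitable $j$ with $p\nmid j$ (choose $j\equiv c\cdot(\lambda)^{\,}\pmod p$ appropriately, matching the $N$-coefficient, the higher coefficients then matching automatically since everything sits in the abelian group generated by $\lambda I$ and $I+\lambda^{-1}N$). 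Thus $B$ is in the clique, and the component is exactly the asserted $(p-1)$-clique. I expect the bookkeeping in this last paragraph — pinning down that commuting with a regular Jordan block plus being an $m$-th root up to a scalar forces membership in $\langle A\rangle Z$ — to be the only genuinely delicate point; everything else is the binomial expansion together with $n\leq p$.
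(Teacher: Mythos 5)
Your proposal follows essentially the same skeleton as the paper's proof: reduce to the concrete Jordan block $\lambda I+N$, use the binomial expansion with $n\le p$ to control which powers are scalar, and then observe that any $B$ commuting with a regular (single-block) $A^k$ lies in the polynomial algebra $\F_q[N]=\F_q[A]$, from which $B$ is forced into the clique. The identification of the $p-1$ vertices, their mutual adjacency, and the reduction to roots of a non-scalar power are all handled the same way as in the paper.

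The one soft spot is your final ``bookkeeping'' paragraph. Once you know $B\in\F_q[A]$ and $B^m=zA^k$ up to a scalar, you assert that after normalizing the eigenvalue, ``the higher coefficients then match automatically since everything sits in the abelian group generated by $\lambda I$ and $I+\lambda^{-1}N$.'' As written this is not a complete argument: $\F_q[A]^{\times}$ is abelian but it is \emph{not} a one-parameter group nor the cyclic group generated by $A$ when $q>p$ or $n>2$, so matching the $N$-coefficient alone does not pin $B$ down. What does work is to split $B=\nu U$ and $A=\lambda V$ into scalar times unipotent; then $B^m=zA^k$ gives $U^m=V^k$ in the abelian $p$-group of unipotent elements of $\F_q[N]$, and since $V^k\ne I$ forces $p\nmid m$, you can invert $m$ modulo the exponent of that $p$-group and conclude $U$ is a power of $V$. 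Alternatively, the paper's finish is shorter and avoids the decomposition entirely: from $B\in\F_q[A]$ and $n\le p$ it follows that $B^p$ is scalar, say $B^p=bI$, whence $p\nmid k$ in $B^k=zA$; choosing $k'$ with $kk'=mp+1$ then gives $z^{k'}A^{k'}=B^{kk'}=b^mB$, i.e.\ $B$ is directly a scalar multiple of a power of $A$. You may want to replace your last paragraph with one of these two arguments to make the step airtight.
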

\begin{proof}
%All powers of $A$ are either multiples of identity, or similar to $\begin{bmatrix}\mu&1&&\\ &\ddots&\ddots&\\&&\ddots&1\\ &&&\mu\end{bmatrix}$ for some $\mu\in\F_q^*$. 
%
%It is enough to show that, if $B^k=zA^t$ for some $z\in\F_q^*$ and positive integer $k,t$ such that $A^t$ is not a scalar multiple of identity, then $B$ is a multiple of a power of $A$. 
%
%Suppose we have $B^k=zA^t$ for some $z\in\F_q^*$ and positive integer $k,t$ such that $A^t$ is not a scalar multiple of identity. Note that all powers of $A$ are either multiples of identity, or similar to $\begin{bmatrix}\mu&1&&\\ &\ddots&\ddots&\\&&\ddots&1\\ &&&\mu\end{bmatrix}$ for some $\mu\in\F_q^*$. So WLOG, we can assume that $t=1$.
%
%Now $B^k=zA$, hence $A$ and $B$ must commute. Therefore $B$ and the nilpotent matrix $N=A-\lambda I$ must commute. Then direct calculation would show that we have
%\[B=\begin{bmatrix}b_1&b_2&\dots&b_{n}\\ &\ddots&\ddots&\vdots\\ &&\ddots&b_2\\ &&&b_1\end{bmatrix}.\] 
%%In particular, $B=f(N)$ for some polynomial $f(x)$ with constant term $b_1\neq 0$.
%
%Now, since we are over a field $\F_q$ with characteristic $p$, we have $B^p=b_1^p I$. So our assumption $B^k=z(\lambda I+N)$ for non-zero $z$ implies that $k$ and the prime $p$ are coprime. Hence we can find a positive integer $k'$ such that $kk'=mp+1$ for some positive integer $m$. So we have $B^{kk'}=z^{k'}A^{k'}$, which implies that $b_1^{mp}B=z^{k'}A^{k'}$. Since $b_1\neq 0$, we see that $B$ is a multiple of powers of $A$.
%
We want to show that, in the reduced power graph of $\PGL_n(\F_q)$, images of powers of $A$ that are not scalar multiples of identity form a connected component. Then it is enough to show that, if $B$ is a root of $A'$, a scalar multiple of a power of $A$ that is not a scalar multiple of identity, then $B$ is a multiple of a power of $A$. 

Note that since $n\leq p$, $A^p$ is a scalar multiple of identity. Suppose $B^k=zA^t$ for some $z\in\F_q^*$ and positive integers $k,t$ such that $A^t$ is not a scalar multiple of identity. Then $t$ and $p$ must be coprime. Hence $A$ is a multiple of a power of $A^t$ as well. So we may WLOG assume that $t=1$.

Suppose $B^k=zA$ for some $z\in\F_q^*$ and a positive integer $k$. Then we have $AB=BA$. Hence $B$ must be a polynomial of $A$. This implies that $B^p$ is a scalar multiple of identity, say $B^p=bI$ for some $b\in\F_q^*$. In particular, $k$ cannot be a multiple of $p$. So we can find a positive integer $k'$ such that $kk'=mp+1$ for some positive integer $m$. Then $z^{k'}A^{k'}=B^{kk'}=b^mB$. So $B$ is a scalar multiple of a power of $A$.

Hence images of powers of $A$ that are not scalar multiples of identity form a connected component. They are the images of $A,A^2,\dots,A^{p-1}$. They all corresponds to distinct vertices, and they are all powers of each other. Hence this connected component has $p-1$ vertices and diameter $1$.
\end{proof}

\begin{prop}[Irredicible obstruction]
\label{prop:ConnObstIrred}
Let $n$ be prime and $A$ be a matrix over $\F_q$ whose characteristic polynomial is irreducible. Then the image of $A$ in the reduced power graph of $\PGL_n(q)$ is trapped in a connected component with $(q^n-q)/(q-1)$ vertices, made of images of polynomials of $A$ that are not scalar multiples of identity. The diameter of this component is at most $2$.
\end{prop}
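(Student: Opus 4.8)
The plan is to show two things: first, that the set $S$ of images in $\PGL_n(\F_q)$ of non-scalar polynomials of $A$ is closed under "being a power of" and "being a root of" — hence a union of connected components; and second, that $S$ is in fact a single connected component of diameter at most $2$. For the closure under roots, suppose $B$ is a matrix with $B^k = zA'$, where $A'$ is a non-scalar polynomial of $A$ and $z\in\F_q^*$. Then $B$ commutes with $A'$. Since the characteristic polynomial of $A$ is irreducible of degree $n$, Lemma~\ref{Lem:CompanionOrderBasic} says $\F_q[A]\cong\F_{q^n}$ is a field; moreover $A'$ is a non-scalar element of this field, so $\F_q[A'] = \F_q[A]$ (a subfield of $\F_{q^n}$ strictly containing $\F_q$ must be all of $\F_{q^n}$, because $n$ is prime). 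Hence the centralizer of $A'$ in $M_n(\F_q)$ is exactly $\F_q[A']=\F_q[A]$, so $B\in\F_q[A]$, i.e. $B$ is itself a polynomial of $A$. Since $A'$ is non-scalar and $B^k=zA'$, $B$ cannot be scalar either, so $B\in S$ (as a matrix; its image lies in $S$). Closure under powers is even easier: a non-scalar power of a polynomial of $A$ is again a non-scalar polynomial of $A$. Therefore $S$ is a union of connected components in the reduced power graph.

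Next I would show $S$ is connected with diameter at most $2$ by producing, for any two vertices, a common "large" power they both divide. Fix a generator $g$ of the cyclic group $\F_q[A]^*\cong\F_{q^n}^*$; note $g$ is a non-scalar matrix (its order $q^n-1$ does not divide $q-1$), so its image $\bar g$ lies in $S$. I claim every vertex of $S$ is adjacent to $\bar g$ in the reduced power graph of $\PGL_n(\F_q)$. Indeed, take any non-scalar $B\in\F_q[A]$; then $B = g^j$ for some $j$. In $\PGL_n(\F_q)$, the image $\bar B = \bar g^{\,j}$ is a power of $\bar g$, so $\bar B$ and $\bar g$ are joined by an edge (they are distinct since $\bar B\ne\bar g$ forces nothing — if $\bar B=\bar g$ there is nothing to prove). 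Hence every vertex of $S$ is within distance $1$ of $\bar g$, so $\diam(S)\le 2$.

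Finally I would count the vertices. The non-scalar elements of $\F_q[A]^*$ number $(q^n-1)-(q-1)$, and passing to $\PGL_n(\F_q)=\GL_n(\F_q)/Z$ identifies two such matrices exactly when they differ by an element of $Z\cap\F_q[A]^* = \F_q^*$, a group of order $q-1$. Since the scalar matrices inside $\F_q[A]^*$ form a single coset class under this identification (they map to the identity of $\PGL_n$, which has been deleted from the reduced power graph), the number of remaining vertices is $\bigl((q^n-1)-(q-1)\bigr)/(q-1) = (q^n-q)/(q-1)$, as claimed. Strictly speaking one should also check that distinct non-scalar cosets really do give distinct vertices of $S$ and that $S$ as just described coincides with "images of powers of $A$" rather than merely "images of polynomials of $A$" — but since $\F_q[A]^*$ is cyclic and generated by a power of $A$ up to scalars (any generator $g$ has $g^{q-1}$ or a suitable power landing among powers of $A$), these two descriptions agree modulo scalars, which is all that is needed in $\PGL_n$.

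The main obstacle I anticipate is the centralizer computation in the closure-under-roots step: one must use that $n$ is prime to rule out $B$ living in a larger commutative ring than $\F_q[A]$ — the irreducibility of the characteristic polynomial alone guarantees the centralizer of $A$ itself is $\F_q[A]$, but here we only know $B$ commutes with a (possibly non-generating) polynomial $A'$ of $A$, and it is the primality of $n$ that forces $\F_q[A']=\F_q[A]$. The counting step is routine once one is careful about the quotient by the center.
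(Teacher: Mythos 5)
Your proposal is correct and takes essentially the same approach as the paper: fix a generator of the cyclic group $\F_q[A]^*$, show closure under roots by a centralizer computation (exploiting that $n$ prime forces the relevant power to be non-derogatory, so its commutant is exactly the polynomial algebra it generates), and conclude diameter $\le 2$ because every vertex is adjacent to the generator's image. The only variation is how you deduce non-derogatoriness: the paper reads it off the generalized Jordan form via Lemma~\ref{Lem:CompanionDecomp} and then applies Lemma~\ref{Lem:CompanionComm}, while you use the subfield lattice of $\F_{q^n}$ to see $\F_q[A']=\F_q[A]$, which forces the minimal polynomial of $A'$ to have degree $n$ --- an equivalent route to the same fact.
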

\begin{proof}
We know polynomials of $A$ form a field $\F_q[A]$ with $q^n$ elements. Pick $C$ that generate $\F_q[A]^*$ multiplicatively. Then all non-zero polynomials of $A$ are powers of $C$. Clearly $C$ is also similar to a companion matrix of an irreducible polynomial of degree $n$.

It is enough to show that, if $B$ is a root of $C'$, a scalar multiple of a power of $C$ that is not a scalar multiple of identity, then $B$ is a scalar multiple of a power of $C$. Then this would imply that images of polynomials of $A$ (or equivalently, powers of $C$) that are not scalar multiples of identity form a connected component, where everyone is connected to $C$. Hence it will have $(q^n-q)/(q-1)$ vertices and diameter at most $2$.

Suppose $B^k=zC^t$ for some $z\in\F_q^*$ and positive integer $k,t$ such that $C^t$ is not a scalar multiple of identity. Note that by Lemma~\ref{Lem:CompanionDecomp}, the generalized Jordan canonical form of $C^t$ is made of identical diagonal blocks that are the same companion matrix of some irreducible polynomial. Since $n$ is prime, either these blocks are $1\times 1$ which implies that $C^t$ is a scalar multiple of identity, or the block is $n\times n$. Only the latter case is possible. So $C^t$ is similar to a companion matrix of an irreducible polynomial.

Now we have $BC^t=C^tB$. By Lemma~\ref{Lem:CompanionComm}, $B$ must be a polynomial of $C^t$. In particular, $B\in\F_q[A]$. Since we know $B^k=zC$, we know that $B$ is invertible and hence non-zero. Therefore $B\in\F_q[C]^*$, and it must be a power of $C$. So we are done.
\end{proof}

\begin{lem}
\label{Lem:CompanionComm}
For $n\geq 2$, consider an $n\times n$ companion matrix $A=\begin{bmatrix}\bs e_2&\bs e_3&\dots&\bs e_n&\bs v\end{bmatrix}$ where $\bs e_1,\dots,\bs e_n$ are the standard basis for $\F_q^n$ and $\bs v$ is an arbitrary vector in $\F_q^n$. If $AB=BA$ for some $n\times n$ matrix $B$, then $B$ is a polynomial of $A$.
\end{lem}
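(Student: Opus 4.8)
The plan is to use the fact that a companion matrix is a \emph{cyclic} (or \emph{non-derogatory}) matrix, i.e., there is a vector whose iterates under $A$ span the whole space. Concretely, I would take $\bs e_1$ as the cyclic vector: since $A\bs e_1=\bs e_2$, $A\bs e_2=\bs e_3$, and in general $A^{i-1}\bs e_1=\bs e_i$ for $1\le i\le n$, the vectors $\bs e_1,A\bs e_1,A^2\bs e_1,\dots,A^{n-1}\bs e_1$ are exactly the standard basis $\bs e_1,\dots,\bs e_n$, hence linearly independent and spanning $\F_q^n$. This is the structural feature of $A$ that makes its centralizer as small as possible, and it is what the whole argument hinges on.

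Given this, here is how I would carry out the proof. Suppose $AB=BA$. Let $\bs w = B\bs e_1 \in \F_q^n$. Since $\bs e_1,\dots,\bs e_n$ is a basis, there is a unique polynomial $g(x)$ of degree at most $n-1$ over $\F_q$ with $g(A)\bs e_1 = \bs w$; indeed, write $\bs w = \sum_{i=1}^n c_i \bs e_i = \sum_{i=1}^n c_i A^{i-1}\bs e_1$ and set $g(x)=\sum_{i=1}^n c_i x^{i-1}$. I claim $B = g(A)$. To see this, it suffices to check that $B$ and $g(A)$ agree on the spanning set $\bs e_1,\dots,\bs e_n$. For $\bs e_1$ this holds by construction. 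For $\bs e_i = A^{i-1}\bs e_1$, we use that $B$ commutes with $A$, hence with every power of $A$ and every polynomial in $A$: $B\bs e_i = B A^{i-1}\bs e_1 = A^{i-1} B\bs e_1 = A^{i-1}g(A)\bs e_1 = g(A) A^{i-1}\bs e_1 = g(A)\bs e_i$. So $B$ and $g(A)$ agree on a basis and therefore $B = g(A)$ is a polynomial of $A$, as desired.

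I do not anticipate a serious obstacle here; the argument is entirely elementary. The only point requiring a little care is the uniqueness/existence of the interpolating polynomial $g$, but this is immediate from the fact that $\{\bs e_1,\dots,\bs e_n\}=\{A^{i-1}\bs e_1\}_{i=1}^n$ is a basis, so the map $h(x)\mapsto h(A)\bs e_1$ from polynomials of degree $<n$ to $\F_q^n$ is a linear bijection. One should also note that the dimension count is consistent: the centralizer of a cyclic matrix has dimension exactly $n$ over $\F_q$, and $\{I,A,\dots,A^{n-1}\}$ is linearly independent precisely because $A$ has degree-$n$ minimal polynomial (its characteristic polynomial, as $A$ is a companion matrix), so in fact $\{I,A,\dots,A^{n-1}\}$ is a basis of the centralizer. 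This remark is not strictly needed for the statement but it explains why the polynomial representation is forced. The statement as written does not even require $A$ invertible, so no extra hypotheses on $\F_q$ or $\bs v$ are used beyond $n\ge 2$.
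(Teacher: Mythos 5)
Your proof is correct and rests on exactly the same idea as the paper's: $\bs e_1$ is a cyclic vector for $A$ (so $A^{i-1}\bs e_1=\bs e_i$), $B$ is determined by $B\bs e_1$ once it commutes with $A$, and reading off the coordinates of $B\bs e_1$ in the standard basis gives the interpolating polynomial. The paper phrases this as an explicit column computation ($\bs b_{i+1}=A\bs b_i$, then $B=\sum x_i A^{i-1}$), while you phrase it via agreement on a spanning set, but the two are the same argument.
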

\begin{proof}
Let $B=\begin{bmatrix}\bs b_1&\dots&\bs b_n\end{bmatrix}$. Then we have 
\[BA=\begin{bmatrix}B\bs e_2&B\bs e_3&\dots&B\bs e_n&B\bs v\end{bmatrix}=\begin{bmatrix}\bs b_2&\dots&\bs b_n&B\bs v\end{bmatrix},\] 
\[AB=\begin{bmatrix}A\bs b_1&\dots&A\bs b_n\end{bmatrix}.\] 
Hence $AB=BA$ implies that $\bs b_{i+1}=A\bs b_i$ for $1\leq i\leq n-1$. So we have 
\[B=\begin{bmatrix}\bs b_1&A\bs b_1&\dots&A^{n-1}\bs b_1\end{bmatrix}.\]

Now we set
\[B_i=\begin{bmatrix}\bs e_i&A\bs e_i&\dots&A^{n-1}\bs e_i\end{bmatrix}=A^{i-1}.\]

Suppose $\bs b_1=\sum x_i\bs e_i$ for coefficients $x_1,\dots,x_n\in\F_q$. Then $B=\sum x_i B_i=\sum x_iA^{i-1}$. Hence $B$ is a polynomial of $A$.
\end{proof}

\begin{prop}[Diagonalizable obstruction]
\label{prop:ConnObstDiag}
Suppose $q>2$ is a power of $2$ with $q-1$ prime, and $2\leq n<q$. Let $A$ be an $n\times n$ diagonalizable matrix with $n$ distinct non-zero eigenvalues over $\F_q$. Then the image of $A$ in the reduced power graph of $\PGL_n(q)$ is trapped in a connected component with $q-2$ vertices and diameter $1$, made of images of non-identity powers of $A$.
\end{prop}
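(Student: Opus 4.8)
The plan is to follow the template of Propositions~\ref{prop:ConnObstJordan} and~\ref{prop:ConnObstIrred}: pin down the candidate component, check it is a clique of the right size, and then check that no edge escapes it.

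First I would normalize $A$ to $\mathrm{diag}(\lambda_1,\dots,\lambda_n)$ with the $\lambda_i\in\F_q^*$ pairwise distinct. Since $\lambda_i^{q-1}=1$ we get $A^{q-1}=I$; since $q-1$ is prime, each $\lambda_i$ has order $1$ or $q-1$, and as $n\ge 2$ at least one ratio $\lambda_i\lambda_j^{-1}$ has order $q-1$, so $A^t$ is a scalar matrix exactly when $(q-1)\mid t$. Thus $\langle A\rangle$ is cyclic of prime order $q-1$ with $I$ as its unique scalar element, and $A,A^2,\dots,A^{q-2}$ have pairwise distinct, non-identity images in $\PGL_n(\F_q)$ — these are the claimed $q-2$ vertices. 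Because $\gcd(i,q-1)=1$ for $1\le i\le q-2$, each $A^i$ generates $\langle A\rangle$, so any two of these images are powers of one another; hence they form a clique, giving diameter $1$.

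Next I would show this set is a whole connected component. Suppose the image of some $B\in\GL_n(\F_q)$ is adjacent to the image of a non-scalar $A^k$. If the image of $B$ is a power of the image of $A^k$ there is nothing to prove. Otherwise $A^k=zB^m$ for a scalar $z$ and some $m\ge 1$; then $B^m=z^{-1}A^k$ is non-scalar, so $B$ is non-scalar, and $B$ commutes with $A^k$. Since $A^k$ is non-scalar and $q-1$ is prime, $\gcd(k,q-1)=1$, so $A^k$ generates $\langle A\rangle$ and therefore $B$ commutes with $A$ itself. A diagonalizable matrix with $n$ distinct eigenvalues is non-derogatory (minimal polynomial equals characteristic polynomial), so its centralizer is $\F_q[A]$ — one may also see this by conjugating $A$ to the companion matrix of $\prod_i(x-\lambda_i)$ and applying Lemma~\ref{Lem:CompanionComm}. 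Hence $B$ is diagonal in the eigenbasis of $A$, say $B=\mathrm{diag}(\mu_1,\dots,\mu_n)$ with $\mu_i^m=z^{-1}\lambda_i^k$. If $(q-1)\mid m$ then every $\mu_i^m=1$, forcing all $\lambda_i$ equal, which is impossible for $n\ge 2$; so $\gcd(m,q-1)=1$, and picking $m'$ with $mm'\equiv 1\pmod{q-1}$ gives $\mu_i=(\mu_i^m)^{m'}=z^{-m'}\lambda_i^{km'}$, i.e. $B=z^{-m'}A^{km'}$. Thus the image of $B$ equals the image of $A^{km'}$, which is non-scalar (else $B$ would be scalar), so it lies in the candidate set. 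Counting vertices then completes the proof.

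The bookkeeping steps (the Fermat identity $A^{q-1}=I$, the count of distinct images, the clique property) are routine. The step that does the real work is the case $A^k=zB^m$: it relies on (a) commutation forcing $B\in\F_q[A]$, which uses that $A$ is non-derogatory, and (b) the primality of $q-1$ making $m$ invertible modulo $q-1$ once one rules out $(q-1)\mid m$. I expect the centralizer identification in (a) to be the only point needing care. Note that the hypotheses $n<q$ and $q$ a power of $2$ serve only to keep the statement non-vacuous and are not otherwise used in the argument.
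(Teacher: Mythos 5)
Your proof is correct and follows essentially the same route as the paper: establish the $(q-2)$-vertex clique, then show that whenever the image of $B$ is adjacent to a non-scalar power of $A$ one is forced (via commutation, simultaneous diagonalizability, and primality of $q-1$ to invert the exponent mod $q-1$) to conclude $B$ is a scalar times a power of $A$. The only superficial difference is that you identify the centralizer of $A$ as $\F_q[A]$ via non-derogatoriness, whereas the paper simply invokes simultaneous diagonalizability of commuting matrices when one has distinct eigenvalues --- same conclusion either way. One small inaccuracy in your closing remark: under the remaining hypotheses, ``$q$ a power of $2$'' only excludes $q=3$, and that case ($n=2$, $A$ conjugate to $\mathrm{diag}(1,-1)$) is not vacuous; rather, the hypothesis guarantees $q\ge 4$ so that $q-2\ge 2$ and the ``diameter $1$'' assertion is meaningful.
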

\begin{proof}
It is enough to show that, if $B$ is a root of $A'$, a scalar multiple of a power of $A$ that is not a scalar multiple of identity, then $B$ is a scalar multiple of a power of $A$. Then powers of $A$ that are not scalar multiples of identity would form a connected component in the reduced power graph of $\PGL_n(q)$ with vertices represented by $A,\dots,A^{q-2}$. None of these matrices can be scalar multiples of each other. They all have order $q-1$, which is prime, so they are all powers of each other. So this is a connected component with $q-2$ vertices and diameter $1$.

Suppose $B^k=zA^t$ for some $z\in\F^*$ and positive integer $k,t$ such that $A^t$ is not a scalar multiple of identity. Note that if $A^t$ is not a scalar multiple of identity, then $t$ must be coprime to the prime number $q-1$. Hence $A^t$ and $A$ are powers of each other, and $A^t$ also has $n$ distinct non-zero eigenvalues over $\F_q$. Then $BA^t=A^tB$ implies that $A^t,B$ are simultaneously diagonalizable over $\F_q$. Suppose $A^t$ has eigenvalues $a_1,\dots,a_n$ and $B$ has corresponding eigenvalues $b_1,\dots,b_n$ such that $b_i^k=za_i$. Note that $q-1$ is prime, so the multiplicative group $\F_q^*$ is cyclic of prime order. Hence if all $b_i^k$ are distinct, we must have $k$ coprime to $q-1$. Let $k'$ be a positive integer such that $kk'$ is $1$ modulus $q-1$. Then $b_i=z^{k'}a_i^{k'}$. Hence we have $B=z^{k'}A^{tk'}$. So we are done.
\end{proof}

Finally, we discuss two special type of obstructions to connectivity when $q=2$. Note that in this case, $\PGL_n(2)=\GL_n(2)$.

The following obstruction is essentially a variant of the obstruction above.

\begin{prop}[Quasi-diagonalizable obstruction when $q=2$]
\label{prop:ConnObstDiag2}
Suppose we have a prime $p_0=2^{p_1}-1$ (i.e., Mersenne prime) for some positive integer $p_1$. 
Suppose $n\leq p_0$. 
Let $A$ be an $n\times n$ non-identity matrix such that $A^{p_0}=I$, and its minimal polynomial equal to its characteristic polynomial. Then the $A$ in the reduced power graph of $\GL_n(2)$ is trapped in a connected component with $p_0-1$ vertices and diameter $1$, made of non-identity powers of $A$.

Note that in this case, $p_1$ must be a prime factor of $n$ or $n-1$.
\end{prop}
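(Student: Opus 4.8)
The plan is to pin down the rational canonical structure of $A$, identify its commutant, and then show that no edge of the reduced power graph can leave the clique formed by the non-identity powers of $A$.

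First I would record the easy structural facts. Since $A^{p_0}=I$ with $A\neq I$ and $p_0$ prime, $A$ has order exactly $p_0$, so $A,A^2,\dots,A^{p_0-1}$ are $p_0-1$ distinct non-identity elements, each a power of every other; as the center of $\GL_n(2)$ is trivial we have $\GL_n(2)=\PGL_n(2)$, and these already form a clique of diameter at most $1$. The minimal polynomial $m(x)$ of $A$ divides $x^{p_0}-1$, which is separable over $\F_2$ ($p_0$ being odd), so $m$ is squarefree. Over $\F_2$, $x^{p_0}-1=(x-1)\Phi_{p_0}(x)$, and the multiplicative order of $2$ modulo $p_0$ is $p_1$ (it divides $p_1$ since $2^{p_1}\equiv 1\pmod{p_0}$, and it cannot be a proper divisor $d$, as then $p_0\mid 2^{d}-1$ with $0<2^{d}-1<p_0$); hence by the standard factorization of cyclotomic polynomials over finite fields, $\Phi_{p_0}$ splits over $\F_2$ into $(p_0-1)/p_1$ distinct irreducibles of degree $p_1$. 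Therefore $m(x)=(x-1)^{\epsilon}f_1(x)\cdots f_r(x)$ with $\epsilon\in\{0,1\}$, the $f_i$ distinct irreducible of degree $p_1$, and $r\geq 1$ (as $A\neq I$ rules out $m=x-1$). Comparing degrees, $n=\epsilon+rp_1$; since $2^{p_1}-1$ is prime, $p_1$ is prime, so this gives the final assertion ($p_1\mid n$ if $\epsilon=0$, $p_1\mid n-1$ if $\epsilon=1$) and also $n\leq p_0$.

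Next I would analyze the commutant. Since $m$ equals the characteristic polynomial, $A$ is nonderogatory, hence similar to the companion matrix of $m$, so by Lemma~\ref{Lem:CompanionComm} every matrix commuting with $A$ is a polynomial in $A$. By the Chinese Remainder Theorem, $\F_2[A]\cong\F_2[x]/(m(x))\cong(\F_2)^{\epsilon}\times(\F_{2^{p_1}})^{r}$, so the unit group $\F_2[A]^{*}$ is a direct product of a trivial group with $r$ cyclic groups of order $2^{p_1}-1=p_0$; in particular every non-identity unit of $\F_2[A]$ has order exactly $p_0$. The key step is then to show the clique is closed: if a vertex $B$ is adjacent to some $A^{j}$ with $1\leq j\leq p_0-1$, then $B$ is a non-identity power of $A$. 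If $B$ is a power of $A^{j}$ this is immediate, since $\langle A^{j}\rangle=\langle A\rangle$. Otherwise $A^{j}=B^{s}$ for some $s\geq 1$, and choosing $j'$ with $jj'\equiv 1\pmod{p_0}$ gives $A=B^{sj'}$; thus $B$ commutes with $A$, so $B\in\F_2[A]$, and $B$ is invertible with $B\neq I$ (because $B^{sj'}=A\neq I$). Hence $B\in\F_2[A]^{*}$ has order $p_0$, so $\langle B\rangle$ is a group of prime order $p_0$ containing $A\neq I$; therefore $\langle B\rangle=\langle A\rangle$ and $B$ is a non-identity power of $A$. This proves $\{A,A^2,\dots,A^{p_0-1}\}$ is a full connected component with $p_0-1$ vertices and diameter at most $1$.

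The main obstacle is the verification, inside the commutant step, that \emph{every} unit of $\F_2[A]$ --- not just $A$ itself --- has order dividing $p_0$. This is exactly where the Mersenne hypothesis is essential: it forces the irreducible factors of $\Phi_{p_0}$ over $\F_2$ to all have degree precisely $p_1$, so that the fields $\F_{2^{p_1}}$ appearing in the CRT decomposition have multiplicative group of prime order $p_0$. Everything else is elementary group theory together with Lemma~\ref{Lem:CompanionComm}.
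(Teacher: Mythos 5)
Your proof is correct, and it takes a genuinely different route from the paper's. The paper proves this proposition by a reduction: set $q=p_0+1$, regard $A$ as a matrix over $\F_q$, observe that $A^{q-1}=I$ forces $A$ to be diagonalizable over $\F_q$ with $n$ distinct eigenvalues (because the minimal and characteristic polynomials agree), and then invoke Proposition~\ref{prop:ConnObstDiag} over the larger field; the component of $A$ in the reduced power graph of $\PGL_n(q)$ restricts to the desired component inside the induced subgraph on $\GL_n(2)$. The paper also proves the final divisibility claim separately via Lemma~\ref{Lem:NonJordayTypeClassify} and Corollary~\ref{Cor:CompanionSameOrderSameSize}.

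You instead work entirely over $\F_2$: factor $x^{p_0}-1=(x-1)\Phi_{p_0}(x)$, compute that $\Phi_{p_0}$ splits into distinct irreducibles of degree $p_1$ (since the order of $2$ modulo $p_0$ is $p_1$), deduce the shape $m(x)=(x-1)^\epsilon f_1\cdots f_r$ of the minimal polynomial (which gives the $p_1\mid n$ or $p_1\mid n-1$ claim immediately and for free), and then use the nonderogatory hypothesis together with Lemma~\ref{Lem:CompanionComm} and CRT to identify the commutant's unit group as an elementary abelian $p_0$-group. The closure of the clique then follows from the elementary observation that any $B$ adjacent to a non-identity power of $A$ commutes with $A$, hence lies in $\F_2[A]^*$, hence has order $p_0$, hence generates the same cyclic group as $A$. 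What your approach buys is self-containment and transparency: it avoids the base-change trick and the dependence on Proposition~\ref{prop:ConnObstDiag}, and the divisibility statement at the end falls out of the degree count rather than needing a separate argument. What the paper's approach buys is economy: once Proposition~\ref{prop:ConnObstDiag} is available, the $q=2$ case is a two-line corollary with no new computation. Both are sound.
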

\begin{proof}
Set $q=p_0+1$. Then $q$ is a power of $2$, so we can pick an extension $\F_q$ of $\F_2$. Let us treat $A$ as a matrix over $\F_{q}$ instead. 

Then $A^{q-1}=I$ implies that $A$ is diagonalizable over $\F_q$. And since its minimal polynomial is the same as its characteristic polynomials, it has $n$ distinct non-zero eigenvalues over $\F_q$, and $2\leq n<q$, and $q-1$ is a prime. So we are in the case of Proposition~\ref{prop:ConnObstDiag}. So $A$ is trapped in a connected component even in the reduced power graph of $\PGL_n(q)$, made of non-identity powers of $A$. Therefore in the subgraph made of matrices in $\PGL_n(2)$, $A$ is also trapped in the same connected component.

For the last statement, if $p_0=2^{p_1}-1$ is a prime, then it is a Mersenne prime. Thus $p_1$ is also prime. 

Now if $A$ has multiplicative order $p_0$, then by Lemma~\ref{Lem:NonJordayTypeClassify}, the generalized Jordan canonical form of $A$ is block diagonal with diagonal blocks $C_1,\dots,C_t$ of companion matrices for irreducible polynomials. Since $A$ has multiplicative order $p_0$, each $C_i$ must have multiplicative order dividing $p_0$. So either $C_i$ is $1\times 1$, or $C_i$ is $p_1\times p_1$ by Corollary~\ref{Cor:CompanionSameOrderSameSize}.

Since the minimal polynomial and characteristic polynomial of $A$ are the same, all $C_i$ must be distinct. Hence there is at most one block of size $1\times 1$, and all other blocks must be $p_1\times p_1$. Therefore $n$ or $n-1$ is a multiple of $p_1$.
\end{proof}

The second one is a variant of Proposition~\ref{prop:ConnObstIrred}.

\begin{prop}[Extra irredicible obstruction when $q=2$]
\label{prop:ConnObstIrred2}
Let $n-1$ be prime and $A$ be an $n\times n$ matrix over $\F_2$ whose characteristic polynomial has an irreducible factor of degree $n-1$. Then the image of $A$ in the reduced power graph of $\PGL_n(2)$ is trapped in a connected component with $2^{n-1}-2$ vertices, made of non-identity powers of a matrix $C\in\PGL_n(2)$. The diameter of this component is at most $2$.
\end{prop}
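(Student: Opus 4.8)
The plan is to mirror the proof of Proposition~\ref{prop:ConnObstIrred}, adapting it so that the ``irreducible block of degree $n-1$'' plays the role that the full irreducible matrix played there, while treating the extra $1$-dimensional piece carefully. First I would put $A$ into generalized Jordan canonical form. Since the characteristic polynomial of $A$ has an irreducible factor of degree $n-1$ over $\F_2$, and $n-1$ is prime, the only possibilities for the complementary $1$-dimensional factor are the eigenvalue $1$ (the only nonzero scalar in $\F_2$) or a nilpotent contribution; but $A\in\GL_n(2)$, so the complement must be the $1\times 1$ block $[1]$, and moreover the degree-$(n-1)$ factor cannot be split further (its irreducible factor of degree $n-1$ must be the whole characteristic polynomial on that complement, since $n-1$ is prime and any proper block would have smaller degree, while a single companion block of size $n-1$ is forced). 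So $A$ is similar to $\begin{bmatrix}1&\\&C_0\end{bmatrix}$ where $C_0$ is the companion matrix of an irreducible polynomial of degree $n-1$. Then $\F_2[C_0]\cong\F_{2^{n-1}}$, whose multiplicative group is cyclic of order $2^{n-1}-1$; pick a generator $C_0'$, and let $C=\begin{bmatrix}1&\\&C_0'\end{bmatrix}$. Every power $A^m$ is then a power of $C$ (working inside the commutative ring $\F_2\times\F_2[C_0]$, and using that $[1]^m=[1]$ always), so it suffices to understand roots of powers of $C$.

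Next I would carry out the key trapping argument: if $B^k = C^t$ with $C^t$ not a scalar multiple of identity (here the only scalar is $I$, so $C^t\neq I$), I claim $B$ is a power of $C$. Write $C^t$ in generalized Jordan form. Since $C^t\neq I$, its action on the $(n-1)$-dimensional part is nontrivial; by Lemma~\ref{Lem:CompanionPower} (equivalently Lemma~\ref{Lem:CompanionDecomp} applied to $C_0'^{\,t}$), the minimal polynomial of $C_0'^{\,t}$ is irreducible, and since $n-1$ is prime it is either degree $1$ (forcing $C_0'^{\,t}=I$ on that block, which combined with $C^t\neq I$ is impossible because then all of $C^t$ would be identity) or degree $n-1$. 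So $C^t$ is similar to $\begin{bmatrix}1&\\&C_1\end{bmatrix}$ with $C_1$ a companion matrix of an irreducible polynomial of degree $n-1$. Now $B$ commutes with $C^t$. The eigenvalue $1$ of $C^t$ has a $1$-dimensional eigenspace (since $1$ is not a root of the degree-$(n-1)$ irreducible factor — it would contradict irreducibility for $n-1\geq 2$), and the remaining $(n-1)$-dimensional generalized eigenspace is $C^t$-invariant and $B$-invariant; restricted there, $B$ commutes with a companion matrix of an irreducible polynomial, so by Lemma~\ref{Lem:CompanionComm} it is a polynomial in that companion matrix, hence lies in the copy of $\F_{2^{n-1}}$. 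Therefore $B$ is block-diagonal of the form $\begin{bmatrix}b&\\&B'\end{bmatrix}$ with $b\in\F_2^*$, so $b=1$, and $B'\in\F_2[C_0']^*$ is a power of $C_0'$. Hence $B$ is a power of $C$, as desired.

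Finally I would count. The powers of $C$ that are not $I$ are the nonidentity elements of the cyclic group generated by $C$; since $C=\begin{bmatrix}1&\\&C_0'\end{bmatrix}$ and $C_0'$ generates $\F_{2^{n-1}}^*$ of order $2^{n-1}-1$, the group $\langle C\rangle$ has order $2^{n-1}-1$, giving $2^{n-1}-2$ nonidentity powers, all distinct in $\GL_n(2)=\PGL_n(2)$ and pairwise comparable under the power relation only through $C$ itself — actually every such power is a power of $C$, so each is joined to $C$ by an edge, yielding a component of diameter at most $2$. The conclusion that this set is a full connected component follows because, as just shown, any neighbor of such a vertex (being a root or a power) is again a nonidentity power of $C$.

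The main obstacle I anticipate is the bookkeeping around the ``extra'' eigenvalue $1$: one must be sure that over $\F_2$ the characteristic polynomial really is forced to be $(x-1)$ times an irreducible of degree $n-1$ (ruling out a second $1$-eigenvalue or a Jordan block at $1$, both excluded by the hypothesis that there is an irreducible factor of degree $n-1$ together with invertibility and the dimension count $n = 1 + (n-1)$), and that $1$ is never a root of that degree-$(n-1)$ irreducible factor, so the two pieces stay genuinely separated and $B$ cannot mix them. Everything else is a direct transcription of the arguments already used for Proposition~\ref{prop:ConnObstIrred} and Proposition~\ref{prop:ConnObstDiag2}.
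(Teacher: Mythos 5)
Your proposal is correct and follows essentially the same route as the paper's proof: both reduce to the block form $\begin{bmatrix}1&\\&C_0\end{bmatrix}$, pick a multiplicative generator $C_0'$ of $\F_2[C_0]^*$ to build $C$, apply Lemma~\ref{Lem:CompanionDecomp}/\ref{Lem:CompanionPower} with $n-1$ prime to force $(C_0')^t$ irreducible of full degree, and then use Lemma~\ref{Lem:CompanionComm} on the $(n-1)$-dimensional block to trap any root $B$ as a power of $C$. The only (minor) improvement in your write-up is that you spell out explicitly why the $1$-eigenspace of $C^t$ is $1$-dimensional and $B$-invariant, a point the paper states more tersely.
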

\begin{proof}
Consider the generalized Jordan canonical form of $A$, we must have $A=X\begin{bmatrix}A'&\\&1\end{bmatrix}X^{-1}$ for invertible $X$, where $A'$ is a companion matrix of an irreducible polynomial of degree $n-1$. 

We know polynomials of $A'$ form a field $\F_q[A']$ with $2^{n-1}$ elements. Pick $C'$ that generate $\F_q[A']^*$ multiplicatively. Then $A'$ is a power of $C'$. Then all non-zero polynomials of $A'$ are powers of $C'$.
Let $C=X\begin{bmatrix}C'&\\&1\end{bmatrix}X^{-1}$, then $A$ is similar to a power of $C$.

It is enough to show that, if $B$ is a root of $C''$, a scalar multiple of a power of $C$ that is not a scalar multiple of identity, then $B$ is a scalar multiple of a power of $C$. Then this would imply that non-identity powers of $C$ form a connected component, where everyone is connected to $C$. Since $C$ has multiplicative order $2^{n-1}-1$, this connected component will have $2^{n-1}-2$ vertices and diameter at most $2$.

Suppose $B^k=C^t$ for some positive integers $k,t$ such that $C^t=X\begin{bmatrix}(C')^t&\\&1\end{bmatrix}X^{-1}$ is not identity. In particular, $(C')^t$ is not identity. By Lemma~\ref{Lem:CompanionDecomp}, the generalized Jordan canonical form of $(C')^t$ is made of identical diagonal blocks that are the same companion matrix of some irreducible polynomial. Since $n-1$ is prime, either these blocks are $1\times 1$ which (over $\F_2$) implies that $C^t$ is identiy, or there is only one $(n-1)\times (n-1)$ block. Only the latter case is possible. So $(C')^t$ is similar to a companion matrix of an irreducible polynomial.

Now we have $BC^t=C^tB$. Since $(C')^t$ has no eigenvalue $1$, this implies that $B=X\begin{bmatrix}B'&\\&1\end{bmatrix}X^{-1}$ for some $(n-1)\times(n-1)$ matrix $B'$ such that $B'(C')^t=(C')^tB'$. (Here the lower right entry has to be $1$, because our field is $\F_2$ and there is no other non-zero element.) Then by Lemma~\ref{Lem:CompanionComm}, since $(C')^t$ is similar to a companion matrix, $B'$ must be a polynomial of $(C')^t$. In particular, $B'\in\F_q[C']$. But $B$ is non-zero due to $B^k=C$, therefore $B'$ is a power of $C'$. Consequently, $B$ is a power of $C$. So we are done.
\end{proof}

Finally, for the sake of completion, here is the criteria for when the diameter in Proposition~\ref{prop:ConnObstIrred} and Proposition~\ref{prop:ConnObstIrred2} is one.

\begin{prop}
In Proposition~\ref{prop:ConnObstIrred}, the diameter of each connected component is $1$ if and only if $q-1$ is a prime power and $\frac{q^n-1}{q-1}$ is a prime. Similarly, in Proposition~\ref{prop:ConnObstIrred2}, the diameter of each connected component is $1$ if and only if $2^{n-1}-1$ is a prime.
\end{prop}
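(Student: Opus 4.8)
The plan is to analyze when each connected component — which by the previous two propositions consists of the non-identity powers of a single element $C$ whose multiplicative order is $N$, where $N = \frac{q^n-1}{q-1}$ in Proposition~\ref{prop:ConnObstIrred} and $N = 2^{n-1}-1$ in Proposition~\ref{prop:ConnObstIrred2} — is a complete graph. Two powers $C^a$ and $C^b$ (neither a scalar multiple of identity, hence their images in $\PGL$ are nontrivial) are adjacent in the reduced power graph precisely when one is a power of the other, i.e. when $\langle \overline{C^a}\rangle \subseteq \langle \overline{C^b}\rangle$ or vice versa. So the component has diameter $1$ iff for every pair of cyclic subgroups of $\langle \overline C\rangle$, one contains the other — equivalently, iff the cyclic group $\langle \overline C\rangle$ has a unique subgroup of each order dividing its order, which happens iff $|\langle\overline C\rangle|$ is a prime power. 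Thus the whole problem reduces to computing the order of $\overline C$ in $\PGL$ and asking when it is a prime power.

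Next I would compute that order. In the setting of Proposition~\ref{prop:ConnObstIrred2}, the field is $\F_2$, so $\PGL_n(2)=\GL_n(2)$ and the image of $C$ has order exactly $2^{n-1}-1$; hence the component has diameter $1$ iff $2^{n-1}-1$ is a prime power. But $2^{n-1}-1$ with $n-1\geq 2$ is a prime power iff it is prime: if $2^{n-1}-1 = r^k$ with $k\geq 2$ and $r$ an odd prime, then $2^{n-1} = r^k+1$; reducing mod $4$ (using $n-1\geq 2$) forces $r^k \equiv 3 \pmod 4$, impossible for a perfect power with $k\geq 2$ and $r$ odd unless $k$ is odd, and then $r^k+1 = (r+1)(r^{k-1}-\cdots+1)$ has an odd factor greater than $1$, contradicting that it is a power of $2$. (Alternatively one just invokes that a Mersenne-like number $2^m-1$ is never a proper prime power for $m\geq 2$, a classical fact.) This gives the stated criterion "$2^{n-1}-1$ is a prime."

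For Proposition~\ref{prop:ConnObstIrred}, the element $C$ generates $\F_q[A]^* \cong \F_{q^n}^*$, which is cyclic of order $q^n-1$, and the center $Z(\GL_n)$ intersected with $\langle C\rangle$ is exactly the scalars in $\F_q[A]^*$, i.e. $\F_q^*$, of order $q-1$. Hence $\overline C$ has order $\frac{q^n-1}{q-1} = 1 + q + \cdots + q^{n-1} =: N$, and the diameter is $1$ iff $N$ is a prime power. Now I would show $N$ is a prime power iff ($q-1$ is a prime power and $N$ is prime). The "if" direction is immediate. For "only if", suppose $N = r^k$ for a prime $r$. One checks $r\nmid q-1$: indeed $q^n - 1 = N(q-1)$ and if $r\mid q-1$ then $N = 1 + q + \cdots + q^{n-1} \equiv n \pmod r$, so $r\mid n$, but then a lifting-the-exponent argument would force the $r$-adic valuation of $q^n-1$ to exceed $k + v_r(q-1)$ while $v_r(q^n-1) = v_r(N) + v_r(q-1) = k + v_r(q-1)$ — one has to be slightly careful with $r=2$, but over odd $q$ this still pins things down. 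Hence $r \nmid q-1$, so $q^n - 1 = r^k(q-1)$ exhibits $q^n-1$ with all its $r$-part in $N$ and the rest a divisor of $q-1$; then I would argue $q-1$ itself must be a prime power by looking at prime divisors of $q^n-1$ not equal to $r$ and using that cyclotomic-type factors are "almost" coprime, or more cleanly by noting that $q^{n}-1$ and $q^n$ are consecutive so Lemma~\ref{Lem:ConseqPP}/Lemma~\ref{Lem:qnPrimeEvasionQ}-style reasoning is not directly available — instead the cleanest route is: if $q - 1$ had two distinct prime factors $s_1, s_2$, both divide $N(q-1)=q^n-1$; since $N = r^k$ is an $r$-power and $r\nmid q-1$, we need $s_1, s_2 \mid q-1$, which is consistent, so this alone does not give a contradiction — therefore I must instead directly use that $N = r^k$ and bound things, concluding via the classification in Lemma~\ref{Lem:ConseqPP} applied to the pair $(q, q-1)$ only after establishing both are prime powers.

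I expect the main obstacle to be exactly this last point: proving that $\frac{q^n-1}{q-1}$ being a prime power forces $q-1$ to be a prime power. The natural strategy is to show that if $N = \frac{q^n-1}{q-1}$ is an $r$-power then $q^n = N(q-1)+1$ forces $q-1$ to absorb all prime factors other than $r$, and then separately that $q-1$ must itself be a single prime power — the honest way is probably to observe that a primitive prime divisor of $q^n - 1$ (which exists by Zsygmondy for $n\geq 2$, with the usual small exceptions $q^n$-wise that must be handled by hand) is coprime to $q-1$, hence must equal $r$, so $n$ is forced to be small (essentially $N = r$ forces $n$ prime by the standard argument that $\frac{q^n-1}{q-1}$ composite whenever $n$ is), and then with $n$ prime and $N = r$ one reads off that $q - 1$ must be a prime power because $q^n - 1 = r(q-1)$ leaves no room for $q-1$ to be composite-in-two-primes without violating Zsygmondy again. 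I would present this carefully, isolating the Zsygmondy exceptions ($q=2,n=6$ and the square cases) and checking them directly against the hypotheses $n$ prime, $n-1$ prime of the two propositions.
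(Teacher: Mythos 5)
Your opening reduction is essentially right, but there is a small misstatement and then a genuine problem that you yourself sense.

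First, the minor issue: you write that a cyclic group ``has a unique subgroup of each order dividing its order \ldots{} iff [the order] is a prime power.'' Every finite cyclic group has a unique subgroup of each order dividing the group order; the correct characterization you want is that the \emph{subgroup lattice is totally ordered by inclusion} iff the order is a prime power. Your next sentence shows you mean the right thing, and the upshot is correct: writing $N$ for the order of $\overline C$ in $\PGL$, the component has diameter $1$ iff $N$ is a prime power. For Proposition~\ref{prop:ConnObstIrred2}, $N = 2^{n-1}-1$, and with $n-1\geq 2$ an elementary/Catalan argument gives $2^{n-1}-1$ prime power iff prime, matching the paper. So far so good.

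The real issue is the equivalence you then try to establish for Proposition~\ref{prop:ConnObstIrred}: that $N=\frac{q^n-1}{q-1}$ is a prime power iff $q-1$ is a prime power and $N$ is prime. You struggle with the ``only if'' direction for a reason --- it is \emph{false}. Take $q=3$, $n=5$ (so $n$ is prime, the hypothesis of Proposition~\ref{prop:ConnObstIrred} is met): $N=(3^5-1)/2 = 121 = 11^2$, a prime power that is not prime. In $\PGL_5(\F_3)$ the component is $\Z/121\Z$ minus the identity, whose only nontrivial subgroups have orders $11$ and $121$; they are nested, so every pair of nonidentity elements is adjacent and the diameter is $1$, while the paper's stated criterion ($N$ prime) fails. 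So your reduction is the correct one and the statement you were asked to prove is wrong as written. The source of the discrepancy is that the paper's Lemma~\ref{Lem:cyclicForObstructionDiameter1or2} analyzes subgroups of the cyclic group $\langle C\rangle$ of order $q^n-1$ inside $\GL_n$ (i.e.\ the projectively reduced power graph of $\GL_n$), but Proposition~\ref{prop:ConnObstIrred} is stated for the reduced power graph of $\PGL_n$, where the relevant cyclic group is $\langle\overline C\rangle$ of order $N$. The latter graph can have strictly more edges --- in the example above, $\overline{C^{11}}$ and $\overline{C^{2}}$ are adjacent in $\PGL_5(\F_3)$ even though the corresponding subgroups of $\Z/242\Z$ are not nested --- so the $\GL$-side criterion is sufficient but not necessary for the $\PGL$-side diameter to be $1$. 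You should not try to force the stated equivalence; the clean and correct statement for Proposition~\ref{prop:ConnObstIrred} is simply: the diameter is $1$ iff $\frac{q^n-1}{q-1}$ is a prime power.
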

\begin{proof}
In either case, the connected component is made of powers of the same element. So this follows by the property of cyclic groups below in Lemma~\ref{Lem:cyclicForObstructionDiameter1or2}.
\end{proof}

\begin{lem}
\label{Lem:cyclicForObstructionDiameter1or2}
Let $q$ be a power of a prime $p$, and pick any positive integer $n\geq 3$. In a cyclic group $G$ with $q^n-1$ elements, if we can find two subgroups $H,K$ such that neither contains the other, and neither is the subgroup whose order divides $q-1$, then $q-1$ is a prime power and $\frac{q^n-1}{q-1}$ is a prime, and this condition is necessary and sufficient.
\end{lem}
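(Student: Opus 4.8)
The statement is lattice theory together with a little number theory, so the plan is to recast it as a divisibility question about $N:=q^{n}-1$. Since $G$ is cyclic of order $N$, its subgroup lattice is the divisor lattice of $N$: the subgroups are the $G_{d}$ with $|G_{d}|=d$, one per divisor $d\mid N$, with $G_{d}\subseteq G_{d'}$ iff $d\mid d'$; and the largest subgroup whose order divides $q-1$ is $Z=G_{q-1}$ (the scalars). So the hypothesis ``there are $H,K$, neither containing the other, neither equal to $Z$'' translates to ``there are divisors $d_{1},d_{2}\mid N$ with $d_{1}\nmid d_{2}$, $d_{2}\nmid d_{1}$ and $d_{1},d_{2}\ne q-1$''. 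Writing $M:=\tfrac{q^{n}-1}{q-1}=1+q+\cdots+q^{n-1}$, the lemma asserts that such an incomparable pair exists if and only if $q-1$ is a prime power and $M$ is prime.

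For the direction ``incomparable pair $\Rightarrow$ the condition'' I would prove the contrapositive: assume the divisors of $N$ other than $q-1$ form a chain under divisibility, and deduce the shape of $N$. The lattice-theoretic input is that deleting a single element from the divisor lattice of $N$ leaves a chain only when $N$ is a prime power, or when $N=\ell_{1}\ell_{2}$ for two distinct primes with the deleted divisor equal to $\ell_{1}$ or $\ell_{2}$ --- any richer factorization of $N$ already carries two distinct $2$-element antichains with no common member, so deleting one divisor cannot linearize it. The number-theoretic input then rules out the prime-power alternative: for $n\ge 3$ one cannot have $q^{n}-1=\ell^{c}$, since $q^{n}=\ell^{c}+1$ would contradict Catalan's conjecture (Lemma~\ref{Lem:ConseqPP}); and, more sharply, a Zsygmondy-type argument --- or, once $q-1$ is known to be a prime power, Lemma~\ref{Lem:qnPrimeEvasionQ} --- supplies a prime dividing $q^{n}-1$ but not $q-1$, so $N$ genuinely has at least two distinct prime factors. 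Combining the two, $N=\ell_{1}\ell_{2}$ with $q-1\in\{\ell_{1},\ell_{2}\}$; hence $q-1$ is prime, so a prime power, and $M=N/(q-1)$ is the complementary prime.

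The converse runs the same reduction in reverse: from $q-1=\ell^{a}$ a prime power and $M=r$ prime we get $N=\ell^{a}r$ with $\ell\ne r$ (because $M=1+q+\cdots+q^{n-1}>q-1\ge\ell$), and when $a\ge 2$ the divisors $\ell$ and $r$ are incomparable with neither equal to $q-1=\ell^{a}$, so $H=G_{\ell}$, $K=G_{r}$ are the desired subgroups. I expect the genuine obstacle to sit entirely in the boundary bookkeeping rather than in the main line: the case $a=1$ (so $q-1$ itself prime and $N=\ell r$) is degenerate, since then $\{q-1,M\}$ is the only incomparable pair of divisors of $N$ and it necessarily meets $Z$, so one has to pin down precisely which such configurations the lemma is meant to include; likewise $q=2$, where $q-1=1$ is not a prime power while $2^{n}-1$ can itself be prime, must be treated on its own. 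Making the ``width $\le 2$'' lattice reduction watertight, and invoking exactly the right instance of Zsygmondy / Lemma~\ref{Lem:qnPrimeEvasionQ} to force the two-prime structure of $q^{n}-1$, is where the care is needed; the remaining steps are routine divisor bookkeeping.
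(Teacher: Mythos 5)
The lemma as printed is evidently garbled (the proof and the application in the preceding proposition make clear it should read ``we \emph{cannot} find two such subgroups if and only if $q-1$ is a prime power and $\frac{q^n-1}{q-1}$ is prime''), and your proposal inherits that confusion but then compounds it with two genuine mathematical gaps.

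First, you misread the constraint on $H,K$. You translate ``neither is the subgroup whose order divides $q-1$'' as $d_1,d_2\ne q-1$, i.e.\ $H,K\ne Z$. The paper's own proof makes clear the intended condition is $|H|\nmid q-1$ and $|K|\nmid q-1$, i.e.\ $H,K\not\subseteq Z$ --- the proof repeatedly checks ``neither $H$ nor $K$ can have order dividing $q-1$.'' This is not cosmetic: under your reading the statement is false. Take $q=5,n=3$, so $N=124=4\cdot 31$, $q-1=4$, $M=31$ prime; the pair $G_2,G_{31}$ is incomparable with neither equal to $Z=G_4$, so ``$\exists$ incomparable pair'' holds even though the condition holds. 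Under the paper's reading, the only divisors with order $\nmid 4$ are $31,62,124$, a chain, so no incomparable pair exists, consistent with the (corrected) lemma.

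Second, your ``converse'' goes in the wrong direction and produces a pair that violates the real constraint. You try to exhibit incomparable $H=G_\ell$, $K=G_r$ when $q-1=\ell^a$ and $M=r$. But $|G_\ell|=\ell$ \emph{does} divide $q-1$, so $G_\ell\subseteq Z$ and this pair is disallowed; and more fundamentally, when the condition holds the correct conclusion (which the paper proves) is that \emph{no} admissible incomparable pair exists, since then $N=\ell^a r$ and every admissible subgroup has order of the form $\ell^i r$, and these form a chain. Your unease about the ``$a=1$'' and ``$q=2$'' cases is a symptom of this inversion.

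Third, your lattice reduction --- ``deleting a single element from the divisor lattice leaves a chain only when $N$ is a prime power or $N=\ell_1\ell_2$'' --- is about removing the single divisor $q-1$, but the real hypothesis removes the entire down-set of divisors of $q-1$. That is a strictly weaker hypothesis, and your classification does not follow from it. Concretely, for $q-1=4$, $M=3$, $N=12$: removing just $4$ leaves $\{1,2,3,6,12\}$, which still has the incomparable pair $2,3$; but removing all divisors of $4$ leaves $\{3,6,12\}$, a chain. The paper's proof avoids this entirely by arguing directly with prime factors of $q-1$ and of $M$ (together with Lemma~\ref{Lem:qnPrimeEvasionQ} to dispose of the case where $q^n-1$ would be a prime power), rather than passing through a one-element-deletion classification.

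What you got right: the recasting as a divisibility question about $N=q^n-1$, and the appeal to Lemma~\ref{Lem:ConseqPP}/Lemma~\ref{Lem:qnPrimeEvasionQ} to rule out $q^n-1$ being a prime power, are both in the spirit of the paper's argument.
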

\begin{proof}
Let $x$ be a generator of $G$. Let us write the group operation in $G$ multiplicatively.

Suppose $q-1$ is not a prime power. Then we can find distinct prime factors $p_1,p_2$ of $q-1$, and the elements $x^{p_1},x^{p_2}$ would respectively generate subgroups $H,K$ that does not contain each other. Since $p_i<q-1\leq\frac{q^n-1}{q-1}$ for $i=1,2$, neither $H$ nor $K$ can have order dividing $q-1$.

Now suppose $q-1$ is a power of a prime $p_1$, but $\frac{q^n-1}{q-1}$ is not a prime. Pick any prime factor $p_2$ of $\frac{q^n-1}{q-1}$. If $p_1=p_2$, then $q^n-1$ is a prime power. By Lemma~\ref{Lem:qnPrimeEvasionQ}, this means $q=3$ and $n=2$. So this cannot happen when $n\geq 3$. Now suppose $p_1\neq p_2$. Then again $x^{p_1},x^{p_2}$ would respectively generate subgroups $H,K$ that does not contain each other. Now obviously $p_1\leq q-1\leq\frac{q^n-1}{q-1}$, but since $\frac{q^n-1}{q-1}$ is not prime, we have $p_1<\frac{q^n-1}{q-1}$. At the same time, since $\frac{q^n-1}{q-1}$ is not prime, its factor $p_2$ must also be strictly less than $\frac{q^n-1}{q-1}$. So again neither $H$ nor $K$ can have $q-1$ elements.

Finally, suppose $q-1$ is a power of a prime $p_1$, and $p_2=\frac{q^n-1}{q-1}$ is a prime. Then the order of $G$ is $p_2$ times a power of $p_1$. Then for any two subgroups $H,K$ of $G$ whose order do not divide $q-1$, then $p_2$ must divide both of their orders. Hence their index must be powers of $p_1$. In a cyclic group, this means one subgroup must contain the other.
\end{proof}

%
%
%
%
%\yang{Only other $q=2,n\geq 3$ obstruction is $(J,J,1)$ or $(J,J)$ when $n=3,4,5$?}
%

\section{The connectivity of the projectively reduced power graphs of $\GL_n(q)$}
\label{sec:nConn}

\subsection{Overview}

We want to study the connectivity of the projectively reduced power graphs of $\GL_n(\F_q)$. Our strategy is to connect most element of $\GL_n(\F_q)$ to (Jordan) pivot matrices, with exceptions outlined in section~\ref{sec:nDisconn}. Since we are dealing with the projectively reduced power graph, we must avoid the center of $\GL_n(\F_q)$, i.e., scalar multiples of the identity matrix. Hence we make the following definition.

\begin{defn}
For a matrix $A\in\GL_n(\F_q)$, its projective order is the multiplicative order of the image of $A$ in $\PGL_n(\F_q)$.
\end{defn}

This concept is useful to understand the generalized Jordan canonical form of $A$.

\begin{lem}
\label{Lem:NonJordayTypeClassify}
Let $q$ be a power of a prime $p$, and let $n\geq 2$. If a matrix $A\in\GL_n(\F_q)-Z(\GL_n(\F_q))$ has projective order coprime to $p$, then its generalized Jordan canonical form is a block diagonal matrix where each diagonal block is either $1\times 1$ or some companion matrix of an irreducible polynomial.
\end{lem}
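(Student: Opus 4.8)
The plan is to analyze the generalized Jordan canonical form of $A$ directly, using the hypothesis on its projective order to rule out the only two dangerous features: a nontrivial nilpotent "$I$-superdiagonal" piece stacked on a companion block, and the appearance of a scalar block of the wrong shape. Recall from the refined canonical form over finite fields that $A$ is similar to a block diagonal matrix whose blocks are of the form $J_i = \begin{bmatrix} C_i & I & & \\ & \ddots & \ddots & \\ & & \ddots & I \\ & & & C_i \end{bmatrix}$ for companion matrices $C_i$ of irreducible polynomials $f_i(x)$ over $\F_q$ (with multiplicities $e_i \geq 1$). I want to show each such block appearing in $A$ has $e_i = 1$, i.e. is a bare companion matrix, and moreover that whenever $f_i$ has degree $1$ the block is $1 \times 1$ (again $e_i = 1$). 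Both assertions will follow from the coprimality of the projective order with $p$.

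First I would handle the nilpotent obstruction. Suppose some block $J_i$ has $e_i \geq 2$. Write $J_i = D_i + N_i$ where $D_i$ is the block-diagonal part with $e_i$ copies of $C_i$ and $N_i$ is the nilpotent part built from the superdiagonal $I$'s; since $C_i$ and the identity commute, $D_i$ and $N_i$ commute and $N_i \neq 0$. The key computation: raising $J_i$ to a power $m$ gives $J_i^m = (D_i + N_i)^m = D_i^m + m D_i^{m-1} N_i + \binom{m}{2} D_i^{m-2} N_i^2 + \cdots$; because $D_i$ is invertible and $N_i^2$ may vanish only when $e_i = 2$, in general the term $m D_i^{m-1} N_i$ is the "first order" correction. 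For $A^m$ (hence $J_i^m$) to be a scalar matrix $\lambda I$, we would in particular need $J_i^m$ to be diagonalizable, which forces its nilpotent part to vanish; but the nilpotent part of $(D_i+N_i)^m$ is $m D_i^{m-1} N_i + (\text{higher }N_i\text{-order terms})$, and modulo $N_i^2$ this is $m D_i^{m-1} N_i$. Since $D_i^{m-1}$ is invertible and $N_i \not\equiv 0 \pmod{N_i^2}$, this vanishes iff $m \equiv 0 \pmod p$. Thus the smallest $m$ with $A^m$ scalar is divisible by $p$, contradicting that the projective order is coprime to $p$. (One should phrase this cleanly: the projective order of $A$ is the least $m$ with $A^m$ scalar; if such $m$ exists at all it must be divisible by $p$ whenever any block has $e_i \geq 2$, because the Jordan-with-superdiagonal structure of $J_i^m$ over the splitting field shows $J_i^m$ is non-semisimple unless $p \mid m$.) Hence $e_i = 1$ for every block, so the canonical form is block diagonal with blocks equal to companion matrices $C_i$ of irreducible polynomials.

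It remains to rule out a companion block of a degree-$1$ irreducible that is not $1 \times 1$ — but a companion matrix of a degree-$1$ polynomial $x - \mu$ is by definition the $1 \times 1$ matrix $[\mu]$, so after the previous step this is automatic; a degree-$1$ irreducible factor contributes exactly a $1 \times 1$ scalar block. So the conclusion "each diagonal block is either $1 \times 1$ or a companion matrix of an irreducible polynomial" holds, with the understanding that the $1\times 1$ blocks are precisely the degree-$1$ cases and the larger blocks are companion matrices of irreducibles of degree $\geq 2$.

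The main obstacle I anticipate is making the nilpotent-part argument rigorous and self-contained without invoking more machinery than the paper has set up. The cleanest route is probably to pass to the algebraic closure (or to $\F_{q^{d_i}}$ where $f_i$ splits): there $J_i$ becomes an ordinary direct sum of honest Jordan blocks of size $e_i$ for each root of $f_i$, and the elementary fact about Jordan blocks — that $\mathrm{diag}_e(\mu) + (\text{shift})$, raised to the $m$-th power, is semisimple iff the block size is $1$ or $p \mid m$ — is standard and can be cited or proved in one line via the binomial expansion of $(\mu I + N)^m$ with $N^e = 0$, $N^{e-1}\neq 0$. Since scalarity of $A^m$ is detected after any field extension, this reduces everything to that one-line fact. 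I would want to double-check the edge behavior when $n \leq p$ versus $n > p$ is irrelevant here (the hypothesis is only that the projective order is coprime to $p$, which is exactly what kills the $p \mid m$ escape), and confirm that "projective order coprime to $p$" is being used in the form "no power $A^m$ with $p \mid m$ is needed to reach a scalar," which is precisely what the argument delivers.
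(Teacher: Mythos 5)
Your proposal is correct, but it takes a noticeably more hands-on route than the paper. The paper's proof is a one-liner: since the projective order $k$ is coprime to $p$ and $A^k=\lambda I$ with $\lambda\in\F_q^*$, the matrix $A$ satisfies $x^k-\lambda$, which is separable (its derivative $kx^{k-1}$ shares no root with it because $k\neq 0$ in $\F_q$ and $\lambda\neq 0$); hence the minimal polynomial of $A$ is separable, i.e.\ squarefree, and squarefree minimal polynomial is exactly the condition that every generalized Jordan block has multiplicity $e_i=1$. You instead assume some block has $e_i\geq 2$ and show directly, via the binomial expansion of $(D_i+N_i)^m$ over a splitting field, that any $m$ with $A^m$ scalar is forced to satisfy $p\mid m$, contradicting coprimality. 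Both arguments are sound; the paper's buys brevity by delegating to the standard ``separable minimal polynomial $\Leftrightarrow$ semisimple'' fact, while yours unpacks that equivalence in the special case at hand, which makes the role of the hypothesis ``$k$ coprime to $p$'' visible at the level of individual Jordan blocks. One small sharpening worth making explicit in your writeup: you should state that $J_i^m-\lambda I$ has Jordan--Chevalley decomposition with semisimple part $D_i^m-\lambda I$ and nilpotent part $m D_i^{m-1}N_i+\cdots$ (using that $D_i$ and $N_i$ commute), so that $J_i^m=\lambda I$ forces the nilpotent part to vanish; then the ``modulo $N_i^2$'' reduction lands cleanly on $m\equiv 0\pmod p$.
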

\begin{proof}
Suppose $A^k=\lambda I$ for some $k$ coprime to $p$ and $\lambda\in\F_q^*$. Then the minimal polynomial of $A$ is a factor of $f(x)=x^k-\lambda$. Since $k$ is coprime to $p$, the polynomial $f(x)$ is separable. Hence the minimal polynomial of $A$ is separable, and our statement follows.
\end{proof}

Given any matrix, by raising it to some power, we can obtian a matrix whose projective order is a prime number. In fact, we can also require that this matrix also has prime-power multiplicative order.

\begin{lem}
\label{Lem:RaisingAtoProjectivePrime}
If $p_0$ is a prime number dividing the projective order of a matrix $A\in\GL_n(\F_q)$, then $A$ has a power $A'$ whose multiplicative order is a power of $p_0$, and projective order is exactly $p_0$.
\end{lem}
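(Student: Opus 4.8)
The plan is to work entirely inside the cyclic subgroup $\gen{A}\leq\GL_n(\F_q)$ and to manipulate multiplicative orders modulo the center. Write $m$ for the multiplicative order of $A$ and $\bar m$ for its projective order (the order of the image $\bar A$ of $A$ in $\PGL_n(\F_q)$). Since the quotient map $\GL_n(\F_q)\to\PGL_n(\F_q)$ is a homomorphism and $\GL_n(\F_q)$ is finite, $\bar m\mid m$; and by hypothesis $p_0\mid\bar m$.

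First I would strip off the part of $m$ that is coprime to $p_0$. Factor $m=p_0^a b$ with $p_0\nmid b$; since $p_0\mid\bar m\mid m$ we have $a\geq 1$. Put $B=A^b$. Then $B$ has multiplicative order $m/\gcd(m,b)=p_0^a$, a power of $p_0$. Passing to $\PGL_n(\F_q)$, the image of $B$ is $\bar A^{\,b}$, of order $\bar m/\gcd(\bar m,b)$; because $\gcd(b,p_0)=1$ the $p_0$-adic valuation of $\bar m$ is not lowered by dividing out $\gcd(\bar m,b)$, so $p_0$ still divides the order of the image of $B$. On the other hand the projective order of $B$ divides its multiplicative order $p_0^a$, so it equals $p_0^{a'}$ for some $1\leq a'\leq a$.

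Second I would raise $B$ once more to pull the projective order down to exactly $p_0$: set $A'=B^{\,p_0^{a'-1}}$. Its image in $\PGL_n(\F_q)$ has order $p_0^{a'}/\gcd(p_0^{a'},p_0^{a'-1})=p_0$, so $A'$ has projective order exactly $p_0$. Its multiplicative order is $p_0^{a}/\gcd(p_0^{a},p_0^{a'-1})=p_0^{a-a'+1}$, which is again a power of $p_0$ (and is nontrivial, since $A'$ has projective order $p_0>1$ and hence is noncentral). This $A'$ is the one claimed.

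I do not expect a genuine obstacle: the only thing to handle with care is the bookkeeping of $p_0$-adic valuations when moving between $m$ and $\bar m$, specifically the two facts that $\gcd(b,p_0)=1$ forces $v_{p_0}\!\big(\bar m/\gcd(\bar m,b)\big)=v_{p_0}(\bar m)\geq 1$, and that the projective order always divides the multiplicative order. If one prefers, the last step can be phrased non-constructively: the image of $B$ generates a cyclic group of order $p_0^{a'}$, so some power $B^s$ has image of order exactly $p_0$; then $B^s$ works, its multiplicative order being a divisor of $p_0^a$ and therefore a power of $p_0$, nontrivial because $B^s$ is noncentral.
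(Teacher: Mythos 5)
Your proof is correct and follows essentially the same route as the paper's: both identify the prime-to-$p_0$ part of the multiplicative order, raise $A$ to that power to make the multiplicative order a pure $p_0$-power, then adjust by a further $p_0$-power to bring the projective order down to exactly $p_0$. The paper does this in a single explicit exponent $A'=A^{m'p_0^{k-1}}$, whereas you split it into two stages ($B=A^b$, then $A'=B^{p_0^{a'-1}}$), but since your $a'$ equals the $p_0$-adic valuation of the projective order of $A$, the two constructions produce the same element.
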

\begin{proof}
Let the projective order of $A$ be $mp_0^k$ for a positive integer $m$ coprime to $p_0$, and a positive integer $k$. Let the multiplicative order of $A$ be $m'p_0^{k'}$ for a positive integer $m$ coprime to $p_0$, and a positive integer $k$. Note that $m$ must divide $m'$ and we must have $1\leq k\leq k'$.

Now set $A'=A^{m'p_0^{k-1}}$. Then $A'$ has multiplicative order $p_0^{k'-k+1}$, and projective order $p_0$ exactly.
\end{proof}

We can now discuss how a matrix in the projectively reduced graph of $\GL_n(\F_q)$ could have a path to a pivot or Jordan pivot matrix.

When $q\neq 2$ and $n\geq 3$, the path towards a pivot matrix is outlined in Figure~\ref{Fig:qNot2Conn}. (Here $A$ is a generic matrix in $\GL_n(\F_q)$ and not a scalar multiple of identity. $p$ is the characteristic of the field $\F_q$.) 

When $q=2$ and $n\geq 6$, the path towards a Jordan pivot matrix is outlined in Figure~\ref{Fig:q=2Conn}.

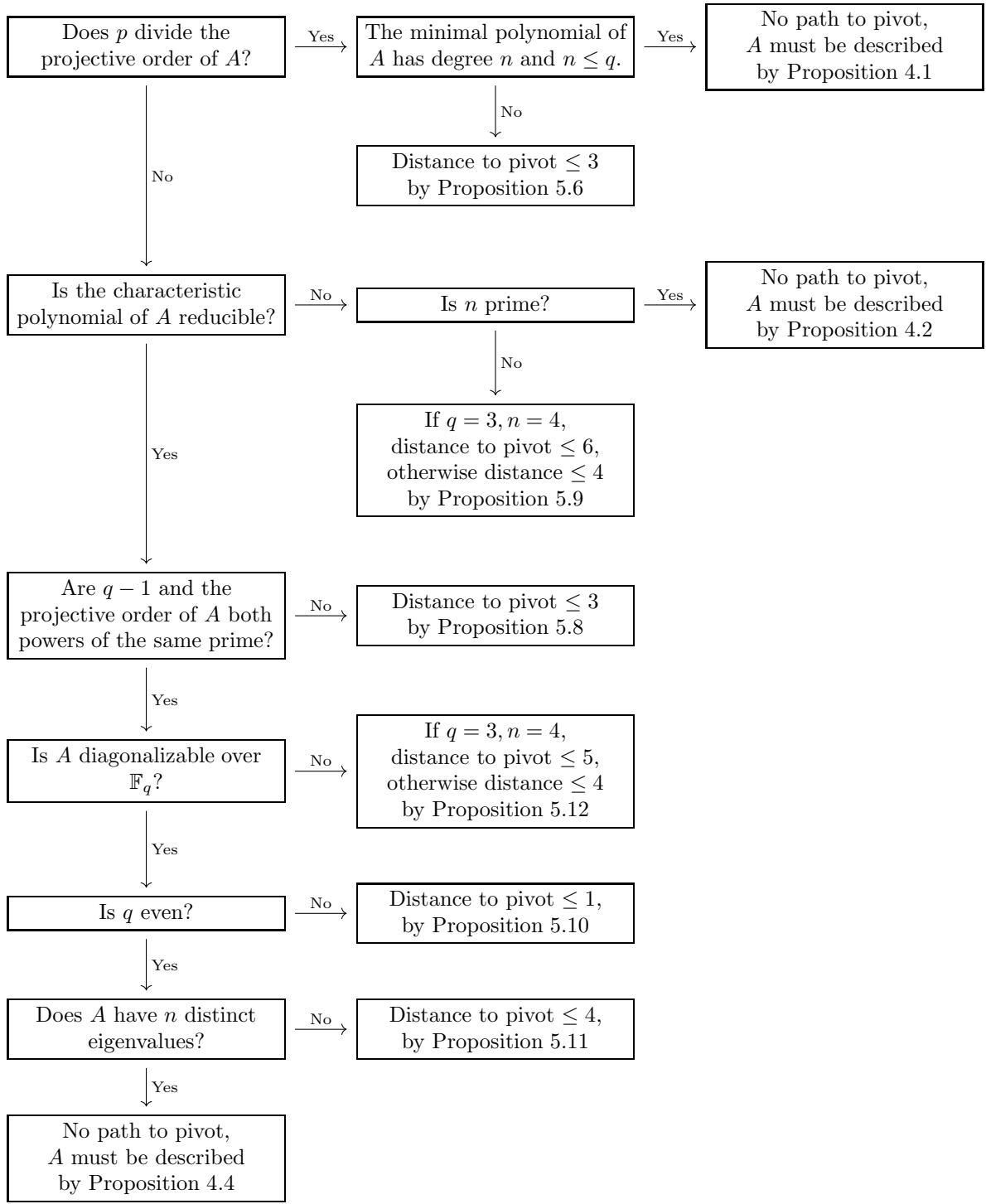
\begin{figure}%[H]
\centering
 \begin{tikzcd}[
 %column sep=huge,row sep=huge, 
ampersand replacement=\&]
%sep
\fbox{\begin{minipage}{12em}
\centering
Does $p$ divide the projective order of $A$?
\end{minipage}} 
%sep
\arrow[r,"\text{Yes}"]
\arrow[dd,"\text{No}"]
\&
%sep
\fbox{\begin{minipage}{12em}
\centering
The minimal polynomial of $A$ has degree $n$ and $n\leq q$.
\end{minipage}}
%sep
\arrow[r,"\text{Yes}"]
\arrow[d,"\text{No}"]
\&
%sep
\fbox{\begin{minipage}{12em}
\centering
No path to pivot,\\
$A$ must be described\\
by Proposition~\ref{prop:ConnObstJordan}
\end{minipage}} 
%sep
\\
%sep
\&
%sep
\fbox{\begin{minipage}{12em}
\centering
Distance to pivot $\leq 3$\\
by Proposition~\ref{Prop:JordayTypePower}
\end{minipage}} 
%sep
\&
%sep
\\
%sep
\fbox{\begin{minipage}{12em}
\centering
Is the characteristic polynomial of $A$ reducible?
\end{minipage}} 
%sep
\arrow[r,"\text{No}"]
\arrow[dd,"\text{Yes}"]
\&
%sep
\fbox{\begin{minipage}{12em}
\centering
Is $n$ prime?
\end{minipage}}
%sep
\arrow[r,"\text{Yes}"]
\arrow[d,"\text{No}"]
\&
%sep
\fbox{\begin{minipage}{12em}
\centering
No path to pivot,\\
$A$ must be described\\
by Proposition~\ref{prop:ConnObstIrred}
\end{minipage}}
%sep
\\
%sep
\&
%sep
\fbox{\begin{minipage}{12em}
\centering
If $q=3,n=4$,\\
distance to pivot $\leq 6$,\\
otherwise distance $\leq 4$\\
by Proposition~\ref{Prop:NonJordayTypeIrredChar}
\end{minipage}} 
%sep
\&
%sep
\\
%sep
\fbox{\begin{minipage}{12em}
\centering
Are $q-1$ and the projective order of $A$ both powers of the same prime?
\end{minipage}} 
%sep
\arrow[r,"\text{No}"]
\arrow[d,"\text{Yes}"]
\&
%sep
\fbox{\begin{minipage}{12em}
\centering
Distance to pivot $\leq 3$\\
by Proposition~\ref{Prop:NonJordayTypePrimeEvasion}
\end{minipage}}
%sep
\&
%sep
\\
%sep
\fbox{\begin{minipage}{12em}
\centering
Is $A$ diagonalizable over $\F_q$?
\end{minipage}} 
%sep
\arrow[r,"\text{No}"]
\arrow[d,"\text{Yes}"]
\&
%sep
\fbox{\begin{minipage}{12em}
\centering
If $q=3,n=4$,\\
distance to pivot $\leq 5$,\\
otherwise distance $\leq 4$\\
by Proposition~\ref{Prop:NonJordayTypeBothPowerSamePrime}
\end{minipage}}
%sep
\&
%sep
\\
%sep
\fbox{\begin{minipage}{12em}
\centering
Is $q$ even?
\end{minipage}} 
%sep
\arrow[r,"\text{No}"]
\arrow[d,"\text{Yes}"]
\&
%sep
\fbox{\begin{minipage}{12em}
\centering
Distance to pivot $\leq 1$,\\
by Proposition~\ref{Prop:NonJordayTypeDiagOdd}
\end{minipage}}
%sep
\&
%sep
\\
%sep
\fbox{\begin{minipage}{12em}
\centering
Does $A$ have $n$ distinct eigenvalues?
\end{minipage}} 
%sep
\arrow[r,"\text{No}"]
\arrow[d,"\text{Yes}"]
\&
%sep
\fbox{\begin{minipage}{12em}
\centering
Distance to pivot $\leq 4$,\\
by Proposition~\ref{Prop:NonJordayTypeDiagEven}
\end{minipage}}
%sep
\&
%sep
\\
%sep
\fbox{\begin{minipage}{12em}
\centering
No path to pivot,\\
$A$ must be described\\
by Proposition~\ref{prop:ConnObstDiag}
\end{minipage}} 
%sep
\&
%sep
\&
%sep
\end{tikzcd}
\caption{Path to pivot matrices when $q\neq 2$}
\label{Fig:qNot2Conn}
\end{figure}

\begin{figure}%[H]
\centering
 \begin{tikzcd}[
 %column sep=huge,row sep=huge, 
ampersand replacement=\&]
%sep
\fbox{\begin{minipage}{12em}
\centering
Is the multiplicative order of $A$ even?
\end{minipage}} 
%sep
\arrow[r,"\text{Yes}"]
\arrow[d,"\text{No}"]
\&
%sep
\fbox{\begin{minipage}{12em}
\centering
Distance to pivot $\leq 5$\\
by Proposition~\ref{Prop:JordayTypePower2}
\end{minipage}}
%sep
\&
%sep
\\
%sep
\fbox{\begin{minipage}{12em}
\centering
Does the characteristic polynomial of $A$ has an irreducible factor of degree $n$ or $n-1$?
\end{minipage}} 
%sep
\arrow[r,"\text{Yes}"]
\arrow[dd,"\text{No}"]
\&
%sep
\fbox{\begin{minipage}{12em}
\centering
Is the degree of this irreducible factor a composite number?
\end{minipage}} 
%sep
\arrow[r,"\text{Yes}"]
\arrow[d,"\text{No}"]
\&
%sep
\fbox{\begin{minipage}{12em}
\centering
Distance to pivot $\leq 6$\\
by Proposition~\ref{Prop:NonJordayTypeIrredFactorN2}
\end{minipage}} 
%sep
\\
%sep
\&
%sep
\fbox{\begin{minipage}{12em}
\centering
No path to pivot,\\
$A$ must be described\\
by Proposition~\ref{prop:ConnObstIrred}\\
or Proposition~\ref{prop:ConnObstIrred2}
\end{minipage}}
%sep
\&
%sep
\\
%sep
\fbox{\begin{minipage}{12em}
\centering
Is the multiplicative order of $A$ a composite number?
\end{minipage}} 
%sep
\arrow[r,"\text{Yes}"]
\arrow[d,"\text{No}"]
\&
%sep
\fbox{\begin{minipage}{12em}
\centering
Distance to pivot $\leq 5$\\
by Proposition~\ref{Prop:NonJordayTypeNonPrimeOrder2}
\end{minipage}} 
%sep
\&
%sep
\\
%sep
\fbox{\begin{minipage}{12em}
\centering
Is one plus the multiplicative order of $A$ a power of two?
\end{minipage}} 
%sep
\arrow[r,"\text{No}"]
\arrow[d,"\text{Yes}"]
\&
%sep
\fbox{\begin{minipage}{12em}
\centering
Distance to pivot $\leq 4$\\
by Proposition~\ref{Prop:NonJordayTypePrimeOrderNonPrimeBlock2}
\end{minipage}} 
%sep
\&
%sep
\\
%sep
\fbox{\begin{minipage}{12em}
\centering
Does $A$ have the same minimal polynomial and characteristic polynomial?
\end{minipage}} 
%sep
\arrow[r,"\text{No}"]
\arrow[d,"\text{Yes}"]
\&
%sep
\fbox{\begin{minipage}{12em}
\centering
Distance to pivot $\leq 6$\\
by Proposition~\ref{Prop:NonJordayTypePrimeOrderMersennePrimeBlock2}
\end{minipage}} 
%sep
\&
%sep
\\
%sep
\fbox{\begin{minipage}{12em}
\centering
No path to pivot,\\
$A$ must be described\\
by Proposition~\ref{prop:ConnObstDiag2}
\end{minipage}}
%sep
\&
%sep
\&
\end{tikzcd}
\caption{Path to pivot matrices when $q=2$}
\label{Fig:q=2Conn}
\end{figure}

In particular, we have the following results.

\begin{prop}
\label{Prop:PathToPivotAllCases}
We have the following connection pattern in the projectively reduced power graph for $\GL_n(\F_q)$.
\begin{enumerate}
\item If $q=2$ and $n\geq 6$, then all matrices in $\GL_n(\F_q)-Z(\GL_n(\F_q))$ must either have a distance at most $6$ to a Jordan pivot matrix, or have no path to pivot matrices.
\item If $q=3$ and $n=4$, then all matrices in $\GL_n(\F_q)-Z(\GL_n(\F_q))$ must either have a distance at most $6$ to a pivot matrix, or have no path to pivot matrices.
\item If $n\geq 3$ and we are not in the cases above, then all matrices in $\GL_n(\F_q)-Z(\GL_n(\F_q))$ must either have a distance at most $4$ to a pivot matrix, or have no path to pivot matrices.
\item If $q\neq 2$ and $n\geq 3$, and a matrix in $\GL_n(\F_q)-Z(\GL_n(\F_q))$ has no path to a pivot matrix, then it must be as described in Proposition~\ref{prop:ConnObstJordan}, Proposition~\ref{prop:ConnObstIrred}, and Proposition~\ref{prop:ConnObstDiag}.
\item If $q=2$ and $n\geq 6$, and a matrix in $\GL_n(\F_q)-Z(\GL_n(\F_q))$ has no path to a pivot matrix, then it must be as described in Proposition~\ref{prop:ConnObstIrred}, Proposition~\ref{prop:ConnObstDiag}, Proposition~\ref{prop:ConnObstDiag2}, and Proposition~\ref{prop:ConnObstIrred2}.
\end{enumerate}
\end{prop}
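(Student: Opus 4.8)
The plan is to organize the argument exactly along the two flowcharts in Figures~\ref{Fig:qNot2Conn} and~\ref{Fig:q=2Conn}: each flowchart is a finite decision tree whose internal nodes are decidable questions about a matrix $A\in\GL_n(\F_q)-Z(\GL_n(\F_q))$, and whose leaves are either a distance bound to a (Jordan) pivot matrix, via one of the propositions cited in the node, or an "obstruction" leaf naming one of Proposition~\ref{prop:ConnObstJordan}, \ref{prop:ConnObstIrred}, \ref{prop:ConnObstDiag}, \ref{prop:ConnObstDiag2}, \ref{prop:ConnObstIrred2}. So the proof is essentially a verification that (i) the decision trees are exhaustive — every matrix lands at some leaf — and (ii) each leaf delivers what Proposition~\ref{Prop:PathToPivotAllCases} claims. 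The key structural input is Lemma~\ref{Lem:RaisingAtoProjectivePrime}: throughout, I would first reduce to a matrix $A'$, a power of $A$, with projective order an exact prime and multiplicative order a prime power. Since having a path to a pivot matrix, and distances to pivot matrices, are essentially controlled by powers (an edge in the projectively reduced power graph joins $A$ to its powers and roots, and a power of $A$ has a path to a pivot iff $A$ does, up to a bounded additive constant absorbed in the $\leq 4,5,6$ bounds), this reduction is harmless and is what makes the flowchart nodes meaningful.

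For part (4), the case $q\neq 2$: I would walk down Figure~\ref{Fig:qNot2Conn}. The first split asks whether the characteristic $p$ divides the projective order of $A$. If yes, Lemma~\ref{Lem:NonJordayTypeClassify} fails to apply and instead the minimal polynomial of $A'$ (projective order $p$, multiplicative order a power of $p$) is a power of a single linear or irreducible factor; the sub-split on "$\deg$ of minimal polynomial $=n$ and $n\leq q$" separates the genuinely obstructed case (this is precisely the hypothesis of Proposition~\ref{prop:ConnObstJordan} after noting $n\leq p$ is forced) from the case where Proposition~\ref{Prop:JordayTypePower} gives distance $\leq 3$. If $p$ does not divide the projective order, Lemma~\ref{Lem:NonJordayTypeClassify} says the generalized Jordan form of $A'$ is block-diagonal with $1\times 1$ blocks and companion blocks of irreducibles. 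Then the splits "characteristic polynomial reducible?", "$n$ prime?", "$q-1$ and projective order powers of the same prime?", "diagonalizable?", "$q$ even?", "$n$ distinct eigenvalues?" successively peel off: the irreducible-characteristic-polynomial/$n$-prime leaf matches Proposition~\ref{prop:ConnObstIrred} (and $n$ non-prime gives distance $\leq 4$, or $\leq 6$ in the $q=3,n=4$ exception, by Proposition~\ref{Prop:NonJordayTypeIrredChar}); the "different primes" branch gives distance $\leq 3$ (Proposition~\ref{Prop:NonJordayTypePrimeEvasion}), using Lemma~\ref{Lem:qnPrimeEvasionQ} to guarantee a prime factor of $q^n-1$ coprime to $q-1$; the "same prime" branch splits on diagonalizability (Proposition~\ref{Prop:NonJordayTypeBothPowerSamePrime}, distance $\leq 4$ or $\leq 5$); and finally among diagonalizable matrices, $q$ odd gives distance $\leq 1$ (a diagonalizable non-central matrix over an odd field already is, or is one power away from, a pivot — Proposition~\ref{Prop:NonJordayTypeDiagOdd}), $q$ even with a repeated eigenvalue gives distance $\leq 4$ (Proposition~\ref{Prop:NonJordayTypeDiagEven}), and $q$ even with $n$ distinct eigenvalues is the obstructed case matching Proposition~\ref{prop:ConnObstDiag} — here one must check that the hypotheses of that proposition ($q-1$ prime, $n<q$) are actually forced, which follows because otherwise $q^n-1$ or $\frac{q^n-1}{q-1}$ would have a useful prime factor and we would not have reached this leaf. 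Each "Yes, distance $\leq k$" leaf is simply an invocation of the named proposition; the work is checking that the path of "No" answers leading to each obstruction leaf exactly reconstitutes the hypotheses of the corresponding Proposition in Section~\ref{sec:nDisconn}, so that "no path to pivot" is equivalent to "described by that obstruction".

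For part (5), the case $q=2$, $n\geq 6$: the argument is the same but follows Figure~\ref{Fig:q=2Conn}, where there are no pivot matrices so the target is a Jordan pivot matrix, and Proposition~\ref{Prop:PivotDistanceAllCases} guarantees all Jordan pivots are mutually close. The splits are "multiplicative order even?" (if yes, Proposition~\ref{Prop:JordayTypePower2}, distance $\leq 5$ — this is the $p\mid$ order analogue, since $p=2$), then for odd multiplicative order: "does the char.\ poly.\ have an irreducible factor of degree $n$ or $n-1$?", and if so "is that degree composite?" (composite: Proposition~\ref{Prop:NonJordayTypeIrredFactorN2}, distance $\leq 6$; prime: obstructed, matching Proposition~\ref{prop:ConnObstIrred} when the factor has degree $n$, or Proposition~\ref{prop:ConnObstIrred2} when it has degree $n-1$, using that over $\F_2$ the leftover block is forced to be $I_1$). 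If no such large irreducible factor, the remaining splits "multiplicative order composite?" (Proposition~\ref{Prop:NonJordayTypeNonPrimeOrder2}, $\leq 5$), "order$+1$ a power of two?" (if not, Proposition~\ref{Prop:NonJordayTypePrimeOrderNonPrimeBlock2}, $\leq 4$), and "minimal polynomial $=$ characteristic polynomial?" (if not, Proposition~\ref{Prop:NonJordayTypePrimeOrderMersennePrimeBlock2}, $\leq 6$) eventually isolate the case where the multiplicative order is a Mersenne prime $2^{p_1}-1$ and min.\ poly.\ equals char.\ poly. — exactly the hypotheses of Proposition~\ref{prop:ConnObstDiag2}, after using Corollary~\ref{Cor:CompanionSameOrderSameSize} to see the companion blocks all have size $p_1$ and hence $p_1\mid n$ or $p_1\mid n-1$ and $n\leq 2^{p_1}-1$. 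Parts (1), (2), (3) then just read off the worst-case distance over all non-obstruction leaves in the appropriate flowchart: $6$ for $q=2,n\geq6$; $6$ for $q=3,n=4$ (the leaves invoking Propositions~\ref{Prop:NonJordayTypeIrredChar} and~\ref{Prop:NonJordayTypeBothPowerSamePrime} in their exceptional sub-bounds); $4$ otherwise.

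The main obstacle I expect is not any single proposition — those are quoted — but the bookkeeping of \emph{exhaustiveness and exclusivity}: showing that the union of the decision-tree leaves really covers every non-central matrix with no overlap in a way that would make a distance bound ambiguous, and, more delicately, that at each obstruction leaf the conjunction of negated conditions along the path is \emph{equivalent} to (not merely implied by) the hypothesis list of the cited Section~\ref{sec:nDisconn} proposition. In particular the converse direction — that matrices described by Propositions~\ref{prop:ConnObstJordan}--\ref{prop:ConnObstIrred2} genuinely have \emph{no} path to any pivot — is exactly the content of those propositions (they trap $A$ in a component of pivot-free matrices, since a pivot matrix is never a power of $A$ in those cases: check that no power of $A$ is a pivot using the generalized Jordan form descriptions there), so part (4) and (5)'s last clauses are a matter of assembling those trapping statements and observing that a pivot matrix cannot lie in any of those small components.
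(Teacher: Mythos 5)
Your proposal is correct and follows the same strategy the paper implicitly takes: the paper does not give a separate proof block for Proposition~\ref{Prop:PathToPivotAllCases}; it is meant to be read off from Figures~\ref{Fig:qNot2Conn} and~\ref{Fig:q=2Conn} together with the case-by-case propositions in Subsections 5.2 and 5.3, which is exactly what you reconstruct, down to the worst-case bounds ($4$ generically, $6$ for $q=3,n=4$ via Propositions~\ref{Prop:NonJordayTypeIrredChar} and~\ref{Prop:NonJordayTypeBothPowerSamePrime}, $6$ for $q=2,n\geq 6$). One small note: your closing paragraph about the "converse direction" (that matrices landing at obstruction leaves genuinely have no path) is not actually needed for the statement as written — parts (4) and (5) assert only that no-path implies one of the listed descriptions, which is the contrapositive of the non-obstruction leaves being exhaustive; the trapping content of Section~\ref{sec:nDisconn} is used elsewhere (in Theorem~\ref{MainThm:n}), not in this proposition.
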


\subsection{When $q\neq 2$}

We first deal with the case when the characteristic $p$ of $\F_q$ divides the projective order of $A$.

\begin{lem}
\label{Lem:JordayTypePower}
Let $q$ be a power of a prime $p$, and let $n\geq 2$. For an $n\times n$ matrix $A$ over $\F_q$, if $p$ divides the projective order of $A$, then $A$ has a power $A'$ whose minimal polynomial is $(x-1)^k$ for some $2\leq k\leq p$.
\end{lem}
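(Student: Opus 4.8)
The plan is to pass to a power of $A$ whose multiplicative order is exactly $p$, and then simply read off its minimal polynomial. First I would note that the projective order of $A$ divides its multiplicative order, because $\langle\bar A\rangle\le\PGL_n(\F_q)$ is a quotient of $\langle A\rangle$; so the hypothesis that $p$ divides the projective order of $A$ forces $p$ to divide the multiplicative order of $A$. Writing $\mathrm{ord}(A)=p^a m$ with $a\ge 1$ and $p\nmid m$, I would take $A'=A^{p^{a-1}m}$, which has multiplicative order exactly $p$. (One could instead invoke Lemma~\ref{Lem:RaisingAtoProjectivePrime} with $p_0=p$, but that only guarantees a power of $p$-power order, so I prefer the explicit choice above.)

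Then I would identify the minimal polynomial of $A'$. Since $A'^p=I$ and $x^p-1=(x-1)^p$ over a field of characteristic $p$, the minimal polynomial of $A'$ divides $(x-1)^p$, hence equals $(x-1)^k$ for some $1\le k\le p$. Finally $k=1$ would mean $A'=I$, contradicting $\mathrm{ord}(A')=p>1$, so $2\le k\le p$, which is exactly what is claimed.

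I do not expect any genuine obstacle; the only point requiring care is the order bookkeeping — specifically that $A^{p^{a-1}m}$ really has order exactly $p$ rather than a higher power of $p$, since that is what pins the degree bound at $k\le p$. Had one started instead from a power of $A$ of $p$-power order $p^j$ with $j>1$, one would have to iterate the identity $(I+N)^p=I+N^p$ valid in characteristic $p$ (so each $p$-th power replaces the nilpotent part $N$ by $N^p$, whose nilpotency index is $\lceil(\text{index of }N)/p\rceil$) until that index lands in $\{2,\dots,p\}$, using that this interval is never skipped because $\lceil(p+1)/p\rceil=2$; but the direct choice of $A'$ sidesteps this entirely.
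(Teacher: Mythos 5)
Your proof is correct and takes essentially the same approach as the paper's: pass to a power of $A$ in the $p$-part and read off the minimal polynomial from the identity $x^p-1=(x-1)^p$ in characteristic $p$. The only (minor) difference is that you take $A'=A^{p^{a-1}m}$ directly to get multiplicative order exactly $p$, whereas the paper invokes Lemma~\ref{Lem:RaisingAtoProjectivePrime} to get a power of $p$-power multiplicative order and then argues separately that the repeated eigenvalue $\mu$ must equal $1$; your version shortcuts that last step.
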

\begin{proof}
By Lemma~\ref{Lem:RaisingAtoProjectivePrime}, $A$ has a power $A'$ whose projective order is $p$, and its multiplicative order is a power of $p$.

Suppose $(A')^p=\lambda I$ for some $\lambda\in\F_q^*$. Then the minimal polynomial of $A'$ must divide the polynomial $f(x)=x^p-\lambda$. Since $p$ is the characteristic of the field $\F_q$, we can find $\mu\in\F_q$ such that $\mu^p=\lambda$. Then $f(x)=x^p-\mu^p=(x-\mu)^p$. So the minimal polynomial of $A'$ is a power of $x-\mu$ with degree at most $p$.

Furthermore, since the multiplicative order of $A'$ is a power of $p$, its eigenvalue $\mu$ must also have multiplicative order a power of $p$ in the group $\F_q^*$. But since the group $\F_q^*$ has $q-1$ elements, which is coprime to $p$, therefore $\mu=1$. So we are done.
\end{proof}

\begin{prop}
\label{Prop:JordayTypePower}
For any power $q\neq 2$ of a prime $p$, and let $n\geq 2$, suppose $p$ divides the projective order of a matrix $A\in\GL_n(\F_q)$. Then either $A$ has distance at most $3$ to a pivot matrix, or $A$ has not path to any pivot matrix, and $A$ must be as described in Proposition~\ref{prop:ConnObstJordan}.
\end{prop}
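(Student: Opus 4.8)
The plan is to pass to a unipotent power of $A$ and then argue by cases on its Jordan type. Since $p$ divides the projective order of $A$, Lemma~\ref{Lem:JordayTypePower} gives a power $A'=A^{m}$ whose minimal polynomial is $(x-1)^{k}$ with $2\le k\le p$; thus $A'$ is unipotent and not a scalar matrix, so it is a vertex of the projectively reduced power graph adjacent to (or equal to) $A$. It therefore suffices to produce a pivot matrix within distance $2$ of $A'$, or else to show that $A'$ (hence $A$) reaches no pivot matrix while $A$ satisfies the hypotheses of Proposition~\ref{prop:ConnObstJordan}. The dichotomy is whether $k<n$, so that the Jordan form of $A'$ has at least two unipotent blocks, or $k=n$, so that $A'$ is a single $n\times n$ Jordan block (forcing $n\le p$).

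For $k<n$ the idea is to build an explicit root of $A'$ whose semisimple part is already a pivot matrix. Up to similarity, write $A'=\mathrm{diag}(U_{1},U_{2})$ with $U_{1},U_{2}$ unipotent of sizes $a,b\ge 1$, $a+b=n$; such a splitting exists precisely because $A'$ has at least two Jordan blocks. Since $q-1$ is coprime to $p$ and the group of unipotent upper triangular matrices of a fixed size over $\F_q$ is a finite $p$-group, raising to the $(q-1)$-th power is a bijection on it, so each $U_{i}$ has a unique unipotent $(q-1)$-th root $V_{i}$. Choosing $x\in\F_q^{*}$ with $x\ne 1$, which is possible exactly because $q\ne 2$, put $B=\mathrm{diag}(xV_{1},V_{2})$. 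Then $B^{q-1}=\mathrm{diag}(x^{q-1}V_{1}^{q-1},V_{2}^{q-1})=A'$, so $A'$ is a power of $B$, and the semisimple part of $B$ is the pivot matrix $P=\mathrm{diag}(xI_{a},I_{b})$ (two distinct nonzero eigenvalues, each occurring). Finally $P$ is a power of $B$: writing the order of $B$ as $N'p^{c}$ with $\gcd(N',p)=1$, the element $B^{p^{c}}$ lies in and generates $\langle P\rangle$. Hence $A'-B-P$ is a path of length at most $2$, so $A$ has distance at most $3$ to the pivot matrix $P$.

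For $k=n$ I would show that $A$ itself is similar to a single Jordan block. Up to similarity $A'$ is the companion matrix of $(x-1)^{n}$, so by Lemma~\ref{Lem:CompanionComm} the commuting matrix $A$ is a polynomial $g(A')$; expanding around $1$, $A=\lambda I+(A'-I)h(A')$ with $\lambda=g(1)\in\F_q^{*}$ (nonzero since $A$ is invertible). A rank argument forces $h(1)\ne 0$: otherwise $A-\lambda I$ lies in the ideal generated by $(A'-I)^{2}$, hence so does $A^{m}-\lambda^{m}I=A'-\lambda^{m}I$, which is impossible because the coefficient of $(x-1)$ in $x-\lambda^{m}$ is nonzero while $A'-I$ has rank $n-1$. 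So $h(A')$ is invertible, hence $A-\lambda I$ is nilpotent of rank $n-1$, forcing $A$ to be similar to the single $n\times n$ Jordan block with eigenvalue $\lambda\in\F_q^{*}$, and $2\le n\le p$. Thus $A$ is exactly as in Proposition~\ref{prop:ConnObstJordan}, which traps $A$ in a connected component whose vertices are images of non-scalar powers of $A$, each a single Jordan block and hence not diagonalizable; such a component contains no pivot matrix, so $A$ has no path to a pivot matrix.

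The step I expect to be the main obstacle is the construction for $k<n$: manufacturing a genuine pivot matrix (with two distinct eigenvalues) at distance $2$ from a unipotent matrix. The essential ingredients are that $(q-1)$-th roots behave well on unipotent matrices because $\gcd(q-1,p)=1$, that the diagonal twist $\mathrm{diag}(xI_{a},I_{b})$ requires a second unit $x\ne 1$ and so uses $q\ne 2$, and the small but easy-to-miss point that $P$ is literally a power of $B$ rather than merely commuting with it, which again rests on the coprimality of $q-1$ and $p$. The case $k=n$ is comparatively routine once one recognizes that a single Jordan block of size at most $p$ is closed, up to scalars, under powers and roots --- which is essentially the content of Proposition~\ref{prop:ConnObstJordan}.
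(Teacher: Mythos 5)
Your proof is correct and its overall structure --- pass to the unipotent power $A'$ via Lemma~\ref{Lem:JordayTypePower}, then split on whether $A'$ is a single Jordan block --- is the same as the paper's. Two small differences in execution are worth noting. In the multi-block case, the paper does not extract $(q-1)$-th roots of the unipotent factors: it keeps the unipotent part of $B$ equal to $A'$, setting $B=X\begin{bmatrix}xJ&\\&A''\end{bmatrix}X^{-1}$, and then reads off $B^{(q-1)^2}=A'$ and $B^q=X\begin{bmatrix}xI&\\&I\end{bmatrix}X^{-1}$ directly, using $(q-1)^2\equiv 1$ and $q\equiv 0\pmod p$. Your construction via the unique unipotent $(q-1)$-th roots $V_i$ is also fine, but you could shorten your final step: since $P^{q-1}=I$ and the unipotent part of $B$ has order dividing $p$ (because $V_i^{(q-1)p}=I$ and $V_i$ lies in a $p$-group, so $V_i^p=I$; and $p\mid q$), one has $P=B^q$ outright, with no need for the order decomposition $N'p^c$. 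In the single-block case you give a direct argument, via Lemma~\ref{Lem:CompanionComm} and a rank computation, that $A$ itself is similar to a single $n\times n$ Jordan block; the paper instead applies Proposition~\ref{prop:ConnObstJordan} to $A'$ and deduces the description of $A$ from membership in the same component. Your version makes explicit a step the paper leaves implicit, and is a slightly cleaner route to the stated conclusion about $A$.
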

\begin{proof}
By Lemma~\ref{Lem:JordayTypePower}, $A$ has a power $A'$ whose minimal polynomial is $(x-1)^k$ for some $2\leq k\leq p$. So in the generalized Jordan canonical form of $A'$, all diagonal blocks are of the form $\begin{bmatrix}1&1&&\\&\ddots&\ddots&\\&&\ddots&1\\&&&1\end{bmatrix}$, and the size of each block is at most $p\times p$. 

Suppose $A'$ has at least two such blocks. Say $A'=X\begin{bmatrix}J&\\&A''\end{bmatrix}X^{-1}$ for some invertible $X$, where $J=\begin{bmatrix}1&1&&\\&\ddots&\ddots&\\&&\ddots&1\\&&&1\end{bmatrix}$ is at most $p\times p$, and $A''$ also has minimal polynomial $(x-1)^{k'}$ for some $2\leq k'\leq p$. In particular, $J^p$ and $(A'')^p$ are both identity matrices (maybe with different size).

Pick any $x\neq 1\in\F_q^*$. Consider $B=X\begin{bmatrix}xJ&\\&A''\end{bmatrix}X^{-1}$. Then $B^q=X\begin{bmatrix}x^qI&\\&I\end{bmatrix}X^{-1}=X\begin{bmatrix}xI&\\&I\end{bmatrix}X^{-1}$ is a pivot matrix. On the other hand, $B^{(q-1)^2}=X\begin{bmatrix}J&\\&A''\end{bmatrix}X^{-1}=A'$. So a power of a root of $A'$ is a pivot matrix. So $A$ has distance at most $3$ to a pivot matrix.

Now suppose $A'$ is similar to a single block $\begin{bmatrix}1&1&&\\&\ddots&\ddots&\\&&\ddots&1\\&&&1\end{bmatrix}$, and the size of this block is at most $p\times p$. Then immediately we have $n\leq p$, and $A'$ must be as described in Proposition~\ref{prop:ConnObstJordan}. Since $A$ and $A'$ must be in the same connected component, $A$ must also be as described in Proposition~\ref{prop:ConnObstJordan}.
\end{proof}

As we can see, when $q\neq 2$ and $p$ divides the projective order of $A$, then the only possible obstruction is from Proposition~\ref{prop:ConnObstJordan}.

we now deal with matrices whose projective order is coprime to the characteristic of the base field. The path to a pivot matrix would usually go through a middle matrix, as described in Lemma~\ref{Lem:NonJordayTypePrimeEvasion}.

\begin{lem}
\label{Lem:NonJordayTypePrimeEvasion}
Let $q\neq 2$ be a power of a prime $p$, and let $n\geq 2$. Suppose a matrix $A\in\GL_n(\F_q)$ has prime projective order $p_1\neq p$, and its multiplicative order is also a power of $p_1$. Furthermore, suppose $q-1$ is not a power of $p_1$, say $p_0$ is another prime factor of $q-1$. Finally, suppose that the characteristic polynomial of $A$ is reducible. Then $A$ has distance at most $2$ to a pivot matrix with multiplicative order $p_0$.
\end{lem}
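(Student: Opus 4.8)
The plan is to insert, between $A$ and a carefully chosen pivot matrix, a single \emph{interpolating} matrix $M$ obtained from the generalized Jordan form of $A$ by rescaling one group of blocks by a root of unity of order $p_0$; then both $A$ and the pivot matrix will be powers of $M$, giving a path of length at most two. First I would record the shape of $A$. Since the multiplicative order of $A$ is a power of $p_1$ with $p_1\ne p$, Lemma~\ref{Lem:NonJordayTypeClassify} applies (note $A\notin Z(\GL_n(\F_q))$ because its projective order is $p_1>1$), so the generalized Jordan canonical form of $A$ is block diagonal with each diagonal block either $1\times 1$ or a companion matrix of an irreducible polynomial. A single such block has irreducible characteristic polynomial (a lone $1\times 1$ block would force $n=1$), so the reducibility of $\chi_A$ together with $n\ge 2$ forces at least two blocks. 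Grouping them, I may write $A=X\begin{bmatrix}B_1&\\&A_2\end{bmatrix}X^{-1}$ for some invertible $X$, where $B_1$ is $k\times k$ with $1\le k\le n-1$ and both $B_1,A_2$ are invertible. The multiplicative order of $A$ is the $\mathrm{lcm}$ of those of $B_1$ and $A_2$ and equals the prime power $p_1^{a}$, so $B_1$ and $A_2$ have orders $p_1^{b},p_1^{c}$ with $\max(b,c)=a$.

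Next I would name the target and the interpolant. As $p_0\mid q-1$ and $\F_q^*$ is cyclic, I fix $\omega\in\F_q^*$ of order exactly $p_0$ and put $C=X\begin{bmatrix}\omega I_k&\\&I_{n-k}\end{bmatrix}X^{-1}$ and $M=X\begin{bmatrix}\omega B_1&\\&A_2\end{bmatrix}X^{-1}$. Then $C$ is diagonalizable with exactly the two distinct nonzero eigenvalues $\omega$ and $1$, so it is a pivot matrix of multiplicative order $p_0$. Using $\gcd(p_0,p_1)=1$ and the Chinese Remainder Theorem, I choose $j$ with $j\equiv 1\pmod{p_1^{a}}$ and $j\equiv 0\pmod{p_0}$, and $i$ with $i\equiv 0\pmod{p_1^{a}}$ and $i\equiv 1\pmod{p_0}$. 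Since $M^{t}=X\begin{bmatrix}\omega^{t}B_1^{t}&\\&A_2^{t}\end{bmatrix}X^{-1}$ for all $t$, and since $p_1^{b},p_1^{c}$ divide $p_1^{a}$, I get $M^{j}=X\begin{bmatrix}B_1&\\&A_2\end{bmatrix}X^{-1}=A$ and $M^{i}=X\begin{bmatrix}\omega I_k&\\&I_{n-k}\end{bmatrix}X^{-1}=C$; thus $A$ and $C$ are both powers of $M$.

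It then remains to check that $M$ is not central, which is exactly what makes $A$, $M$, $C$ a genuine path in the projectively reduced power graph. If $M=\mu I$ then $\omega B_1=\mu I_k$ and $A_2=\mu I_{n-k}$, whence $A=X\begin{bmatrix}\omega^{-1}\mu I_k&\\&\mu I_{n-k}\end{bmatrix}X^{-1}$; since $\omega\ne 1$ and $1\le k\le n-1$, this exhibits $A$ as a pivot matrix whose two eigenvalues have ratio of order $p_0$, forcing the projective order of $A$ to be $p_0\ne p_1$, a contradiction. So $M$ is non-central; it also differs from $A$ (as $\omega\ne 1$) and from $C$ (else $A=I$). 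Hence $A$, $M$, $C$ is a path of length at most $2$ from $A$ to the pivot matrix $C$ of multiplicative order $p_0$, as required.

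The construction is essentially forced once one commits to rescaling a single block, so the only real points of care, and the main obstacle, are the two-block decomposition --- which is precisely where the hypotheses $p_1\ne p$ and ``$\chi_A$ reducible'' enter --- and the non-centrality of $M$, which is where ``projective order exactly $p_1$'' is used; the exponent bookkeeping via the Chinese Remainder Theorem is routine.
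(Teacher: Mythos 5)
Your proof is correct, and it takes a genuinely different route from the paper's. Both arguments begin the same way: Lemma~\ref{Lem:NonJordayTypeClassify} gives the block-diagonal form with companion-matrix blocks, and reducibility of $\chi_A$ forces at least two such blocks. From there the paper extracts a $p_0$-th root $C_i'$ of \emph{each} block (using Lemma~\ref{Lem:CompanionOrderBasic}, Corollary~\ref{Cor:CompanionSameOrderSameSize}, Lemma~\ref{Lem:CompanionRoot}), then multiplies each $C_i'$ by a carefully chosen scalar so that the $m$-th power of the resulting $p_0$-th root of $A$ (with $m$ the multiplicative order of $A$) is the pivot $X\,\mathrm{diag}(xI,I,\dots,I)\,X^{-1}$; the path is root-then-power. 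Your argument instead partitions the blocks into two groups, rescales one group by a fixed $\omega$ of order $p_0$, and observes that the single intermediate matrix $M=X\begin{bmatrix}\omega B_1&\\&A_2\end{bmatrix}X^{-1}$ has order $p_0p_1^a$; the Chinese Remainder Theorem inside $\langle M\rangle$ then exhibits both $A$ and the pivot $C$ as powers of $M$. This is cleaner: it sidesteps the root-extraction and scalar-bookkeeping machinery entirely (no need for Corollary~\ref{Cor:CompanionSameOrderSameSize}, Lemma~\ref{Lem:CompanionRoot}, or Corollary~\ref{Cor:CompanionPower}), and your non-centrality check makes transparent exactly where the hypothesis on the projective order of $A$ is used. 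One small simplification you could note: for ``distance at most $2$'' it is not necessary to verify $M\ne A$ and $M\ne C$ separately --- only that $M$ is non-central --- though your checks are correct and cost nothing.
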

\begin{proof}
Since we have $p_1\neq p$, by Lemma~\ref{Lem:NonJordayTypeClassify}, $A=X\begin{bmatrix}C_1&&\\ &\ddots&\\&&C_t\end{bmatrix}X^{-1}$ for some invertible $X$, where $C_1,\dots, C_t$ are companion matrices to various irreducible polynomials over $\F_q$. Since the characteristic polynomial of $A$ is not irreducible, we know that $t\geq 2$. By Corollary~\ref{Cor:CompanionSameOrderSameSize}, we can find a positive integer $k$ depending only on $p_1$ and $q$, such that all $C_i$ are either $1\times 1$ or $k\times k$. 

For each $i$, let $m_i$ be the multiplicative order of the block $C_i$. Then $m_i$ must divide the multiplicative order of $A$, hence it is a power of $p_1$. 

Since $q-1$ is not a power of $p_1$, we can find a prime factor $p_0$ of $q-1$ distinct from $p_1$. Then $p_0$ is also a factor of $q^k-1$, and it is coprime to $m_i$. If $C_i$ is $k\times k$, then by Lemma~\ref{Lem:CompanionOrderBasic}, $m_i$ is a factor of $q^k-1$. Therefore, $p_0m_i$ is a factor of $q^k-1$. By Lemma~\ref{Lem:CompanionRoot}, this means $C_i$ must have a $p_0$-th root $C'_i$ whose multiplicative order is $p_0m_i$. Similarly, if $C_i$ is $1\times 1$, then $p_0p_1$ is a factor of $q-1$, and again $C_i$ must have a $p_0$-th root $C'_i$ whose multiplicative order is $p_0m_i$. 

Since $p_0$ divides $q-1$, we can find $x\neq 1\in\F_q^*$ such that $x^{p_0}=1$. Let $m$ be the multiplicative order of $A$ which is the least common multiples of all $m_i$, and hence coprime to $p_0$. Then $(C'_i)^{m}$ will have multiplicative order $p_0$, a factor of $q-1$. By Corollary~\ref{Cor:CompanionPower}, $(C'_i)^{m}=x^{d_i}I$ for some integer $d_i$. 

Since $m$ is coprime to $p_0$, we can pick a positive integer $m'$ such that $mm'$ is $1$ modulus $p_0$. Then we have a path in the projectively reduced power graph of $\GL_n(\F_q)$.
\begin{align*}
A=&X\begin{bmatrix}C_1&&\\ &\ddots&\\&&C_t\end{bmatrix}X^{-1}\\
\xrightarrow{\text{$p_0$-th root}}&
X\begin{bmatrix}x^{(1-d_1)m'}C'_1&&&\\ &x^{-d_2m'}C'_2&&\\ &&\ddots&\\&&&x^{-d_tm'}C'_t\end{bmatrix}X^{-1}\\
\xrightarrow{\text{$m$-th power}}&
X\begin{bmatrix}xI&&&\\ &I&&\\ &&\ddots&\\&&&I\end{bmatrix}X^{-1}.
\end{align*}

The last matrix here is a pivot matrix, so we are done.
\end{proof}

This immediately shows that many matrices are connected to pivot matrices by some short path.

\begin{prop}
\label{Prop:NonJordayTypePrimeEvasion}
Let $q\neq 2$ be a power of a prime $p$, and let $n\geq 2$. Suppose the characteristic polynomial of a matrix $A\in\GL_n(\F_q)$ is reducible, and we can find a prime $p_1$ dividing the projective order of $A$, such that $q-1$ is not a power of $p_1$. Then $A$ has distance at most $3$ to a pivot matrix.
\end{prop}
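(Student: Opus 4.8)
The plan is to reduce to Lemma~\ref{Lem:NonJordayTypePrimeEvasion} by passing from $A$ to a suitable power of itself. First I would fix a prime $p_1$ dividing the projective order of $A$ with $q-1$ not a power of $p_1$, as furnished by the hypothesis. If $p$ itself divides the projective order of $A$, then $A$ is covered by Proposition~\ref{Prop:JordayTypePower}; so I would assume from now on that $p$ does not divide the projective order of $A$, and in particular $p_1\neq p$. Then Lemma~\ref{Lem:RaisingAtoProjectivePrime} produces a power $A'$ of $A$ whose multiplicative order is a power of $p_1$ and whose projective order is exactly $p_1$. Since $p_1\geq 2$, the matrix $A'$ is not scalar, so it is a genuine vertex of the projectively reduced power graph of $\GL_n(\F_q)$, and $A'$ lies at distance at most $1$ from $A$ (they are either equal or adjacent).

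The only point needing an argument before I can invoke Lemma~\ref{Lem:NonJordayTypePrimeEvasion} on $A'$ is that the characteristic polynomial of $A'$ is still reducible. For this I would use the generalized Jordan canonical form: a matrix over $\F_q$ is conjugate to a block upper triangular matrix $\begin{bmatrix}C_1&*&*\\&\ddots&*\\&&C_s\end{bmatrix}$ whose diagonal blocks $C_1,\dots,C_s$ are companion matrices of irreducible polynomials, and this has a single diagonal block exactly when its characteristic polynomial is irreducible. Hence the hypothesis that the characteristic polynomial of $A$ is reducible gives $s\geq 2$. Since $A'$ is a power of $A$, it is conjugate to $\begin{bmatrix}C_1^k&*&*\\&\ddots&*\\&&C_s^k\end{bmatrix}$ for the relevant exponent $k$, and its characteristic polynomial is the product of the $s\geq 2$ nonconstant characteristic polynomials of the $C_j^k$, hence again reducible.

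Now $A'$ meets all the hypotheses of Lemma~\ref{Lem:NonJordayTypePrimeEvasion}: its projective order is the prime $p_1\neq p$, its multiplicative order is a power of $p_1$, $q-1$ is not a power of $p_1$, and its characteristic polynomial is reducible. That lemma then gives a path of length at most $2$ from $A'$ to a pivot matrix; concatenating with the edge (or equality) between $A$ and $A'$ produces a path of length at most $3$ from $A$ to a pivot matrix, which is the claim.

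The only genuinely non-bookkeeping step is the stability of ``reducible characteristic polynomial'' under taking powers in the middle paragraph, and the block-triangular description above makes it short; everything else is just assembling Lemma~\ref{Lem:RaisingAtoProjectivePrime} and Lemma~\ref{Lem:NonJordayTypePrimeEvasion}, together with the harmless reduction to the case where $p$ does not divide the projective order of $A$.
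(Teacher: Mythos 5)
Your overall reduction is the paper's: pass from $A$ to a power $A'$ via Lemma~\ref{Lem:RaisingAtoProjectivePrime}, check that the characteristic polynomial of $A'$ is still reducible, and apply Lemma~\ref{Lem:NonJordayTypePrimeEvasion}. The reducibility argument via the block-triangular form of $A^k$ is fine, and matches what the paper asserts without spelling out.

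The genuine gap is in your handling of the case where $p$ divides the projective order of $A$. You say that case is ``covered by Proposition~\ref{Prop:JordayTypePower}'', but the conclusion of that proposition is an either/or: either $A$ is within distance $3$ of a pivot matrix, \emph{or} $A$ has no path to any pivot matrix and is of the type described in Proposition~\ref{prop:ConnObstJordan}. You cannot rule out the second alternative from the hypotheses at hand. Indeed, a single Jordan block $\lambda I + N$ of size $n$ with $2\le n\le p$ has reducible characteristic polynomial $(x-\lambda)^n$, has projective order exactly $p$, and $q-1$ is never a power of $p$ when $q\neq 2$; so it satisfies every hypothesis of the statement with $p_1=p$, yet by Proposition~\ref{prop:ConnObstJordan} it lies in a component containing no pivot matrix. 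This means the proposition as literally written is not true without the additional hypothesis $p_1\neq p$ (equivalently, $p$ does not divide the projective order of $A$), which is exactly the condition Lemma~\ref{Lem:NonJordayTypePrimeEvasion} needs. You correctly spotted that $p_1\neq p$ is required, and in that respect your proof is more careful than the paper's, which invokes Lemma~\ref{Lem:NonJordayTypePrimeEvasion} without checking it; but your way of closing the case $p\mid$ projective order does not actually close it. The intended reading --- visible in Figure~\ref{Fig:qNot2Conn}, where this proposition is only reached after the $p\mid$ projective order branch has been handled separately --- is that $p_1\neq p$ is an unstated hypothesis. Once you add that hypothesis and drop the fallback to Proposition~\ref{Prop:JordayTypePower}, your argument is complete and agrees with the paper's.
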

\begin{proof}
By Lemma~\ref{Lem:RaisingAtoProjectivePrime}, $A$ has a power $A'$ with projective order $p_1$, and multiplicative order a power of $p_1$. Since the characteristic polynomial of $A$ is reducible, the characteristic polynomial of $A'$ must also be reducible. Then we are done by Lemma~\ref{Lem:NonJordayTypePrimeEvasion} above.
\end{proof}

We are left with only two situations: when the characteristic polynomial of $A$ is irreducible, or when the projective order of $A$ and $q-1$ are both powers of the same prime.

If the characteristic polynomial of an $n\times n$ matrix $A$ is irreducible, and $n$ is prime, then this is exactly the situation as described in Proposition~\ref{prop:ConnObstIrred}. If $n$ is not prime, then the situation is described by Proposition~\ref{Prop:NonJordayTypeIrredChar} below.

\begin{prop}
\label{Prop:NonJordayTypeIrredChar}
Let $q\neq 2$ be a power of a prime $p$, and let $n$ be a composite number. Suppose a matrix $A\in\GL_n(\F_q)$ has projective order coprime to $p$, and its characteristic polynomial is irreducible. Then either $A$ has distance at most $4$ to a pivot matrix, or $q=3$ and $n=4$ and $A$ has distance at most $6$ to a pivot matrix.
\end{prop}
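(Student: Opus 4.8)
The plan is to realise a path of length four from $A$ to a pivot matrix,
\[ A \;\longrightarrow\; \zeta \;\longrightarrow\; N \;\longrightarrow\; N' \;\longrightarrow\; P, \]
where $\zeta$ and $N$ are taken inside the field $\F_q[A]$ and the last two edges repeat the ``root‑and‑twist'' construction of Lemma~\ref{Lem:NonJordayTypePrimeEvasion}; the single case $q=3,\ n=4$ will require a separate, longer chain.

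\emph{Setup and the first edge.} Since the characteristic polynomial of $A$ is irreducible of degree $n$, Lemma~\ref{Lem:CompanionOrderBasic} identifies the polynomials of $A$ with the field $\F_q[A]\cong\F_{q^n}$, whose unit group is cyclic of order $q^n-1$; because the polynomial has degree exactly $n$, $A$ lies in no proper subfield, so its order $k$ divides no $q^d-1$ with $d\mid n$, $d<n$. Applying Lemma~\ref{Lem:CompanionRoot} with $t=(q^n-1)/k$ produces a root $\zeta$ of $A$ with irreducible characteristic polynomial and order $q^n-1$; since $A=\zeta^{t}$ and neither matrix is central, $A$ and $\zeta$ are adjacent.

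\emph{The main construction.} I will choose a proper divisor $d$ of $n$ with $1<d<n$ and a prime power $o=p_1^{a}$ such that $d$ is the least integer with $o\mid q^{d}-1$ (so in particular $o\nmid q-1$), and such that $q-1$ has a prime factor $p_0\neq p_1$. Put $N=\zeta^{(q^n-1)/o}$. Then $N$ is a power of $\zeta$, so $\zeta$ and $N$ are adjacent; $N$ has order exactly $o$; and by Lemma~\ref{Lem:CompanionPower} its minimal polynomial is irreducible of degree exactly $d$. Since $d\mid n$, the generalized Jordan canonical form of $N$ is $Y\,\mathrm{diag}(C,\dots,C)\,Y^{-1}$ with $n/d\geq2$ copies of the companion matrix $C$ of that polynomial, and $C$ has order $o$. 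Because $p_0\neq p_1$ and $p_0\mid q-1\mid q^{d}-1$, Lemma~\ref{Lem:PrimeCompanionHasGoodRoot} gives a $p_0$-th root $C'$ of $C$ with irreducible characteristic polynomial whose order is divisible by $p_0$; as $(C')^{p_0}=C$ has order $o$ and $\gcd(p_0,o)=1$, the order of $C'$ is exactly $p_0o$, so $(C')^{o}$ has order $p_0\mid q-1$ and hence equals $x^{b}I$ for a fixed $x\in\F_q^{*}$ of order $p_0$ (Corollary~\ref{Cor:CompanionPower}). Choosing integers $a_1,\dots,a_{n/d}$ with $oa_1+b\equiv1$ and $oa_i+b\equiv0\pmod{p_0}$ for $i\geq2$ (possible since $o$ is a unit mod $p_0$), set $N'=Y\,\mathrm{diag}(x^{a_1}C',\dots,x^{a_{n/d}}C')\,Y^{-1}$, which is non-central. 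Then $N'^{\,p_0}=Y\,\mathrm{diag}(C,\dots,C)\,Y^{-1}=N$, so $N$ and $N'$ are adjacent, and $N'^{\,o}=Y\,\mathrm{diag}(xI,I,\dots,I)\,Y^{-1}$ is a pivot matrix, completing the length-four path.

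\emph{Choosing $(d,o)$, and the exceptional case.} The clean choice is $o=\ell_0$, a primitive prime divisor of $q^{d}-1$ (a prime dividing $q^{d}-1$ but no $q^{d'}-1$ with $d'<d$): such $\ell_0$ automatically misses $q-1$, so any prime of $q-1$ serves as $p_0$. If $n$ has a proper divisor $d$ with $1<d<n$ and $d\neq2$, an elementary check supplies a primitive prime divisor of $q^{d}-1$ (for $d$ an odd prime this amounts to $\tfrac{q^d-1}{q-1}$ not being a power of $d$; for $d=4$ it amounts to $q^2+1$ having an odd prime factor), giving distance $\leq4$. The only composite $n$ with no such $d$ is $n=4$, which forces $d=2$; then $q^2-1$ has a primitive prime divisor exactly when $q+1$ is not a power of $2$, and in that subcase distance $\leq4$ again, while if $q+1$ is a power of $2$ (so $q\equiv3\pmod4$) and $q>3$ one instead takes $o=4$, whose only prime factor is $2$, and uses an odd prime factor of $q-1$ as $p_0$. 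This leaves precisely $q=3,\ n=4$, where the argument genuinely fails: every pivot in $\GL_4(\F_3)$ has order $2=q-1$, so $q-1$ has no prime factor outside $o$ to play the role of $p_0$. For this case the plan is to write down an explicit longer chain of roots and powers, of length at most $6$, terminating in an involution pivot $\mathrm{diag}(I_2,2I_2)$; producing and verifying that chain is the main obstacle, and it is what forces the weaker bound $6$ in the statement.
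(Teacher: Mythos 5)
Your four-step scheme $A\to\zeta\to N\to N'\to P$, with $\zeta$ a generator of $\F_q[A]^*$, $N=\zeta^{(q^n-1)/o}$ of prime-power order $o$ and irreducible minimal polynomial of degree $d\mid n$ with $1<d<n$, and $N'$ the scalar-twisted $p_0$-th root so that $N'^{\,o}$ is a pivot, is sound in the generic case and in the same spirit as the paper's proof: its path $A\to A'\to B\to\text{pivot}$ also has length four, and your last two edges are exactly Lemma~\ref{Lem:NonJordayTypePrimeEvasion} unwound by hand. Two aspects of your version are actually more careful. By allowing $o$ to be a prime \emph{power} rather than a prime you cover, e.g., $q=7,\ n=4,\ k=2$, where every prime dividing $q^k-1=48$ already divides $q-1=6$ but $o=4$ with $p_0=3$ works; the paper at the corresponding point cites Lemma~\ref{Lem:qnPrimeEvasionQ}, whose hypothesis that $q-1$ is a prime power is not guaranteed here. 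And since $N$ is a power of the companion matrix $\zeta$, Lemma~\ref{Lem:CompanionDecomp} gives you identical blocks directly, so you do not need the ``prime projective order'' hypothesis of Lemma~\ref{Lem:NonJordayTypePrimeEvasion}.

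The genuine gap is that you never actually produce the chain for the exceptional case $q=3,\ n=4$; you only announce the plan and concede that ``producing and verifying that chain is the main obstacle.'' That case is not incidental---it is precisely why the statement carries the separate bound $6$, so without the chain the proposition is not proved. The paper finishes it by an explicit computation: raise a generator $A'$ of $\F_3[A]^*$ (order $80$) to the $20$th power to get a matrix similar to $\begin{bmatrix}C&\\&C\end{bmatrix}$ with $C=\begin{bmatrix}0&-1\\1&0\end{bmatrix}$; then take the $9$-th root $\begin{bmatrix}C&I\\&C\end{bmatrix}$, its $4$th power $\begin{bmatrix}I&-C\\&I\end{bmatrix}$, a $4$-th root of that, and a cube, reaching the pivot $\mathrm{diag}(1,-1,-1,1)$---total distance $2+4=6$. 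You would need to supply and verify something of this sort. Separately, your existence of primitive prime divisors of $q^d-1$ is asserted but not shown (``an elementary check supplies\dots''); it is true---by a short lifting-the-exponent argument for $d$ an odd prime or $d=4$, or by Zsigmondy---but it does need to be written out, since you cannot lean on Lemma~\ref{Lem:qnPrimeEvasionQ} the way the paper attempts to.
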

\begin{proof}
First let us suppose that $q\neq 3$ or $n\neq 4$.

If $q=3$ but $n\neq 4$, then we can pick a proper factor $k\geq 3$ of $n$. If $q\neq 3$, pick any proper factor $k\geq 2$ of $n$. Then by Lemma~\ref{Lem:qnPrimeEvasionQ}, either way, we can find a prime factor $p_1$ of $q^k-1$ that does not divide $q-1$.

By Lemma~\ref{Lem:CompanionOrderBasic}, since $A$ has irreducible characteristic polynomial, $\F_q[A]$ is a field. So $A$ has a root $A'$ which is a multiplicative generator of $\F_q[A]$. Since $p_1$ divides $q^k-1$, it also divides $q^n-1$. So $p_1$ divides the multiplicative order of $A'$. So $A'$ has a power $B$ with multiplicative order $p_1$. But since $p_1$ does not divide $q-1$, $B$ is not a scalar multiple of identity. Hence $B$ has projective order exactly $p_1$. 

Agains since $p_1$ divides $q^k-1$, by Lemma~\ref{Cor:CompanionSameOrderSameSize}, the characteristic polynomial of $B$ cannot be irreducible. Hence by Lemma~\ref{Lem:NonJordayTypePrimeEvasion}, $B$ has distance at most $2$ to some pivot matrix. Hence $A$ has distance at most $4$ to some pivot matrix.

Now let us consider the case when $q=3$ and $n=4$. Again by Lemma~\ref{Lem:CompanionOrderBasic}, since $A$ has irreducible characteristic polynomial, $\F_q[A]$ is a field. So any multiplicative generator of $\F_q[A]$ is a root of $A$. We can pick a multiplicative generator $A'$ of $\F_q[A]$ which has multiplicative order $80$. Since $A'$ is a root of $A$, it is also similar to the companion matrix of some polynomials. Now $(A')^20$ has multiplicative order $4$, which is not a factor of $q-1$, but a factor of $q^2-1$. So by Lemma~\ref{Lem:CompanionPower}, the minimal polynomial of $(A')^20$ is irreducible with degree $2$. In particular, $(A')^{20}$ must be similar to $\begin{bmatrix}C&\\&C\end{bmatrix}$ for some companion matrix $C$ to some irreducible polynomial. Over $\F_3$, it is easy to see that there is only one such polynomial, so we have $C=\begin{bmatrix}0&-1\\1&0\end{bmatrix}$.

Now $\begin{bmatrix}C&\\&C\end{bmatrix}$ has a $9$-th root $\begin{bmatrix}C&I\\&C\end{bmatrix}$, whose fourth power is $\begin{bmatrix}I&-C\\&I\end{bmatrix}$, which has a $4$-th root $\begin{bmatrix}1&0&0&1\\&-1&1&0\\&&-1&0\\&&&1\end{bmatrix}$, whose third power is $\begin{bmatrix}1&&&\\&-1&&\\&&-1&\\&&&1\end{bmatrix}$, a pivot matrix. So $(A')^20$ has distance at most $4$ to a pivot matrix. 

All in all, $A$ has distance at most $2$ to $(A')^{20}$, which has distance at most $4$ to a pivot matrix. So we are done.
\end{proof}

For the rest of this section, we can assume that the projective order of $A$ and $q-1$ are both powers of the same prime. However, if $q-1$ is a prime power, then by Lemma~\ref{Lem:ConseqPP}, either $q=9$, or $q$ is a Fermat prime, or $q$ is a power of $2$ with $q-1$ prime.

First we deal with the diagonalizable case.

\begin{prop}
\label{Prop:NonJordayTypeDiagOdd}
Let $q$ be Fermat prime or $9$, and let $n\geq 2$. Suppose a matrix $A\in\GL_n(\F_q)-Z(\GL_n(\F_q))$ is diagonalizable over $\F_q$, then it has distance at most $1$ to a pivot matrix.
\end{prop}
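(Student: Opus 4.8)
The plan is to raise $A$ to a suitable power of $2$ and track how quickly the number of distinct eigenvalues collapses. First I would record that since $q$ is a Fermat prime or $9$, the integer $q-1$ is a power of $2$, say $q-1 = 2^m$ with $m \geq 1$; hence $\F_q^*$ is cyclic of order $2^m$, and every eigenvalue $\lambda \in \F_q^*$ of the diagonalizable matrix $A$ satisfies $\lambda^{2^m} = 1$. Let $E \subseteq \F_q^*$ be the set of distinct eigenvalues of $A$. Since $A$ is not a scalar multiple of the identity, $|E| \geq 2$; and if $|E| = 2$ then $A$ is itself a pivot matrix by definition, so the distance is $0$ and we are done. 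Thus we may assume $|E| \geq 3$.

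For $j \geq 0$ let $E_j$ denote the set of distinct eigenvalues of $A^{2^j}$, so that $E_0 = E$, $E_{j+1} = \{\lambda^2 : \lambda \in E_j\}$, and $E_m = \{1\}$ because $A^{2^m} = A^{q-1} = I$. The key elementary observation is that the squaring map on $\F_q^*$ is exactly two-to-one (its kernel is $\{\pm 1\}$), so $|E_{j+1}| \geq \lceil |E_j|/2 \rceil$ for every $j$. Consequently the sequence $|E_0| \geq |E_1| \geq \cdots \geq |E_m| = 1$ is nonincreasing but can never jump past the value $2$: letting $j^*$ be the least index with $|E_{j^*}| \leq 2$, we have $j^* \geq 1$ because $|E_0| \geq 3$, and minimality gives $|E_{j^*-1}| \geq 3$, whence $|E_{j^*}| \geq \lceil 3/2 \rceil = 2$ and so $|E_{j^*}| = 2$. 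Then $A^{2^{j^*}}$, being a power of the diagonalizable matrix $A$, is diagonalizable with exactly two distinct eigenvalues, i.e., it is a pivot matrix; it is a power of $A$, and since it has two distinct eigenvalues it is not central. Hence $A$ has distance at most $1$ to the pivot matrix $A^{2^{j^*}}$.

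I do not anticipate any serious obstacle: the argument is essentially bookkeeping on the $2$-group $\F_q^*$. The one point requiring a little care is the step forcing $|E_{j^*}| = 2$ rather than $1$, which rests precisely on the two-to-one nature of squaring on $\F_q^*$, together with the (immediate) verification that the pivot matrix produced is a genuine non-central vertex of the projectively reduced power graph.
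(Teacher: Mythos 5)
Your argument is correct and rests on the same key fact as the paper's proof, namely that squaring is two-to-one on the cyclic $2$-group $\F_q^*$, so the number of distinct eigenvalues cannot drop from $\geq 3$ past $2$ in a single squaring step. The paper reaches the same conclusion slightly more directly by taking $m$ to be the projective order of $A$ and observing that $A^{m/2}$ already has all eigenvalues sharing the same square, hence at most two distinct eigenvalues, while your version finds the right power by iterating; this is a cosmetic difference, not a different approach.
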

\begin{proof}
Suppose $A$ is diagonalizable with eigenvalues $\lambda_1,\dots,\lambda_n\in\F_q$, and suppose $A$ has projective order $m\neq 1$. Then $m$ divides $q-1$ and thus it is a power of $2$. So $A^{\frac{m}{2}}$ has eigenvalues $\lambda_1^{\frac{m}{2}},\dots,\lambda_n^{\frac{m}{2}}$, and all these eigenvalues have the same square. However, in the cyclic group $\F_q^*$, each element can have at most two square roots. Hence $\lambda_1^{\frac{m}{2}},\dots,\lambda_n^{\frac{m}{2}}$ can take only two possible values. Since $A^{\frac{m}{2}}$ is not a scalar multiple of identity, it must be a pivot matrix.
\end{proof}

\begin{prop}
\label{Prop:NonJordayTypeDiagEven}
Let $q$ be a power of $2$ such that $q-1$ is prime, and let $n\geq 2$. Suppose a matrix $A\in\GL_n(\F_q)-Z(\GL_n(\F_q))$ is diagonalizable over $\F_q$ but not all eigenvalues are distinct, then it has distance at most $4$ to a pivot matrix.
\end{prop}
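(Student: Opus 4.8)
The plan is to exploit square roots of scalar matrices that exist only in characteristic $2$. Although $A$ is semisimple, any scalar block $\mu I_m$ with $m\geq 2$ has a root with a nontrivial nilpotent part: over a field of characteristic $2$ one has $\begin{bmatrix}\sqrt{\mu}&1\\0&\sqrt{\mu}\end{bmatrix}^2=\mu I_2$, since the cross term $2\sqrt{\mu}$ vanishes. This is what matters: a purely semisimple root of $A$ has exactly as many distinct eigenvalues as $A$ and is therefore useless, whereas the Jordan-type root lets us eventually fold all of $A$'s eigenvalue data onto the single eigenvalue $1$ and reach a pivot matrix.

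First I would dispose of the trivial case. Since $A$ is invertible and not central it has at least two distinct nonzero eigenvalues; if it has exactly two it is itself a pivot matrix and there is nothing to prove. So assume $A$ has $r\geq 3$ distinct eigenvalues $\mu_1,\dots,\mu_r\in\F_q^*$ with multiplicities $n_1,\dots,n_r$, and since the eigenvalues are not all distinct, fix $j$ with $n_j\geq 2$. Let $X\in\GL_n(\F_q)$ conjugate $A$ to $\bigoplus_i\mu_i I_{n_i}$. Because $q-1$ is odd, $x\mapsto x^2$ is a bijection of $\F_q^*$, so each $\mu_i$ has a unique square root $\nu_i\in\F_q^*$; write $K_i=\begin{bmatrix}\nu_i&1\\0&\nu_i\end{bmatrix}$ and note that $K_i^2=\nu_i^2 I_2=\mu_i I_2$, that $K_i^q=\nu_i^q I_2=\nu_i I_2$ (as $q$ is even), and that $K_i^{q-1}=\nu_i^{q-2}K_i=\begin{bmatrix}1&\nu_i^{-1}\\0&1\end{bmatrix}$ (as $q-2$ is even and $\nu_i^{q-1}=1$).

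Now I would build the walk of length $4$. Let $B=X\bigl(K_j\oplus\nu_j I_{n_j-2}\oplus\bigoplus_{i\neq j}\nu_i I_{n_i}\bigr)X^{-1}$; then $B^2=A$, so $A$ is a power of $B$. Let $C=B^{q-1}$; by the identities above $C$ is $X$-conjugate to $\begin{bmatrix}1&\nu_j^{-1}\\0&1\end{bmatrix}\oplus I_{n-2}$, which is a Jordan pivot matrix, and $C$ is a power of $B$. Pick $y\in\F_q^*$ with $y\neq\nu_j$ (possible since $q\geq 4$), and let $M$ be $X$-conjugate to $K_j\oplus yI_{n-2}$, with the $K_j$ placed in the same two coordinates carrying the $K_j^{q-1}$ block of $C$; then $M^{q-1}=C$, so $C$ is also a power of $M$, and $M^q=\nu_j I_2\oplus yI_{n-2}$ is a pivot matrix (since $\nu_j\neq y$) and a power of $M$. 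One checks that $B$, $C$, $M$, $M^q$ are all non-central, so $A-B-C-M-M^q$ is a walk in the projectively reduced power graph of $\GL_n(\F_q)$ witnessing that $A$ has distance at most $4$ to a pivot matrix.

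The conceptual point is the first paragraph — leaving the world of semisimple matrices and routing through a Jordan pivot matrix — which is forced, since neither powers of $A$ (of prime order $q-1$) nor semisimple roots of $A$ change the number of distinct eigenvalues. The part requiring care is that adjacency in the graph demands exact equalities, not mere conjugacies, so $C$ must be literally a common power of $B$ and of $M$; this is why $B$ and $M$ are conjugated by the same $X$ and their block-diagonal normal forms are arranged to agree in the ``Jordan slot''. Everything else is the characteristic-$2$ arithmetic of the $2\times 2$ block $K_i$, all of which follows from $K_i^2=\nu_i^2 I_2$.
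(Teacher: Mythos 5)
Your proof is correct and takes a genuinely different route from the paper's. The paper keeps $A$'s square root semisimple: it picks a prime $p_1$ dividing $q^2-1$ but not $q-1$ (via Lemma~\ref{Lem:qnPrimeEvasionQ}), builds a $p_1$-th root of $A$ via Lemma~\ref{Lem:DoubleCompanionHasGoodRoot} whose projective order is now divisible by $p_1$ and whose characteristic polynomial is reducible, and then invokes Proposition~\ref{Prop:NonJordayTypePrimeEvasion} to finish in three more steps. You instead leave the semisimple world immediately by taking the characteristic-$2$ square root $K_j=\bigl[\begin{smallmatrix}\nu_j&1\\0&\nu_j\end{smallmatrix}\bigr]$ of a $2\times 2$ scalar block, route through the explicit Jordan pivot matrix $C=B^{q-1}$, and then reach a pivot matrix in two more steps via the auxiliary $M$. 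The two approaches cost the same (distance $4$ from $A$), but yours is self-contained and fully explicit, needing no prime-evasion lemma nor the machinery of Proposition~\ref{Prop:NonJordayTypePrimeEvasion}; the paper's approach has the advantage of staying within the semisimple framework that it reuses throughout Section~\ref{sec:nConn}. One small remark: you correctly note $q\geq 4$ so that $y\neq\nu_j$ can be chosen, but the same bound is also needed so that $M^q$ is non-central -- and in fact the substantive case $r\geq 3$ already forces $n\geq 4$, so this is automatic. The careful point you flag -- that $C$ must be a literal common power of $B$ and of $M$, hence both must be conjugated by the same $X$ with $K_j$ in matching coordinates -- is exactly right and is where a sloppier version of this argument would break.
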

\begin{proof}
Suppose $\lambda$ is a repeating eigenvalue for $A$. Then we have $A=X\begin{bmatrix}C_1&\\ &C_2\end{bmatrix}X^{-1}$ for some invertible $X$, where $C_1=\begin{bmatrix}\lambda&\\&\lambda\end{bmatrix}$ and $C_2$ is diagonal.

By Lemma~\ref{Lem:qnPrimeEvasionQ}, we can find a prime factor $p_1$ of $q^2-1$ coprime to $q-1$. Since $C_1$ has multiplicative order dividing $q-1$, therefore $p_1(q-1)$ divides $q^2-1$. Hence by Lemma~\ref{Lem:DoubleCompanionHasGoodRoot}, we can find a matrix $C'_1$ which is a $p_1$-th root of $C_1$, and has $p_1$ dividing its multiplicative order. Note that since $p_1$ and $q-1$ are coprime, $p_1$ must also divide the projective order of $C'_1$.

Since $q-1$ and $p_1$ are coprime, each diagonal entry of $C_2$ has a unique $p_1$-th root. So $C_2$ has a $p_1$-th root $C'_2$. Therefore $A$ has a $p_1$-th root $A'=X\begin{bmatrix}C'_1&\\ &C'_2\end{bmatrix}X^{-1}$, and since $p_1$ divides the projective order of $C'_1$, it must also divide the projective order of $A'$. Furthermore, since $A'$ is similar to a block diagonal matrix, its characteristic polynomial is reducible. So by Proposition~\ref{Prop:NonJordayTypePrimeEvasion}, $A'$ has distance at most $3$ to a pivot matrix. Hence $A$ has distance at most $4$ to a pivot matrix.
\end{proof}

Note that if $q$ be a power of $2$ such that $q-1$ is prime, and a matrix $A\in\GL_n(\F_q)-Z(\GL_n(\F_q))$ is diagonalizable over $\F_q$ with $n$ distinct eigenvalues. Then we must have $n<q$, and this is exactly the situation described by Proposition~\ref{prop:ConnObstDiag}. So we have completed all discussions on diagonalizable matrices.

We now sum up the remaining cases.

\begin{prop}
\label{Prop:NonJordayTypeBothPowerSamePrime}
Let $q$ be a power of a prime $p$, and $q-1$ is a power of a prime $p_0$, and let $n\geq 2$.  Suppose a matrix $A\in\GL_n(\F_q)-Z(\GL_n(\F_q))$ is not diagonalizable, has reducible characteristic polynomial, and its projective order is also a power of $p_0$. Then either $A$ has distance at most $4$ to a pivot matrix, or $q=3$ and $n=4$ and $A$ has distance at most $5$ to a pivot matrix.
\end{prop}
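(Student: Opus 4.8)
The plan is to reduce every case to Proposition~\ref{Prop:NonJordayTypePrimeEvasion}: I want to produce, from $A$, a non-scalar matrix with \emph{reducible} characteristic polynomial whose projective order is divisible by some prime not dividing $q-1$; that proposition then finishes within distance $3$. First some normalization. Since $q-1=p_0^s$ is coprime to $q$ we have $p_0\neq p$, so the projective order of $A$ is coprime to $p$, and since $A^{p_0^c}$ is a scalar matrix the multiplicative order of $A$ is itself a power of $p_0$. By Lemma~\ref{Lem:NonJordayTypeClassify}, $A=X\,\mathrm{diag}(C_1,\dots,C_t)X^{-1}$ with each $C_i$ either $1\times1$ or a companion of an irreducible polynomial of degree $d_i\geq2$; reducibility of $\mathrm{char}(A)$ forces $t\geq2$ and non-diagonalizability forces $d_1\geq2$ for some block, while every block has $p_0$-power order.

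\textbf{The case $q\neq3$.} Here I would take, via Lemma~\ref{Lem:qnPrimeEvasionQ}, a prime $p_1\mid q^{d_1}-1$ with $p_1\nmid q-1$ (so $p_1\notin\{p,p_0\}$). With $k_i=\mathrm{ord}(C_i)$ one has $p_1k_1\mid q^{d_1}-1$, so Lemma~\ref{Lem:CompanionRoot} gives a $p_1$-th root $C_1'$ of $C_1$ of order $p_1k_1$ with irreducible characteristic polynomial; for $i\geq2$ put $C_i'=C_i^{p_1'}$ with $p_1p_1'\equiv1\pmod{k_i}$, again a $p_1$-th root, still $1\times1$ or a companion of an irreducible polynomial and of $p_0$-power order. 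Then $A':=X\,\mathrm{diag}(C_1',\dots,C_t')X^{-1}$ is a non-scalar $p_1$-th root of $A$ with reducible characteristic polynomial and with $p_1$ dividing its projective order, so by Proposition~\ref{Prop:NonJordayTypePrimeEvasion} it is within distance $3$ of a pivot, and $A$ within distance $4$.

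\textbf{The case $q=3$.} Now $p_0=2$, and a $d\times d$ companion of an irreducible polynomial over $\F_3$ has order dividing $3^d-1$, which is a $2$-power only for $d\in\{1,2\}$; hence every large block is $2\times2$, and $A\sim\mathrm{diag}(C_1,\dots,C_r,\mu_1,\dots,\mu_s)$ with the $C_i$ the $2\times2$ blocks ($r\geq1$) and $\mu_j\in\F_3^*$. Then $\mu_j^2=1$, $\mathrm{ord}(C_i)=\kappa_i\in\{4,8\}$, $C_i^{\kappa_i/2}=-I_2$, and $C_i^2$ is similar to $J$, the companion of $x^2+1$, when $\kappa_i=8$ (while $C_i^2=-I_2$ when $\kappa_i=4$). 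If the $\kappa_i$ are not all equal, or if $s\geq1$, then a suitable power of $A$ ($A^4$ when both orders occur; $A^2$ resp.\ $A^4$ when all $\kappa_i$ equal $4$ resp.\ $8$ and $s\geq1$) is diagonalizable with exactly the two eigenvalues $1$ and $-1$, i.e.\ a pivot, so the distance is $\leq1$. Otherwise $s=0$, all $\kappa_i$ equal a common $\kappa$, and $r\geq2$. When $r=2$ (so $n=4$): $A\sim\mathrm{diag}(J,J)$ if $\kappa=4$, which reaches a pivot by the explicit four-step path in the proof of Proposition~\ref{Prop:NonJordayTypeIrredChar}; and $A^2\sim\mathrm{diag}(J,J)$ if $\kappa=8$, giving distance $\leq5$ — exactly the permitted exception.

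\textbf{The remaining case, and the main obstacle.} When $r\geq3$ (so $n=2r\neq4$) I would instead work inside the centralizer of $A$, which is a product $\prod_j\GL_{a_j}(\F_9)$ over the distinct block types; since at most two irreducible quadratics over $\F_3$ share a given order, $r\geq3$ forces some $a_j\geq2$, and in that factor I can pick $B_1$ of order $5\kappa$ (which divides $9^2-1$) in a non-split torus with $B_1^{\ell_0}$ equal to the scalar $w\in\F_9^*$ of order $\kappa$ naming that type. Because $\gcd(5\kappa,\ell_0)=5$ while $2\mid5\kappa$, the exponent $\ell_0$ is odd, so $w$ and every scalar value of $A$ in the other factors have $\ell_0$-th roots in $\F_9^*$; assembling these produces a root $B$ of $A$ with $B^{\ell_0}=A$, with $5\mid\mathrm{ord}(B)$, and — carrying a degree-$4$ $\F_3$-block next to at least one further block — with reducible characteristic polynomial over $\F_3$, whence Proposition~\ref{Prop:NonJordayTypePrimeEvasion} gives distance $\leq3$ and so $A$ has distance $\leq4$. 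I expect the hard part to be precisely this configuration ($q=3$, $A$ a sum of $r\geq3$ equal-order $2\times2$ irreducible blocks with no scalar block): no power or obvious root of $A$ lands near a pivot, so one is forced into the centralizer $\GL_{\geq2}(\F_9)$ and must there balance three competing demands on $B$ — that $B^{\ell_0}=A$ for a usable exponent, that $B$ keep a reducible characteristic polynomial over the base field $\F_3$, and that the odd prime $5$ survive into the projective order.
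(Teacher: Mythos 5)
Your proof has a genuine gap in the $q=3$ case, stemming from a logical slip in the opening line of that paragraph. You write that a $d\times d$ irreducible companion block over $\F_3$ has order dividing $3^d-1$, ``which is a $2$-power only for $d\in\{1,2\}$; hence every large block is $2\times2$.'' The conclusion does not follow from the premise: the block's order must be a power of $2$ that \emph{divides} $3^d-1$, not one that \emph{equals} $3^d-1$. A power-of-$2$ divisor of $3^d-1$ which divides no $3^{d'}-1$ with $d'<d$ exists precisely when $d$ is a power of $2$. For instance $16\mid 3^4-1=80$ but $16\nmid 3^j-1$ for $j<4$, so over $\F_3$ there are irreducible quartics whose companion has multiplicative order $16$; similarly size $8$ (order $32$), size $16$, etc. Thus a matrix such as $\mathrm{diag}(C_4,1)\in\GL_5(\F_3)$ with $C_4$ an irreducible $4\times4$ block of order $16$ satisfies all hypotheses of the proposition but falls through your $q=3$ case analysis entirely (and not through your $q\neq3$ branch either, since you only invoke that branch when $q\neq3$).

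The fix is straightforward and is exactly what the paper does: when $q=3$ and some block $C_1$ has degree $k\geq3$, Lemma~\ref{Lem:qnPrimeEvasionQ} is still available (the only excluded pair is $(q,n)=(3,2)$), giving a prime $p_1\mid 3^k-1$ coprime to $q-1=2$, after which your $q\neq 3$ argument applies verbatim. So you should replace the blanket claim ``every large block is $2\times2$'' with a dichotomy: either some block has degree $\geq3$ (handled exactly as for $q\neq3$), or all blocks have degree $\leq2$ (your remaining analysis).

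Apart from this, your approach tracks the paper's for $q\neq3$, with a cosmetic difference in how the remaining blocks' $p_1$-th roots are taken (you use $C_i^{p_1'}$ with $p_1p_1'\equiv1\pmod{k_i}$; the paper invokes Lemma~\ref{Lem:PrimeCompanionHasGoodRoot}). Your $q=3$, all-blocks-degree-$\leq2$ subcase is organized quite differently from the paper's: the paper passes to $A^{m/4}$ (forcing all blocks to become the companion of $x^2+1$) and then applies Lemma~\ref{Lem:DoubleCompanionHasGoodRoot}, whereas you argue via the centralizer $\prod_j\GL_{a_j}(\F_9)$, exploiting that $r\geq3$ forces some $a_j\geq2$ and building a root inside a non-split $\GL_2(\F_9)$-torus with the odd exponent $\ell_0$. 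Your centralizer route is sound and has the advantage of reaching a $5$-heavy root of $A$ directly (rather than of a power of $A$), which avoids a potential off-by-one in the paper's bound when the multiplicative order of $A$ is $8$ rather than $4$; but it is also more delicate to state rigorously (the assembly of $B$ across the $\GL_{a_j}(\F_9)$ factors, and the translation of its $\F_9$-blocks to $\F_3$-Jordan blocks, deserve a sentence each). Your explicit $n=4$ path and the ``$\kappa_i$ unequal or $s\geq1$'' reduction to a pivot power are both correct.
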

\begin{proof}
Since the projective order of $A$ is coprime to $p$, by Lemma~\ref{Lem:NonJordayTypeClassify}, the generalized Jordan canonical form of $A$ is block diagonal, with companion matrices for irreducible polynomials on its diagonal. Since $A$ is not diagonalizable, one of these diagonal blocks $C_1$ is not $1\times 1$.

Since $A$ has reducible characteristic polynomial, we must have $A=X\begin{bmatrix}C_1&\\&C_2\end{bmatrix}X^{-1}$ for some invertible $X$, where $C_2$ is block diagonal, with companion matrices for irreducible polynomials on its diagonal.

Suppose $C_1$ is $k\times k$. If $q\neq 3$, then by Lemma~\ref{Lem:qnPrimeEvasionQ}, we can find a prime factor $p_1$ of $q^k-1$ coprime to $q-1$. Since the projective order of $C_1$ and all diagonal blocks of $C_2$ are powers of $p_0$, by Lemma~\ref{Lem:PrimeCompanionHasGoodRoot}, $C_1,C_2$ have $p_1$-th root $C'_1,C'_2$. Furthremore, since $p_1,p_0$ are distinct primes dividing $q^k-1$, by Lemma~\ref{Lem:PrimeCompanionHasGoodRoot} again, we can choose $C'_1$ such that $p_1$ divides its multiplicative order. Note that since $p_1$ and $q-1$ are coprime, $p_1$ must also divide the projective order of $C'_1$.

So $A$ has a $p_1$-th root $A'=X\begin{bmatrix}C'_1&\\&C'_2\end{bmatrix}X^{-1}$ with reducible characteristic polynomial, and $p_1$ divides the projective order of $A'$. Then by Proposition~\ref{Prop:NonJordayTypePrimeEvasion}, $A'$ has distance at most $3$ to a pivot matrix. Hence $A$ has distance at most $4$ to a pivot matrix.

Now suppose $q=3$. Suppose in the diagonal blocks of the generalized Jordan canonical form of $A$, at least one block $C_1$ is $k\times k$ for some $k\geq 3$. Then again by Lemma~\ref{Lem:qnPrimeEvasionQ}, we can find a prime factor $p_1$ of $q^k-1$ coprime to $q-1$. The rest of the argument is identical to above.

Now suppose $q=3$, but all diagonal blocks of the generalized Jordan canonical form of $A$ are either $1\times 1$ or $2\times 2$. Suppose $A$ has multiplicative order $m$, which must be a power of $2$. If $A^{\frac{m}{2}}\neq -I$, then it must be a diagonalizable matrix with eigenvalues $1$ and $-1$, hence it is a pivot matrix. If $A^{\frac{m}{2}}=-I$, then for the matrix $A'=A^{\frac{m}{4}}$, its generalized Jordan canonical form must have all diagonal blocks identical to $\begin{bmatrix}0&-1\\1&0\end{bmatrix}$. 

If $n>4$, then again $A=X\begin{bmatrix}C_1&\\&C_2\end{bmatrix}X^{-1}$ where $C_1,C_2$ are both block diagonal with all diagonal blocks identical to $\begin{bmatrix}0&-1\\1&0\end{bmatrix}$, but $C_1$ only have two diagonal blocks while $C_2$ have $\frac{n-4}{2}$ diagonal blocks. Then by Lemma~\ref{Lem:DoubleCompanionHasGoodRoot}, $C_1$ has a $5$-th root $C'_1$, and $C'_1$ have projective order a multiple of $5$. We also have $C_2$ as its own $5$-th root. Hence $A$ has a $5$-th root $A'=X\begin{bmatrix}C'_1&\\&C_2\end{bmatrix}X^{-1}$ with reducible characteristic polynomial, and $5$ divides the projective order of $A'$. So again by Proposition~\ref{Prop:NonJordayTypePrimeEvasion}, $A'$ has distance at most $3$ to a pivot matrix. Hence $A$ has distance at most $4$ to a pivot matrix.

Finally, the last case requires $n=4$. Then $A'=A^{\frac{m}{4}}$ is similar to a matrix which is block diagonal, and the two diagonal blocks are both $\begin{bmatrix}0&-1\\1&0\end{bmatrix}$. We have already encountered this matrix in Proposition~\ref{Prop:NonJordayTypeIrredChar}, and it has distance at most $4$ to a pivot matrix. Hence $A$ has distance at most $5$ to a pivot matrix.
\end{proof}

\subsection{When $q=2$}

Now we turn to the case of $q=2$. Since there is no pivot matrix in this case, we would connect matrices in $\GL_n(\F_2)-Z(\GL_n(\F_2))$ to Jordan pivot matrices.

\begin{prop}
\label{Prop:JordayTypePower2}
For $n\geq 6$, if $2$ divides the multiplicative order of a matrix $A\in\GL_n(\F_2)$, then $A$ has distance at most $5$ to a Jordan pivot matrix.
\end{prop}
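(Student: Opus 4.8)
The plan is to produce an explicit path of length at most $5$ from $A$ to a Jordan pivot matrix. First I would reduce to an involution. Over $\F_2$ the center of $\GL_n(\F_2)$ is trivial, so the multiplicative order of $A$ equals its projective order, and Lemma~\ref{Lem:RaisingAtoProjectivePrime} (applied with $p_0=2$) gives a power $A'$ of $A$ with $(A')^2=I$ and $A'\neq I$. In characteristic $2$ this forces $(A'-I)^2=0$, so writing $j=\mathrm{rank}(A'-I)$ and $k=n-2j$, after conjugating we may assume $A'=\begin{bmatrix}I_j&I_j\\0&I_j\end{bmatrix}\oplus I_k$ with $1\le j\le n/2$. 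If $j=1$ then $A'$ is itself a Jordan pivot matrix, and $A$ has distance at most $1$ to it; so from now on assume $j\ge 2$, and it remains to connect $A'$ to a Jordan pivot matrix in four more steps.

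The guiding observation is that a path all of whose vertices have $2$-power order can never change the conjugacy class of the unique order-$2$ element of the cyclic group it generates; since for $j\ge 2$ our $A'$ is not conjugate to a Jordan pivot matrix, the path \emph{must} visit a matrix whose order has a nontrivial odd part, and I would create this by grafting on an order-$3$ companion block. Fix $\bar S\in\GL_j(\F_2)$ of order exactly $3$ — for instance the companion matrix of $x^2+x+1$ together with $I_{j-2}$, which makes sense because $j\ge 2$ — and set
\[
B=\begin{bmatrix}\bar S&\bar S\\0&\bar S\end{bmatrix}\oplus I_k\in\GL_n(\F_2).
\]
A short block computation in characteristic $2$ gives $B^2=\bar S^2\oplus\bar S^2\oplus I_k$ and $B^3=\begin{bmatrix}I_j&I_j\\0&I_j\end{bmatrix}\oplus I_k=A'$, so $B$ has order $6$ and $A'\to B\to B^2$ are edges. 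Since $\bar S^2$ is conjugate to $\bar S$, the order-$3$ matrix $B^2$ is conjugate to $C\oplus C\oplus I_{n-4}$, where $C$ is the companion matrix of $x^2+x+1$ and $n-4\ge 2$. Now put $D_0=C\oplus C\oplus\begin{bmatrix}1&1\\0&1\end{bmatrix}\oplus I_{n-6}$, which has order $6$; then $D_0^2=C^2\oplus C^2\oplus I_{n-4}$ is conjugate to $B^2$, so $B^2$ has a square root $D$ conjugate to $D_0$, giving an edge $B^2\to D$. Finally $D^3$ is conjugate to $D_0^3=I_4\oplus\begin{bmatrix}1&1\\0&1\end{bmatrix}\oplus I_{n-6}$, which is a Jordan pivot matrix, so $D\to D^3$ is an edge. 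The chain $A\to A'\to B\to B^2\to D\to D^3$ then witnesses distance at most $5$.

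The items that need genuine (but routine) verification are: the two matrix identities for $B^2$ and $B^3$, which use only $2\bar S^2=0$ and $\bar S^3=I_j$; the two "conjugate to" assertions, i.e. that $\begin{bmatrix}I_j&I_j\\0&I_j\end{bmatrix}\oplus I_k$ and $C\oplus C\oplus I_{n-4}$ realize the rational canonical forms of $A'$ and $B^2$ respectively; and the standard fact that if $D_0^2$ is conjugate to $B^2$ then some conjugate of $D_0$ is an honest square root of $B^2$. I expect the only real subtlety — and the sole place the hypothesis $n\ge 6$ enters — to be the dimension bookkeeping: the order-$3$ companion block is unavoidable, yet one must still be able to split off a single rank-$1$ unipotent summand at the end in order to land exactly on a Jordan pivot matrix, which requires at least $4+2$ free coordinates. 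For $n<6$ this last step has no room, which is consistent with the anomalous components of $\GL_4(\F_2)$ and $\GL_5(\F_2)$ noted in the introduction.
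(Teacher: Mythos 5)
Your proof is correct, and it follows essentially the same chain as the paper's: reduce by a power to an involution $A'$, pass to a cube root $B$ of order $6$ that introduces an order-$3$ companion block, descend to the order-$3$ element $B^2$, lift to a suitable order-$6$ matrix whose third power is a Jordan pivot, and cube. The concrete matrices differ — the paper keeps a smaller $2\times 2$ (or $4\times 4$) order-$3$ block $P$ inside a fixed basis and then multiplies $B^4$ by a commuting Jordan pivot $C$ to form the order-$6$ lift $B^4C$, whereas you spread the order-$3$ structure across the full $2j\times 2j$ unipotent part of $A'$ and obtain the lift $D$ as a square root of $B^2$ via a conjugacy argument — but these are interchangeable implementations of the same idea (indeed the paper's $B^4C$ is itself a square root of $B^2$ whose cube is $C$, so the two chains become formally identical after replacing $B^4$ by $B^2$). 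Your version handles the small-$j$ and large-$j$ cases in a single parametrized statement and makes the role of $n\geq 6$ more transparent, while the paper's version avoids any appeal to conjugacy of square roots by working entirely in the fixed basis $X$; otherwise the two arguments are the same.
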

\begin{proof}
First, by Lemma~\ref{Lem:JordayTypePower}, $A$ has a power $A'$ whose generalized Jordan canonical form is made of diagonal blocks of the form $\begin{bmatrix}1&1\\&1\end{bmatrix}$ or $\begin{bmatrix}1\end{bmatrix}$. If $n\geq 6$ (in fact $n\geq 4$ is enough here), then either this Jordan canonical form has at least two $1\times 1$ blocks, or it has at least two $2\times 2$ blocks.

Note that $\begin{bmatrix}1&1&&\\&1&&\\&&1&1\\&&&1\end{bmatrix}$ is similar to $\begin{bmatrix}I_2&I_2\\&I_2\end{bmatrix}$. Therefore, consider the generalized Jordan canonical form of $A'$, we have $A'=X\begin{bmatrix}I_2&\\&A''\end{bmatrix}X^{-1}$ or $X\begin{bmatrix}I_2&I_2&\\&I_2&\\&&A''\end{bmatrix}X^{-1}$ for some invertible $X$, where $A''$ has multiplicative order exactly $2$. 

Set $P=\begin{bmatrix}0&1\\1&1\end{bmatrix}$ which has multiplicative order $3$. Set $B=X\begin{bmatrix}P&\\&A''\end{bmatrix}X^{-1}$ or $X\begin{bmatrix}P&P&\\&P&\\&&A''\end{bmatrix}X^{-1}$ respectively. Either way, we have $B^3=A'$, $B^6=I_n$ and $B^4=X\begin{bmatrix}P&\\&I_{n-2}\end{bmatrix}X^{-1}$ or $X\begin{bmatrix}P&&\\&P&\\&&I_{n-4}\end{bmatrix}X^{-1}$.  Let $C=X\begin{bmatrix}I_{n-2}&\\&J\end{bmatrix}X^{-1}$ where $J=\begin{bmatrix}1&1\\&1\end{bmatrix}$. Then as $n\geq 6$, $B^4$ must commute with $C$, and hence $(B^4C)^4=B^4$ and $(B^4C)^3=C$.

Then we have the following path
\[
A\xrightarrow{\text{power}} A' \xrightarrow{\text{$3$-rd root}} B \xrightarrow{\text{$4$-th power}} B^4 \xrightarrow{\text{$4$-th root}} B^4C \xrightarrow{\text{$3$-rd power}} C.
\]

So we have reached a Jordan pivot matrix in $5$ steps.
\end{proof}

\begin{lem}
\label{Lem:NonJordayTypeRepeatOne2}
For $n\geq 4$, suppose a non-identity matrix $A\in\GL_n(\F_2)$ has odd multiplicative order, and $(x-1)^2$ is a factor of the characteristic polynomial $f(x)$ of $A$. Then $A$ has distance at most $2$ to a Jordan pivot matrix.
\end{lem}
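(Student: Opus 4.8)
The plan is to exploit that a matrix over $\F_2$ of odd multiplicative order is semisimple, so that the generalized eigenspace for the eigenvalue $1$ is an honest eigenspace, and then to ``thicken'' a $2\times 2$ identity block into a single Jordan block at the cost of doubling the order. This produces a matrix $B$ having $A$ as a power and having a Jordan pivot matrix as another power.

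\textbf{Step 1: structure of $A$.} Since $A$ has odd multiplicative order and $q=2$, its projective order equals its multiplicative order and is coprime to $p=2$; by Lemma~\ref{Lem:NonJordayTypeClassify} the generalized Jordan canonical form of $A$ is block diagonal with each block either $1\times 1$ (hence equal to $[1]$, as $1$ is the only nonzero scalar over $\F_2$) or a companion matrix of an irreducible polynomial of degree $\geq 2$. A block $[1]$ contributes the factor $x-1$ to the characteristic polynomial, while an irreducible companion block of degree $\geq 2$ contributes an irreducible polynomial with no linear factor; hence the exact multiplicity of $x-1$ in $f(x)$ equals the number of $[1]$ blocks. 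As $(x-1)^2 \mid f(x)$, there are at least two such blocks, so $A = X\begin{bmatrix} I_2 & \\ & A''\end{bmatrix}X^{-1}$ for some $X\in\GL_n(\F_2)$ and some $(n-2)\times(n-2)$ matrix $A''$ whose multiplicative order $m$ divides that of $A$, hence is odd. Because $A\neq I_n$ we have $A''\neq I_{n-2}$, so $m$ is odd and $m\neq 1$. (This is where $n\geq 4$ is needed: for $n\leq 3$ the hypotheses are vacuous.)

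\textbf{Step 2: the bridge matrix and its powers.} Put $J = \begin{bmatrix} 1 & 1 \\ & 1 \end{bmatrix}$ and $B = X\begin{bmatrix} J & \\ & A'' \end{bmatrix}X^{-1}\in\GL_n(\F_2)$, which is not the identity. Since $J^2 = I_2$, since $m+1$ is even, and since $(A'')^m = I_{n-2}$, we get $B^{m+1} = X\begin{bmatrix} J^{m+1} & \\ & (A'')^{m+1}\end{bmatrix}X^{-1} = X\begin{bmatrix} I_2 & \\ & A''\end{bmatrix}X^{-1} = A$. On the other hand, $m$ being odd gives $J^m = J$ and $(A'')^m = I_{n-2}$, so $B^m = X\begin{bmatrix} J & \\ & I_{n-2}\end{bmatrix}X^{-1}$; this matrix has all eigenvalues equal to $1$ and $\dim\Ker(B^m - I_n) = \dim\Ker(J - I_2) + (n-2) = 1 + (n-2) = n-1$, so it is a Jordan pivot matrix.

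\textbf{Step 3: conclusion.} Both $A = B^{m+1}$ and $B^m$ are powers of $B$, so in the projectively reduced power graph of $\GL_n(\F_2)$ we have the walk $A \longrightarrow B \longrightarrow B^m$ of length at most $2$ from $A$ to a Jordan pivot matrix, and all three vertices are non-identity, hence non-central over $\F_2$. I do not anticipate a genuine obstacle: the only point requiring care is the structural claim that the eigenvalue-$1$ part of $A$ is exactly a copy of $I_2$ rather than a $2\times 2$ Jordan block, which is precisely where semisimplicity from the odd order enters, together with the parity bookkeeping that makes $m+1$ even while $m$ is odd.
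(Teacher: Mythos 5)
Your proof is correct and follows essentially the same route as the paper's: both extract the block $I_2$ from the generalized Jordan canonical form guaranteed by Lemma~\ref{Lem:NonJordayTypeClassify}, replace it by a $2\times 2$ Jordan block to form a bridge matrix $B$, and observe that (with $m$ the odd multiplicative order) $B^{m+1}=A$ while $B^{m}$ is a Jordan pivot matrix, giving a path of length $2$. Your Step~1 merely spells out in more detail why $(x-1)^2\mid f(x)$ forces two $[1]$ blocks and why the argument is vacuous for $n\leq 3$, which the paper leaves implicit.
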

\begin{proof}
Since the projective order of $A$ is coprime to $2$, by Lemma~\ref{Lem:NonJordayTypeClassify}, the generalized Jordan canonical form of $A$ is block diagonal, with companion matrices for irreducible polynomials on its diagonal. Since $(x-1)^2$ is a factor of the characteristic polynomial $f(x)$ of $A$, therefore $A=X\begin{bmatrix}I_2&\\&C\end{bmatrix}X^{-1}$ for some invertible $X$, where $I_2$ is the $2\times 2$ identity matrix and $C$ has the same multiplicative order as $A$.

Let $k$ be the multiplicative order of $A$. Since $k$ is an odd number, therefore $A$ has a $(k+1)$-th root $A'=X\begin{bmatrix}J&\\&C\end{bmatrix}X^{-1}$ where $J=\begin{bmatrix}1&1\\&1\end{bmatrix}$, and $(A')^k$ is a Jordan pivot matrix. So $A$ has distance at most $2$ to a Jordan pivot matrix.
\end{proof}

\begin{prop}
\label{Prop:NonJordayTypePrimeOrderNonPrimeBlock2}
For $n\geq 6$, suppose the multiplicative order of a non-identity matrix $A\in\GL_n(\F_2)$ is an odd prime $p$, $p+1$ is not a power of $2$, and the characteristic polynomial of $A$ has no irreducible factor of degree $n$ or $n-1$. Then $A$ has distance at most $4$ to a Jordan pivot matrix.
\end{prop}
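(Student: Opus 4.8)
\emph{Reduction.} Over $\F_2$ the centre is trivial, so $\PGL_n(\F_2)=\GL_n(\F_2)$ and the projective order of $A$ is its multiplicative order $p$, which is coprime to $2$. By Lemma~\ref{Lem:NonJordayTypeClassify} the generalized Jordan canonical form of $A$ is $I_s$ together with $t$ blocks each equal to the companion matrix $C$ of a fixed irreducible polynomial $f$. Since every block has order dividing the prime $p$ and $A\ne I$, every block larger than $1\times1$ has order exactly $p$, and by Corollary~\ref{Cor:CompanionSameOrderSameSize} its size is $d$, the least positive integer with $p\mid 2^d-1$; in particular $d\ge2$ because $p$ is odd. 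If $s\ge2$ then $(x-1)^2$ divides the characteristic polynomial and Lemma~\ref{Lem:NonJordayTypeRepeatOne2} already gives distance $\le2$, so we may assume $s\le1$. If $t=1$ then $A$ is similar to $C$ (when $s=0$) or to $I_1\oplus C$ (when $s=1$), so $f$ is an irreducible factor of the characteristic polynomial of degree $td=n$ or $n-1$, contrary to hypothesis. Hence $t\ge2$, and in particular $n=s+td$ is composite.

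\emph{An auxiliary prime.} Since $p+1$ is not a power of $2$, $p$ is not a Mersenne prime, and as $p\mid 2^d-1$ this forces $2^d-1\ne p$. If every prime factor of $2^d-1$ were $p$ we would have $2^d-1=p^a$ with $a\ge2$; but then $2^d$ and $2^d-1=p^a$ are consecutive prime powers, and Lemma~\ref{Lem:ConseqPP} is contradicted (no power $2^d$ with $d\ge2$ is a Fermat prime, $p^a$ with $a\ge2$ is not a Mersenne prime, and $2^d\ne9$). Therefore there is an odd prime $p_1\ne p$ dividing $2^d-1$. By Lemma~\ref{Lem:PrimeCompanionHasGoodRoot}, $C$ has a $p_1$-th root $C'\in\F_2[C]\cong\F_{2^d}$ with $p_1\mid\mathrm{ord}(C')$; since $pp_1\mid 2^d-1$ we may take $\mathrm{ord}(C')=pp_1$.

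\emph{The intended path.} The plan is to exhibit a path of length at most $4$ by combining three standard moves. First, the $p_1$-th root replaces a copy of $C$ (order $p$) by $C'$ (order $pp_1$), allowing a jump of distance $2$ between commuting matrices of coprime orders $p$ and $p_1$ via their product (as in Proposition~\ref{Prop:JordayTypePower2}, $\gcd(\mathrm{ord}M,\mathrm{ord}N)=1$ and $MN=NM$ puts $M$ at distance $2$ from $N$). Second, merging two of the $t\ge2$ copies of $C$ into a single generalized Jordan block (two diagonal copies of $C$ with an identity block on the superdiagonal) yields a root of $A$ of order $2p$ whose $p$-th power is unipotent, which injects the factor $2$ needed to approach an order-$2$ Jordan pivot. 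Third, if a matrix $M$ of odd order commutes with a Jordan pivot $P$, then $M$ is at distance $2$ from $P$ through $MP$. Using $n\ge6$ to guarantee enough room, one stitches these together so that $A$ is routed through a unipotent matrix similar to $J\oplus I$ for a single Jordan block $J$ whose size is one more than a power of $2$; such a matrix has distance $1$ to a Jordan pivot (raise it to the appropriate power).

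\emph{The main obstacle.} The delicate point is routing control, and it is exactly what separates this bound from the weaker bound $\le6$ in the Mersenne case of Proposition~\ref{Prop:NonJordayTypePrimeOrderMersennePrimeBlock2}. Every root and power of $A$ lies in $\comm(A)\cong\GL_t(\F_{2^d})$ (when $s\le1$), so the naive way of injecting a factor of $2$ lands at a unipotent element of $\GL_t(\F_{2^d})$, which over $\F_2$ is a direct sum of $d$ equal Jordan blocks; one checks that for $d\ge2$ any such matrix has distance $\ge3$ from every Jordan pivot, and distance $\ge4$ when it has order $2$ (two distinct order-$2$ elements whose only power-relations are trivial cannot be joined by a path of length $3$, by the usual cyclic-group argument). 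Hence no such matrix can sit on a path of length $4$. The proof must therefore spend its first step using the auxiliary prime $p_1$ to leave $\comm(A)$ \emph{before} unipotent structure is introduced, and then arrange the single-large-Jordan-block shape; checking that this can always be done within four steps for all admissible $d$, $t$, $p$, $p_1$ is the crux of the argument.
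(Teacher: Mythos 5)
There is a genuine gap: you never actually produce a path of length at most $4$. Your \emph{Reduction} and \emph{Auxiliary prime} steps reach the right place (you identify $p_1\neq p$ dividing $2^d-1$, which is exactly the paper's $p_0$ dividing $2^m-1$), but the \emph{Intended path} paragraph is a list of possible moves without a concrete construction, and the \emph{Main obstacle} paragraph ends by declaring the crux unresolved. In fact the ``merging two Jordan blocks'' move you suggest is not used and the worry about getting stuck in a unipotent element of $\comm(A)$ is misplaced.

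The intended route is shorter than what you sketch. Having chosen $p_0\neq p$ dividing $2^m-1$ (your $p_1,d$), take the $p_0$-th root $A'$ of $A$ in which the single $m\times m$ block $C_1$ is replaced by $C_1'$ of order $pp_0$ and each remaining block by a $p_0$-th root still of order $1$ or $p$; this uses Lemma~\ref{Lem:PrimeCompanionHasGoodRoot}/Lemma~\ref{Lem:CompanionRoot}, distance $1$. Then pass to $(A')^p$, distance $1$, which has odd order $p_0$ and has the shape $(C_1')^p\oplus I_{n-m}$. Because the characteristic polynomial of $A$ has no irreducible factor of degree $n$ or $n-1$ and all non-scalar blocks are $m\times m$, one has $m\le n-2$, so $(x-1)^2$ divides the characteristic polynomial of $(A')^p$. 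Now Lemma~\ref{Lem:NonJordayTypeRepeatOne2} applies to $(A')^p$ directly: a matrix of odd order with a repeated eigenvalue $1$ has distance $\le 2$ to a Jordan pivot. Total distance is $4$. The point you miss is that the factor of $2$ is introduced only in that final lemma, and that what makes it applicable is the size-$\ge 2$ identity block of $(A')^p$, not any unipotent structure introduced earlier.

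Two smaller inaccuracies: the non-scalar blocks are companion matrices of \emph{various} irreducible polynomials of degree $m$ (there are $(p-1)/m$ such polynomials for $x^p-1$ over $\F_2$), not all of a single fixed $f$, so $\comm(A)$ need not be $\GL_t(\F_{2^d})$; and the sentence about a unipotent matrix ``similar to $J\oplus I$ for a single Jordan block $J$ whose size is one more than a power of $2$'' having ``distance $1$ to a Jordan pivot'' is not correct in general and is not how the final step actually works.
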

\begin{proof}
Let $m$ be the smallest positive integer such that $p$ divides $2^m-1$. Since $p+1$ is not a power of $2$, this means that $2^m-1$ is a composite number. In particular, by Lemma~\ref{Lem:ConseqPP}, $2^m-1$ cannot be a prime power. So we can find a prime factor $p_0\neq p$ of $2^m-1$.

Since we must have $p\neq 2$, by Lemma~\ref{Lem:NonJordayTypeClassify}, $A=X\begin{bmatrix}C_1&&\\ &\ddots&\\&&C_t\end{bmatrix}X^{-1}$ for some invertible $X$, where $C_1,\dots, C_t$ are companion matrices to various irreducible polynomials over $\F_2$. By Corollary~\ref{Cor:CompanionSameOrderSameSize}, all $C_i$ are either $1\times 1$ or $m\times m$. Since $A$ is not identity, one of these $C_i$ is $m\times m$. Say $C_1$ is $m\times m$.

Now $\F_2[C_1]$ is a field with $2^m$ elements. So $C_1$ has a $p_0$-th root $C'_1$ with multiplicative order $pp_0$. For other $C_i$, since each has multiplicative order $1$ or $p$, each has a $p_0$-th root $C'_i$ with multiplicative order $1$ or $p$. Hence $A$ has a $p_0$-th root $A'=X\begin{bmatrix}C'_1&&\\ &\ddots&\\&&C'_t\end{bmatrix}X^{-1}$, which has multiplicative order $pp_0$. 

Now, $(A')^p=X\begin{bmatrix}(C'_1)^p&\\&I_{n-m}\end{bmatrix}X^{-1}$ has multiplicative order $p_0$. Since the characteristic polynomial of $A$ has no irreducible factor of degree $n$ or $n-1$, therefore $m\leq n-2$. So $(x-1)^2$ is a factor of the characteristic polynomial $f(x)$ of $(A')^p$. Hence by Lemma~\ref{Lem:NonJordayTypeRepeatOne2}, $(A')^p$ has distance at most $2$ to a Jordan pivot matrix. Therefore, $A$ has distance at most $4$ to a Jordan pivot matrix.
\end{proof}

\begin{lem}
\label{Lem:NonJordayTypePrimeOrderMersennePrimeBlock2}
For $n\geq 6$, suppose the multiplicative order of a non-identity matrix $A\in\GL_n(\F_2)$ is an odd prime $p$, and $p+1=2^m$ for some positive integer $m$. If the minimal polynomial and the characteristic polynomial of $A$ are not the same, then either $A$ has distance at most $4$ to a Jordan pivot matrix, or $A$ is similar to $\begin{bmatrix}C&\\&C\end{bmatrix}$ or $\begin{bmatrix}C&&\\&C&\\&&1\end{bmatrix}$ for some companion matrix $C$ to an irreducible polynomial.
\end{lem}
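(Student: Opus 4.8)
The plan is to put $A$ into generalized Jordan canonical form, extract from the block pattern exactly how the minimal and characteristic polynomials can differ, and then split into three cases: a cheap one, the two exempted ones, and a final case for which I exhibit an explicit length-four path.

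\emph{Shape of $A$.} As $A$ is non-identity it lies outside $Z(\GL_n(\F_2))=\{I\}$, and its order $p$ is coprime to the characteristic, so Lemma~\ref{Lem:NonJordayTypeClassify} gives that the generalized Jordan form of $A$ is block diagonal with each block $1\times1$ or a companion matrix of an irreducible polynomial. Each block has order dividing $p$, hence $1$ or $p$; an order-$1$ block must be $[1]$, while an order-$p$ block has size $>1$ (since $\F_2^*=\{1\}$), irreducible characteristic polynomial, and $p$-th power the identity, so by Corollary~\ref{Cor:CompanionSameOrderSameSize} its size is the least integer $n_1$ with $p\mid2^{n_1}-1$, namely $m$ (using $p=2^m-1$, and also $m\geq2$). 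Conjugating suitably, $A=X\,\mathrm{diag}(C_1,\dots,C_s,I_r)\,X^{-1}$ with $s\geq1$ and each $C_i$ the companion matrix of an irreducible $f_i$ of degree $m$. The minimal polynomial of $A$ is the product of the distinct $f_i$ times $(x-1)$ when $r\geq1$, and the characteristic polynomial is $f_1\cdots f_s\,(x-1)^r$; so ``minimal $\neq$ characteristic'' is equivalent to: $r\geq2$, or $C_i=C_j$ for some $i\neq j$.

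\emph{The cheap case and the exempted cases.} If $r\geq2$, then $(x-1)^2$ divides the characteristic polynomial of $A$, so since $A$ is non-identity of odd order and $n\geq4$, Lemma~\ref{Lem:NonJordayTypeRepeatOne2} already gives distance $\leq2$ to a Jordan pivot matrix. Otherwise $r\leq1$ and, absorbing a permutation of blocks into $X$, $C_1=C_2=:C$. If in addition $s=2$, then $A$ is similar to $\mathrm{diag}(C,C)$ (if $r=0$) or $\mathrm{diag}(C,C,1)$ (if $r=1$), which are exactly the two matrices exempted in the statement. So we are left with $s\geq3$.

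\emph{The case $s\geq3$.} Write $A=X\,\mathrm{diag}\bigl(\mathrm{diag}(C,C),\,A''\bigr)X^{-1}$ with $A''=\mathrm{diag}(C_3,\dots,C_s,I_r)$ of size $(s-2)m+r\geq m\geq2$ and of order $p$. Pick an odd prime $p_0$ dividing $2^m+1$ (possible since $2^m+1\geq5$); then $p_0\mid2^{2m}-1$ and $p_0\neq p$ because $\gcd(2^m+1,2^m-1)=1$. By Lemma~\ref{Lem:DoubleCompanionHasGoodRoot}, $\mathrm{diag}(C,C)$ has a $p_0$-th root $D$ over $\F_2$ whose order is divisible by $p_0$; that order also divides $p_0p$ and is divisible by $p$ (as $D^{p_0}$ has order $p$), so it equals $p_0p$, whence $D^p$ has order exactly $p_0$. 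Since $\gcd(p,p_0)=1$, the power $(A'')^k$ with $k\equiv p_0^{-1}\pmod p$ is a $p_0$-th root of $A''$, and $(A'')^{kp}=I$. Set $A'=X\,\mathrm{diag}\bigl(D,\,(A'')^k\bigr)X^{-1}$. Then $(A')^{p_0}=A$, so $A$ is a power of $A'$; and $(A')^p=X\,\mathrm{diag}\bigl(D^p,\,I_{(s-2)m+r}\bigr)X^{-1}$ is a power of $A'$. This last matrix is non-identity (as $D^p\neq I$), of odd order $p_0$, and its characteristic polynomial is divisible by $(x-1)^2$ because the identity block has size $\geq2$; so Lemma~\ref{Lem:NonJordayTypeRepeatOne2} places it within distance $2$ of a Jordan pivot matrix. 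Concatenating $A\to A'\to(A')^p\to\cdots$ yields distance at most $1+1+2=4$.

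\emph{Where the difficulty is.} The substance is in the distance bookkeeping. The routine move — take a $p_0$-th root with $p_0$ dividing the resulting order, then feed it into Proposition~\ref{Prop:NonJordayTypePrimeOrderNonPrimeBlock2} (the engine for the case $p+1\neq2^m$) or Proposition~\ref{Prop:JordayTypePower2} — overshoots the bound of $4$ by a step or two. The trick that keeps us inside four edges is to choose the root so that its $p$-th power already displays a repeated eigenvalue $1$, coming from an untouched identity block of size at least $2$, landing us directly in the two-step regime of Lemma~\ref{Lem:NonJordayTypeRepeatOne2}. This is precisely why $s=2$ has to be set aside: there is no spare identity block, and no way to manufacture one without destroying the prime order $p$, so those matrices are recorded as the exceptional cases.
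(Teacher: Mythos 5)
Your proof is correct and follows essentially the same route as the paper: split off two identical blocks, handle repeated $1\times 1$ blocks via Lemma~\ref{Lem:NonJordayTypeRepeatOne2}, exempt the $\mathrm{diag}(C,C)$ and $\mathrm{diag}(C,C,1)$ cases, and otherwise take a $p_0$-th root of $\mathrm{diag}(C,C)$ (for $p_0\mid 2^m+1$) via Lemma~\ref{Lem:DoubleCompanionHasGoodRoot}, then pass to the $p$-th power to expose an identity block of size $\geq 2$. Your use of $(A'')^k$ with $kp_0\equiv 1\pmod p$ as the $p_0$-th root of the remaining blocks is a mild tidying of the paper's block-by-block appeal to Lemma~\ref{Lem:PrimeCompanionHasGoodRoot}, but the decomposition, the key lemmas invoked, and the $1+1+2$ distance bookkeeping are the same.
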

\begin{proof}
Since we must have $p\neq 2$, by Lemma~\ref{Lem:NonJordayTypeClassify}, $A=X\begin{bmatrix}C_1&&\\ &\ddots&\\&&C_t\end{bmatrix}X^{-1}$ for some invertible $X$, where $C_1,\dots, C_t$ are companion matrices to various irreducible polynomials over $\F_2$. By Corollary~\ref{Cor:CompanionSameOrderSameSize}, all $C_i$ are either $1\times 1$ or $m\times m$. Since the minimal polynomial and the characteristic polynomial of $A$ are not the same, two of these $C_i$ are identical. Say $C_1=C_2$.

If $C_1,C_2$ are both $1\times 1$, then $(x-1)^2$ divides the characteristic polynomial of $A$. Hence $A$ has distance at most $2$ to a Jordan pivot matrix.

Now suppose $C_1,C_2$ are both $m\times m$. Note that $2^{2m}-1=(2^m-1)(2^m+1)$, and $2^m-1,2^m+1$ must be coprime. Hence we can find a prime factor $p_0\neq p$ of $2^m+1$. By Lemma~\ref{Lem:DoubleCompanionHasGoodRoot}, we can find a matrix $C'$ such that $(C')^{p_0}=\begin{bmatrix}C_1&\\&C_2\end{bmatrix}$ and $p_0$ divides the multiplicative order of $C'$.

For all $i>2$, since $p_0\neq p$, $C_i$ has a $p_0$-th root $C'_i$ by Lemma~\ref{Lem:PrimeCompanionHasGoodRoot}. So all in all, $A$ has a $p_0$-th root $A'=X\begin{bmatrix}C'&&&\\ &C'_3&&\\&&\ddots&\\&&&C'_t\end{bmatrix}X^{-1}$, and $(A')^p=X\begin{bmatrix}(C')^p&\\ &I_{n-2m}\end{bmatrix}X^{-1}$. Since $p_0\neq p$ divides the multiplicative order of $C'$, we see that $(A')^p$ is not an identity matrix.

Suppose that $A$ is not similar to $\begin{bmatrix}C&\\&C\end{bmatrix}$ or $\begin{bmatrix}C&&\\&C&\\&&1\end{bmatrix}$ for some companion matrix $C$ to an irreducible polynomial. Then the matrix $\begin{bmatrix}C_3&&\\ &\ddots&\\&&C_t\end{bmatrix}$ is at least $2\times 2$, and hence $n-2m\geq 2$. Therefore $(A')^p$ satisfies the requirement of Lemma~\ref{Lem:NonJordayTypeRepeatOne2}. Hence $(A')^p$ has distance at most $2$ to a Jordan pivot matrix, and $A$ has distance at most $4$ to a Jordan pivot matrix.
\end{proof}

\begin{prop}
\label{Prop:NonJordayTypePrimeOrderMersennePrimeBlock2}
For $n\geq 6$, suppose the multiplicative order of a non-identity matrix $A\in\GL_n(\F_2)$ is an odd prime $p$, and $p+1=2^m$ for some positive integer $m$. If the minimal polynomial and the characteristic polynomial of $A$ are not the same, then $A$ has distance at most $6$ to a Jordan pivot matrix.
\end{prop}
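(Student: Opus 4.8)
The plan is to invoke Lemma~\ref{Lem:NonJordayTypePrimeOrderMersennePrimeBlock2}, which has already reduced the claim to the two leftover cases: $A$ is similar to $\begin{bmatrix}C&\\&C\end{bmatrix}$ or to $\begin{bmatrix}C&&\\&C&\\&&1\end{bmatrix}$, where $C$ is an $m\times m$ companion matrix of an irreducible polynomial, $C$ has multiplicative order $p$, and $n\in\{2m,2m+1\}$. (In every other case that lemma already yields distance at most $4$.) The first point to record is a parity observation: since $p=2^m-1$ is prime, $m$ must be prime, and since $n\geq 6$ forces $m\geq 3$, the integer $m$ is an \emph{odd} prime, so $p\geq 7$. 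Consequently $2^m\equiv -1\pmod 3$, hence $3\mid 2^m+1\mid 2^{2m}-1$, while $3\nmid 2^m-1$ and $3\neq p$.

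The strategy is then to take a $3$rd root, then a $p$-th power, landing on a matrix of a very controlled shape, and then to recurse. By Lemma~\ref{Lem:DoubleCompanionHasGoodRoot} applied with $q=2$ and $p_0=3$, the block $\begin{bmatrix}C&\\&C\end{bmatrix}$ has a $3$rd root $C'$ whose multiplicative order is divisible by $3$; because $3\nmid 2^m-1$, the root produced in the proof of that lemma satisfies an irreducible quadratic over $\F_2[C]=\F_{2^m}$, so $C'$ is similar to a companion matrix of an irreducible polynomial of degree $2m$ and $\mathrm{ord}(C')=3p$. Using $1^3=1$ in the second case, $A$ therefore has a $3$rd root $A'$. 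Taking the $p$-th power and using Lemma~\ref{Lem:CompanionPower} together with Lemma~\ref{Lem:CompanionDecomp}: since the least $d$ with $3\mid 2^d-1$ equals $2$, the generalized Jordan canonical form of $(A')^p$ consists of $m$ copies of the companion matrix $E$ of $x^2+x+1$, together with a single trailing $1$ in the $\begin{bmatrix}C&&\\&C&\\&&1\end{bmatrix}$ case. In particular $(A')^p$ is a non-identity element of $\GL_n(\F_2)$ with $n\geq 6$, its multiplicative order is the odd prime $3$, we have $3+1=2^2$, and its minimal polynomial ($x^2+x+1$, times $x-1$ in the second case) differs from its characteristic polynomial because $m\geq 2$.

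This is exactly the setting of Lemma~\ref{Lem:NonJordayTypePrimeOrderMersennePrimeBlock2}, now applied to $(A')^p$. Either $(A')^p$ has distance at most $4$ to a Jordan pivot matrix — in which case $A$ has distance at most $1+1+4=6$, as desired — or $(A')^p$ is similar to $\begin{bmatrix}D&\\&D\end{bmatrix}$ or $\begin{bmatrix}D&&\\&D&\\&&1\end{bmatrix}$ for a companion matrix $D$ of an irreducible polynomial. Such a $D$ has multiplicative order $3$, so its irreducible characteristic polynomial divides $x^3-1=(x-1)(x^2+x+1)$ and is not $x-1$; hence it is $x^2+x+1$ and $D$ is $2\times 2$, making these exceptional matrices $4\times 4$ or $5\times 5$ and contradicting $n\geq 6$. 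So the first alternative must hold, and stringing the edges $A\to A'\to (A')^p\to\cdots$ together gives distance at most $6$. The only delicate step is this last recursion: one must check that the dimension bound $n\geq 6$ genuinely rules out the exceptional conclusion of the helper lemma for $(A')^p$, and that the oddness of $m$ is precisely what makes the $3$rd root available; everything else is routine bookkeeping of multiplicative orders and similarity classes.
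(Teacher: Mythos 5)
Your proposal is correct and follows essentially the same route as the paper: invoke the helper lemma to isolate the two exceptional shapes, extract a $3$rd root via Lemma~\ref{Lem:DoubleCompanionHasGoodRoot}, raise to the $p$-th power to get an order-$3$ matrix, and apply the helper lemma a second time after checking that $n\geq 6$ rules out its exceptional conclusion. Your parity argument ($m$ odd, hence $3\nmid 2^m-1$) and the explicit description of the Jordan form of $(A')^p$ as $m$ copies of the companion matrix of $x^2+x+1$ are slightly more detailed than the paper's appeal to Lemma~\ref{Lem:NonJordayTypeClassify} and Corollary~\ref{Cor:CompanionSameOrderSameSize}, but they reach the same conclusion by the same mechanism.
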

\begin{proof}
By Lemma~\ref{Lem:NonJordayTypePrimeOrderMersennePrimeBlock2}, we only need to consider the case when $A$ is similar to $\begin{bmatrix}C&\\&C\end{bmatrix}$ or $\begin{bmatrix}C&&\\&C&\\&&1\end{bmatrix}$ for some $m\times m$ companion matrix $C$ to an irreducible polynomial. Since $n\geq 6$, this implies that $m\geq 3$. So $3$ divides $2^{2m}-1$ but $3\neq p$.

By Lemma~\ref{Lem:DoubleCompanionHasGoodRoot}, we can find a matrix $C'$ such that $(C')^{3}=\begin{bmatrix}C&\\&C\end{bmatrix}$ and the multiplicative order of $C'$ is exactly $3p$. Therefore, if $A$ is similar to $\begin{bmatrix}C&\\&C\end{bmatrix}$ or $\begin{bmatrix}C&&\\&C&\\&&1\end{bmatrix}$, then $A$ has a third root $A'$ similar to $C'$ or $\begin{bmatrix}C'&\\ &1\end{bmatrix}$. In particular, $A'$ has multiplicative order $3p$, and $(A')^p$ has multiplicative order $3$. Its minimal polynomial has degree at most $3$, so it cannot equal to its characteristic polynomial. 

Furthermore, since $3=2^2-1$, by Corollary~\ref{Cor:CompanionSameOrderSameSize}, the generalized Jordan canonical form of $(A')^p$ must be block diagonal by $1\times 1$ or $2\times 2$ blocks. Since $n\geq 6$, it cannot be similar to $\begin{bmatrix}C''&\\&C''\end{bmatrix}$ or $\begin{bmatrix}C''&&\\&C''&\\&&1\end{bmatrix}$ for some $2\times 2$ companion matrix $C''$ to an irreducible polynomial. So by Lemma~\ref{Lem:NonJordayTypePrimeOrderMersennePrimeBlock2}, $(A')^p$ has distance at most $4$ to a Jordan pivot matrix. Hence $A$ has distance at most $6$ to a Jordan pivot matrix.
\end{proof}

Note that in the case above, if the minimal polynomial and the characteristic polynomial of $A$ are the same, then this is exactly the case described by Proposition~\ref{prop:ConnObstDiag2}.

\begin{prop}
\label{Prop:NonJordayTypeNonPrimeOrder2}
For $n\geq 6$, suppose the multiplicative order of a non-identity matrix $A\in\GL_n(\F_2)$ is an odd composite number, and the characteristic polynomial of $A$ has no irreducible factor of degree $n$ or $n-1$. Then $A$ has distance at most $5$ to a Jordan pivot matrix.
\end{prop}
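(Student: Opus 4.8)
The plan is to raise $A$ to a power of prime multiplicative order and then feed the result into the prime‑order propositions already proved, using crucially that a matrix of composite order cannot be trapped in an obstructed component.

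First I would unpack the hypotheses. Since $q=2$ the centre of $\GL_n(\F_2)$ is trivial, so the projective order of $A$ equals its multiplicative order $k$, which is odd; by Lemma~\ref{Lem:NonJordayTypeClassify} the generalized Jordan canonical form of $A$ is $X\,\mathrm{diag}(C_1,\dots,C_s,I_\epsilon)\,X^{-1}$, where each $C_i$ is a companion matrix of an irreducible polynomial of degree $d_i\ge 2$ and $I_\epsilon$ collects the $1\times1$ blocks. The assumption that the characteristic polynomial of $A$ has no irreducible factor of degree $n$ or $n-1$ forces every block to have size at most $n-2$, and (since $A\ne I$) forces $s\ge 2$: a single companion block together with $I_\epsilon$, $\epsilon\le 1$, would have degree $n$ or $n-1$. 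If $\epsilon\ge 2$ then $(x-1)^2$ divides the characteristic polynomial of $A$, and Lemma~\ref{Lem:NonJordayTypeRepeatOne2} already gives distance at most $2$ to a Jordan pivot matrix; so from here on I assume $\epsilon\le 1$.

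Now fix any prime $p_1\mid k$. By Lemma~\ref{Lem:RaisingAtoProjectivePrime} some power $A_1$ of $A$ has multiplicative order exactly $p_1$, and by Lemma~\ref{Lem:CompanionDecomp} passing to a power only shrinks the generalized Jordan blocks, so $A_1$ still has no irreducible factor of degree $n$ or $n-1$ and $A_1\ne I$. If $p_1+1$ is not a power of $2$, then Proposition~\ref{Prop:NonJordayTypePrimeOrderNonPrimeBlock2} applies to $A_1$ and gives distance at most $4$, hence distance at most $5$ for $A$. If $p_1+1=2^m$ (so $p_1$ is a Mersenne prime), I would first argue that the minimal and characteristic polynomials of $A_1$ are distinct: otherwise $A_1$ meets the hypotheses of Proposition~\ref{prop:ConnObstDiag2} (it is non-identity, $A_1^{p_1}=I$, the two polynomials coincide, and $n\le p_1$ because the characteristic polynomial then divides $x^{p_1}-1$), so $A_1$ is trapped in a connected component all of whose vertices have order $p_1$; but $A_1$ is a power of $A$, so $A$ lies in that same component while having composite order $k\ne p_1$, a contradiction.

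With the two polynomials of $A_1$ distinct, Lemma~\ref{Lem:NonJordayTypePrimeOrderMersennePrimeBlock2} gives either distance at most $4$ for $A_1$, or $A_1$ similar to $\mathrm{diag}(C,C)$ or $\mathrm{diag}(C,C,I_1)$ for an $m\times m$ companion matrix $C$ of an irreducible polynomial; it remains to rule out this degenerate shape. The two size‑$m$ blocks of $A_1$ come from the non‑collapsing companion blocks of $A$, each $C_i$ of size $d_i$ (a multiple of $m$) contributing $d_i/m$ copies, while a block with $C_i^{N}=I$ instead produces $d_i\ge 2$ copies of $1\times1$ identity blocks. Hence either some $C_i$ has $d_i=2m$, or $A$ has exactly two companion blocks, both of size $m$. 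In the first case, if $A$ had another companion block it would collapse to an identity block of size $\ge2$, giving two $1\times1$ identity blocks in $A_1$ and contradicting both admissible shapes; and if $C_i$ is the only companion block then $A=\mathrm{diag}(C_i,I_\epsilon)$ with $\epsilon\le1$, forcing $d_i=n$ or $n-1$, which is excluded. In the second case $C_1,C_2$ are companion matrices of irreducible polynomials of degree $m$, so $\mathrm{ord}(C_j)$ divides $2^m-1=p_1$ and is $>1$, hence equals $p_1$, so $k=\mathrm{lcm}(\mathrm{ord}(C_1),\mathrm{ord}(C_2))=p_1$ is prime, contradicting that $k$ is composite. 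Thus $A_1$ has distance at most $4$ in all cases and $A$ has distance at most $5$. The main obstacle is precisely this Mersenne branch, where one must simultaneously block the genuine obstruction — via the observation that a matrix of composite order cannot be obstructed — and the $\mathrm{diag}(C,C)$ configuration, which would otherwise cost the two extra steps incurred in Proposition~\ref{Prop:NonJordayTypePrimeOrderMersennePrimeBlock2}.
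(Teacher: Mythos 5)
Your proof is correct and follows the same overall architecture as the paper's: raise $A$ to a power $A_1$ of prime order $p_1$; if $p_1+1$ is not a power of $2$, invoke Proposition~\ref{Prop:NonJordayTypePrimeOrderNonPrimeBlock2}; otherwise show that the minimal and characteristic polynomials of $A_1$ differ and that $A_1$ is not similar to $\mathrm{diag}(C,C)$ or $\mathrm{diag}(C,C,1)$, then apply Lemma~\ref{Lem:NonJordayTypePrimeOrderMersennePrimeBlock2}.

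The one place where your argument genuinely diverges is the ``distinct polynomials'' step in the Mersenne branch. The paper argues structurally: writing $A=X\,\mathrm{diag}(C_1,\dots,C_t)\,X^{-1}$, some $C_i^{k/p_1}$ must have reducible characteristic polynomial (otherwise every block would have size $1$ or $m$ and the order of $A$ would already divide $p_1$), and this forces $A_1$ to carry repeated Jordan blocks. You instead argue by contradiction through the obstruction result: if the two polynomials coincided then $A_1$ would satisfy every hypothesis of Proposition~\ref{prop:ConnObstDiag2}, so its connected component would consist solely of non-identity powers of $A_1$; yet $A$ is adjacent to $A_1$ and has composite order, hence cannot be such a power. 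This ``a composite-order matrix cannot live in an obstruction component'' observation is a clean alternative and makes it quite transparent why the Mersenne branch poses no obstruction here, at the mild cost of pulling the obstruction machinery into the connectivity argument. Two smaller remarks: your explicit split between $d_i=2m$ and two blocks of size $m$ when ruling out the exceptional shapes is spelled out more carefully than the paper's compressed ``compare the generalized Jordan forms,'' which is a modest gain in rigour; and your preliminary detour through Lemma~\ref{Lem:NonJordayTypeRepeatOne2} when there are at least two $1\times 1$ identity blocks is harmless but unnecessary, since both the structural argument and the ruling-out of the exceptional shapes already cover that configuration.
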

\begin{proof}
Suppose the multiplicative order of $A$ has a prime factor $p$ such that $p+1$ is not a power of $2$, then $A$ has a power $A'$ of multiplicative order $p$. By Lemma~\ref{Lem:IrreducibleRootIrreducible}, the characteristic polynomial of $A'$ has no irreducible factor of degree $n$ or $n-1$. Therefore, by Proposition~\ref{Prop:NonJordayTypePrimeOrderNonPrimeBlock2}, $A'$ has distance at most $4$ to a Jordan pivot matrix. So $A$ has distance at most $5$ to a Jordan pivot matrix.

Now suppose all prime factors of the multiplicative order of $A$ are one less than some powers of two. Say the multiplicative order of $A$ has a prime factor $p=2^m-1$ for some positive integer $m$. Then $A$ has a power $A'$ of multiplicative order $p$. 

By Lemma~\ref{Lem:NonJordayTypeClassify}, $A=X\begin{bmatrix}C_1&&\\ &\ddots&\\&&C_t\end{bmatrix}X^{-1}$ for some invertible $X$, where $C_1,\dots, C_t$ are companion matrices to various irreducible polynomials over $\F_2$. Let $k$ be the multiplicative order of $A$, then $A'=A^{\frac{k}{p}}=X\begin{bmatrix}C_1^{\frac{k}{p}}&&\\ &\ddots&\\&&C_t^{\frac{k}{p}}\end{bmatrix}X^{-1}$. 

Note that each $C_i^{\frac{k}{p}}$ has multiplicative order $1$ or $p=2^m-1$. If all $C_i^{\frac{k}{p}}$ have irreducible characteristic polynomials, then by Corollary~\ref{Cor:CompanionSameOrderSameSize}, they all have size $1\times 1$ or $m\times m$. But then $C_i$ must have multiplicative order dividing $1$ or $p=2^m-1$. So $A$ has prime multiplicative order, contradiction. So for some $C_i$, $C_i^{\frac{k}{p}}$ no longer has irreducible characteristic polynomial. In particular, $A'$ must have distinct minimal polynomial and characteristic polynomial.

Suppose for contradiction that $A'$ is similar to $\begin{bmatrix}C&\\&C\end{bmatrix}$ or $\begin{bmatrix}C&&\\&C&\\&&1\end{bmatrix}$ for some companion matrix $C$ of some irreducible polynomial. Since $C$ must have multiplicative order $p$, it must be $m\times m$. Since the characteristic polynomial of $A$ has no irreducible factor of degree $n$ or $n-1$, compare with the generalized Jordan canonical forms of $A$ and of $A'$, we can only have $A=\begin{bmatrix}C_1&\\&C_2\end{bmatrix}$ or $\begin{bmatrix}C_1&&\\&C_2&\\&&1\end{bmatrix}$, where $C_1,C_2$ are both $m\times m$. But since $p=2^{m}-1$, the multiplicative orders of $C_1,C_2$ must divide $p$. So again $A$ has prime multiplicative order, contradiction.  So $A'$ is not similar to $\begin{bmatrix}C&\\&C\end{bmatrix}$ or $\begin{bmatrix}C&&\\&C&\\&&1\end{bmatrix}$ for some companion matrix $C$ of some irreducible polynomial. 

In conclusion, Lemma~\ref{Lem:NonJordayTypePrimeOrderMersennePrimeBlock2} must apply to $A'$. So $A'$ has distance at most $4$ to a Jordan pivot matrix. Thus $A$ has distance at most $5$ to a Jordan pivot matrix.
\end{proof}

\begin{lem}
\label{Lem:NonJordayTypeIrredFactorN2}
For a compositite number $n\geq 6$, suppose the multiplicative order of a non-identity matrix $A\in\GL_n(\F_2)$ is an odd number. If the characteristic polynomial of $A$ is irreducible, then $A$ has distance at most $6$ to a Jordan pivot matrix.
\end{lem}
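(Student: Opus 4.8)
The plan is to lift $A$ to a root that generates the full multiplicative group of $\F_2[A]$, then drop down to a power that lands in a proper subfield of $\F_{2^n}$, and finally hand off to the $q=2$ propositions already proved for matrices of prime order with reducible characteristic polynomial. Since the characteristic polynomial of $A$ is irreducible of degree $n$, Lemma~\ref{Lem:CompanionOrderBasic} makes $\F_2[A]$ a field with $2^n$ elements, so $\F_2[A]^*$ is cyclic of order $2^n-1$; applying Lemma~\ref{Lem:CompanionRoot} with $t=(2^n-1)/\mathrm{ord}(A)$ produces a root $A'$ of $A$ with irreducible characteristic polynomial and multiplicative order exactly $2^n-1$, so $A'$ is at distance $1$ from $A$.

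Next I would pick a divisor $e\mid n$ with $2\le e\le n/3$, which exists for every composite $n\ge 6$: take $e=2$ when $n$ is even, and when $n$ is odd it is composite and hence $n\ge 9$, so its least prime factor $e$ satisfies $e\le\sqrt n\le n/3$. Then choose a prime $p_0\mid 2^e-1$ (so $p_0\ge 3$) and set $e'=\mathrm{ord}_{p_0}(2)$; since $p_0\mid 2^e-1\mid 2^n-1$ we get $e'\mid\gcd(e,n)=e$, hence $2\le e'$ and $n/e'\ge n/e\ge 3$. Because $p_0$ divides $2^n-1=\mathrm{ord}(A')$, the matrix $A''=(A')^{(2^n-1)/p_0}$ has multiplicative order exactly $p_0$ and is at distance $1$ from $A'$, hence distance $2$ from $A$. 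Since $A''$ is a power of $A'$ and $A'$ has irreducible characteristic polynomial, Lemma~\ref{Lem:CompanionDecomp} shows the generalized Jordan canonical form of $A''$ consists of equal companion blocks of a single irreducible polynomial, and identifying $\F_2(A'')$ with $\F_{2^{e'}}$ (the smallest subfield of $\F_{2^n}$ containing the order-$p_0$ element $A''$) shows that polynomial has degree exactly $e'$; so the canonical form of $A''$ is $n/e'\ge 3$ copies of an $e'\times e'$ companion block. In particular $A''\ne I$, its multiplicative order is the odd prime $p_0$, every irreducible factor of its characteristic polynomial has degree $e'\le n/3<n-1$ (so none of degree $n$ or $n-1$), its minimal and characteristic polynomials differ, and — having at least three blocks, each of size $\ge 2$ — it is similar to neither $\begin{bmatrix}C&\\&C\end{bmatrix}$ nor $\begin{bmatrix}C&&\\&C&\\&&1\end{bmatrix}$.

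Now I would split on $p_0$. If $p_0+1$ is not a power of $2$, Proposition~\ref{Prop:NonJordayTypePrimeOrderNonPrimeBlock2} applies to $A''$ and gives a Jordan pivot matrix at distance $\le 4$ from $A''$. If $p_0+1=2^m$, Lemma~\ref{Lem:NonJordayTypePrimeOrderMersennePrimeBlock2} applies; its second alternative (similarity to one of the two exceptional block forms) has just been ruled out, so again a Jordan pivot matrix lies at distance $\le 4$ from $A''$. Chaining $A\to A'\to A''$ then yields distance $\le 6$ from $A$ to a Jordan pivot matrix.

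I expect the only genuinely delicate point to be the choice of $e$ in the middle step. The bound $n/e'\ge 3$ is exactly what forces $A''$ to have at least three equal companion blocks, and that is what lets $A''$ evade the exceptional conclusion of Lemma~\ref{Lem:NonJordayTypePrimeOrderMersennePrimeBlock2}; without it the Mersenne case would only give distance $6$ from $A''$, overshooting the target of $6$. Everything else is routine: the companion-matrix lemmas quoted above supply the structure of $A'$ and $A''$, and the two cited $q=2$ propositions finish the job.
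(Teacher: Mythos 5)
Your proposal is correct and follows essentially the same route as the paper's own proof: pass to a multiplicative generator $A'$ of $\F_2[A]^*$, then drop to a power $A''$ of prime order $p_0$ chosen so that its canonical form has at least three equal companion blocks, then hand off to Proposition~\ref{Prop:NonJordayTypePrimeOrderNonPrimeBlock2} or Lemma~\ref{Lem:NonJordayTypePrimeOrderMersennePrimeBlock2} according to whether $p_0+1$ is a power of two. The paper fixes $m$ to be the smallest prime factor of $n$ and takes $p\mid 2^m-1$ (which automatically forces $\mathrm{ord}_p(2)=m$); your version with an arbitrary divisor $e$ and the extra bookkeeping $e'=\mathrm{ord}_{p_0}(2)$ is a harmless slight generalization, and both produce the same ``at least three blocks'' count that lets you dodge the exceptional forms in Lemma~\ref{Lem:NonJordayTypePrimeOrderMersennePrimeBlock2}.
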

\begin{proof}
Since the characteristic polynomial of $A$ is irreducible, $\F_q[A]$ is a field with $q^n$ elements. Let $A'$ be a multiplicative generator of this field (and hence a root of $A$).

Since $n\geq 6$ is a composite number, let $m\geq 2$ be its smallest prime factor, then $n\geq 3m\geq 2m+2$. Pick any prime factor $p$ dividing $2^m-1$. Then $p$ also divides $2^n-1$, and therefore $A'$ has a power $A''$ whose multiplicative order is $p$. By Lemma~\ref{Lem:NonJordayTypeClassify}, $A''=X\begin{bmatrix}C_1&&\\ &\ddots&\\&&C_t\end{bmatrix}X^{-1}$ for some invertible $X$, where $C_1,\dots, C_t$ are companion matrices to various irreducible polynomials over $\F_q$. Furthermore, since each $C_i$ has multiplicative order dividing $p$, by Corollary~\ref{Cor:CompanionSameOrderSameSize}, therefore it is either $1\times 1$ or $m\times m$. 

Since $n\geq 3m$, we must have $t\geq 3$, so the characteristic polynomial of $A''$ cannot have irreducible factor of degree $n$ or $n-1$. If $p\neq 2^m-1$, then by Proposition~\ref{Prop:NonJordayTypePrimeOrderNonPrimeBlock2}, $A''$ has distance at most $4$ to a Jordan pivot matrix. Therefore $A$ has distance at most $6$ to a Jordan pivot matrix.

If $p=2^m-1$, note that $n\geq 2m+2$, so $A''$ cannot be similar to $\begin{bmatrix}C&\\&C\end{bmatrix}$ or $\begin{bmatrix}C&&\\&C&\\&&1\end{bmatrix}$ for some companion matrix $C$ of some irreducible polynomial. Furthermore, $A''$ is a power of $A'$, so by Lemma~\ref{Lem:CompanionPower}, its minimal polynomial must be irreducible. So its minimal polynomial and characteristic polynomial are different. Hence by Lemma~\ref{Lem:NonJordayTypePrimeOrderMersennePrimeBlock2}, $A''$ has distance at most $4$ to a Jordan pivot matrix. Therefore $A$ has distance at most $6$ to a Jordan pivot matrix.
\end{proof}

\begin{prop}
\label{Prop:NonJordayTypeIrredFactorN2}
For $n\geq 6$, suppose the multiplicative order of a non-identity matrix $A\in\GL_n(\F_2)$ is an odd number. If the characteristic polynomial of $A$ has an irreducible of degree $n$ or $n-1$, and this degree is a composite number, then $A$ has distance at most $6$ to a Jordan pivot matrix.
\end{prop}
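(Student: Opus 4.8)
The statement naturally splits according to whether the relevant irreducible factor of the characteristic polynomial of $A$ has degree $n$ or degree $n-1$. If it has degree $n$, then the characteristic polynomial of $A$ is itself irreducible of composite degree, and this is precisely the hypothesis of Lemma~\ref{Lem:NonJordayTypeIrredFactorN2}, which already yields distance at most $6$ to a Jordan pivot matrix; nothing further is needed. So the real content is the case where the characteristic polynomial factors as $(x+1)g(x)$ with $g$ irreducible of composite degree $n-1$ (over $\F_2$ the only linear factor available for an invertible matrix is $x+1$). In that case the generalized Jordan canonical form of $A$ is $X\begin{bmatrix}A_1&\\&1\end{bmatrix}X^{-1}$ for some invertible $X$, where $A_1$ is an $(n-1)\times(n-1)$ matrix with irreducible characteristic polynomial, so $\F_2[A_1]\cong\F_{2^{n-1}}$ by Lemma~\ref{Lem:CompanionOrderBasic}. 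The plan is to replay the argument of Lemma~\ref{Lem:NonJordayTypeIrredFactorN2} while carrying the extra $1\times1$ identity block along unchanged.

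\textbf{Key steps.} First I would pick a multiplicative generator $B'$ of $\F_2[A_1]^*$ and set $B=X\begin{bmatrix}B'&\\&1\end{bmatrix}X^{-1}$; then $A_1$ is a power of $B'$, hence $A$ is a power of $B$, so $A$ is at distance $1$ from $B$. Next let $m\geq2$ be the smallest prime factor of $n-1$. Writing $n-1=ms$ with $s\geq m\geq2$ and using $n-1\geq6$ (the only composite $n-1<6$ would force $n=5$, excluded), one gets $s\geq3$, hence $n-1\geq3m$ and in particular $m\leq n-2$. Choose a prime $p\mid 2^m-1$; since $m\mid n-1$ we have $p\mid 2^{n-1}-1$, so $B'$ has a power $D'$ of multiplicative order $p$, and $D=X\begin{bmatrix}D'&\\&1\end{bmatrix}X^{-1}$ is again a power of $B$, hence at distance at most $2$ from $A$.

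\textbf{Finishing.} Then I would identify the generalized Jordan canonical form of $D$. By Lemma~\ref{Lem:CompanionPower} the minimal polynomial of $D'$ is irreducible, so by Lemma~\ref{Lem:CompanionDecomp} its generalized Jordan form consists of identical companion blocks of a single irreducible polynomial; since $(D')^p=I$ with $p$ an odd prime, Corollary~\ref{Cor:CompanionSameOrderSameSize} (over $\F_2$, where $q-1=1$) forces these blocks to have size $m$, because $m$ is prime and $p\nmid 2^1-1$. Thus $D$ is similar to a block diagonal matrix consisting of $(n-1)/m\geq3$ copies of an $m\times m$ irreducible companion matrix together with one $1\times1$ identity block. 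In particular $D$ is non-identity of odd prime multiplicative order $p$, its characteristic polynomial has no irreducible factor of degree $n$ or $n-1$ (the largest has degree $m\leq n-2$), and its minimal and characteristic polynomials differ. If $p+1$ is not a power of $2$, Proposition~\ref{Prop:NonJordayTypePrimeOrderNonPrimeBlock2} places $D$ within distance $4$ of a Jordan pivot matrix. If $p+1$ is a power of $2$, Lemma~\ref{Lem:NonJordayTypePrimeOrderMersennePrimeBlock2} applies, and its two exceptional forms $\begin{bmatrix}C&\\&C\end{bmatrix}$ and $\begin{bmatrix}C&&\\&C&\\&&1\end{bmatrix}$ are excluded because $D$ has at least three irreducible companion blocks of size $m\geq2$; so again $D$ is within distance $4$ of a Jordan pivot matrix. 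Combining with the distance at most $2$ from $A$ to $D$ gives the bound $6$.

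\textbf{Main obstacle.} I do not expect a new idea beyond Lemma~\ref{Lem:NonJordayTypeIrredFactorN2}; the work is entirely bookkeeping. The point to be careful about is the numerology: verifying that $n\geq6$ together with ``$n-1$ composite'' forces $n-1\geq3m$ (so that $D$ has at least three large blocks, ruling out the exceptional cases of Lemma~\ref{Lem:NonJordayTypePrimeOrderMersennePrimeBlock2}) and $m\leq n-2$ (so the hypotheses of Proposition~\ref{Prop:NonJordayTypePrimeOrderNonPrimeBlock2} hold), and checking that appending the fixed $1\times1$ identity block never disturbs any multiplicative-order computation or block count along the chain $A\to B\to D\to\dots$.
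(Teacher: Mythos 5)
Your proposal is correct and takes essentially the same approach as the paper: reduce to the degree-$n-1$ case (the degree-$n$ case being exactly Lemma~\ref{Lem:NonJordayTypeIrredFactorN2}) and observe that the fixed $1\times1$ identity block can be carried through the whole construction without disturbing any power/root relation or the non-identity requirement. The paper packages this more compactly — it applies Lemma~\ref{Lem:NonJordayTypeIrredFactorN2} as a black box to the $(n-1)\times(n-1)$ companion block $C$ (which has $n-1\geq 6$ composite, odd multiplicative order, and irreducible characteristic polynomial) and then lifts the resulting path from $\GL_{n-1}(\F_2)$ to $\GL_n(\F_2)$ by appending the $1\times1$ block at each step, noting that ``$1$ is its own power and root''; your version instead re-derives the lemma's internal construction with that extra block tracked explicitly, which is longer but equivalent, and your numerology ($s\geq 3$, $m\leq n-2$) matches what the lemma's own proof needs.
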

\begin{proof}
By Lemma~\ref{Lem:NonJordayTypeIrredFactorN2}, we only need to prove the case when the characteristic polynomial of $A$ has an irreducible of degree $n-1$, and $n-1$ is a composite number. Then the generalized Jordan canonical form of $A$ must be $\begin{bmatrix}C&\\&1\end{bmatrix}$ where $C$ is a companion matrix to an irreducible polynomial of degree $n-1$.

Since $n-1$ is a composite number, we cannot have $n=6$. So $n\geq 7$. Then $C$ has irreducible characteristic polynomial, and its dimension is $n-1\geq 6$. Therefore by Lemma~\ref{Lem:NonJordayTypeIrredFactorN2}, $C$ has distance at most $6$ to a Jordan pivot matrix. Since $A$ is similar to $\begin{bmatrix}C&\\&1\end{bmatrix}$ and $1$ is its own power and root, $A$ has distance at most $6$ to a Jordan pivot matrix.
\end{proof}

\bibliographystyle{alpha}
\bibliography{references}

\end{document}